\newtheorem{theorem}{Theorem}[section]
\newtheorem{lemma}[theorem]{Lemma}
\newtheorem{corollary}[theorem]{Corollary}
\newtheorem{proposition}[theorem]{Proposition}
\theoremstyle{definition} 
\newtheorem{definition}[theorem]{Definition}
\theoremstyle{remark}
\newtheorem{remark}[theorem]{Remark}
\newtheorem{example}{Example}
\numberwithin{equation}{section}
\begin{document}

%
\title{$k$-slant distributions}

\author{Dan Radu La\c tcu}

\address{Central University Library of Timi\c soara\\ 
300223 Timi\c soara, Romania} 
\email{latcu07@yahoo.com}

\subjclass{58A30, 53C40, 53C15}

\keywords
{slant distribution; slant submanifold; pointwise slant distribution; pointwise slant submanifold; $k$-slant distribution; $k$-slant submanifold; $k$-pointwise slant distribution; $k$-pointwise slant submanifold; pointwise $k$-slant distribution; pointwise $k$-slant submanifold; almost Hermitian structure; almost product Riemannian structure; almost contact metric structure; almost paracontact metric structure; almost ($\epsilon$)-con\-tact metric structure}
 
\date{}

\begin{abstract}
Inspired by the concepts of slant distribution and slant submanifold, with their variants of hemi-slant, semi-slant, \mbox{bi-slant,} or almost bi-slant, we introduce the more general concepts of \mbox{$k$-slant} distribution and $k$-slant submanifold in the settings of an almost Hermitian, an almost product Riemannian, an almost contact metric, and an almost paracontact metric manifold and study some of their properties. We prove that, for any proper $k$-slant distribution in the tangent bundle of a Riemannian manifold, there exists another one in its orthogonal complement, and we establish basic relations (metric properties, formulae relating the involved tensor fields, conformal properties) between them. Furthermore, allowing the slant angles to depend on the points of the manifold, we generalize these concepts and those of pointwise slant distribution and pointwise slant submanifold to the concepts of $k$-pointwise slant distribution and $k$-pointwise slant submanifold in the above-mentioned settings. For any $k$-pointwise slant distribution, we prove the existence of a corresponding one in its orthogonal complement and reveal basic relations between them. We also provide sufficient conditions for $k$-pointwise slant distributions to become $k$-slant distributions and establish other related results. By the end, for the fulfilment of some specific requirements, we introduce a special class of \mbox{$k$-point}\-wise slant distributions, that of pointwise $k$-slant distributions, and the corresponding class of submanifolds, pointwise $k$-slant submanifolds, which is slightly more general than the class of generic submanifolds in sense of Ronsse, getting new results. 

\end{abstract}
\maketitle

\section{Introduction}\label{intro}

The theory of submanifolds isometrically immersed into smooth manifolds carrying different geometric structures, such as almost complex, almost product, almost contact, almost paracontact, has been continuously developed in the last half-century and has become of intensive study in the last twenty years. A fact that motivated such an interest was the introduction over time of special types of submanifolds. In the sequel, we will point out only some of the most significant moments of this development as far as they are related to the present study. 

In 1978, Bejancu \cite{bejancu CR} introduced the notion of CR (or semi-invariant) sub\-man\-i\-fold for almost Hermitian manifolds, integrating the concepts of totally real (or anti-invariant) and holomorphic (or invariant) submanifold in a single one. Later, this notion was extended by Bejan \cite{bejan} to that of almost semi-invariant submanifold. 

In 1990, Chen \cite{chen} introduced for an isometric immersion into an almost Hermitian manifold the notion of general slant immersion, requiring for the structural endomorphism to make a constant angle of any value in $[0,\frac{\pi}{2}]$ with the tangent space of the submanifold, thus generalizing the notions of holo\-mor\-phic and totally real submanifold. This immersion and the corresponding submanifold were simply named slant immersion and slant submanifold, respectively, when they are not holomorphic, i.e., the constant angle between the structural endomorphism and the tangent space of the submanifold is different from zero. In this case, the angle was called the slant angle of the slant immersion. Since then, the study of slant sub\-man\-i\-folds with respect to different structures substantially evolved (e.g., \cite{alegre, blaga, cabrerizo, chen2, lotta, papaghiuc, se, sahin, tastan}).

In 1990, Chen \cite{chen2, chen} and Ronsse \cite{ronsse} considered the or\-thog\-o\-nal decomposition of the tangent spaces of a submanifold of an almost Hermitian manifold into the direct sum of the eigenspaces corresponding to the square of the tangential component of the structural tensor field. Imposing some conditions to ensure the existence of regular distributions in the tangent bundle, the submanifold was called by Ronsse a generic submanifold or, in more restrictive conditions, a skew CR submanifold. Properties of the above decomposition have been studied by the two authors. Avoiding supplementary considerations, we will further outline the main ideas they used for defining these notions. 

Considering $M$ an immersed submanifold of an almost Hermitian man\-i\-fold $(\overline{M},\varphi,g)$, we have the orthogonal decomposition of the tangent space of $\overline{M}$ at a point $x\in M$ into the tangent space $T_xM$ of $M$ at $x$ and its orthogonal complement $(T_xM)^{\perp}$ in $T_x\overline{M}$, 
$$T_x\overline{M}=T_xM\oplus (T_xM)^{\perp},$$ 
and we denote 
$$\varphi_x X_x=f_x X_x+ w_x X_x$$
for any tangent vector $X_x\in T_xM$, where $f_x X_x\in T_xM$ and $w_x X_x\in (T_xM)^{\perp}$.  

Since $(\overline{M},\varphi,g)$ is an almost Hermitian manifold, we have
$$g(f_x X_x,Y_x)+g(X_x,f_x Y_x)=0$$
for any $X_x,Y_x$ tangent vectors to $M$ at $x$. 

Since $f$ is skew-symmetric, and, hence, $f^2$ is symmetric, all the eigenvalues $\lambda_i(x)$ of $f_x^2$ with respect to $x$ are real and lie in $[-1,0]$. If $\lambda_i(x)\neq0$, the cor\-re\-spond\-ing eigenspace 
$D_x^i$ is of even dimension and is invariant under $f_x$. 
Each tangent space $T_xM$ of $M$ at $x$ admits the following orthogonal decomposition into the eigenspaces $D_x^i$ of $f_x^2$: 
$$T_xM={D_x^1}\oplus\ldots\oplus{D_x^{k(x)}}.$$

Furthermore, denoting $\lambda_i(x)=-\alpha_i(x)^2$, with $\alpha_i(x)\in [0,1]$, we get $\alpha_i(x)=\cos\theta_i(x)$, where $\theta_i(x)$ is the angle between $\varphi_x X_x$ and $T_xM$ for any nonzero $X_x\in D_x^i$, $i=\overline{1,k(x)}$. 

Denoting by ${D_x^{\alpha}}$ the eigenspace corresponding to the eigenvalue \linebreak $\lambda(x)= -\alpha(x)^2$, where $\alpha(x)\in [0,1]$, Ronsse considered in the K\"ahlerian case the following definition. 

\begin{definition}\hspace{-2pt}\cite{ronsse}
A submanifold $M$ of a K\"ahler manifold $(\overline{M},\varphi,g)$ is called a \textit{generic submanifold} if there exists an integer $k$ and some real functions $\alpha_i:\nolinebreak M\rightarrow(0,1)$, $i=\overline{1,k}$, such that: 
\begin{enumerate}
\item each $-\alpha_i^2(x)$, for $i=\overline{1,k}$, is a distinct eigenvalue of $f^2_x$, and \newline 
$T_xM={D_x^0}\oplus{D_x^1}\oplus{D_x^{\alpha_1}}\oplus\ldots\oplus{D_x^{\alpha_k}}$ for $x\in M$; 
\item the dimensions of ${D_x^0}$, ${D_x^1}$, ${D_x^{\alpha_i}}$,  $i=\overline{1,k}$, are independent of $x\in M$. 
\end{enumerate}
In addition, if each $\alpha_i$ is constant on $M$, then $M$ is called a \textit{skew CR submanifold}. 

For the sake of generality, ${D_x^0}$ and ${D_x^1}$ are allowed to be the null space $\{0\}$, but $D_x^{\alpha_i}$ is not null for $i=\overline{1,k}$. 

If $k=0$, then $M$ is called a \textit{CR submanifold}, and, if $k=0$ and ${D_x^0}$, ${D_x^1}$ are non-null (i.e., at least 1-di\-men\-sion\-al), then $M$ is called a \textit{proper CR submanifold}. 
\end{definition}

\begin{remark}\label{p204}
Due to the existence of at least one function $\alpha_i: M\rightarrow(0,1)$ in the definition of a generic submanifold, a CR submanifold is neither a skew CR nor a generic submanifold. 
\end{remark}

The notion of slant submanifold was generalized by Etayo \cite{etayo} to that of quasi-slant submanifold. He called a submanifold $N$ of the almost Hermitian manifold $(M,\varphi,g)$ a quasi-slant submanifold if, for any point $p\in N$ and any $X \in T_p N\verb=\=\{0\}$, the angle $\theta (p)$ between $\varphi_p X$ and the tangent space $T_p N$ depends only on the point $p\in N$ and not on the nonzero tangent vector $X$. The slant angle became so a slant function. Later, the name of the submanifold changed to that of pointwise slant submanifold \cite{chen-garay}. 
After that, different variants of pointwise slant submanifolds (semi-slant, hemi-slant, bi-slant) in different settings have been investigated (e.g., \cite{gulbahar, kazemi, pahan, park}).

\medskip 
In the present paper, we introduce the $k$-slant and $k$-pointwise slant concepts for smooth regular distributions and submanifolds, where $k\in \mathbb{N}^*$, which together enclose the above-mentioned notions in more general frameworks. At the beginning, we define and study the notion of $k$-slant distribution in the tangent bundle of a Riemannian manifold $(\overline{M},g)$ endowed with a $g$-compatible $(1,1)$-tensor field. Correspondingly, we introduce the notion of \mbox{$k$-slant} submanifold, which includes that of CR and skew CR submanifold. For a unitary treatment of the almost contact metric and almost paracontact metric structures, we will make use of an almost ($\epsilon$)-contact metric structure, where $\epsilon\in \{-1,1\}$, which, for the values $-1$ or $1$ of $\epsilon$, corresponds to one or the other of the two structures. We will proceed similarly to the unitary treatment of the almost Hermitian and almost product Riemannian structures. We also describe the concept of skew CR submanifold for each of the settings considered in the paper, showing the relation between this concept and that of $k$-slant submanifold. 
Further, we introduce and study the notion of \mbox{$k$-point}\-wise slant distribution and, correspondingly, that of $k$-pointwise slant submanifold. In addition, we describe the notion of generic submanifold, in sense of Ronsse \cite{ronsse}, in the almost Hermitian, almost product Riemannian, almost contact metric, and almost paracontact metric setting, showing that the $k$-pointwise slant framework is more general than the generic one in each of the considered settings, illustrating this through examples. 
 
In short, we initiate the study of algebraic and geometric properties of \mbox{$k$-slant} and $k$-pointwise slant distributions in the settings of almost contact metric, almost paracontact metric, almost Hermitian, and almost product Riemannian geometry. 

The paper is mainly structured in two parts. After a short introduction, it follows a section of general considerations regarding definitions, properties, and notations to be used. The main components of the first part of the paper are sections \ref{alm_cont} and \ref{alm_hermitian}, which treat with $k$-slant distributions and \mbox{$k$-slant} submanifolds in an almost contact metric, an almost paracontact metric, an almost Hermitian, or an almost product Riemannian man\-i\-fold.  
Emphasizing the slant properties of the distributions and the correspondence between these, we prove that to any $k$-slant distribution it corresponds another \mbox{$k$-slant} distribution in its orthogonal complement and establish some relations between their components, such as that regarding the dimensions of the slant distributions or that regarding the angles. New properties and formulas related to the structural endomorphism or to certain pairs of vector fields are ob\-tained. In particular, we identify some conformal properties. 

The second part is devoted to the introduction and study of $k$-pointwise slant distributions and $k$-pointwise slant submanifolds. It reveals properties and results corresponding to those in the first part. In short, its first segment, section \ref{pointwise_gen_consid}, consists of general considerations related to the pointwise slant framework. This is followed by sections \ref{pointwise_alm_cont} and \ref{pointwise_alm_hermitian}, which deal with the study of $k$-pointwise slant distributions in an almost contact metric, an almost paracontact metric, an almost Hermitian, and an almost product Riemannian manifold. In section \ref{slant_via_pointwise_slant}, we search for sufficient conditions for a $k$-pointwise slant distribution to be a $k$-slant distribution. On the way, we get a lot of results in connection to this. By the end, related to the investigated problem, a special class of $k$-pointwise slant distributions, that of pointwise \mbox{$k$-slant} distributions, and the corresponding class of submanifolds, that of pointwise \mbox{$k$-slant} submanifolds (which is slightly more general than the class of generic submanifolds), are introduced, and corresponding properties are revealed. 

For every different setting, both in the first and in the second part, we explain how the results got for a type of distributions are transferred to the same type of submanifolds. 

The absence of any supplementary conditions leads to a sufficiently large gen\-er\-al\-i\-ty of the obtained results.

\section{General considerations}\label{gen_consid}

We will adopt throughout the paper the following definitions and notations. 

For any manifold $M$, we will denote by $TM$ the set of all smooth vector fields on $M$. Moreover, all the manifolds and vector fields considered as well as the Riemannian metric are assumed to be smooth. By regular distribution we mean a smooth regular distribution, that is, a smooth distribution for which any local basis has the same dimension. 

The localization of a vector field $Z$, of a distribution $D$, of a Riemannian metric $g$, or of an arbitrary tensor field $\psi$ in a point $x$ will be denoted by $Z_x$, $D_x$, $g_x$, and $\psi_x$, respectively. 

On a Riemannian manifold $(\overline{M},g)$, for $\epsilon \in \{-1,1\}$, we consider a \linebreak$(1,1)$-tensor field $\varphi$ such that
\begin{equation}\label{1}
g(\varphi X,Y)=\epsilon g(X,\varphi Y) 
\text{ for any }X,Y\in T\overline{M}.
\end{equation}

For any distribution $D$ on $\overline{M}$, we denote by $D^{\perp}$ the orthogonal complement of $D$ in $T\overline{M}$; hence, we have the or\-thog\-o\-nal de\-com\-po\-si\-tion
$$T\overline{M}=D\oplus D^{\perp}.$$
For any $Z\in T\overline{M}$, let $fZ$ and $wZ$ denote the components of $\varphi Z$ in $D$ and in $D^{\perp}$, respectively, calling $f$ the \textit{component of $\varphi$ into $D$}. Also, we denote by $|Z|$ the real nonnegative function defined on $\overline{M}$ by $x\mapsto \sqrt{g_x(Z_x,Z_x)}$ and the norm of the tangent vector $Z_x$ by $\|Z_x\|$. The notation $\langle Z\rangle$ will represent the $C^\infty (\overline{M})$-module generated by $Z$. 
We will call dimension of the regular distribution $D$, and we will denote it by $\dim (D)$, the dimension of the vector space $D_x$, where $x$ is an arbitrary point in $\overline{M}$. 

For any $1$-form or $(1,1)$-tensor field $\psi$ on $\overline{M}$, we denote by $\ker \psi$ the distribution consisting of all smooth vector fields $X\in T\overline{M}$ for which $\psi X=\nolinebreak 0$. For any $x\in \overline{M}$ and any tangent vector $v$ of $\overline{M}$ in $x$, by $\psi v$ we will actually mean $\psi_x v$. For any immersed submanifold $M$ of $\overline{M}$, the points of $M$ will be identified with the corresponding points of $\overline{M}$ through the immersion. For any $X, Y\in TM$, by $\psi Y$ and $g(X,Y)$ we mean the ''localization'' of the tensor field $\psi$ on $M$, $\psi_M=\{\psi_x\}_{ x\in M}$, applied to $Y$ (i.e., $\{\psi_x(Y_x)\}_{x\in M}$) and the family $\{g_x(X_x, Y_x)\}_{x\in M}$, or, equivalently, the functions defined on ${M}$ by $x\mapsto \psi_x(Y_x)$ and $x\mapsto {g_x(X_x,Y_x)}$. Also, the same meaning will be assigned to $\psi Y$ and $g(X,Y)$ if $X=\{X_x\}_{x\in M}$ and $Y=\{Y_x\}_{x\in M}$, with $X_x,Y_x \in T_x\overline{M}$ for $x\in M$, are smooth families (with respect to $x\in M$) of tangent vectors of $\overline{M}$. 
For such a $Y$, in particular for $Y\in TM$, the notation $|Y|$ will represent the real nonnegative function defined by $x\mapsto \sqrt{g_x(Y_x,Y_x)}$ for $x\in {M}$. Consequently, the notation $|\psi Y|$ will inherit the same meaning.

Let $D$ be a distribution on $\overline{M}$. We immediately notice: 

\begin{lemma}\label{lema1}
For any $X,Y \in D$ and $U,V\in D ^{\bot}$, we have:
\begin{align}
g(X,fY) &=\epsilon g(fX,Y), \nonumber\\ 
g(X,fU) &=\epsilon g(wX,U), \nonumber\\ 
g(U,wV) &=\epsilon g(wU,V). \nonumber
\end{align}
\end{lemma}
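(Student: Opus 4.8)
The plan is to derive all three identities directly from the defining compatibility relation \eqref{1} together with the orthogonality of the splitting $T\overline{M}=D\oplus D^{\perp}$; no deeper structure is needed. The only recurring algebraic move is that, since $\epsilon\in\{-1,1\}$, we have $\epsilon^2=1$, so multiplying \eqref{1} through by $\epsilon$ lets me swap the coefficient onto the other side freely.

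For the first identity I would take $X,Y\in D$ and apply \eqref{1} in the form $g(\varphi X,Y)=\epsilon\, g(X,\varphi Y)$. Writing $\varphi X=fX+wX$ and $\varphi Y=fY+wY$, the components $wX$ and $wY$ lie in $D^{\perp}$ while $X,Y\in D$, so the mixed terms $g(wX,Y)$ and $g(X,wY)$ vanish by orthogonality. What survives is $g(fX,Y)=\epsilon\, g(X,fY)$; multiplying by $\epsilon$ and using $\epsilon^2=1$ yields $g(X,fY)=\epsilon\, g(fX,Y)$.

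The second and third identities follow the same template, the only difference being which projected component is killed by orthogonality. For $X\in D$ and $U\in D^{\perp}$, in $g(\varphi X,U)$ the term $g(fX,U)$ drops out (since $fX\in D$), leaving $g(wX,U)$, while in $g(X,\varphi U)$ the term $g(X,wU)$ drops out, leaving $g(X,fU)$; relation \eqref{1} then gives $g(wX,U)=\epsilon\, g(X,fU)$, i.e.\ the claimed $g(X,fU)=\epsilon\, g(wX,U)$. Likewise, for $U,V\in D^{\perp}$ the terms $g(fU,V)$ and $g(U,fV)$ vanish, and \eqref{1} reduces to $g(wU,V)=\epsilon\, g(U,wV)$, which is the third identity.

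There is essentially no obstacle here, since the statement is a formal consequence of \eqref{1}; this is why the author writes ``we immediately notice.'' The only point demanding care is the consistent bookkeeping of which of the two components $fZ\in D$ or $wZ\in D^{\perp}$ is annihilated in each inner product, and remembering that the factor $\epsilon$ is its own inverse, so that the coefficient is unchanged when the roles of the two arguments are interchanged.
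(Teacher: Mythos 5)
Your proof is correct and is exactly the argument the paper leaves implicit behind the phrase ``we immediately notice'': decompose $\varphi Z = fZ + wZ$, kill the cross terms via the orthogonality of $D$ and $D^{\perp}$, and apply relation \eqref{1} with $\epsilon^2=1$. Nothing further is needed.
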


\begin{lemma}\label{lema2}
For any $X,Y \in D$ and $U,V\in D^{\bot}$, we have: 
\begin{align}
g(f^2X,Y)&=\epsilon g(fX,fY)=g(X,f^2Y), \nonumber\\ 
g(fwX,Y)&=\epsilon g(wX,wY)=g(X,fwY), \nonumber\\ 
g(wfU,V)&=\epsilon g(fU,fV)=g(U,wfV), \nonumber\\ 
g(w^2U,V)&=\epsilon g(wU,wV)=g(U,w^2V), \nonumber\\ 
g(wfX,U)&=\epsilon g(fX,fU)=g(X,f^2U), \nonumber\\ 
g(w^2X,U)&=\epsilon g(wX,wU)=g(X,fwU). \nonumber 
\end{align}

\end{lemma}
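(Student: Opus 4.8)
The plan is to obtain all six chains of equalities algebraically, using nothing beyond the three relations of Lemma~\ref{lema1}, the symmetry of $g$, and the fact that $\epsilon^2=1$. The organizing observation is that, by the very definition of $f$ and $w$, one has $fZ\in D$ and $wZ\in D^{\perp}$ for every $Z\in T\overline{M}$; consequently, in any expression $g(\cdot,\cdot)$ occurring in the statement, the subspace to which each argument belongs is unambiguous, and this determines which of the three relations of Lemma~\ref{lema1} is the relevant one for moving an $f$ or a $w$ from one slot to the other.

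I would first carry out the computation for $X,Y\in D$. For the left equality $g(f^2X,Y)=\epsilon g(fX,fY)$, I apply the first relation of Lemma~\ref{lema1}, read as $g(fX,Y)=\epsilon g(X,fY)$, with $fX\in D$ in place of $X$, which turns $g(f(fX),Y)$ into $\epsilon g(fX,fY)$. For the right equality I apply the same relation with $fY\in D$ in place of $Y$, obtaining $g(X,f^2Y)=\epsilon g(fX,fY)$, whence $\epsilon g(fX,fY)=g(X,f^2Y)$ after multiplying by $\epsilon$ and using $\epsilon^2=1$. This two-step template — move one operator across with one relation, then move the other across with a second application, and absorb the resulting $\epsilon^2$ — is exactly what I would repeat for each of the remaining five lines.

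The only thing that changes from line to line is which relation of Lemma~\ref{lema1} is invoked, and this is dictated by whether the operator being moved is $f$ or $w$ and by the subspace of the vector it acts on. For instance, $g(w^2U,V)=\epsilon g(wU,wV)$ for $U,V\in D^{\perp}$ is the third relation applied with $wU\in D^{\perp}$ in place of $U$; the mixed identity $g(X,fwU)=\epsilon g(wX,wU)$ for $X\in D$, $U\in D^{\perp}$ is the second relation applied with $wU\in D^{\perp}$ in place of $U$; and $g(fwX,Y)=\epsilon g(wX,wY)$ for $X,Y\in D$ is the second relation applied with $Y$ in place of $X$ and $wX$ in place of $U$, using the symmetry of $g$ on both sides. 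The remaining equalities are entirely analogous.

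I do not anticipate any genuine difficulty: the lemma is a formal consequence of Lemma~\ref{lema1}. The only point requiring care — and the sole place where a slip could occur — is the bookkeeping of memberships, namely checking before each substitution that the vector fed to $f$ or $w$ actually lies in the subspace ($D$ or $D^{\perp}$) demanded by the relation being used. This is automatic once one records $fZ\in D$ and $wZ\in D^{\perp}$ at the outset, so the whole proof reduces to routine substitution.
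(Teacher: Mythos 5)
Your proposal is correct, and it is exactly the argument the paper intends: Lemma~\ref{lema2} is stated without proof, immediately after Lemma~\ref{lema1}, as a formal consequence of it, and your substitutions (replacing $X$, $Y$, $U$, $V$ in Lemma~\ref{lema1} by $fX$, $fY$, $wX$, $wU$, etc., using $fZ\in D$, $wZ\in D^{\perp}$, the symmetry of $g$, and $\epsilon^2=1$) are precisely the routine verifications the paper leaves to the reader. Each of the six lines checks out under your template, so there is nothing to add.
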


Relative to any immersed submanifold $M$ of $\overline{M}$, we consider the following or\-thog\-o\-nal decomposition: 
$$T\overline{M}=TM\oplus (TM)^{\perp}.$$
Using the same notation as above, for any $Z\in TM$, we will denote by $fZ$ and $wZ$ the components of $\varphi Z$ in $TM$ and in $(TM)^{\perp}$, respectively, if there is no other distribution $D$ in the context. In this case, we will call $f$ and $w$ the \textit{tangential} and the \textit{orthogonal} \textit{component} of $\varphi$ with respect to $M$, respectively. 

For any distribution $D$ on ${M}$, we denote by $D^{\perp}$ the orthogonal complement of $D$ in $T\overline{M}$; hence, we have the following or\-thog\-o\-nal de\-com\-po\-si\-tion: 
$$T\overline{M}=D\oplus D^{\perp}.$$
For any $Z\in T{M}$ and, more general, for any smooth family $Z=\{Z_x\}_{x\in M}$ of tangent vectors of $\overline{M}$ in $x\in M$, respectively ($Z_x \in T_x\overline{M}$ for $x\in M$), we will denote by $fZ$ and $wZ$ the components of $\varphi Z$ in $D$ and in $D^{\perp}$, respectively, calling $f$ the \textit{component of $\varphi$ into $D$}. 
Also, for $Z\in T\overline{M}$, we will denote by $Z_M$ the smooth family $\{Z_x\}_{x\in M}$ and by $fZ$ and $wZ$ the components of $\varphi Z_M$ in $D$ and in $D^{\perp}$, respectively. 

\begin{remark}\label{p194}
Lemmas \ref{lema1} and \ref{lema2} are also true if $D$ is a distribution on $M$, where $M$ is an immersed submanifold of $\overline M$. 
\end{remark}

Throughout this section, we will consider $M$ to be $\overline{M}$ or an immersed submanifold of $\overline{M}$ if not specified otherwise. 

\begin{definition}\label{5} Let $D$ be a non-null distribution (i.e., $D\neq \{0\}$) on $M$. \\ 
\hspace*{7pt} (i) We will say that a vector field $Z$ on $M$ or $\overline{M}$ and the distribution $D$ \textit{make an angle} $\theta\in [0,\frac{\pi}{2}]$ and will denote this by $\widehat{(Z, D)}=\theta$ if there is $x\in\nolinebreak M$ with $Z_x\neq 0$, and, for any such $x$, the angle between $Z_x$ and the vector space $D_x$ is equal to $\theta$. \\ 
\hspace*{7pt} (ii)\,\cite{chen, papaghiuc, cabrerizo} The distribution $D$ is called a \textit{slant distribution} if, for any $x \in {M}$ and $v \in D_x\verb=\=\{0\}$, we have $\varphi_x v\neq 0$, and the angle between $\varphi_x v$ and the vector space $D_x$ is nonzero and does not depend on $x$ or $v$. 
Denoting this angle by $\theta$ and calling it \textit{slant angle}, we will also call the distribution a \textit{$\theta$-slant distribution}. \\ 
\hspace*{7pt} (iii) The distribution $D$ is called \textit{invariant} if, for any $x \in {M}$ and $v \in\nolinebreak D_x$, we have $\varphi_x v\in D_x$. 
\end{definition}

\begin{remark}\label{p74}
If $D$ is a $\theta$-slant distribution, then $\varphi X$ and the distribution $D$ make an angle $\theta$ for any vector field $X\in D\verb=\=\{0\}$. 
\end{remark}

\begin{remark}\label{p59}
Obviously, the direct sum $D_1\oplus D_2$ of two orthogonal invariant distributions, $D_1,\,D_2$, on ${M}$ is an invariant distribution. 
\end{remark}

\begin{definition}\label{p193}
We will say that \textit{the orthogonality of vector fields on $M$} (or \textit{from $TM$}) \textit{is invariant under $\varphi$} if, for any two orthogonal vector fields $X,Y\in \nolinebreak T{M}$, we have $g_x (\varphi_x X_x, \varphi_x Y_x)=0$ for any $x\in M$, which will be denoted by $\varphi X\perp \varphi Y$. 

For $D$ a distribution on $M$, we will say that \textit{the orthogonality of vector fields from $D$ is invariant under $\varphi$} if, for any two orthogonal vector fields $X,Y\in D$, we have $\varphi X\perp \varphi Y$. 
\end{definition}

\begin{remark}
If $\varphi$ acts isometrically on a distribution $D$, then  the orthogonality of vector fields from $D$ is invariant under $\varphi$. 
\end{remark}

\begin{proposition}\label{p31}
Let $D_1,\,D_2$ be two orthogonal slant distributions on ${M}$ such that $D_1,\,D_2$ have the same slant angle $\theta$. Denoting, for any $Z\in T{M}$, by $fZ$ the component of $\varphi Z$ in $D_1\oplus D_2$, assume that: \\ 
\hspace*{7pt} i)\; the orthogonality of vector fields from $D_1\oplus D_2$ is invariant under $\varphi$;\\ 
\hspace*{7pt} ii) $f(D_i)\subseteq D_i$, $i=1,2$.\\ 
Then, the two slant distributions, $D_1,\,D_2$, can be joined into a single slant distribution with slant angle $\theta $.
\end{proposition}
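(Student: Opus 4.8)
The plan is to reduce the statement to a single pointwise computation of an angle and to exploit the elementary fact that, for a $\theta$-slant distribution, the tangential component of $\varphi$ has length $\cos\theta$ times the length of $\varphi$ itself. Fix $x\in M$ and a nonzero $v\in (D_1\oplus D_2)_x$, and split it as $v=v_1+v_2$ with $v_1\in (D_1)_x$ and $v_2\in (D_2)_x$. Since the angle between $\varphi_x v$ and the subspace $(D_1\oplus D_2)_x$ is the angle between $\varphi_x v$ and its orthogonal projection onto that subspace, and this projection is exactly $f_x v$ (the $(D_1\oplus D_2)_x$-component of $\varphi_x v$), I would work with the relation $\cos\theta_v=\|f_x v\|/\|\varphi_x v\|$, where $\theta_v$ denotes that angle, and aim to show $\cos\theta_v=\cos\theta$ for every nonzero $v$.

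The first real step is to recognize $f_x v_1$ as the $(D_1)_x$-projection of $\varphi_x v_1$. Write $\varphi_x v_1=f_x v_1+w_x v_1$ with $f_x v_1\in (D_1\oplus D_2)_x$ and $w_x v_1\in (D_1\oplus D_2)_x^{\perp}$. Hypothesis ii) gives $f_x v_1\in (D_1)_x$, while $w_x v_1\in (D_1\oplus D_2)_x^{\perp}\subseteq (D_1)_x^{\perp}$. Thus $\varphi_x v_1=f_x v_1+w_x v_1$ is the orthogonal decomposition of $\varphi_x v_1$ relative to $(D_1)_x$, so by uniqueness $f_x v_1$ is precisely the $(D_1)_x$-component of $\varphi_x v_1$. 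Because $D_1$ is $\theta$-slant (Definition \ref{5}), this yields $\|f_x v_1\|=\cos\theta\,\|\varphi_x v_1\|$ and, moreover, $\varphi_x v_1\neq 0$ whenever $v_1\neq 0$. The same argument applied to $D_2$ gives $\|f_x v_2\|=\cos\theta\,\|\varphi_x v_2\|$ with $\varphi_x v_2\neq 0$ for $v_2\neq 0$.

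Next I would compute the numerator and denominator of the cosine. Since $f_x v=f_x v_1+f_x v_2$ with $f_x v_1\in (D_1)_x$, $f_x v_2\in (D_2)_x$, and $D_1\perp D_2$, the cross term vanishes and $\|f_x v\|^2=\|f_x v_1\|^2+\|f_x v_2\|^2$. For the denominator I invoke hypothesis i): choosing smooth extensions $X\in D_1$ and $Y\in D_2$ of $v_1$ and $v_2$ (orthogonal vector fields in $D_1\oplus D_2$, as $D_1\perp D_2$), the invariance of orthogonality under $\varphi$ from Definition \ref{p193} forces $g_x(\varphi_x v_1,\varphi_x v_2)=0$, whence $\|\varphi_x v\|^2=\|\varphi_x v_1\|^2+\|\varphi_x v_2\|^2$. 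Substituting the two slant relations from the previous step gives $\|f_x v\|^2=\cos^2\theta\,(\|\varphi_x v_1\|^2+\|\varphi_x v_2\|^2)=\cos^2\theta\,\|\varphi_x v\|^2$.

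It remains to collect the conclusions. For $v\neq 0$ at least one of $v_1,v_2$ is nonzero, so $\|\varphi_x v\|^2\geq\|\varphi_x v_i\|^2>0$ and $\varphi_x v\neq 0$; the identity of the previous step then gives $\cos\theta_v=\cos\theta$, an angle that is the same for every $x$ and every nonzero $v$, and nonzero since $\theta\in(0,\tfrac{\pi}{2}]$. This is exactly Definition \ref{5}(ii) for $D_1\oplus D_2$ with slant angle $\theta$. The crux of the argument, and the only place where both hypotheses are essential, is contained in the middle two steps: hypothesis ii) is what identifies $f$ on each summand with the slant data of $D_1$ and $D_2$, while hypothesis i) is precisely what annihilates the cross term $g_x(\varphi_x v_1,\varphi_x v_2)$ that would otherwise spoil the clean factorization of $\cos\theta$. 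The one point requiring a little care is the passage from the pointwise vectors $v_1,v_2$ to the vector fields demanded by Definition \ref{p193}, which is dispatched by choosing smooth extensions inside $D_1$ and $D_2$.
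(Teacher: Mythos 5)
Your proof is correct and follows essentially the same route as the paper's: decompose $v=v_1+v_2$, use hypothesis ii) to identify $f_xv_i$ with the $(D_i)_x$-projection of $\varphi_x v_i$ so that $\|f_xv_i\|=\cos\theta\,\|\varphi_xv_i\|$, kill the cross terms (in the numerator by $D_1\perp D_2$ and condition ii), in the denominator by hypothesis i)), and conclude $\|f_xv\|^2=\cos^2\theta\,\|\varphi_xv\|^2$. Your write-up merely makes explicit two points the paper leaves implicit — the projection identification behind $\|f_xv_i\|=\cos\theta\,\|\varphi_xv_i\|$ and the extension of tangent vectors to vector fields needed to invoke Definition \ref{p193} — so there is nothing to correct.
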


\begin{proof} 
It is enough to check that, for arbitrary $x\in {M}$, $v_1\in (D_1)_x\verb=\=\{0\}$, and $v_2\in (D_2)_x\verb=\=\{0\}$, the tangent vector $\varphi_x (v_1+v_2)$, which is nonzero,  makes the angle $\theta $ with $(D_1\oplus D_2)_x$, i.e., 
$\|f_x (v_1+v_2)\|^2=
\cos^2\theta \|\varphi_x (v_1+v_2)\|^2$. First, we notice that $g(f_x v_1,f_x v_2)=0$.
Hence, 
\begin{align} 
\|f_x (v_1+v_2)\|^2=\|f_x v_1\|^2+\|f_x v_2\|^2 &=
\cos^2\theta\|\varphi_x v_1\|^2+\cos^2\theta\|\varphi_x v_2\|^2 \nonumber\\ 
&= \cos^2\theta\|\varphi_x (v_1+v_2)\|^2. \nonumber \qedhere 
\end{align}
\end{proof} 

\medskip 
Taking into account Proposition \ref{p31} and Remark \ref{p59}, we get 

\begin{corollary}\label{p197}
Let $L_1$, $L_2$, \ldots , $L_m$ be mutually orthogonal distributions on ${M}$, invariant with respect to $\tilde f$ (the component of $\varphi$ into $\oplus_{i=1}^m L_i$) which are slant (at least one) or invariant distributions (with respect to $\varphi$) such that the orthogonality of vector fields from $\oplus_{i=1}^m L_i$ is invariant under $\varphi$. Then, the direct sum $\oplus_{i=1}^m L_i$ can be represented as an orthogonal sum of slant distributions with distinct slant angles and at most one invariant distribution. 
\end{corollary}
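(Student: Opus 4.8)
The plan is to group the distributions $L_1,\dots,L_m$ according to their type and, for the slant ones, according to their common slant angle, and then to merge the members of each group into a single distribution. Concretely, let $\theta_1,\dots,\theta_r$ be the distinct slant angles occurring among the slant members, let $S_j=\{\,i : L_i \text{ is } \theta_j\text{-slant}\,\}$ for each $j$, and let $I=\{\,i : L_i \text{ is invariant}\,\}$. Set $\tilde L_j=\bigoplus_{i\in S_j}L_i$ and $\tilde L_0=\bigoplus_{i\in I}L_i$ (the latter possibly null). Since the $S_j$ together with $I$ partition $\{1,\dots,m\}$ and the $L_i$ are mutually orthogonal, we have $\bigoplus_{i=1}^m L_i=\tilde L_0\oplus\tilde L_1\oplus\dots\oplus\tilde L_r$ as an orthogonal sum. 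It then suffices to show that $\tilde L_0$ is invariant and that each $\tilde L_j$ is a $\theta_j$-slant distribution; the angles $\theta_1,\dots,\theta_r$ are distinct by construction, $r\ge 1$ because at least one $L_i$ is slant, and the summand $\tilde L_0$ occurs only when $I\neq\varnothing$, giving at most one invariant distribution.

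For $\tilde L_0$, being an orthogonal direct sum of finitely many invariant distributions, repeated application of Remark \ref{p59} shows that it is invariant. For a fixed angle $\theta_j$, I would prove that $\tilde L_j$ is $\theta_j$-slant by induction on $|S_j|$: the case $|S_j|=1$ is trivial, the case $|S_j|=2$ is exactly Proposition \ref{p31}, and the inductive step joins the already-merged $\theta_j$-slant distribution $\bigoplus_{i\in S_j'}L_i$ (for $S_j'\subsetneq S_j$) with one further $L_i$ via another application of Proposition \ref{p31}. Thus the whole argument reduces to checking, at each merging step, that the two distributions being joined satisfy hypotheses i) and ii) of Proposition \ref{p31}.

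The main point to verify — and the only genuine obstacle — is that these hypotheses are inherited by the sub-sums arising in the induction. Hypothesis i) is immediate: if $D'\oplus D''$ is a subdistribution of $\bigoplus_{i=1}^m L_i$, then any two orthogonal vector fields of $D'\oplus D''$ are orthogonal vector fields of $\bigoplus_{i=1}^m L_i$, so $\varphi$ sends them to orthogonal vector fields by the global hypothesis. For hypothesis ii) I would show that the component of $\varphi$ into any sub-sum agrees with $\tilde f$ on each $\tilde f$-invariant piece. Indeed, for a sub-sum $D=\bigoplus_{i\in J}L_i\subseteq\bigoplus_{i=1}^m L_i$, mutual orthogonality gives $\bigoplus_{i\notin J}L_i\subseteq D^{\perp}$ and $\bigl(\bigoplus_{i=1}^m L_i\bigr)^{\perp}\subseteq D^{\perp}$; writing $\varphi v=\tilde f v+\tilde w v$ for $v\in L_a$ with $a\in J$, the $\tilde f$-invariance of $L_a$ gives $\tilde f v\in L_a\subseteq D$ while $\tilde w v\in\bigl(\bigoplus_{i=1}^m L_i\bigr)^{\perp}\subseteq D^{\perp}$, so the $D$-component of $\varphi v$ equals $\tilde f v\in L_a$. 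Hence, writing $f$ for the component of $\varphi$ into $D=D'\oplus D''$, we have $f=\tilde f$ on $D$; and since each of $D'$ and $D''$ is an orthogonal sum of $\tilde f$-invariant distributions $L_i$ and is therefore itself $\tilde f$-invariant, we obtain $f(D')=\tilde f(D')\subseteq D'$ and $f(D'')=\tilde f(D'')\subseteq D''$, which is hypothesis ii). With i) and ii) verified at every step, Proposition \ref{p31} applies throughout the induction, completing the proof.
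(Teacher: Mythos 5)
Your proof is correct and follows essentially the same route as the paper, which simply derives the corollary from Proposition \ref{p31} and Remark \ref{p59}: group the invariant summands and group the slant summands by common angle, then merge inductively. Your explicit verification that hypotheses i) and ii) of Proposition \ref{p31} are inherited by sub-sums — in particular that the component of $\varphi$ into any sub-sum agrees with $\tilde f$ there — is exactly the detail the paper leaves implicit, and you carry it out correctly.
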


We are now ready to provide the definition of a $k$-slant distribution. 

\begin{definition}\label{8} 
Let $k\in \mathbb{N}^*$. We will call the distribution $D$ on ${M}$ a \textit{\mbox{$k$-slant} distribution} if there exists an orthogonal decomposition of $D$ into regular distributions, 
$$D=\oplus_{i=0}^k{D_i}$$
with $D_i$ non-null for $i=\overline{1,k}$ and $D_0$ possible null (i.e., $D_0= \{0\}$), and there exist distinct values $\theta_i\in (0,\frac{\pi}{2}]$, $i=\overline{1,k}$, such that: \\ 
\hspace*{7pt} (i) \ $D_i$ is a $\theta_i$-slant distribution, $i=\overline{1,k}$;\\ 
\hspace*{7pt} (ii)\, $\varphi X\in D_0$ for any $X\in D_0$ 
(i.e., $\widehat{(\varphi X, D)}=0=:\theta_0$ for $X\in D_0$ with $\varphi X\neq 0$, and $f(D_0)\subseteq D_0$);\\ 
\hspace*{7pt} (iii) $f(D_i)\subseteq D_i$, $i=\overline{1,k}$. 

\medskip
We will say that $D$ is a \textit{multi-slant distribution} for $k\geq 2$. 

If we want to specify the values of the slant angles, we will say that $D$ is a \textit{$(\theta_1,\theta_2,\ldots,\theta_k)$-slant distribution}. 

We will call $D_0$ the \textit{invariant component} and $\oplus_{i=1}^kD_i$ the \textit{proper \mbox{$k$-slant} component} of $D$. 

The distribution $D=\oplus_{i=0}^kD_i$ will be called a \textit{proper $k$-slant distribution} if $D_0=\{0\}$. 
\end{definition}

\begin{remark}\label{6}
In view of (iii), we  notice that (i) is equivalent to \\ 
\hspace*{7pt} (i') For any $i\in \{1,\ldots,k\}$, $x\in {M}$, and $v\in (D_i)_x\verb=\=\{0\}$, we have $\varphi v \neq 0$ and 
$ \widehat{(\varphi v, D_x)}=\theta_i$. 
\end{remark}	

\begin{remark}\label{p196}
In view of Corollary \ref{p197} and Remark \ref{p59}, any orthogonal sum $D=\oplus_{i=0}^k{D_i}$ of invariant and slant (at least one) distributions on $M$ which are invariant with respect to $f$ such that the orthogonality of vector fields from $D$ is invariant under $\varphi$ can be represented as a $k'$-slant distribution, where $1\leq k'\leq k$. If there is no invariant component in $D$, then $D$ can be represented as a proper $k'$-slant distribution. 
\end{remark}

\begin{proposition}\label{p167}
Let $k\in \mathbb{N}^*$ and $D$ be a non-null distribution on ${M}$ decomposable into an orthogonal sum of regular distributions, $D=\oplus_{i=0}^k{D_i}$ with $D_i\neq \{0\}$ for $i=\overline{1,k}$ and $D_0$ invariant (possible null). Let $pr_i$ denote the projection operator from $TM$ onto $D_i$ for $i=\overline{1,k}$. 
If $\varphi$ restricted to $\oplus_{i=1}^k{D_i}$ is an isometry, and $f(D_i)\subseteq D_i$ for $i=\overline{1,k}$, and there exist $k$ distinct values $\theta_i\in (0,\frac{\pi}{2}]$, $i=\overline{1,k}$, such that  
\begin{equation}\label{99}
f^2X=\epsilon \sum_{i=1}^k\cos^2\theta_i\cdot pr_iX \, \text{ for any } X\in \oplus_{i=1}^k{D_i}\,,
\end{equation}
then $D$ is a $k$-slant distribution with slant angles $\theta_i$ corresponding to $D_i$, $i=\overline{1,k}$. 
\end{proposition}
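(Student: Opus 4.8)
The plan is to verify directly the three defining conditions (i)--(iii) of Definition \ref{8} for the decomposition $D=\oplus_{i=0}^k D_i$ with the prescribed angles $\theta_i$. Condition (iii) is nothing but the second hypothesis, and condition (ii) holds because $D_0$ is assumed invariant, so $\varphi X\in D_0$ for $X\in D_0$. Thus the entire content of the proof lies in establishing (i), namely that each $D_i$, $i=\overline{1,k}$, is a $\theta_i$-slant distribution in the sense of Definition \ref{5}(ii).

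Fix $i\in\{1,\ldots,k\}$, a point $x\in M$, and a nonzero $v\in (D_i)_x$. Writing $\varphi_x v=f_x v+w_x v$ with $f_x v\in D_x$ and $w_x v\in (D^{\perp})_x$, I would first identify the orthogonal projection of $\varphi_x v$ onto the subspace $(D_i)_x$. By the second hypothesis $f_x v\in (D_i)_x$, while $w_x v\in (D^{\perp})_x$ is orthogonal to $D_i\subseteq D$; since $D=\oplus_{j=0}^k D_j$ is an orthogonal decomposition, the projection of $\varphi_x v$ onto $(D_i)_x$ is exactly $f_x v$. Consequently the cosine of the angle between $\varphi_x v$ and $(D_i)_x$ equals $\|f_x v\|/\|\varphi_x v\|$, provided $\varphi_x v\neq 0$; the latter is guaranteed because $\varphi$ restricted to $\oplus_{j=1}^k D_j$ is an isometry, whence $\|\varphi_x v\|=\|v\|\neq 0$.

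It then remains to compute $\|f_x v\|$. Using the first identity of Lemma \ref{lema2} (valid on a distribution on $M$ by Remark \ref{p194}), $\|f_x v\|^2=g(f_x v,f_x v)=\epsilon\, g(f_x^2 v,v)$. Specializing \eqref{99} to $v\in (D_i)_x$, where $pr_i v=v$ and $pr_j v=0$ for $j\neq i$, yields $f_x^2 v=\epsilon\cos^2\theta_i\, v$, so that $g(f_x^2 v,v)=\epsilon\cos^2\theta_i\|v\|^2$ and hence $\|f_x v\|^2=\cos^2\theta_i\|v\|^2$. Combining this with $\|\varphi_x v\|=\|v\|$ shows that the angle between $\varphi_x v$ and $(D_i)_x$ is $\theta_i$, which is nonzero (since $\theta_i\in(0,\frac{\pi}{2}]$) and independent of both $x$ and $v$. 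This is precisely the assertion that $D_i$ is $\theta_i$-slant, and since the $\theta_i$ are distinct by the third hypothesis, $D$ is a $(\theta_1,\ldots,\theta_k)$-slant distribution.

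The step I expect to require the most care is the identification, in the second paragraph, of the $(D_i)_x$-tangential part of $\varphi_x v$ with $f_x v$: it is here that the second hypothesis (so that $f_x v$ lands in $D_i$, not merely in $D$) and the orthogonality of the decomposition (so that the $D^{\perp}$- and $D_j$-components, $j\neq i$, drop out) must both be invoked. A minor point worth recording is the boundary case $\theta_i=\frac{\pi}{2}$, in which $f_x v=0$ and $\varphi_x v\in (D^{\perp})_x$; the angle is then $\frac{\pi}{2}$, still nonzero, so $D_i$ indeed qualifies as a (proper) slant distribution.
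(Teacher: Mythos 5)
Your proof is correct and follows essentially the same route as the paper's: specialize \eqref{99} to $v\in (D_i)_x$, use the compatibility lemma (the paper invokes Lemma \ref{lema1}, you invoke the equivalent identity from Lemma \ref{lema2}) to get $|f_xv|^2=\epsilon\, g(f_x^2v,v)=\cos^2\theta_i\|v\|^2$, and combine with the isometry hypothesis $\|\varphi_xv\|=\|v\|$ to conclude each $D_i$ is $\theta_i$-slant, with $D_0$ invariant handling the remaining condition. Your extra care in identifying $f_xv$ as the orthogonal projection of $\varphi_xv$ onto $(D_i)_x$ is a detail the paper leaves implicit, but it is the same argument.
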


\begin{proof}
 $f$ satisfies (\ref{99}); hence, for any $i\in\{1,\ldots,k\}$ and $X_i\in D_i$, we get 
\begin{equation}\nonumber
f^2X_i=\epsilon \cos^2\theta_i\cdot X_i\,; 
\end{equation}
 thus, in view of Lemma \ref{lema1}, Remark \ref{p194}, and the fact that $\varphi|_{\oplus_{i=1}^k{D_i}}$ is an isometry, 
\begin{equation}\nonumber 
|fX_i|^2=\epsilon g(f^2 X_i,X_i)= \cos^2\theta_i \cdot |X_i|^2= \cos^2\theta_i \cdot |\varphi X_i|^2, 
\end{equation}
from which it results that $D_i$ is a slant distribution with slant angle $\theta_i$. 
Additionally, $D_0$ is invariant, hence the conclusion. 
\end{proof}

Let $D$ be an orthogonal sum of distributions on $M$, $D=\oplus_{i=0}^kD_i$ for some $k\in \mathbb{N}^*$, with $D_0$ invariant (with respect to $\varphi$) and the $D_i$'s ($i=\nolinebreak\overline{1,k}$) non-null slant distributions with different slant angles. If $D$ is a \mbox{$k$-slant} distribution on $M$, from Definition \ref{8} (iii), we get 
\begin{equation}\label{18}
\varphi(D_i)\perp D_j \ \ \text{for} \ \ i\neq j  \ \ \text{from} \ \ \{1,\ldots,k\}.
\end{equation}

Conversely, from (\ref{18}), we have $f(D_i)\perp D_j$ for any $i\neq j$ in $\{1,\dots,k\}$. 
For $X\in D_0$ and $Y\in D_i$ with $i\geq 1$, we have $\varphi X\perp Y$, and, in view of (\ref{1}), we get $X\perp \varphi Y$. We obtain $\varphi (D_i)\perp D_0$; hence, $f(D_i)\perp D_0$. Since $f(D_i)\subseteq \oplus_{j=0}^k{D_j}$, we get $f(D_i)\subseteq D_i$ for any $i\in \{1,\ldots,k\}$.
Therefore, 

\begin{remark}\label{p25}
Condition (iii) from Definition \ref{8} of a $k$-slant distribution can be replaced by \\ 
\hspace*{7pt} (iii') $\varphi(D_i)\perp D_j$ for any $i\neq j$ from $\{1,\ldots,k\}$.
\end{remark}

\begin{remark}\label{p198}
If, additionally, the orthogonality of vector fields from the proper $k$-slant distribution $\oplus_{i=1}^kD_i$ is invariant under $\varphi$, we get $g(\varphi X,\varphi Y)=\nolinebreak 0$ 
for any $X$ and $Y$ vector fields belonging to distinct distributions among $D_1,\ldots,D_k$; hence, 
\begin{equation}\label{15}
\varphi(D_1),\ldots,\varphi(D_k) \,\text{ are orthogonal}. \nonumber 
\end{equation}

Since
$g(\varphi X,\varphi Y)=g(fX,fY)+g(wX, wY)$, 
from Definition \ref{8} (iii), we get 
\begin{equation}\label{17}
w(D_i)\perp w(D_j) \ \ \text{for} \ \ i\neq j \ \ \text{from} \ \ \{1,\ldots,k\}.
\end{equation}
\end{remark}

In view of (\ref{17}), we get 

\begin{proposition}\label{p64}
If the orthogonality of vector fields from the proper \mbox{$k$-slant} distribution $\oplus_{i=1}^kD_i$ is invariant under $\varphi$, we have 
$$w(\oplus_{i=1}^kD_i)=\oplus_{i=1}^kw(D_i).$$
\end{proposition}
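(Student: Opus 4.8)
The plan is to split the claim into a linearity statement and an orthogonality statement. The linearity of $w$ will give $w(\oplus_{i=1}^k D_i)=\sum_{i=1}^k w(D_i)$ as a (fibrewise) sum of subspaces, and the orthogonality relation (\ref{17}) already recorded in Remark \ref{p198} will upgrade this ordinary sum to an orthogonal direct sum.

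First I would note that $w$, being the composition of $\varphi$ with the orthogonal projection onto $D^{\perp}$, is $C^\infty(\overline M)$-linear, so at each point it acts linearly on the fibre. Hence, writing an arbitrary $Z\in \oplus_{i=1}^k D_i$ as $Z=\sum_{i=1}^k X_i$ with $X_i\in D_i$, we have $wZ=\sum_{i=1}^k wX_i$; this yields $w(\oplus_{i=1}^k D_i)\subseteq \sum_{i=1}^k w(D_i)$, while the reverse inclusion is immediate because each $\sum_i wX_i$ equals $w(\sum_i X_i)$ with $\sum_i X_i\in \oplus_{i=1}^k D_i$. Thus $w(\oplus_{i=1}^k D_i)=\sum_{i=1}^k w(D_i)$, and it only remains to make the sum on the right direct.

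Next I would invoke the hypothesis: since the orthogonality of vector fields from $\oplus_{i=1}^k D_i$ is invariant under $\varphi$, Remark \ref{p198} gives exactly (\ref{17}), i.e.\ $w(D_i)\perp w(D_j)$ for $i\neq j$ in $\{1,\ldots,k\}$. Pairwise orthogonal subspaces always sum directly: if $\sum_{i=1}^k v_i=0$ with $v_i\in w(D_i)$, then pairing with a fixed $v_j$ and using $g(v_i,v_j)=0$ for $i\neq j$ forces $g(v_j,v_j)=0$, whence $v_j=0$ for every $j$. Therefore $\sum_{i=1}^k w(D_i)=\oplus_{i=1}^k w(D_i)$, and combining with the previous paragraph yields the desired identity.

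There is no serious obstacle here, as Remark \ref{p198} has already extracted the only nontrivial input (\ref{17}) from the hypothesis. The single point that merits care is the bookkeeping: I would phrase every equality fibrewise, as an identity of subspaces of each $T_x\overline M$, so as not to presuppose that $w$ has constant rank on the individual $D_i$, and so that ``direct sum'' unambiguously means the pointwise orthogonal direct sum of the fibres.
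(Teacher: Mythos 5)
Your proof is correct and takes essentially the same route as the paper: the paper deduces the proposition directly from relation (\ref{17}) of Remark \ref{p198} (pairwise orthogonality of the $w(D_i)$'s, extracted from the hypothesis), which, combined with the pointwise linearity of $w$, is precisely the two-step argument you spell out. Your explicit verification that pairwise orthogonal subspaces sum directly, and the fibrewise phrasing that avoids any constant-rank assumption, simply make explicit what the paper leaves implicit.
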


\begin{remark}\label{16}
In analogy with the existent terminology for slant submanifolds, particular types of $k$-slant distributions will be named as follows. 

Let $D=\oplus_{i=0}^kD_i$ be a $k$-slant distribution. For $k=1$ and $D_0=\{0\}$, $D$ is a \textit{slant distribution}; it is an \textit{anti-invariant distribution} if $\theta_1=\frac{\pi}{2}$. For $k=1$ and $D_0\neq \{0\}$, $D$ is a \textit{semi-invariant distribution} if $\theta_1=\frac{\pi}{2}$ or a \textit{semi-slant distribution} if $\theta_1<\frac{\pi}{2}$. For $k=2$ and $D_0=\{0\}$, $D$ is a \textit{bi-slant distribution}; it is a \textit{hemi-slant distribution} if one of the slant angles is equal to $\frac{\pi}{2}$. 
\end{remark}

Let $M$ be an immersed submanifold of $\overline{M}$ and $k\in \mathbb{N}^*$. 
Considering the notion of $k$-slant distribution, we introduce the notion of $k$-slant submanifold. 

\begin{definition}\label{p30}
We will call $M$ a \textit{$k$-slant submanifold} of $\overline{M}$\, if $TM$ is a \mbox{$k$-slant} distribution. 

We will say that $M$ is a \textit{$(\theta_1,\theta_2,\ldots,\theta_k)$-slant sub\-man\-i\-fold} if we want to specify the values $\theta_i$ of the slant angles, or a \textit{multi-slant submanifold} if $k\geq 2$. 

Denoting $TM=\oplus_{i=0}^kD_i$, where $D_0$ is the invariant component, we will call $\oplus_{i=1}^kD_i$ the \textit{proper $k$-slant distribution associated} to $M$. 

We will call $M$ a \textit{proper $k$-slant submanifold} if\, $TM$ is a proper \mbox{$k$-slant} distribution. 

$M$ is called an \textit{invariant submanifold} if\, $TM$ is an invariant distribution. 
\end{definition}

The explicit formulation of the above definition is  

\begin{definition}\label{20}
We will say that $M$ is a \textit{$k$-slant submanifold} of $\overline{M}$ if there exists an orthogonal decomposition of $TM$ into regular distributions, 
$$TM=\oplus_{i=0}^k{D_i}$$
with $D_i\neq \{0\}$ for $i= \overline{1,k}$ and $D_0$ possible null, and there exist distinct values $\theta_i\in (0,\frac{\pi}{2}]$, $i=\overline{1,k}$, such that: \\ 
\hspace*{7pt} (i) \;$\varphi v \neq 0$, and $ \widehat{(\varphi v, (D_i)_x)}=\theta_i$ for any $x\in {M}$ and $v\in (D_i)_x\verb=\=\{0\}$, $i=\overline{1,k}$;\\ 
\hspace*{7pt} (ii) \,$\varphi v\in (D_0)_x$ for any $x\in M$ and $v\in (D_0)_x$;\\ 
\hspace*{7pt} (iii) $fv\in (D_i)_x$ for any $x\in M$ and $v\in (D_i)_x$, $i=\overline{1,k}$. 
\end{definition}

\begin{remark}\label{p88}
As justified above, we have: \\ 
(a) Condition (i) of Definition \ref{20} can be replaced by \\ 
\hspace*{7pt} (i') $\varphi v \neq 0$, and 
$ \widehat{(\varphi v, T_xM)}=\theta_i$ for any $x\in {M}$ and $v\in (D_i)_x\verb=\=\{0\}$, $i=\overline{1,k}$; \\ 
(b) Condition (iii) of Definition \ref{20} can be replaced by \\ 
\hspace*{7pt} (iii') $\varphi(D_i)\perp D_j$ for any $i\neq j$ from $\{1,\ldots,k\}$.
\end{remark}

\begin{remark}\label{p89}
We notice that all the results that would be valid for any \mbox{$k$-slant} distribution on an arbitrary submanifold of $\overline{M}$ will, in particular, be valid for any $k$-slant submanifold $M$ of $\overline{M}$. 
\end{remark}

\begin{proposition}\label{p175}
Let $M$ be an immersed submanifold of $\overline{M}$ such that $TM$ is decomposable into an orthogonal sum of regular distributions, $TM=\nolinebreak\oplus_{i=0}^k{D_i}$ with $D_0$ invariant (possible null) and $D_i\neq \{0\}$ for $i=\nolinebreak\overline{1,k}$. Let $pr_i$ denote the projection operator from $TM$ onto $D_i$ for $i=\overline{1,k}$. If $\varphi$ restricted to $\oplus_{i=1}^k{D_i}$ is an isometry, and $f(D_i)\subseteq D_i$ for $i=\overline{1,k}$, and there exist $k$ distinct values $\theta_i\in (0,\frac{\pi}{2}]$, $i=\overline{1,k}$, such that  
\begin{equation}\nonumber
f^2X=\epsilon \sum_{i=1}^k\cos^2\theta_i\cdot pr_iX\, \text{ for any }\, X\in \oplus_{i=1}^k{D_i},
\end{equation}
then $M$ is a $k$-slant submanifold of $\overline{M}$ with slant angles $\theta_i$ corresponding to $D_i$, $i=\overline{1,k}$. 

\end{proposition}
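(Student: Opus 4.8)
The plan is to obtain this statement as the direct submanifold counterpart of Proposition~\ref{p167}, via the bridge provided by Definition~\ref{p30} and Remark~\ref{p89}. Recall that, by Definition~\ref{p30}, calling $M$ a $k$-slant submanifold of $\overline{M}$ is by definition the assertion that the distribution $TM$ on $M$ is a $k$-slant distribution. Hence it suffices to verify that $TM$, regarded as a (regular, non-null) distribution on $M$, satisfies all the hypotheses of Proposition~\ref{p167} with $D:=TM$, and then to read off its conclusion.

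First I would observe that $TM$ is itself a distribution on $M$ and that, taking $D=TM$ in the general setup of Section~\ref{gen_consid}, the component $f$ of $\varphi$ into $TM$ coincides with the tangential component of $\varphi$ with respect to $M$; consequently the operator $f^2$ appearing in the present statement is exactly the one figuring in Proposition~\ref{p167}. With this identification, the given decomposition $TM=\oplus_{i=0}^k D_i$ into regular distributions, with $D_0$ invariant (possibly null) and $D_i\neq\{0\}$ for $i=\overline{1,k}$, is precisely the decomposition required in Proposition~\ref{p167}. The remaining hypotheses match verbatim: $\varphi$ restricted to $\oplus_{i=1}^k D_i$ is an isometry, $f(D_i)\subseteq D_i$ for $i=\overline{1,k}$, and the distinct values $\theta_i\in(0,\frac{\pi}{2}]$ satisfy the eigenvalue-type relation $f^2X=\epsilon\sum_{i=1}^k\cos^2\theta_i\cdot pr_iX$ on $\oplus_{i=1}^k D_i$. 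Applying Proposition~\ref{p167} to $D=TM$ then yields that $TM$ is a $k$-slant distribution with slant angles $\theta_i$ corresponding to $D_i$, $i=\overline{1,k}$, which by Definition~\ref{p30} is exactly the assertion that $M$ is a $k$-slant submanifold of $\overline{M}$ with the prescribed slant angles.

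There is no genuine analytic obstacle here: the only point requiring care is the bookkeeping identification of $f$ as the component of $\varphi$ into the distribution $TM$, which is guaranteed by the conventions fixed in Section~\ref{gen_consid} and reflected in Remark~\ref{p89}. Everything else is an immediate transcription of the hypotheses, so the difficulty is purely notational rather than substantive.
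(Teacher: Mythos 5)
Your proposal is correct and matches the paper's own (implicit) treatment: the paper states Proposition~\ref{p175} without a separate proof precisely because it is the immediate specialization of Proposition~\ref{p167} to $D=TM$, combined with Definition~\ref{p30}. Your verification of the hypotheses and the identification of $f$ with the tangential component of $\varphi$ is exactly the intended argument.
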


\begin{remark}\label{part_subm} 
Let $M$ be a $k$-slant submanifold of $\overline{M}$ and $TM=\oplus_{i=0}^k{D_i}$. 
The already known particular cases are the following. 

If $k=1$ and $D_0=\{0\}$, $M$ is a \textit{slant submanifold}; it is an \textit{anti-invariant submanifold} for $\theta_1=\frac{\pi}{2}$. If $k=1$ and $D_0\neq \{0\}$, $M$ is a \textit{semi-invariant submanifold} for $\theta_1=\frac{\pi}{2}$ or a \textit{semi-slant submanifold} for $\theta_1<\frac{\pi}{2}$. If $k=2$, $M$ is an \textit{almost bi-slant submanifold}; if, additionally, $D_0=\{0\}$, $M$ is a \textit{bi-slant submanifold}, and it is a \textit{hemi-slant submanifold} if one of the slant angles is equal to $\frac{\pi}{2}$. 
\end{remark}

\begin{definition}\label{p72}
Let $M$ be $\overline{M}$ or an immersed submanifold of $\overline{M}$, and let $X=\{X_x\}_{x\in M}$ and $Y=\{Y_x\}_{x\in M}$, with $X_x,Y_x \in T_x\overline{M}$ for $x\in M$, be two nonzero smooth families (with respect to $x\in M$) of tangent vectors of $\overline{M}$ (in particular, $X$ and $Y$ can be two nonzero vector fields on $M$). 
We will say that $X$ and $Y$ are \textit{angular compatible} if there is $x\in {M}$ such that $X_x$ and $Y_x$ are both nonzero. 
Denoting ${M}_{\widehat{X,Y}}:=\{x\in {M}\,|\,X_x, Y_x \neq 0\}$, we introduce the \textit{angular function} of $X$ and $Y$ (in short, the \textit{angle} between $X$ and $Y$) as $\widehat{(X,Y)}:\nolinebreak {M}_{\widehat{X,Y}}\rightarrow\nolinebreak[4] [0,{\pi}]$ defined by
$\widehat{(X,Y)}(x) :=\widehat{(X_x,Y_x)}$ for $x\in {M}_{\widehat{X,Y}}$\,, where $\widehat{(X_x,Y_x)} =\arccos \displaystyle\frac {g_x (X_x,Y_x)}{\|X_x\|\cdot \|Y_x\|}$\,. 

Correspondingly, we will denote by $\cos \widehat{(X,Y)}$ and $\sin \widehat{(X,Y)}$ the real functions defined on ${M}_{\widehat{X,Y}}$\, by $x\mapsto \cos \widehat{(X_x,Y_x)}$ and $x\mapsto \sin \widehat{(X_x,Y_x)}$, re\-spec\-tive\-ly. 
\end{definition}

\section{$k$-slant distributions and $k$-slant submanifolds in almost contact and almost paracontact metric geometries}\label{alm_cont}

On a Riemannian manifold $(\overline{M},g)$, we consider a unitary vector field $\xi$ and its dual $1$-form $\eta$ (defined by $\eta(X)=g(X,\xi)$ for any $X\in T\overline{M}$); this satisfies $\eta(\xi)=1$. For a fixed $\epsilon\in \{-1,1\}$, let $\varphi$ be a $(1,1)$-tensor field on $(\overline{M},g)$ $\epsilon$-compatible with $g$, i.e., 
\begin{equation}\label{35}
g(\varphi X,Y)=\epsilon g(X,\varphi Y)\, 
\text{ for any $X,Y\in T\overline{M}$},\nonumber  
\end{equation}
such that 
\begin{equation}\label{2}
\varphi^2=\epsilon (I-\eta\otimes \xi). \nonumber
\end{equation}
We immediately get: 
\begin{equation}\label{3}
g(\varphi X,\varphi Y)=g(X,Y)-\eta(X)\eta(Y)\,
\text{ for any }X,Y\in T\overline{M};
\end{equation}
\begin{equation}
\varphi \xi=0,\ \text{and}\ \eta(\varphi X)=0\, \text{ for any }X\in T{\overline{M}};
\nonumber 
\end{equation}
\begin{equation}\label{7}
\varphi^2X=\epsilon X,\ \varphi X\in \langle\xi\rangle^{\perp},\ \text{and}\ |\varphi X|=|X|\,
\text{ for any }X\in \langle\xi\rangle^{\perp}.
\end{equation}

Therefore, $\ker \varphi=\langle\xi\rangle$; hence, $\dim(\ker \varphi)=1$.

\begin{remark}\label{p90}
For $\epsilon =-1$ (in which case $\overline{M}$ has to be odd dimensional), $(\varphi,\xi,\eta,g)$ defines an \textit{almost contact metric structure} and $(\overline{M},\varphi,\xi,\eta,g)$ becomes an \textit{almost contact metric manifold}, while for $\epsilon =1$, $(\varphi,\xi,\eta,g)$ defines an \textit{almost paracontact metric (Riemannian) structure} and $(\overline{M},\varphi,\xi,\eta,g)$ becomes an \textit{almost paracontact metric (Riemannian) manifold}. 
\end{remark}

The notions present in the next definition were actually introduced in \cite{latcu} under the names of $\epsilon$-\textit{almost contact metric structure} and $\epsilon$-\textit{almost contact metric manifold}. To avoid a possible misunderstanding, relating to the use of these names with another meaning in the semi-Riemannian case, we will rename them as follows. 

\begin{definition}\label{p152}
For $\epsilon\in \{-1,1\}$, we will call $(\varphi,\xi,\eta,g)$ an \textit{almost} ($\epsilon$)-\textit{contact metric structure} and $(\overline{M},\varphi,\xi,\eta,g)$ an \textit{almost} ($\epsilon$)-\textit{contact metric manifold}. 
\end{definition}

\begin{remark}\label{p33}
In view of (\ref{3}), we notice that in an almost ($\epsilon$)-contact metric manifold $(\overline{M},\varphi,\xi,\eta,g)$, that is, in an almost contact  metric manifold or an almost paracontact  metric manifold, $\varphi$ restricted to $\langle\xi\rangle^{\perp}$ is an isometry; hence, it preserves the orthogonality of vector fields from $\langle\xi\rangle^{\perp}$. 
\end{remark}

Throughout this section, we consider that any sub\-man\-i\-fold $M$ of $\overline{M}$ we deal with satisfies $\xi\in TM$. 

Let $M$ be an immersed submanifold of $\overline{M}$. Since $\langle\xi\rangle=\ker \varphi$, $\langle\xi\rangle$ does not participate to any slant distribution on $M$ but can be considered as a part of an invariant component of $TM$. Thus, any slant distribution on $M$ is included in $\langle\xi\rangle^{\perp_{TM}}$ (the orthogonal complement of $\langle\xi\rangle$ in $TM$), and, in view of (\ref{7}), the definition of a $k$-slant submanifold will become the following. 

\begin{definition}\label{4}
Let $k\in \mathbb{N}^*$. We say that $M$ is a \textit{$k$-slant submanifold} of $(\overline{M},\varphi,\xi,\eta,g)$ if there exists an orthogonal decomposition of $TM$ into regular distributions, 
$$TM=\oplus_{i=0}^k{D_i}\oplus \langle \xi\rangle$$
with $D_i\neq \{0\}$ for $i= \overline{1,k}$ and $D_0$ possible null, and there exist distinct values $\theta_i\in (0,\frac{\pi}{2}]$, $i=\overline{1,k}$, such that: \\ 
\hspace*{7pt} (i) \,$\widehat{(\varphi X, D_i)}=\theta_i$ for any $X\in D_i\verb=\=\{0\}$, $i=\overline{1,k}$;\\ 
\hspace*{7pt} (ii) \,$\varphi X\in D_0$ for any $X\in D_0$ (i.e., $\widehat{(\varphi X, TM)}=0=:\theta_0$ for $X\in\nolinebreak D_0\verb=\=\{0\}$, and $f(D_0)\subseteq D_0$);\\ 
\hspace*{7pt} (iii) $f(D_i)\subseteq D_i$ for $i=\overline{1,k}$. 
\end{definition}

\begin{remark}\ \\ 
(a) In view of (iii), condition (i) can be replaced by \\ 
\hspace*{7pt} (i') $\widehat{(\varphi X, TM)}=\theta_i$ for any $X\in D_i\verb=\=\{0\}$, $i=\overline{1,k}$;\\ 
(b) Condition (iii) can be replaced by \\ 
\hspace*{7pt} (iii') $\varphi(D_i)\perp D_j$ for any $i\neq j$ from $\{1,\ldots,k\}$.
\end{remark}

\begin{remark}
We notice that $\oplus_{i=1}^kD_i$ is a proper $(\theta_1,\theta_2,\ldots,\theta_k)$-slant distribution and represents the proper $k$-slant distribution associated to $M$. 
\end{remark}

The last definition can be reformulated as follows. 

\begin{definition}\label{p205}
Let $k\in \mathbb{N}^*$. We say that $M$ is a \textit{$k$-slant submanifold} of $\overline{M}$ if, in the orthogonal decomposition $TM=D\oplus \langle \xi\rangle$, $D$ is a $k$-slant distribution. 
\end{definition}

\begin{remark}\label{p206}
The correspondence \textit{$k$-slant submanifold $\leftrightarrow$ $k$-slant distribution} will take place, in the almost ($\epsilon$)-contact metric setting and with the notations of the last definition, between the submanifold $M$ and the distribution $D$, i.e., between the submanifold $M$ and $\langle\xi\rangle^{\perp_{TM}}$ organized as a $k$-slant distribution. 
\end{remark}

\begin{example}\label{ex1}
Let $\overline M=\mathbb{R}^{4k+3}$ be the Euclidean space for some $k\geq 2$, with the canonical coordinates 
$(x_{1},\ldots, x_{4k+3})$, and let $\{e_{1}=\frac{\partial }{\partial x_{1}},\ldots,e_{4k+3}=\frac{\partial }{\partial x_{4k+3}}\}$ be the natural basis in the tangent bundle. Let $\epsilon\in \{-1,1\}$, and let us define
a vector field $\xi $, a $1$-form $\eta $, and a $(1,1)$-tensor field $\varphi$ by: 
$$\xi =e_{4k+3}, \quad \eta =dx_{4k+3},$$
$$
\varphi e_{1} = e_{2}, \quad \varphi e_{2}=\epsilon e_{1},$$
$$\varphi e_{4j-1} =\frac{j^2-1}{j^2+1}{\ }e_{4j}+\epsilon \frac{2j}{j^2+1}{\ }e_{4j+2},$$
$$\varphi e_{4j}=\epsilon \frac{j^2-1}{j^2+1}{\ }e_{4j-1}+\epsilon \frac{2j}{j^2+1}{\ }e_{4j+1},$$
$$\varphi e_{4j+1} =\frac{2j}{j^2+1}{\ }e_{4j}-\epsilon \frac{j^2-1}{j^2+1}{\ }e_{4j+2},$$
$$\varphi e_{4j+2}=\frac{2j}{j^2+1}{\ }e_{4j-1}-\frac{j^2-1}{j^2+1}{\ }e_{4j+1},$$
$$\varphi e_{4k+3} =0$$
for $j= \overline{1,k}$.
Let the metric tensor field $g$ be given by $g(e_{i},e_{j})=\delta _{ij}$, $i, j=\nolinebreak\overline{1,4k+3}$. Then, $(\overline{M}, \varphi,\xi,\eta, g )$ is an almost ($\epsilon$)-contact metric manifold. It's to be noticed that, for $\epsilon=-1$, it is an almost contact metric manifold, and, for $\epsilon=1$, it is an almost paracontact metric manifold.

We define the following submanifold of $\overline M$: 
$$
M:=\{(x_{1},\ldots, x_{4k+3})\in \mathbb{R}^{4k+3} \ | \ x_{4j+1}=x_{4j+2}=0,\ j=\overline{1,k}\}.  $$

Considering 
$D_0=\langle e_{1},e_{2}\rangle,\, D_{j}=\langle e_{4j-1},e_{4j}\rangle,\, j=\overline{1,k}$, we notice that $M$ is a $k$-slant submanifold with $TM=\oplus_{i=0}^k D_i\oplus \langle\xi\rangle$. The corresponding \mbox{$k$-slant} distribution is $\oplus_{i=0}^k D_i$, where $D_0$ is an invariant distribution, and $D_j$, $j=\overline{1,k}$, are slant distributions with corresponding slant angles 
$$\theta _{j}=\arccos \left(\frac{j^2-1}{j^2+1}\right),\ j=\overline{1,k}\,.$$
$\oplus_{i=1}^kD_i$ is the proper $k$-slant distribution associated to $M$.
\end{example}

\medskip
Further, until the end of this section, we will consider $M$ to be $\overline{M}$ or an immersed submanifold of $\overline{M}$ if not specified otherwise. 

In the following, we will consider that $\epsilon=-1$, that is, $(\overline{M},\varphi,\xi,\eta,g)$ is an almost contact metric manifold. 
Let $k\in \mathbb{N}^*$ and $D=\oplus_{i=0}^k{D_i}$ be a \mbox{$k$-slant} distribution on $M$ with $D_0$ the invariant component such that $\xi \perp D$, and let $G$ be the orthogonal complement of $D\oplus \langle \xi\rangle$ in $T\overline{M}$, i.e., $G=(D\oplus \langle \xi\rangle)^\perp$. Let $\theta_1,\theta_2,\dots ,\theta_k$ denote the slant angles of $D$, and let $\theta_0=0$. We notice that, for any $Z\in T\overline{M}$, the components of $\varphi Z_M$ in $D$ and in $D^\perp$ coincide with the components of $\varphi Z_M$ in $D\oplus \langle \xi\rangle$ and in $G$, respectively.
From (\ref{7}) and Definition \ref{8} (ii), we get
\begin{equation}\label{9}
\varphi(D_0)=D_0
\end{equation}
and, therefore, $w(D_0)=\{0\}$, and $f(D_0)=D_0$\,.
We have $\eta(X)=g(X,\xi)=0$ for any $X\in D \oplus G$, which implies
\begin{equation}\label{12}
\ker \eta_M=D \oplus G, \nonumber
\end{equation}
where $\eta_M$ is the ''localization'' $\{\eta_x\}_{x\in M}$ of $\eta$ on $M$, 
and, from $\eta(\varphi(T\overline{M}))=\nolinebreak\{0\}$, we get 
$$\varphi(D \oplus G)\subseteq D \oplus G.$$
In view of (\ref{7}), it follows that 
\begin{equation}\label{13}
\varphi(D \oplus G)=D \oplus G, 
\end{equation}

and we get: 

\begin{remark}\label{p184}
\begin{align*}
(i) \quad \varphi^2(D_i) &=D_i \,\text{ for } i=\overline{1,k},\ \text{and}\ \, \varphi^2(G)=G;\\ 
(ii) \ \ f(\varphi X) &= -X, \text{ and }\, w(\varphi X)=0 \,\text{ for any } X\in D;\\ 
(iii) \;\,f(\varphi U) &=0, \text{ and }\, w(\varphi U)=- U \,\text{ for any } U\in G. \nonumber
\end{align*}
\end{remark}

For any $i\in \{1,\ldots,k\}$ and $X_i\in D_i\verb=\=\{0\}$, from Definition \ref{8} (i), we have $\varphi X_i\neq 0$ and 
\begin{equation}\label{34}
|fX_i|=\cos \theta_i\cdot |\varphi X_i|, 
\end{equation}
which, for $X_i,Y_i \in D_i$, implies
\begin{equation}\label{36}
g(f^2X_i,Y_i)= \cos^2 \theta_i \cdot g(\varphi^2 X_i, Y_i).
\end{equation}

Taking into account that $f(D_i)\subseteq D_i$, we notice that, for any $Z \in T{M}$ and $X_i\in D_i$, we have 
\begin{equation}\label{10}
g(f^2X_i,Z)= \cos^2 \theta_i \cdot g(\varphi^2 X_i,Z); 
\nonumber 
\end{equation}
so, we get
\begin{equation}\label{37}
f^2X_i=- \cos^2\theta_i\cdot X_i\, \text{ for any } X_i\in D_i\,.
\end{equation}

From (\ref{7}) and Definition \ref{8} (ii), we deduce that $f^2X=-X$ for $X \in\nolinebreak D_0$. Denoting by $pr_i$ the projection operators onto $D_i$, $i=\overline{0,k}$, we get 

\begin{proposition}\label{p23}
\begin{equation}\label{38}
f^2X=-\sum_{i=0}^k\cos^2\theta_i\cdot pr_iX\, \text{ for any } X\in D.
\end{equation}
\end{proposition}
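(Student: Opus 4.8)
The plan is to reduce the global identity to the per-summand identities already established and then reassemble by linearity. The key ingredients are equation (\ref{37}), which gives $f^2X_i=-\cos^2\theta_i\cdot X_i$ for every $X_i\in D_i$ with $i=\overline{1,k}$, together with the fact noted just above that $f^2X=-X$ for $X\in D_0$. Since $\theta_0=0$ and hence $\cos^2\theta_0=1$, this last identity can be written uniformly as $f^2X=-\cos^2\theta_0\cdot X$ for $X\in D_0$. Thus the asserted formula already holds on each individual summand $D_i$, $i=\overline{0,k}$.

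First I would take an arbitrary $X\in D$ and use the orthogonal decomposition $D=\oplus_{i=0}^kD_i$ to write $X=\sum_{i=0}^k pr_iX$, where each $pr_iX\in D_i$. Applying the linear endomorphism $f^2$ term by term gives $f^2X=\sum_{i=0}^k f^2(pr_iX)$, and substituting the per-summand identity $f^2(pr_iX)=-\cos^2\theta_i\cdot pr_iX$ for each $i$ yields exactly $f^2X=-\sum_{i=0}^k\cos^2\theta_i\cdot pr_iX$, as claimed.

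The only point requiring a word of justification --- and the nearest thing to an obstacle here, though a mild one --- is that this term-by-term evaluation is legitimate, i.e. that $f^2$ respects the direct-sum decomposition. This is guaranteed by Definition \ref{8}(iii): since $f(D_i)\subseteq D_i$, we also have $f^2(D_i)\subseteq D_i$, so $f^2(pr_iX)\in D_i$ and the displayed sum is genuinely the decomposition of $f^2X$ into its $D_i$-components, consistent with $pr_i(f^2X)=f^2(pr_iX)$. Everything else is routine linearity, so no further computation is needed.
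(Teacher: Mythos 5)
Your proof is correct and follows essentially the same route as the paper: the paper derives (\ref{37}) on each $D_i$, notes $f^2X=-X$ on $D_0$ (from (\ref{7}) and Definition \ref{8}(ii)), and then assembles the formula through the projections exactly as you do by linearity. Your added remark that $f(D_i)\subseteq D_i$ makes $f^2$ respect the decomposition is a harmless (and correct) elaboration of what the paper leaves implicit.
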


\begin{corollary}\label{p0}
\begin{equation}\label{39}
f(D_i)=D_i\, \text{ for any } i \text{ with } \theta_i\neq \frac{\pi}{2}\,. \nonumber
\end{equation}
\end{corollary}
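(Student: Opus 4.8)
The plan is to derive the statement directly from Proposition \ref{p23}, or rather from its restriction to a single slant distribution, which is exactly formula (\ref{37}). The inclusion $f(D_i)\subseteq D_i$ is already guaranteed by condition (iii) of Definition \ref{8}, so the only thing left to establish is the reverse inclusion $D_i\subseteq f(D_i)$; that is, the surjectivity of $f$ restricted to $D_i$. The hypothesis $\theta_i\neq\frac{\pi}{2}$ is what makes this possible, since it ensures $\cos^2\theta_i\neq 0$ and hence that the scalar appearing in (\ref{37}) is invertible.

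First I would fix an index $i$ with $\theta_i\neq\frac{\pi}{2}$ and recall from (\ref{37}) that $f^2X_i=-\cos^2\theta_i\cdot X_i$ for every $X_i\in D_i$. Since $\cos^2\theta_i\neq 0$, this relation can be solved for $X_i$, giving the identity
\begin{equation}\nonumber
X_i=f\!\left(-\frac{1}{\cos^2\theta_i}\,fX_i\right)\, \text{ for any } X_i\in D_i.
\end{equation}
The key point is that the argument of $f$ on the right-hand side lies in $D_i$: indeed $fX_i\in D_i$ because $f(D_i)\subseteq D_i$ by Definition \ref{8} (iii), and $D_i$ is a vector-space fibre closed under scalar multiplication. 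Hence $X_i$ is the image under $f$ of an element of $D_i$, which shows $X_i\in f(D_i)$.

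Since $X_i\in D_i$ was arbitrary, this yields $D_i\subseteq f(D_i)$, and combined with the inclusion $f(D_i)\subseteq D_i$ from Definition \ref{8} (iii) we obtain the desired equality $f(D_i)=D_i$. I do not expect any genuine obstacle here: the entire content of the corollary is that the self-map $f|_{D_i}$, which satisfies $f^2=-\cos^2\theta_i\cdot\mathrm{Id}$ with a nonzero scalar, is a bijection of $D_i$ onto itself. One could equally well phrase the argument by noting that this relation forces $f|_{D_i}$ to be injective on each (finite-dimensional) fibre $(D_i)_x$ and invoking the rank–nullity theorem, but the explicit preimage above is cleaner and makes the role of the assumption $\theta_i\neq\frac{\pi}{2}$ transparent.
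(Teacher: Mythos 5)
Your proposal is correct and follows exactly the route the paper intends: Corollary \ref{p0} is stated as an immediate consequence of Proposition \ref{p23} (i.e., of formula (\ref{37})), and your explicit preimage $X_i=f\bigl(-\frac{1}{\cos^2\theta_i}\,fX_i\bigr)$, combined with $f(D_i)\subseteq D_i$ from Definition \ref{8} (iii), is precisely the argument left implicit there. The use of $\theta_i\neq\frac{\pi}{2}$ to invert the scalar $\cos^2\theta_i$ (which is constant in the $k$-slant setting, so the scaling stays within the $C^\infty$-module $D_i$) is exactly the intended point.
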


\begin{remark}\label{169}
Proposition \ref{p23} and Corollary \ref{p0} are, in particular, valid if $M$ is a $k$-slant submanifold of $(\overline{M},\varphi,\xi,\eta,g)$, considering the distribution $D=\oplus_{i=0}^k{D_i}$ for $TM=\oplus_{i=0}^k{D_i}\oplus \langle \xi\rangle$. 
\end{remark}

Taking into account Propositions \ref{p167}, \ref{p23} and Remark \ref{p33}, we get 

\begin{theorem}\label{p168}
Let $\mathfrak D$ be a non-null distribution on $M$ such that $\mathfrak D \perp\nolinebreak \xi$ and $\mathfrak D$ is decomposable into an orthogonal sum of regular distributions, $\mathfrak D=\nolinebreak\oplus_{i=0}^k{\mathfrak D_i}$ with $\mathfrak D_i\neq \{0\}$ for $i=\overline{1,k}$ and $\mathfrak D_0$ invariant (possible null). Let $pr_i$ denote the projection operator onto $\mathfrak D_i$ for $i=\overline{0,k}$, $f$ the component of $\varphi$ into $\mathfrak D$ (i.e., $f=pr_{\mathfrak D}\circ \varphi$), and $\theta_0=0$. If $f(\mathfrak D_i)\subseteq \mathfrak D_i$ for $i=\overline{1,k}$, then the following assertions are equivalent: \\ 
\hspace*{7pt} (a) There exist $k$ distinct values $\theta_i\in (0,\frac{\pi}{2}]$, $i=\overline{1,k}$, such that  
\begin{equation}\nonumber
f^2X= -\sum_{i=0}^k\cos^2\theta_i\cdot pr_iX\, \text{ for any } X\in \mathfrak D;
\end{equation}
\hspace*{7pt} (b) $\mathfrak D$ is a $k$-slant distribution with slant angles $\theta_i$ corresponding to $\mathfrak D_i$, $i=\nolinebreak\overline{1,k}$. 
\end{theorem}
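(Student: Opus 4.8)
The plan is to establish the two implications separately, reducing each to a result already proved in the excerpt; throughout we are in the setting $\epsilon=-1$, so that $(\overline{M},\varphi,\xi,\eta,g)$ is an almost contact metric manifold and formula (\ref{99}) reads $f^2X=-\sum_{i=1}^k\cos^2\theta_i\cdot pr_iX$ on $\oplus_{i=1}^k\mathfrak{D}_i$.

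For the implication (b)$\Rightarrow$(a), I would simply invoke Proposition \ref{p23}. Indeed, if $\mathfrak{D}=\oplus_{i=0}^k\mathfrak{D}_i$ is a $k$-slant distribution with slant angles $\theta_i$ corresponding to $\mathfrak{D}_i$, then by the definition of a $k$-slant distribution (Definition \ref{8}) the values $\theta_i$ are distinct and lie in $(0,\frac{\pi}{2}]$, while Proposition \ref{p23} yields exactly $f^2X=-\sum_{i=0}^k\cos^2\theta_i\cdot pr_iX$ for every $X\in\mathfrak{D}$. This is precisely assertion (a), so nothing further is needed.

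For the converse (a)$\Rightarrow$(b), the strategy is to apply Proposition \ref{p167} to $\mathfrak{D}$, so the work consists in verifying its hypotheses. The orthogonal decomposition, the non-nullity of the $\mathfrak{D}_i$ for $i=\overline{1,k}$, the invariance of $\mathfrak{D}_0$, and the inclusions $f(\mathfrak{D}_i)\subseteq\mathfrak{D}_i$ are all part of the standing hypotheses. The isometry requirement follows from Remark \ref{p33}: since $\mathfrak{D}\perp\xi$ we have $\oplus_{i=1}^k\mathfrak{D}_i\subseteq\mathfrak{D}\subseteq\langle\xi\rangle^{\perp}$, and $\varphi$ restricted to $\langle\xi\rangle^{\perp}$ is an isometry, hence so is its restriction to $\oplus_{i=1}^k\mathfrak{D}_i$. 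Finally, to obtain (\ref{99}) I would restrict the identity in (a) to $X\in\oplus_{i=1}^k\mathfrak{D}_i$: for such $X$ one has $pr_0X=0$, so the $i=0$ summand drops out and the identity becomes $f^2X=-\sum_{i=1}^k\cos^2\theta_i\cdot pr_iX$, which is exactly (\ref{99}) for $\epsilon=-1$. Proposition \ref{p167} then concludes that $\mathfrak{D}$ is a $k$-slant distribution with slant angles $\theta_i$ corresponding to $\mathfrak{D}_i$, giving (b).

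There is no genuine obstacle here; the theorem is essentially a repackaging of Propositions \ref{p23} and \ref{p167}. The only points demanding care are that the isometry hypothesis of Proposition \ref{p167} be \emph{deduced} from $\mathfrak{D}\perp\xi$ via Remark \ref{p33} rather than assumed, and that the passage from the full identity in (a) (summed over $i=\overline{0,k}$) to the proper-component identity (\ref{99}) (summed over $i=\overline{1,k}$) be justified by the vanishing of $pr_0$ on $\oplus_{i=1}^k\mathfrak{D}_i$.
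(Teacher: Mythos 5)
Your proof is correct and takes exactly the paper's route: the paper obtains Theorem \ref{p168} precisely by combining Proposition \ref{p23} for (b)$\Rightarrow$(a) with Proposition \ref{p167} and Remark \ref{p33} for (a)$\Rightarrow$(b), which is what you do. The details you supply --- deducing the isometry hypothesis from $\mathfrak D\perp\xi$ via Remark \ref{p33}, and dropping the $i=0$ term since $pr_0$ vanishes on $\oplus_{i=1}^k\mathfrak D_i$ to recover (\ref{99}) with $\epsilon=-1$ --- are exactly the steps the paper leaves implicit.
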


\begin{remark}\label{p170}
Theorem \ref{p168} provides a necessary and sufficient condition for a submanifold $M$ of an almost contact metric manifold $\overline M$ to be a \mbox{$k$-slant} submanifold, considering $\mathfrak D$ to be the distribution on $M$  given by $\mathfrak D=\nolinebreak\oplus_{i=0}^k{\mathfrak D_i}$ if $TM=\oplus_{i=0}^k{\mathfrak D_i}\oplus \langle \xi\rangle$.
\end{remark}

\begin{remark}\label{p201}
We describe below the notion of \textit{skew CR submanifold of an almost contact metric manifold}, relating it to the concept of $k$-slant submanifold.  

Let $M$ be an immersed submanifold of an almost contact metric manifold $(\overline{M},\varphi,\xi,\eta,g)$, and, for any $Z\in T{M}$, let $fZ$ be the tangential component of $\varphi Z$ (the component of $\varphi Z$ in $TM$). 

In view of Lemmas \ref{lema1}, \ref{lema2} and Remark \ref{p194}, $f$ is skew-symmetric; hence, $f^2$ is symmetric. Denoting by $\lambda_i(x)$, $i=\overline{1,m(x)}$, the distinct eigenvalues of $f_x^2$ acting on the tangent space $T_xM$ for $x\in M$, these eigen\-val\-ues are all real and nonpositive. 
In view of (\ref{7}) and $\varphi \xi=0$, we have $|fX|\leq|\varphi X|\leq |X|$ for any $X\in T{M}$; hence, all the $\lambda_i(x)$'s are contained in $[-1,0]$. For any $x\in M$, let $\mathfrak D_x^i$ denote the eigenspace corresponding to $\lambda_i(x)$, $i=\overline{1,m(x)}$; then, the tangent space $T_xM$ of $M$ at $x$ has the following orthogonal decomposition into the eigenspaces of $f_x^2$: 
$$T_xM={\mathfrak D_x^1}\oplus\ldots\oplus{\mathfrak D_x^{m(x)}}.$$
Every eigenspace $\mathfrak D_x^i$ is invariant under $f_x$, and, for $\lambda_i(x)\neq0$, the cor\-re\-spond\-ing $\mathfrak D_x^i$ is of even dimension. 

Now, let us consider that $M$ is a skew CR submanifold of $\overline{M}$, i.e.: 
\begin{enumerate}
\item $m(x)$ does not depend on $x\in M$ (we will denote $m(x)=m$); 
\item the dimension of ${\mathfrak D_x^i}$, $i=\overline{1,m}$, is independent of $x\in M$; 
\item each $\lambda_i(\cdot)$ is constant on $M$ (we will denote $\lambda_i(x)=\lambda_i)$. 
\end{enumerate}

So, for any tangent space $T_xM$, we have the same number $m$ of distinct eigenvalues, $\lambda_1$, \dots , $\lambda_m$, of $f_x^2$, these being independent of $x\in M$. 
Denoting by $\mathfrak D_i$ the distribution corresponding to the family $\{\mathfrak D_x^i:\nolinebreak x\in \nolinebreak M\}$ for $i=\overline{1,m}$, we get for $TM$ the orthogonal decomposition: 
$$TM={\mathfrak D_1}\oplus\ldots\oplus{\mathfrak D_m}\,.$$
Moreover, every $\mathfrak D_i$ is invariant under $f$, and, for $\lambda_i\neq0$, the cor\-re\-spond\-ing distribution $\mathfrak D_i$ is of even dimension.

Since $f\xi=\varphi\xi=0$, one of the $\mathfrak D_i$'s contains $\langle \xi \rangle$ and corresponds to the zero eigenvalue of $f^2$; let $\mathfrak D_m$ be that distribution. Let us decompose $\mathfrak D_m$ into $\langle\xi\rangle$ and the orthogonal complement of $\langle\xi\rangle$ in $\mathfrak D_m$, denoted by $\mathfrak D'_m$, $\mathfrak D_m=\langle\xi\rangle\oplus\mathfrak D'_m$. We notice that $\mathfrak D'_m$, if non-null, is a slant distribution with slant angle $\frac{\pi}{2}$.

For every $i\in \{1,\ldots,m-1\}$, let $\alpha_i\in (0,1]$ denote the positive value for which $\lambda_i=-\alpha_i^2$. Then, for any $X\in  \mathfrak{D}_i\verb=\=\{0\}$, we get 
$$|fX|^2=-\lambda_ig(X,X)= \alpha_i^2 |X|^2= \alpha_i^2 |\varphi X|^2$$
and $|fX|=\alpha_i |X|=\alpha_i |\varphi X|$; hence, $\alpha_i=\cos\zeta_i$, where $\zeta_i$ is the value of the angle between $\varphi X$ and $TM$, the same for any nonzero $X\in \mathfrak D_i$. 

The distributions $\mathfrak{D}_i$, $i=\overline{1,m-1}$, are slant distributions with distinct slant angles $\zeta_i$ except at most one of them, which is invariant (with respect to $\varphi$) and corresponds to $\alpha_i=1$ if such one exists. 

It follows that, since $\oplus_{i=1}^{m-1}\mathfrak{D}_i\oplus\mathfrak{D}'_m$ does not reduce to an invariant distribution with respect to $\varphi$, $M$ is a $k$-slant submanifold of $\overline{M}$, where $k$ is one of the values: $m,\, m-1,\, m-2$. 
\end{remark}

We conclude: 

\begin{proposition}\label{p123}
Any skew CR submanifold of an almost contact metric man\-i\-fold is a $k$-slant submanifold. 
\end{proposition}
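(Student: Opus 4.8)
The plan is to take the eigenspace decomposition of $f^2$ supplied by the skew CR hypothesis and reorganize it directly into the shape demanded by Definition \ref{4}. Since $f$ is skew-symmetric by Lemmas \ref{lema1}, \ref{lema2} and Remark \ref{p194}, the operator $f^2$ is symmetric and, by (\ref{7}) together with $\varphi\xi=0$, negative semidefinite with spectrum in $[-1,0]$. The defining conditions of a skew CR submanifold guarantee that the number $m$ of distinct eigenvalues, their common values $\lambda_1,\ldots,\lambda_m$, and the dimensions of the eigenspaces are all independent of $x\in M$, so the eigenspaces assemble into regular distributions $\mathfrak D_1,\ldots,\mathfrak D_m$ with $TM=\oplus_{i=1}^m\mathfrak D_i$, exactly as set up in Remark \ref{p201}.

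First I would isolate the zero-eigenvalue block. Because $f\xi=\varphi\xi=0$, the line $\langle\xi\rangle$ lies in the eigenspace of $\lambda=0$, say $\mathfrak D_m$, and I would split $\mathfrak D_m=\langle\xi\rangle\oplus\mathfrak D'_m$. All other blocks $\mathfrak D_i$ with $i<m$ are orthogonal to $\mathfrak D_m$, hence to $\xi$, since eigenspaces of the symmetric operator $f^2$ for distinct eigenvalues are orthogonal; likewise $\mathfrak D'_m\perp\xi$. Thus every slant block lies in $\langle\xi\rangle^{\perp}$, where $\varphi$ is an isometry by (\ref{7}), and from $|fX|^2=-\lambda_i|X|^2$ I obtain $|fX|=\alpha_i|X|=\alpha_i|\varphi X|$ on each block, writing $\lambda_i=-\alpha_i^2$ with $\alpha_i\in[0,1]$. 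Consequently $\mathfrak D'_m$ (if non-null) is anti-invariant with slant angle $\tfrac{\pi}{2}$, while each $\mathfrak D_i$ with $i<m$ is a slant distribution of angle $\zeta_i=\arccos\alpha_i\in[0,\tfrac{\pi}{2})$, being invariant precisely when $\alpha_i=1$. Distinctness of the eigenvalues forces the $\zeta_i$ to be distinct and permits at most one invariant $\mathfrak D_i$.

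Next I would gather the unique possible invariant block into the invariant component $D_0$ (setting $D_0=\{0\}$ if none occurs) and relabel the remaining slant blocks — the proper-slant $\mathfrak D_i$ together with $\mathfrak D'_m$ — as $D_1,\ldots,D_k$ with distinct angles $\theta_i\in(0,\tfrac{\pi}{2}]$. Each $\mathfrak D_i$ is $f$-invariant because $f$ preserves the eigenspaces of $f^2$ (if $f^2X=\lambda X$ then $f^2(fX)=\lambda fX$), so clause (iii) of Definition \ref{4} holds; clause (i) is the slant property just established, and (ii) is the invariance of $D_0$. This produces $TM=\oplus_{i=0}^k D_i\oplus\langle\xi\rangle$ in the required form, with $k\in\{m,m-1,m-2\}$ depending on the presence of an invariant block and of $\mathfrak D'_m$.

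The sole genuine point, as opposed to bookkeeping, is confirming $k\geq 1$, that is, that at least one proper slant distribution survives. Here the hypothesis that $\oplus_{i=1}^{m-1}\mathfrak D_i\oplus\mathfrak D'_m$ does not reduce to an invariant distribution is essential: it rules out the degenerate situation in which every nonzero-eigenvalue block is invariant and $\mathfrak D'_m$ vanishes, which would render $M$ merely invariant rather than $k$-slant. Granting this, all clauses of Definition \ref{4} are met and $M$ is a $k$-slant submanifold. As an alternative packaging of the slant verification, one could instead invoke Theorem \ref{p168} with $\mathfrak D=\oplus_{i=0}^k D_i$, matching the spectral identity above against the projection formula of Proposition \ref{p23}.
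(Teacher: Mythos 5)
Your proof is correct and takes essentially the same route as the paper's own argument (laid out in Remark \ref{p201}): the eigenspace decomposition of $f^2$, the splitting $\mathfrak D_m=\langle\xi\rangle\oplus\mathfrak D'_m$, the identification $\alpha_i=\cos\zeta_i$ of slant angles, the $f$-invariance of the eigenspaces, and the relabeling into an invariant component plus slant components with distinct angles, with the non-invariance of $\oplus_{i=1}^{m-1}\mathfrak D_i\oplus\mathfrak D'_m$ (implicit in Ronsse's definition, cf. Remark \ref{p204}) guaranteeing $k\geq 1$. Your closing suggestion to package the verification via Theorem \ref{p168} and Proposition \ref{p23} is only a cosmetic variant of the same argument.
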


\begin{proposition}\label{p210}
Any $k$-slant submanifold of an almost contact metric man\-i\-fold which is not an anti-invariant or a CR submanifold is a skew CR submanifold. 
\end{proposition}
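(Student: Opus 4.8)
The plan is to read off the eigenvalue structure of the tangential endomorphism $f$ directly from the $k$-slant decomposition and to check that it meets the three constancy requirements in the description of a skew CR submanifold recorded in Remark \ref{p201}, while using the hypothesis to guarantee that a genuinely (proper) slant piece, with angle in $(0,\frac{\pi}{2})$, is present. So let $M$ be a $k$-slant submanifold with $TM=\oplus_{i=0}^k D_i\oplus\langle\xi\rangle$, distinct slant angles $\theta_1,\dots,\theta_k\in(0,\frac{\pi}{2}]$, $D_0$ the (possibly null) invariant component, and $\theta_0=0$. By Remark \ref{169} (i.e. Proposition \ref{p23} applied to $D=\oplus_{i=0}^k D_i$), the tangential endomorphism satisfies $f^2X=-\sum_{i=0}^k\cos^2\theta_i\cdot pr_iX$ for $X\in D$, together with $f\xi=\varphi\xi=0$ and hence $f^2\xi=0$.

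From this formula $f^2$ acts on each $D_i$ as the scalar $-\cos^2\theta_i$ and on $\langle\xi\rangle$ as $0$. First I would list the distinct eigenvalues of $f^2$ on $TM$: the value $-1$ on $D_0$ (when $D_0\neq\{0\}$); the values $-\cos^2\theta_i$, mutually distinct since the $\theta_i$ are, on the corresponding $D_i$; and $0$, whose eigenspace is $\langle\xi\rangle$ enlarged by $D_j$ if some $\theta_j=\frac{\pi}{2}$. The crucial observation is that every object here --- the number $m$ of distinct eigenvalues, the eigenvalues themselves, and the dimensions of the eigenspaces, which are sums of the constant dimensions $\dim D_i$ and possibly $\dim\langle\xi\rangle=1$ --- is globally determined and independent of the point $x\in M$, precisely because the $D_i$ are regular distributions and the $\theta_i$ are constants. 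This yields conditions (1)--(3) of Remark \ref{p201}; the remaining structural facts (each eigenspace invariant under $f$, and even-dimensional for a nonzero eigenvalue) are automatic from $f(D_i)\subseteq D_i$ and the general discussion there.

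The step I expect to be the crux is ensuring that $M$ is \emph{properly} skew CR rather than merely CR, i.e. that at least one eigenvalue of $f^2$ lies strictly in $(-1,0)$, equivalently that some $\theta_i\in(0,\frac{\pi}{2})$. Here I would argue by contraposition: if no slant angle lies strictly in $(0,\frac{\pi}{2})$, then $\theta_i=\frac{\pi}{2}$ for every $i\in\{1,\dots,k\}$, and the distinctness of the $\theta_i$ forces $k=1$ with $\theta_1=\frac{\pi}{2}$. By the classification recorded in Remark \ref{part_subm}, such an $M$ is an anti-invariant submanifold when $D_0=\{0\}$ and a CR (semi-invariant) submanifold when $D_0\neq\{0\}$ --- exactly the two cases excluded by hypothesis. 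Hence some $\theta_i\in(0,\frac{\pi}{2})$, so the corresponding $D_i$ is a proper slant eigenspace of $f^2$ with eigenvalue in $(-1,0)$; combined with the constancy established above, this shows $M$ is a skew CR submanifold.
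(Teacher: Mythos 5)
Your proof is correct and follows essentially the same route the paper intends: Proposition \ref{p210} is left there as an immediate consequence of the analysis in Remark \ref{p201}, and your argument is precisely that analysis run in reverse --- reading the constant eigenvalues and constant eigenspace dimensions of $f^2$ off the $k$-slant decomposition via Proposition \ref{p23}/Remark \ref{169}, and then using the exclusion of the anti-invariant and CR cases to force some $\theta_i\in(0,\frac{\pi}{2})$, which is exactly the extra requirement (at least one eigenvalue strictly inside $(-1,0)$) that Ronsse's notion of skew CR carries, cf. Remark \ref{p204}. Your identification of that last point as the crux, and the contraposition handling it ($k=1$, $\theta_1=\frac{\pi}{2}$, split by $D_0$ null or not), matches the paper's intent exactly.
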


\subsection{The dual $k$-slant distribution in almost contact metric \newline geometry}\label{subsec_epsilon=-1}

We continue to investigate, for $k\in \mathbb{N}^*$, the properties of the $k$-slant distribution $D=\oplus_{i=0}^k{D_i}$ on $M$, where $M$ is $\overline{M}$ or an immersed submanifold of the almost contact metric manifold $(\overline{M},\varphi,\xi,\eta,g)$, with $D_0$ the invariant component and $\xi \perp D$, underlining the properties of $G$, the orthogonal complement of $D\oplus \langle \xi\rangle$ in $T\overline{M}$. 

For $U\in G$, we have $g(fU,\xi)=0$, which implies
$f(G)\perp \langle \xi\rangle$, 
and, if $X\in D_0$, we have $g(fU,X)=0$, which gives
$f(G)\perp D_0$\,. 
In conclusion,
\begin{equation}\label{42}
f(G)\subseteq \oplus_{i=1}^k{D_i}\,.
\end{equation}

For $U\in G$,\, $U\perp w(\oplus_{i=1}^k{D_i})$, in view of Lemma \ref{lema1}, we have \linebreak $fU\perp \oplus_{i=1}^k{D_i}$, and, using (\ref{42}), we get $fU=0$.
Taking into account (\ref{17}), we have the following orthogonal decomposition: 

\begin{theorem}\label{p1}
\begin{equation}\label{43}
G=\oplus_{i=1}^kw({D_i})\oplus H, \text{ where }f(H)=\{0\}.
\nonumber
\end{equation}
\end{theorem}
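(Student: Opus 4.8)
The plan is to define $H$ outright as the orthogonal complement of $\oplus_{i=1}^k w(D_i)$ inside $G$, so that the splitting $G=\oplus_{i=1}^k w(D_i)\oplus H$ becomes tautological the moment we have checked that $\oplus_{i=1}^k w(D_i)$ is an orthogonal (regular) subdistribution of $G$. The only substantive claim is then $f(H)=\{0\}$, and, as the discussion immediately preceding the statement suggests, essentially all the ingredients for it are already in place. First I would note that $\oplus_{i=1}^k w(D_i)$ really does sit inside $G$: for $Z\in T\overline M$ the $D^{\perp}$-component $wZ$ of $\varphi Z$ agrees with its $G$-component (this is the observation recorded in the text, that splitting against $D,\,D^{\perp}$ coincides with splitting against $D\oplus\langle\xi\rangle,\,G$), so each $w(D_i)\subseteq G$. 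That the inner sum is orthogonal and direct is exactly $(\ref{17})$; and this applies because, by Remark \ref{p33}, $\varphi$ is an isometry on $\langle\xi\rangle^{\perp}\supseteq D$, so the orthogonality of vector fields from $\oplus_{i=1}^k D_i$ is invariant under $\varphi$, which is the hypothesis feeding both $(\ref{17})$ and Proposition \ref{p64}. Hence $H:=(\oplus_{i=1}^k w(D_i))^{\perp_G}$ yields the asserted orthogonal decomposition of $G$.

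For $f(H)=\{0\}$ I would argue pointwise, repeating the computation displayed just above the theorem. Let $U\in H$; by construction $U\perp\oplus_{i=1}^k w(D_i)=w(\oplus_{i=1}^k D_i)$, the last equality being Proposition \ref{p64}. Then for every $X\in\oplus_{i=1}^k D_i$, Lemma \ref{lema1} (with $U\in G\subseteq D^{\perp}$) gives $g(X,fU)=\epsilon\,g(wX,U)=0$, so $fU\perp\oplus_{i=1}^k D_i$. On the other hand $(\ref{42})$ asserts $f(G)\subseteq\oplus_{i=1}^k D_i$, whence $fU\in\oplus_{i=1}^k D_i$. A vector simultaneously lying in and orthogonal to $\oplus_{i=1}^k D_i$ must vanish, so $fU=0$, giving $f(H)=\{0\}$.

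I do not expect a genuine obstacle: the decomposition is forced once $H$ is taken to be an orthogonal complement, and the vanishing of $f$ on $H$ is precisely the $(\ref{42})$–$(\ref{17})$–Lemma \ref{lema1} interplay the author has already arranged. The only points deserving a word of care are the identification of $wZ$ with the $G$-component of $\varphi Z$ (so that one gets $w(D_i)\subseteq G$ rather than merely $w(D_i)\subseteq D^{\perp}$), and, if one insists on $H$ being a bona fide regular distribution, the injectivity of $w$ on each $D_i$: since $D_i\subseteq\langle\xi\rangle^{\perp}$ we have $|\varphi X_i|=|X_i|$ by $(\ref{7})$, and combining with $(\ref{34})$ gives $|wX_i|=\sin\theta_i\,|X_i|$ with $\sin\theta_i>0$, so $w(D_i)$ has constant rank $\dim D_i$ and $\oplus_{i=1}^k w(D_i)$ is regular, making its orthogonal complement $H$ in $G$ a regular distribution as well.
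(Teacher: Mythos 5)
Your proof is correct and follows essentially the same route as the paper: the paper likewise establishes $f(G)\subseteq\oplus_{i=1}^k D_i$ (relation (\ref{42})), then shows that any $U\in G$ orthogonal to $w(\oplus_{i=1}^k D_i)$ satisfies $fU\perp\oplus_{i=1}^k D_i$ via Lemma \ref{lema1}, hence $fU=0$, and invokes (\ref{17}) for the orthogonality of the $w(D_i)$'s, with $H$ implicitly the orthogonal complement of $\oplus_{i=1}^k w(D_i)$ in $G$. Your explicit justifications of the side points (Remark \ref{p33} feeding the hypothesis of (\ref{17}), and the regularity of $w(D_i)$ from $|wX_i|=\sin\theta_i\,|X_i|$) are sound additions, not deviations.
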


\medskip
For $V\in H$, $X\in D_i$, $i= \overline{1,k}$, we have
$g(wX,wV)=g(\varphi X, \varphi V)=0$; hence, $wV\perp w(D_i)$. It follows that $w(H)\perp \oplus_{i=1}^kw({D_i})$; thus, $w(H)\subseteq H$.

In view of $f(H)=\{0\}$ and (\ref{7}), for any $V\in H$, we have 
$$\varphi V=wV\in H, \ \text{ hence }\ w^2V=w\varphi V=\varphi^2 V=- V\in H,$$
which implies $w^2(H)=H$. Taking into account that $w(H)\subseteq H$, we get 

\begin{corollary}\label{p62}
\begin{equation}\label{45}
\varphi (H)=w(H)=H. \nonumber
\end{equation}
\end{corollary}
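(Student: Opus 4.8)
The plan is to establish the two equalities $\varphi(H)=w(H)$ and $w(H)=H$ separately, building on the facts already assembled: the orthogonal decomposition $G=\oplus_{i=1}^k w(D_i)\oplus H$ with $f(H)=\{0\}$ furnished by Theorem \ref{p1}, and the relation $\varphi^2 X=-X$ on $\langle\xi\rangle^\perp$ coming from (\ref{7}) (recall that here $\epsilon=-1$). The first observation I would record is that $\varphi$ and $w$ agree on $H$: for $V\in H$ the $D$-component of $\varphi V$ vanishes, $fV=0$, so $\varphi V=fV+wV=wV$; moreover, since $H\subseteq G\subseteq\langle\xi\rangle^\perp$ and $\eta(\varphi V)=0$, this $\varphi V=wV$ carries no $\langle\xi\rangle$-part and thus lies in $G$. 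This alone yields $\varphi(H)=w(H)$, reducing the corollary to the identification $w(H)=H$.

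For the inclusion $w(H)\subseteq H$ I would exploit the compatibility identity (\ref{3}). Given $V\in H$ and $X\in D_i$ for some $i\in\{1,\dots,k\}$, orthogonality $D\perp G$ together with $\eta(X)=\eta(V)=0$ gives $g(\varphi X,\varphi V)=g(X,V)-\eta(X)\eta(V)=0$; expanding $\varphi X=fX+wX$ and using $fX\in D$, $wV\in D^{\perp}$ collapses this to $g(wX,wV)=0$. Hence $wV\perp w(D_i)$ for every $i$, so $wV$ is orthogonal to $\oplus_{i=1}^k w(D_i)$, and by the direct-sum structure of $G$ it must sit in $H$.

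For the reverse inclusion $H\subseteq w(H)$ I would bring in $\varphi^2 V=-V$, valid because $V\in\langle\xi\rangle^\perp$. Since $w(H)\subseteq H$, the vector $wV$ lies in $H$, where $w$ again coincides with $\varphi$; therefore $w^2 V=w(wV)=\varphi(wV)=\varphi^2 V=-V$, giving $w^2(H)=H$. The chain $H=w^2(H)=w(w(H))\subseteq w(H)\subseteq H$ then forces $w(H)=H$, which combined with $\varphi(H)=w(H)$ proves the corollary.

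The argument is largely bookkeeping, and I expect no deep obstacle. The one point that genuinely requires care is keeping straight the difference between the splitting of $\varphi$ relative to $D$ (which defines $f$ and $w$) and the ambient splitting $T\overline{M}=D\oplus\langle\xi\rangle\oplus G$: the key subtle verifications are that $wV$ lands in $G$ and not merely in $D^{\perp}$ (ensured by $\eta\circ\varphi=0$) and that $f$ annihilates all of $H$ (which is precisely its defining property in Theorem \ref{p1}). Once these are in place, the equalities follow mechanically.
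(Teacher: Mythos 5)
Your proposal is correct and follows essentially the same route as the paper: both establish $\varphi=w$ on $H$ from $f(H)=\{0\}$, derive $w(H)\subseteq H$ from the orthogonality computation $g(wX,wV)=g(\varphi X,\varphi V)=0$ for $X\in D_i$, $V\in H$, and then use $w^2V=\varphi^2V=-V$ on $H$ to force $w(H)=H$. The only difference is presentational—you spell out the chain $H=w^2(H)\subseteq w(H)\subseteq H$ and the verification that $wV$ lands in $G$, which the paper leaves implicit.
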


\medskip 
From Definition \ref{8} (ii)-(iii), (\ref{3}), Lemma \ref{lema2}, and (\ref{38}), we obtain 

\begin{proposition}\label{p2}
For any $X,Y\in D$, we have: 
\begin{align}
g(\varphi X,\varphi Y)&= \sum_{i=0}^k g(pr_iX, pr_iY), \nonumber\\ 
g(fX,fY)&=\sum_{i=0}^k \cos^2 \theta_i \cdot g(pr_iX, pr_iY), \nonumber\\ 
g(wX,wY)&=\sum_{i=1}^k \sin^2 \theta_i \cdot g(pr_iX, pr_iY). \nonumber
\end{align}

\end{proposition}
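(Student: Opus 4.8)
The plan is to establish the three identities in the order listed, exploiting the mutual orthogonality of the decomposition $D=\oplus_{i=0}^k D_i$ together with the linearity of $\varphi$, $f$, $w$, and the $pr_i$. Throughout I would write $X=\sum_{i=0}^k pr_iX$ and $Y=\sum_{j=0}^k pr_jY$; since the $D_i$ are mutually orthogonal, any bilinear expression of the form $g(\cdot X,\cdot Y)$ collapses to a single sum over $i$ as soon as the operator applied preserves each $D_i$, the cross terms with $i\neq j$ being killed by orthogonality.

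For the first identity I would simply invoke (\ref{3}) together with the standing hypothesis $\xi\perp D$. Since $X,Y\in D$, we have $\eta(X)=g(X,\xi)=0$ and likewise $\eta(Y)=0$, so (\ref{3}) reduces to $g(\varphi X,\varphi Y)=g(X,Y)$. Expanding $g(X,Y)=\sum_{i,j}g(pr_iX,pr_jY)$ and discarding the cross terms gives $g(\varphi X,\varphi Y)=\sum_{i=0}^k g(pr_iX,pr_iY)$.

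For the second identity I would start from (\ref{38}), namely $f^2X=-\sum_{i=0}^k\cos^2\theta_i\cdot pr_iX$, and pair it against $Y$. By Lemma \ref{lema2} with $\epsilon=-1$ (and Remark \ref{p194} in the submanifold case) one has $g(fX,fY)=-g(f^2X,Y)$; substituting (\ref{38}) and again removing the cross terms $g(pr_iX,pr_jY)$ for $i\neq j$ yields $g(fX,fY)=\sum_{i=0}^k\cos^2\theta_i\cdot g(pr_iX,pr_iY)$. For the third identity I would then use the orthogonal splitting $\varphi X=fX+wX$ with $fX\in D$ and $wX\in D^{\perp}$, so that the mixed terms vanish and $g(\varphi X,\varphi Y)=g(fX,fY)+g(wX,wY)$; solving for $g(wX,wY)$ and inserting the first two identities gives $g(wX,wY)=\sum_{i=0}^k\bigl(1-\cos^2\theta_i\bigr)g(pr_iX,pr_iY)=\sum_{i=0}^k\sin^2\theta_i\cdot g(pr_iX,pr_iY)$, and since $\theta_0=0$ the $i=0$ term drops, leaving the sum from $1$ to $k$.

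All the computations are routine; the only step demanding genuine care is the second identity, where one must apply precisely the $\epsilon=-1$ instance of Lemma \ref{lema2} so that the minus sign in (\ref{38}) combines with the factor $\epsilon$ to produce a positive coefficient $\cos^2\theta_i$. The one bookkeeping subtlety is to notice that the invariant component contributes to the first two sums, with $\theta_0=0$ so that $\cos^2\theta_0=1$, but is absent from the third because $\sin^2\theta_0=0$.
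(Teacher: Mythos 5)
Your proof is correct and follows essentially the same route as the paper, which derives the proposition from (\ref{3}) (with $\eta(X)=\eta(Y)=0$ since $\xi\perp D$), Lemma \ref{lema2} combined with (\ref{38}), and the orthogonal splitting $\varphi X=fX+wX$. The sign bookkeeping with $\epsilon=-1$ and the observation that $\sin^2\theta_0=0$ removes the $i=0$ term from the third sum are exactly the points the paper's cited ingredients are meant to handle.
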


\bigskip 
Hence, $-g(fwX,Y)= \sum_{i=1}^k \sin^2 \theta_i \cdot g(pr_iX, Y)$. 

\begin{corollary}\label{p69}
\begin{equation}\label{51}
fwX=- \sum_{i=1}^k \sin^2 \theta_i \cdot pr_iX\  
\text{ for any }\ X\in D.
\end{equation}
\end{corollary}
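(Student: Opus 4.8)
The plan is to read the identity off from the third formula of Proposition~\ref{p2} by pairing against arbitrary test fields and then invoking nondegeneracy of the metric on $D$. First I would recall that, since $\epsilon=-1$ in the almost contact metric setting, Lemma~\ref{lema2} (applicable here for a distribution on a submanifold by Remark~\ref{p194}) gives $g(fwX,Y)=\epsilon\, g(wX,wY)=-g(wX,wY)$ for all $X,Y\in D$. Combining this with the third identity of Proposition~\ref{p2}, namely $g(wX,wY)=\sum_{i=1}^k \sin^2\theta_i\cdot g(pr_iX,pr_iY)$, yields $-g(fwX,Y)=\sum_{i=1}^k \sin^2\theta_i\cdot g(pr_iX,pr_iY)$, which is exactly the relation displayed just before the statement.

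Next I would eliminate the projection on the $Y$-slot. Since the decomposition $D=\oplus_{i=0}^k D_i$ is orthogonal and $pr_iX\in D_i$, we have $g(pr_iX,pr_iY)=g(pr_iX,Y)$ for each $i$, so the right-hand side equals $\sum_{i=1}^k \sin^2\theta_i\cdot g(pr_iX,Y)=g\bigl(\sum_{i=1}^k \sin^2\theta_i\cdot pr_iX,\,Y\bigr)$. Hence $g(fwX,Y)=g\bigl(-\sum_{i=1}^k \sin^2\theta_i\cdot pr_iX,\,Y\bigr)$ for every $Y\in D$.

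Finally I would conclude by uniqueness. Because $f$ is the component of $\varphi$ into $D$, both $fwX$ and $-\sum_{i=1}^k \sin^2\theta_i\cdot pr_iX$ lie in $D$; since they pair identically with every $Y\in D$ and the restriction of $g$ to $D$ is positive definite, hence nondegenerate, the two vectors coincide, giving the claimed formula. There is no genuine obstacle here, as the content is entirely carried by Proposition~\ref{p2} and Lemma~\ref{lema2}; the only points worth stating with care are the replacement $g(pr_iX,pr_iY)=g(pr_iX,Y)$ (orthogonality of the $D_i$) and the membership $fwX\in D$, which is what legitimizes cancelling the arbitrary $Y\in D$.
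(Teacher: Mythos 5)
Your proposal is correct and follows exactly the paper's route: the paper derives the intermediate identity $-g(fwX,Y)=\sum_{i=1}^k \sin^2\theta_i\cdot g(pr_iX,Y)$ from the third formula of Proposition \ref{p2} together with Lemma \ref{lema2} (with $\epsilon=-1$), and then concludes the corollary by the same nondegeneracy argument you spell out. Your write-up merely makes explicit the steps (orthogonality $g(pr_iX,pr_iY)=g(pr_iX,Y)$ and the membership $fwX\in D$) that the paper leaves implicit.
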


\medskip 
In view of (\ref{42}) and (\ref{51}), we get: 

\begin{proposition}\label{p3}
\begin{align*}
f(w(D_i)) &=D_i \,\text{ for } i=\overline{1,k}\,;\\ 
f(G) &=\oplus_{i=1}^k{D_i}\,. 
\end{align*}
Moreover, for any $i\in \{1,\ldots,k\}$, $w|_{D_i}$ and $f|_{w(D_i)}$ are injective; therefore, $w(D_i)$ and $D_i$ localized in any point of $M$ are isomorphic; hence, $w(D_i)$ is also regular as $D_i$ is, and they both have the same dimension.
\end{proposition}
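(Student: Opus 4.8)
The plan is to run everything off the identity (\ref{51}) from Corollary \ref{p69}. Specializing it to a vector field $X\in D_i$, for which $pr_iX=X$ and $pr_jX=0$ when $j\neq i$, collapses the sum to a single term and gives $f(wX)=-\sin^2\theta_i\cdot X$. The only structural input I will need beyond this is that $\theta_i\in(0,\frac{\pi}{2}]$, whence $\sin^2\theta_i>0$; thus, fibrewise, the composite $f_x\circ w_x$ acts on $(D_i)_x$ as multiplication by the nonzero constant $-\sin^2\theta_i$. This single nondegeneracy fact drives all the assertions.

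From here I would first read off $f(w(D_i))=D_i$. Indeed, at each $x$ the restriction of $f_x\circ w_x$ to $(D_i)_x$ is a scalar multiple (by $-\sin^2\theta_i\neq0$) of the identity, hence carries $(D_i)_x$ bijectively onto itself; since $(w(D_i))_x=w_x((D_i)_x)$, applying $f_x$ yields exactly $(D_i)_x$. For the second equality I would invoke the decomposition $G=\oplus_{i=1}^k w(D_i)\oplus H$ of Theorem \ref{p1}, in which $f(H)=\{0\}$; applying $f$ and using its fibrewise linearity together with the mutual orthogonality of the $D_i$'s gives $f(G)=\sum_{i=1}^k f(w(D_i))=\oplus_{i=1}^k D_i$, which also sharpens the inclusion (\ref{42}) to an equality.

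The two injectivity claims are immediate consequences of the same identity. If $X\in D_i$ with $wX=0$, then $-\sin^2\theta_i\cdot X=f(wX)=0$ forces $X=0$, so $w|_{D_i}$ is injective. Likewise, any element of $w(D_i)$ has the form $wX$ with $X\in D_i$, and $f(wX)=0$ again yields $X=0$, hence $wX=0$, so $f|_{w(D_i)}$ is injective. Finally, $w|_{D_i}$ is by construction a surjection $D_i\to w(D_i)$ and, being injective, a fibrewise linear isomorphism; consequently $\dim (w(D_i))_x=\dim (D_i)_x$ at every $x\in M$, and since $D_i$ is regular the distribution $w(D_i)$ is regular of the same dimension.

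I do not anticipate a genuine obstacle: once (\ref{51}) is available the statement is essentially formal. The only points requiring care are to interpret the symbol $f(w(D_i))$ fibrewise, so that the scalar-multiplication argument applies, and to recall that each slant angle $\theta_i$, and therefore each $\sin^2\theta_i$, is constant on $M$, which is exactly what guarantees that $f_x\circ w_x$ is honest scalar multiplication on $(D_i)_x$ by the same scalar at every point.
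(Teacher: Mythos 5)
Your proposal is correct and follows essentially the same route as the paper, which obtains the proposition directly from the inclusion (\ref{42}) and the key identity (\ref{51}): specializing $fwX=-\sum_{i=1}^k\sin^2\theta_i\cdot pr_iX$ to $X\in D_i$ and using $\sin^2\theta_i\neq 0$ is exactly the intended argument for $f(w(D_i))=D_i$, the injectivity statements, and the dimension/regularity conclusions. Your only deviation—invoking the decomposition $G=\oplus_{i=1}^kw(D_i)\oplus H$ with $f(H)=\{0\}$ from Theorem \ref{p1} instead of citing (\ref{42}) to get $f(G)=\oplus_{i=1}^kD_i$—is immaterial, since that theorem itself rests on (\ref{42}).
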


\begin{remark}\label{p70}
If $\theta_j=\frac{\pi}{2}$, then, for any $X_j\in D_j$, we have
\begin{equation}\label{54}
fwX_j=-X_j
\end{equation}
and $wfwX_j=- wX_j$, which implies
$wfU_j=-U_j$ for any \ $U_j\in w(D_j)$; 
thus, $f|_{w(D_j)}: w(D_j) \rightarrow D_j$ and $w|_{D_j}: D_j \rightarrow w(D_j)$ are anti-inverse to each other.
\end{remark}

\begin{proposition}\label{p4}
For any $X\in D\oplus \langle \xi\rangle$ and $U\in G$, we have:
\begin{align}
f^2X+fwX &=-X+ \eta(X) \xi, \nonumber\\ 
wfX+w^2X &=0, \nonumber \\ 
f^2U+fwU &=0, \nonumber
\\ 
wfU+w^2U &=-U. \nonumber
\end{align}
\end{proposition}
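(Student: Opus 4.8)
The plan is to derive all four identities simultaneously from the single structure relation $\varphi^2=\epsilon(I-\eta\otimes\xi)$, which for $\epsilon=-1$ reads $\varphi^2 Z=-Z+\eta(Z)\xi$ for every $Z\in T\overline M$, and then simply to read off the $D$- and $G$-components separately in the two cases $Z=X\in D\oplus\langle\xi\rangle$ and $Z=U\in G$.

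First I would record the two preliminary facts about the decomposition that make the argument go through. For any $Z$ we have $\varphi Z=fZ+wZ$ with $fZ\in D$ and $wZ$ the component of $\varphi Z$ in $D^{\perp}$; since $\eta(\varphi Z)=0$ and $\xi\perp D$, the $\langle\xi\rangle$-component of $\varphi Z$ vanishes, so in fact $wZ\in G$. Consequently $f$ takes values in $D$ and $w$ takes values in $G$, so applying $\varphi$ once more and splitting $\varphi(fZ)$ and $\varphi(wZ)$ each into its $D$- and $G$-parts yields
\[
\varphi^2 Z=\bigl(f^2Z+fwZ\bigr)+\bigl(wfZ+w^2Z\bigr),
\]
where the first bracket lies in $D$ and the second in $G$.

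The remaining step is to compute the $D$- and $G$-components of the right-hand side $-Z+\eta(Z)\xi$ in the two cases. If $X\in D\oplus\langle\xi\rangle$, then writing $X=X'+\eta(X)\xi$ with $X'\in D$ gives $-X+\eta(X)\xi=-X'\in D$, so its $G$-part is zero; matching the $D$- and $G$-parts of $\varphi^2 X$ against this produces exactly $f^2X+fwX=-X+\eta(X)\xi$ and $wfX+w^2X=0$. If $U\in G$, then $\eta(U)=0$ and $-U+\eta(U)\xi=-U\in G$, whose $D$-part is zero; matching components yields $f^2U+fwU=0$ and $wfU+w^2U=-U$.

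The only delicate point is the bookkeeping of where each term lands: one must be sure that $wZ$ carries no $\langle\xi\rangle$-component (so that it sits in $G$ rather than merely in $D^{\perp}$) and that the images of $f$ and $w$ lie in $D$ and $G$ respectively. Both follow at once from $\eta(\varphi\,\cdot\,)=0$, the orthogonality $\xi\perp D$, and the definitions of $f$ and $w$; compare also \eqref{13} and Remark \ref{p184}. Once this is settled, the four formulas are just the $D$- and $G$-projections of $\varphi^2 Z=-Z+\eta(Z)\xi$ and require no further computation.
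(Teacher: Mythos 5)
Your proof is correct and is exactly the argument the paper intends: the proposition is stated there without an explicit proof, being the immediate consequence of splitting $\varphi^2 Z=-Z+\eta(Z)\xi$ into its $D$- and $G$-components, using that $\eta\circ\varphi=0$ forces $fZ\in D$ and $wZ\in G$ (the point the paper itself records just before, and in Remark \ref{p184}). Your handling of the bookkeeping, including writing $X=X'+\eta(X)\xi$ so that $-X+\eta(X)\xi=-X'\in D$, is precisely right.
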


\medskip 
In view of Proposition \ref{p4} and Theorem \ref{p1}, we get 

\begin{corollary}\label{p60}
For any $U_0,V_0\in H$, we have: 
\begin{align}
w^2U_0&=-U_0\,,\nonumber\\  
g(wU_0,wV_0)&=g(U_0,V_0),\nonumber\\ 
|wU_0|&=|U_0|.\nonumber
\end{align}
\end{corollary}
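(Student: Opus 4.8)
The plan is to read off all three identities directly from the two quoted results, exploiting the defining property $f(H)=\{0\}$ of $H$ supplied by Theorem \ref{p1}. First I would record the structural facts I need: by Theorem \ref{p1}, $H$ is a direct summand of $G=(D\oplus\langle\xi\rangle)^{\perp}$, so in particular $H\subseteq G\subseteq D^{\perp}$, and every $U_0\in H$ satisfies $fU_0=0$.

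For the first identity I would invoke the fourth relation of Proposition \ref{p4}, namely $wfU+w^2U=-U$ for $U\in G$. Specializing to $U=U_0\in H$ and using $fU_0=0$ (hence $wfU_0=0$) immediately yields $w^2U_0=-U_0$.

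For the second identity the key tool is Lemma \ref{lema2}, which by Remark \ref{p194} is valid on the submanifold as well. Since $U_0,V_0\in H\subseteq D^{\perp}$, the relation $g(w^2U_0,V_0)=\epsilon\, g(wU_0,wV_0)$ applies; in the present almost contact metric setting $\epsilon=-1$, so substituting $w^2U_0=-U_0$ gives $-g(U_0,V_0)=-g(wU_0,wV_0)$, that is $g(wU_0,wV_0)=g(U_0,V_0)$.

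The third identity is then just the case $V_0=U_0$ together with taking nonnegative square roots: $|wU_0|^2=g(wU_0,wU_0)=g(U_0,U_0)=|U_0|^2$. There is no genuine obstacle here; the only points requiring care are the bookkeeping of the sign $\epsilon=-1$ and the observation $H\subseteq D^{\perp}$, which is precisely what licenses the application of Lemma \ref{lema2}.
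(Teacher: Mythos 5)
Your proposal is correct and follows essentially the same route as the paper: the paper obtains the corollary precisely by combining the fourth relation of Proposition \ref{p4} with the property $f(H)=\{0\}$ from Theorem \ref{p1}, exactly as you do for $w^2U_0=-U_0$. The only cosmetic difference is in the metric identity, which you get from Lemma \ref{lema2} together with $w^2U_0=-U_0$, while the paper can read it off equivalently from $\varphi U_0=wU_0$ on $H$ and the compatibility relation (\ref{3}); both are the same one-line computation in substance.
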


\medskip
For $X_i \in D_i$, $i=\overline{1,k}$, we have $w^2X_i=-wfX_i\in w(D_i)$; hence, 
$w^2(D_i)\subseteq w(D_i)$.

If $\theta_i\neq \frac{\pi}{2}$ and $Y_i\in D_i$, there exists $X_i\in D_i$ with $fX_i=-Y_i$; hence, $wY_i=w^2X_i$, so $w(D_i)\subseteq w^2(D_i)$.

If $\theta_j= \frac{\pi}{2}$ and $X_j\in D_j$, we have $fX_j=0$ and $\varphi wX_j=\varphi^2X_j=-X_j$; hence,
$w^2X_j+fwX_j=-X_j$, so $w^2X_j=0$.

We deduce: 

\begin{proposition}\label{p5}
\[
w^2(D_i)=
\begin{cases}
w(D_i) & \text{for} \ \ \theta_i\neq \frac{\pi}{2}\,,\\ 
\ \{0\} & \textit{for} \ \ \theta_i=\frac{\pi}{2}\,.
\end{cases}
\]
\end{proposition}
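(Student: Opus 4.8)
The plan is to exploit the single identity $w^2 X=-w(fX)$, valid for every $X\in D$, which is exactly the second relation of Proposition \ref{p4} (stated there for $X\in D\oplus\langle\xi\rangle$, and in particular for $X\in D$). One inclusion is then immediate and holds for all $i$: since $f(D_i)\subseteq D_i$ by Definition \ref{8}(iii), for any $X_i\in D_i$ we have $w^2 X_i=-w(fX_i)\in w(D_i)$, whence $w^2(D_i)\subseteq w(D_i)$. It only remains to decide, according to the value of $\theta_i$, whether this inclusion is an equality or whether the left-hand side collapses to $\{0\}$, and I expect this dichotomy to be governed entirely by whether $\cos^2\theta_i$ is nonzero or zero.

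First I would treat the case $\theta_i\neq\frac{\pi}{2}$ and establish the reverse inclusion $w(D_i)\subseteq w^2(D_i)$. The key point is that $f$ restricted to $D_i$ is bijective. Indeed, from (\ref{37}) we have $f^2X_i=-\cos^2\theta_i\cdot X_i$ with $\cos^2\theta_i\neq0$, so $f|_{D_i}$ has trivial kernel, and in fact $f(D_i)=D_i$ by Corollary \ref{p0}. Hence, given any $Y_i\in D_i$, there exists $X_i\in D_i$ with $fX_i=-Y_i$, and then $w^2 X_i=-w(fX_i)=wY_i$, which shows $wY_i\in w^2(D_i)$. Letting $Y_i$ range over $D_i$ gives $w(D_i)\subseteq w^2(D_i)$, and combined with the inclusion of the previous paragraph this yields $w^2(D_i)=w(D_i)$.

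For the case $\theta_i=\frac{\pi}{2}$, the distribution $D_i$ is anti-invariant, so $f$ vanishes on $D_i$: from (\ref{37}) we get $f^2X_i=-\cos^2\tfrac{\pi}{2}\cdot X_i=0$, and, since $f$ is skew-symmetric, Lemma \ref{lema2} gives $|fX_i|^2=\epsilon\,g(f^2X_i,X_i)=0$, so $fX_i=0$ for every $X_i\in D_i$. The identity $w^2 X_i=-w(fX_i)$ then forces $w^2 X_i=0$, i.e.\ $w^2(D_i)=\{0\}$.

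The computations here are routine once the identity $w^2 X=-w(fX)$ is in hand; the only genuine content is recognizing that the stated dichotomy is precisely the split between $\cos^2\theta_i\neq0$ (so that $f|_{D_i}$ is invertible and can be used to produce preimages under $w^2$) and $\cos^2\theta_i=0$ (so that $f|_{D_i}=0$ and $w^2$ annihilates $D_i$). I anticipate no real obstacle beyond correctly invoking $f(D_i)=D_i$ from Corollary \ref{p0} in the non-anti-invariant case.
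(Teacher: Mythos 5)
Your proof is correct and follows essentially the same route as the paper: the inclusion $w^2(D_i)\subseteq w(D_i)$ via the identity $w^2X=-w(fX)$ from Proposition \ref{p4}, the reverse inclusion via surjectivity of $f|_{D_i}$ (Corollary \ref{p0}) when $\theta_i\neq\frac{\pi}{2}$, and the vanishing of $f$ on $D_j$ when $\theta_j=\frac{\pi}{2}$. The only cosmetic difference is in the anti-invariant case, where the paper decomposes $\varphi wX_j=\varphi^2X_j=-X_j$ into its $D$- and $G$-components (thereby also recovering $fwX_j=-X_j$), whereas you simply substitute $fX_j=0$ into the same identity $w^2X_j=-w(fX_j)$; both arguments rest on the relations of Proposition \ref{p4}.
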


\medskip 
For $\theta_j=\frac{\pi}{2}$ and $U_j=wX_j\in w(D_j)$, we get $wU_j=0$ and $wfU_j=-U_j$.

In general, for $i\in \{1,\ldots,k\}$ and $U_i\in w(D_i)$, let $X_i\in D_i$ with $U_i=\nolinebreak wX_i$. We have
$w(fU_i)=w(fwX_i)=-\sin^2\theta_i \cdot wX_i$\,, hence 
\begin{equation}\label{62}
w(fU_i)=-\sin^2\theta_i \cdot U_i 
\end{equation}
and \ $w^2U_i=w\varphi U_i-w fU_i=\varphi^2 U_i-w fU_i$. We get 
$w^2U_i=-\cos^2\theta_i \cdot U_i$\,. 

\begin{proposition}\label{p6}
For any $U\in w(D)$, $U=\sum_{i=1}^kU_i$ with $U_i\in w(D_i)$, we have:
\begin{equation}\label{64}
wfU= -\sum_{i=1}^k\sin^2\theta_i\cdot U_i\,, 
\end{equation}
\begin{equation}\label{65}
w^2U= -\sum_{i=1}^k\cos^2\theta_i\cdot U_i\,.
\end{equation}
\end{proposition}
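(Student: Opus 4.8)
The plan is to reduce both identities to their one-block versions, which are already available from the computation immediately preceding the statement, and then extend them by additivity across the orthogonal splitting $w(D)=\oplus_{i=1}^k w(D_i)$.

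First I would record the single-block identities: for each $i$ and each $U_i\in w(D_i)$ one has $wfU_i=-\sin^2\theta_i\cdot U_i$, which is (\ref{62}), and $w^2U_i=-\cos^2\theta_i\cdot U_i$. Both follow by writing $U_i=wX_i$ with $X_i\in D_i$: Corollary \ref{p69} gives $fwX_i=-\sin^2\theta_i\cdot X_i$, whence $wfU_i=w(fwX_i)=-\sin^2\theta_i\cdot U_i$; and, since $U_i\in G\subseteq\langle\xi\rangle^{\perp}$ so that $\eta(U_i)=0$, relation (\ref{7}) yields $w\varphi U_i=\varphi^2U_i=-U_i$, so that $w^2U_i=w\varphi U_i-wfU_i=-U_i+\sin^2\theta_i\cdot U_i=-\cos^2\theta_i\cdot U_i$.

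Next, for an arbitrary $U\in w(D)$ I would use the decomposition $U=\sum_{i=1}^k U_i$ with $U_i\in w(D_i)$. Since $f$ and $w$ are the $D$- and $G$-components of the $(1,1)$-tensor field $\varphi$, they are $C^\infty(M)$-linear, hence additive, and so are $wf$ and $w^2$. Applying them termwise and summing the single-block identities gives $wfU=\sum_{i=1}^k wfU_i=-\sum_{i=1}^k\sin^2\theta_i\cdot U_i$ and $w^2U=\sum_{i=1}^k w^2U_i=-\sum_{i=1}^k\cos^2\theta_i\cdot U_i$, which are exactly (\ref{64}) and (\ref{65}).

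The only point deserving attention is that the decomposition $U=\sum_{i=1}^k U_i$ must be legitimate and, for the right-hand sides to be unambiguous, unique; that is, the sum $\oplus_{i=1}^k w(D_i)$ must be direct. This is secured by (\ref{17}), the mutual orthogonality of the subspaces $w(D_i)$, which in the present almost contact metric setting is automatic because $\varphi$ acts isometrically on $\langle\xi\rangle^{\perp}\supseteq D$ (Remark \ref{p33}); it is also recorded in Proposition \ref{p64}. Since everything beyond this reduces to the additivity of the components of $\varphi$, no genuine obstacle remains: the whole content of the proposition is carried by the per-block identities established just above, and the statement is their linear extension.
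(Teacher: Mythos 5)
Your proof is correct and follows essentially the same route as the paper: it writes $U_i=wX_i$, obtains $wfU_i=-\sin^2\theta_i\cdot U_i$ from Corollary \ref{p69}, computes $w^2U_i=w\varphi U_i-wfU_i=\varphi^2U_i-wfU_i=-\cos^2\theta_i\cdot U_i$, and extends by linearity over the orthogonal decomposition $w(D)=\oplus_{i=1}^k w(D_i)$. The only difference is that you make explicit the additivity step and the directness of the sum (via (\ref{17}) and Remark \ref{p33}), which the paper leaves implicit after Theorem \ref{p1}.
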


\medskip 
In view of Proposition \ref{p6} and Lemma \ref{lema1}, we get  

\begin{proposition}\label{p67}
For any $U,V\in w(D)$, $U=\sum_{i=1}^kU_i$, $V=\sum_{i=1}^kV_i$ with $U_i,V_i\in w(D_i)$, $i=\overline{1,k}$, we have: 
\begin{align}
g(fU,fV)&=\sum_{i=1}^k \sin^2 \theta_i \cdot g(U_i, V_i), \nonumber\\ 
g(wU,wV)&=\sum_{i=1}^k \cos^2 \theta_i \cdot g(U_i, V_i), \nonumber\\ 
g(\varphi U,\varphi V)&= \sum_{i=1}^k g(U_i, V_i). \nonumber
\end{align}

\end{proposition}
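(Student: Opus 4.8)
The plan is to express each inner product through Proposition \ref{p6} and the adjunction identities of Lemma \ref{lema1} (here with $\epsilon=-1$), the only structural input being the mutual orthogonality $w(D_i)\perp w(D_j)$ for $i\neq j$ recorded in (\ref{17}). First I would note that (\ref{17}) is available without extra hypotheses: since $D\subseteq\langle\xi\rangle^{\perp}$ and $\varphi$ restricted to $\langle\xi\rangle^{\perp}$ is an isometry (Remark \ref{p33}), the orthogonality of vector fields from $\oplus_{i=1}^k D_i$ is invariant under $\varphi$, so Remark \ref{p198} applies. This orthogonality is precisely what makes the cross terms vanish: for $V_i\in w(D_i)$ one has $g(V_i,U)=g(V_i,U_i)$, and likewise $g(U,V_i)=g(U_i,V_i)$.

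For the first identity I would start from $fV\in D$ and apply the second relation of Lemma \ref{lema1} to the pair $(fV,U)\in D\times D^{\perp}$, obtaining $g(fV,fU)=-g(wfV,U)$. Substituting $wfV=-\sum_{i=1}^k\sin^2\theta_i\, V_i$ from Proposition \ref{p6} and collapsing the cross terms via (\ref{17}) then gives $g(fU,fV)=\sum_{i=1}^k\sin^2\theta_i\, g(U_i,V_i)$.

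For the second identity I would apply the third relation of Lemma \ref{lema1} to the pair $(U,wV)\in D^{\perp}\times D^{\perp}$, which yields $g(wU,wV)=-g(U,w^2V)$. Inserting $w^2V=-\sum_{i=1}^k\cos^2\theta_i\, V_i$ from Proposition \ref{p6} and again using (\ref{17}) to annihilate the cross terms produces $g(wU,wV)=\sum_{i=1}^k\cos^2\theta_i\, g(U_i,V_i)$.

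The third identity then follows with no further computation: since $\varphi U=fU+wU$ and $\varphi V=fV+wV$ with $fU,fV\in D$ and $wU,wV\in D^{\perp}$, the orthogonality of $D$ and $D^{\perp}$ gives $g(\varphi U,\varphi V)=g(fU,fV)+g(wU,wV)$, and adding the two previous lines with $\sin^2\theta_i+\cos^2\theta_i=1$ yields $\sum_{i=1}^k g(U_i,V_i)$. (Alternatively, (\ref{3}) together with $\eta(U)=\eta(V)=0$ reduces it directly to $g(U,V)=\sum_{i=1}^k g(U_i,V_i)$.) There is no genuine obstacle here; the only points requiring care are applying each identity of Lemma \ref{lema1} to arguments lying in the correct summand ($D$ versus $D^{\perp}$) and confirming the availability of (\ref{17}) as noted above.
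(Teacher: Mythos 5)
Your proof is correct and follows essentially the same route as the paper, which derives Proposition \ref{p67} precisely ``in view of Proposition \ref{p6} and Lemma \ref{lema1}''; you have simply filled in the details (the adjunction identities applied to $(fV,U)$ and $(U,wV)$, the substitution of $wfV$ and $w^2V$, and the collapse of cross terms via the orthogonality of the $w(D_i)$'s). Your preliminary remark that (\ref{17}) holds automatically here, since $\varphi|_{\langle\xi\rangle^{\perp}}$ is an isometry (Remark \ref{p33}) and $D\perp\xi$, matches the paper's own justification for the orthogonal decomposition in Theorem \ref{p1}.
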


\medskip 
From (\ref{38}), (\ref{65}), and Lemma \ref{lema2}, we obtain 

\begin{proposition}\label{p7}
For any $X=\sum_{i=0}^kX_i$ and $U=\sum_{i=1}^kU_i$ with $X_0\in D_0$, $X_i\in D_i$, and $U_i\in w(D_i)$, $i=\overline{1,k}$, we have: 
$$|fX|^2={\sum_{i=0}^k\cos^2\theta_i\cdot |X_i|^2}\ \text{ and }\ |wU|^2={\sum_{i=1}^k\cos^2\theta_i\cdot  |U_i|^2}.$$

In particular, we get:\, $|fX_0|= |X_0|$ for $X_0\in D_0$, 
$$|fX_i|=\cos\theta_i\cdot |X_i| \text{ and } |wU_i|=\cos\theta_i\cdot  |U_i| \text{ for } X_i\in D_i,\, U_i\in w(D_i),\, i=\overline{1,k}.
$$
\end{proposition}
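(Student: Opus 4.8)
The plan is to derive both identities in Proposition \ref{p7} directly from results already established, using the orthogonal decomposition $X=\sum_{i=0}^k X_i$ and the fact that the pieces behave independently under $f$ and $w$. First I would handle $|fX|^2$. By definition, $|fX|^2=g(fX,fX)$, and since $fX=\sum_{i=0}^k f X_i$ with each $fX_i\in D_i$ (using $f(D_i)\subseteq D_i$ from Definition \ref{8}(iii) and $f(D_0)=D_0$ from (\ref{9})), the cross terms vanish by orthogonality of the $D_i$'s. Thus $|fX|^2=\sum_{i=0}^k |fX_i|^2$. Now for each $i$, (\ref{34}) gives $|fX_i|=\cos\theta_i\cdot|\varphi X_i|$, and since $X_i\in D_i\subseteq\langle\xi\rangle^\perp$, identity (\ref{7}) yields $|\varphi X_i|=|X_i|$; hence $|fX_i|^2=\cos^2\theta_i\cdot|X_i|^2$. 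Summing gives the first claim. (For the invariant component, $\theta_0=0$ gives $|fX_0|=|X_0|$, consistent with $f(D_0)=D_0$ and $\varphi$ acting isometrically on $\langle\xi\rangle^\perp$.)

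For the second identity I would run the entirely parallel argument for $w$ on $w(D)$. Writing $U=\sum_{i=1}^k U_i$ with $U_i\in w(D_i)$, Proposition \ref{p67} already records that $g(wU,wV)=\sum_{i=1}^k\cos^2\theta_i\cdot g(U_i,V_i)$; setting $V=U$ and taking $g(U_i,U_i)=|U_i|^2$ immediately gives $|wU|^2=\sum_{i=1}^k\cos^2\theta_i\cdot|U_i|^2$. Alternatively, one can argue from (\ref{65}): since $w^2U=-\sum_{i=1}^k\cos^2\theta_i\cdot U_i$ and the $w(D_i)$ are mutually orthogonal (by (\ref{17})), Lemma \ref{lema2} gives $|wU|^2=g(w^2U,U)\cdot(-\epsilon)$ with $\epsilon=-1$, reproducing the same sum. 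Either route is routine once the orthogonality of the summands is in hand.

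The ``in particular'' statements then fall out by specializing: for a single block $X=X_i\in D_i$ the sum collapses to one term, giving $|fX_i|=\cos\theta_i\cdot|X_i|$, and similarly $U=U_i\in w(D_i)$ gives $|wU_i|=\cos\theta_i\cdot|U_i|$; the case $X=X_0$ with $\theta_0=0$ gives $|fX_0|=|X_0|$.

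The only point requiring genuine care — the ``main obstacle,'' such as it is — is justifying the vanishing of all cross terms, i.e. that $fX_i\perp fX_j$ and $wU_i\perp wU_j$ for $i\neq j$. For $f$ this is immediate because $fX_i\in D_i$ and the $D_i$ are orthogonal by hypothesis; for $w$ it is precisely (\ref{17}), $w(D_i)\perp w(D_j)$, which has already been established (under the standing decomposition, via Definition \ref{8}(iii) and the computation preceding Remark \ref{p198}). Since both orthogonality facts are available earlier in the section, the proof reduces to assembling (\ref{34}), (\ref{7}), these orthogonality relations, and Proposition \ref{p67}, with no new computation needed.
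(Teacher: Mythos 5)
Your proof is correct and follows essentially the paper's own route: the paper's proof is a one-line citation of (\ref{38}), (\ref{65}), and Lemma \ref{lema2}, and your assembly of (\ref{34}), (\ref{7}), the orthogonality of the components, and Proposition \ref{p67} (with $V=U$) is the same circle of previously established facts. One slip in your parenthetical alternative: Lemma \ref{lema2} gives $|wU|^2=\epsilon\, g(w^2U,U)$, so with $\epsilon=-1$ the correct factor is $\epsilon=-1$, not $-\epsilon=+1$; as written, $g(w^2U,U)\cdot(-\epsilon)=-\sum_{i=1}^k\cos^2\theta_i\,|U_i|^2$ would come out negative, though this does not affect your primary (and sufficient) argument via Proposition \ref{p67}.
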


\begin{corollary}\label{p15}
For any $i\in \{1,\ldots,k\}$, $X_i\in D_i$, $U_i\in w(D_i)$, we have: 
$$|wX_i|=\sin\theta_i \cdot |X_i| \ \text{ and }\ |fU_i|=\sin\theta_i \cdot |U_i|.$$

\end{corollary}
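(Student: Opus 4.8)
The plan is to exploit the orthogonal splitting $\varphi Z = fZ + wZ$, in which $fZ$ lies in $D$ and $wZ$ in $D^{\perp}$, so that the Pythagorean identity $|\varphi Z|^2 = |fZ|^2 + |wZ|^2$ holds at every point of $M$. Combined with the isometry of $\varphi$ on $\langle\xi\rangle^{\perp}$ recorded in Remark \ref{p33} and the norm formulas already established in Proposition \ref{p7}, both asserted equalities will follow by a short computation.

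For the first identity, I would fix $X_i\in D_i$. Since $\xi\perp D$, we have $D_i\subseteq\langle\xi\rangle^{\perp}$, so Remark \ref{p33} yields $|\varphi X_i|=|X_i|$. The components $fX_i$ and $wX_i$ are orthogonal, whence $|X_i|^2 = |\varphi X_i|^2 = |fX_i|^2 + |wX_i|^2$. Substituting the value $|fX_i|=\cos\theta_i\cdot|X_i|$ from Proposition \ref{p7} gives $|wX_i|^2 = (1-\cos^2\theta_i)\,|X_i|^2 = \sin^2\theta_i\cdot|X_i|^2$, and taking square roots produces $|wX_i|=\sin\theta_i\cdot|X_i|$.

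For the second identity, I would argue symmetrically with $U_i\in w(D_i)$. Because $w(D_i)\subseteq G=(D\oplus\langle\xi\rangle)^{\perp}$, we again have $U_i\perp\xi$, so $|\varphi U_i|=|U_i|$. The same Pythagorean decomposition gives $|U_i|^2 = |fU_i|^2 + |wU_i|^2$, and inserting $|wU_i|=\cos\theta_i\cdot|U_i|$ from Proposition \ref{p7} yields $|fU_i|=\sin\theta_i\cdot|U_i|$.

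There is no genuine obstacle here: once one observes that both $D_i$ and $w(D_i)$ sit inside $\langle\xi\rangle^{\perp}$, so that $\varphi$ acts isometrically on them, the two statements are immediate consequences of Pythagoras applied to the orthogonal pair $(fZ,wZ)$ together with the companion norm formulas of Proposition \ref{p7}. The only point requiring a moment's care is the placement of $w(D_i)$ inside $\langle\xi\rangle^{\perp}$, which is read off directly from the definition $G=(D\oplus\langle\xi\rangle)^{\perp}$ and the inclusion $w(D_i)\subseteq G$ established in Theorem \ref{p1}.
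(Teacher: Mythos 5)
Your proof is correct and follows essentially the same route as the paper, which states Corollary \ref{p15} without proof as an immediate consequence of Proposition \ref{p7}: one combines the norm formulas $|fX_i|=\cos\theta_i\cdot|X_i|$ and $|wU_i|=\cos\theta_i\cdot|U_i|$ with the Pythagorean splitting $|\varphi Z|^2=|fZ|^2+|wZ|^2$ and the isometry of $\varphi$ on $\langle\xi\rangle^{\perp}$ (Remark \ref{p33}), exactly as you do. The same identities can equally be read off by specializing the bilinear formulas $g(wX,wY)$ of Proposition \ref{p2} and $g(fU,fV)$ of Proposition \ref{p67} to $X=Y=X_i$ and $U=V=U_i$, but this is the same computation in polarized form.
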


For more general vector fields, we obtain

\begin{proposition}\label{p16}
For $X=\sum_{i=1}^kX_i$ and $U=\sum_{i=1}^kU_i$ with $X_i\in D_i$, 
$U_i\in\nolinebreak w(D_i)$, $i=\overline{1,k}$, we have: 
$$|wX|^2={\sum_{i=1}^k\sin^2\theta_i\cdot |X_i|^2}\ \text{ and }\ |fU|^2={\sum_{i=1}^k\sin^2\theta_i\cdot |U_i|^2}.$$
\end{proposition}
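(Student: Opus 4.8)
The plan is to expand each squared norm by the Pythagorean theorem and then invoke the componentwise estimates already recorded in Corollary \ref{p15}; the only substantive point is to verify that the relevant summands are mutually orthogonal, and this is furnished by orthogonality relations established earlier in this section.

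For the first identity, I would write $wX=\sum_{i=1}^k wX_i$. Since we are in the almost contact metric setting, where $\varphi$ restricted to $\langle\xi\rangle^\perp$ is an isometry (Remark \ref{p33}) and $D\subseteq\langle\xi\rangle^\perp$ because $\xi\perp D$, the orthogonality of vector fields from $\oplus_{i=1}^k D_i$ is automatically invariant under $\varphi$, so relation (\ref{17}), namely $w(D_i)\perp w(D_j)$ for $i\neq j$, applies. Hence the vectors $wX_i$ are mutually orthogonal, and the Pythagorean theorem together with Corollary \ref{p15} gives $|wX|^2=\sum_{i=1}^k|wX_i|^2=\sum_{i=1}^k\sin^2\theta_i\cdot|X_i|^2$.

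For the second identity, I would write $fU=\sum_{i=1}^k fU_i$ and use Proposition \ref{p3}, which asserts $f(w(D_i))=D_i$; thus $fU_i\in D_i$ for each $i$. Since the $D_i$'s are mutually orthogonal by the defining decomposition $D=\oplus_{i=0}^k D_i$, the vectors $fU_i$ are mutually orthogonal as well, and applying the Pythagorean theorem and the second estimate of Corollary \ref{p15} yields $|fU|^2=\sum_{i=1}^k|fU_i|^2=\sum_{i=1}^k\sin^2\theta_i\cdot|U_i|^2$. There is essentially no obstacle: the statement is a direct consequence of orthogonality—supplied by (\ref{17}) for the $w$-images and by the orthogonal decomposition of $D$ for the $f$-images—combined with the single-component norms of Corollary \ref{p15}. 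The one thing worth double-checking is precisely that the orthogonality hypothesis underlying (\ref{17}) is met here without extra assumptions, which follows from Remark \ref{p33}.
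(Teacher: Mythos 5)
Your proof is correct and follows essentially the paper's own route: the statement is the specialization (taking $Y=X$ and $V=U$) of the bilinear identities in Propositions \ref{p2} and \ref{p67}, whose derivations rest on exactly the orthogonality facts you verify explicitly, namely (\ref{17}) (justified via Remark \ref{p33} since $D\perp\xi$) and $f(w(D_i))=D_i\perp D_j$ from Proposition \ref{p3}. Your Pythagorean assembly from Corollary \ref{p15} is the same computation with the vanishing of the cross-terms made explicit, and your check that the hypothesis underlying (\ref{17}) holds automatically in the almost contact metric setting is exactly right.
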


\medskip 
Taking into account Propositions \ref{p2}, \ref{p7}, Corollary \ref{p60}, and relationships (\ref{65}), (\ref{3}), and (\ref{7}), we get 

\begin{proposition}\label{p13}
For any $i\in \{1,\ldots ,k\}$ with $\theta_i\neq \frac{\pi}{2}$ and any $X_i, Y_i\in \nolinebreak D_i\verb=\=\{0\}$, $U_i, V_i\in w(D_i)\verb=\=\{0\}$, $X_0, Y_0\in D_0\verb=\=\{0\}$, 
$U_0, V_0\in H\verb=\=\{0\}$, $\overline{X}, \overline{Y}\in (D\oplus G)\verb=\=\{0\}$ such that ${M}_{\widehat{X_i,Y_i}}\,, {M}_{\widehat{X_0,Y_0}}\,, {M}_{\widehat{U_i,V_i}}\,, {M}_{\widehat{U_0,V_0}}\,, {M}_{\widehat{\overline{X},\overline{Y}}}$ are nonempty, we have: \\ 
\hspace*{7pt} (i) \ \;$\cos(\widehat{fX_0,fY_0})=\cos(\widehat{\varphi X_0,\varphi Y_0})=\cos(\widehat{X_0,Y_0})$;\\ 
\hspace*{7pt} (ii) \ $\cos(\widehat{fX_i,fY_i})=\cos(\widehat{\varphi X_i,\varphi Y_i})=\cos(\widehat{X_i,Y_i})$;\\ 
\hspace*{7pt} (iii) \,$g(wU_i,wV_i)=\cos^2\theta_i \cdot g(U_i,V_i)$;\\ 
\hspace*{7pt} (iv) \;$\cos(\widehat{wU_0,wV_0})=\cos(\widehat{U_0,V_0})=\cos(\widehat{\varphi U_0,\varphi V_0})$;\\ 
\hspace*{7pt} (v) \ \;$\cos(\widehat{wU_i,wV_i})=\cos(\widehat{U_i,V_i})=\cos(\widehat{\varphi U_i,\varphi V_i})$;\\ 
\hspace*{7pt} (vi) \ $\cos(\widehat{\varphi \overline{X},\varphi \overline{Y}})=\cos(\widehat{\overline{X},\overline{Y}})$.
\end{proposition}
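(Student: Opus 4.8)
The plan is to exploit a single uniform mechanism throughout: on each homogeneous piece of the decomposition, the relevant operator ($\varphi$, $f$, or $w$) acts as a scalar multiple of an isometry, and a conformal map preserves the cosine of every angle because the inner product in the numerator and the product of norms in the denominator pick up the same scalar factor, which then cancels. So each item reduces to reading off the appropriate inner-product formula and the matching norm formula from the earlier results.

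For (i) and (ii), I would first record that $D_0,D_i\subseteq\langle\xi\rangle^{\perp}$ (since $\xi\perp D$), so by (\ref{3}) and (\ref{7}) the map $\varphi$ is an isometry on each of them; this gives $g(\varphi\,\cdot,\varphi\,\cdot)=g(\cdot,\cdot)$ and $|\varphi\,\cdot|=|\cdot|$, hence the $\varphi$-equalities $\cos(\widehat{\varphi X_0,\varphi Y_0})=\cos(\widehat{X_0,Y_0})$ and $\cos(\widehat{\varphi X_i,\varphi Y_i})=\cos(\widehat{X_i,Y_i})$. For the $f$-equality in (ii), I would feed $X_i,Y_i\in D_i$ into Proposition \ref{p2} (only the $i$-th projection survives) to get $g(fX_i,fY_i)=\cos^2\theta_i\,g(X_i,Y_i)$, and into Proposition \ref{p7} to get $|fX_i|=\cos\theta_i\,|X_i|$; dividing, the factor $\cos^2\theta_i$ cancels and the angle is unchanged. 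For (i), $D_0$ is invariant, so $fX_0=\varphi X_0$ and the $f$- and $\varphi$-statements coincide.

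For (iii) I would simply specialize the $g(wU,wV)$ formula of Proposition \ref{p67} to $U_i,V_i\in w(D_i)$, where only the $i$-th term remains. For (v), I would combine (iii) with the norm relation $|wU_i|=\cos\theta_i\,|U_i|$ from Proposition \ref{p7}: again numerator and denominator carry the same factor $\cos^2\theta_i$, so the cosine is preserved, while the $\varphi$-equality follows from the $g(\varphi U,\varphi V)$ formula in Proposition \ref{p67} together with $\varphi$ being an isometry on $w(D_i)\subseteq G\subseteq\langle\xi\rangle^{\perp}$. For (iv), I would use Corollary \ref{p60}, which says $w$ is an isometry on $H$, to obtain the first equality, and then recall from Theorem \ref{p1} that $f(H)=\{0\}$, whence $\varphi U_0=wU_0$ on $H$, giving the $\varphi$-equality. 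Finally, for (vi), since $D\oplus G=\langle\xi\rangle^{\perp}$, the map $\varphi$ is an isometry on $D\oplus G$ by (\ref{3}) and (\ref{7}), so the cosine is trivially preserved.

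The only points requiring care are bookkeeping ones: one must check that $\theta_i\neq\frac{\pi}{2}$ throughout (ii) and (v) so that the denominators $|fX_i|$ and $|wU_i|$ do not vanish, which is exactly the hypothesis restricting $i$ to those indices with $\theta_i\neq\frac{\pi}{2}$, and one must confirm that the sets ${M}_{\widehat{\cdot,\cdot}}$ are nonempty so that the angular functions are defined, which is assumed in the statement. No genuine difficulty is expected; the whole proposition is a coordinated application of the scaling behaviour already isolated in Propositions \ref{p2}, \ref{p7}, \ref{p67} and Corollary \ref{p60}.
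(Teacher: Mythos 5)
Your proposal is correct and follows essentially the same route as the paper, which proves this proposition simply by invoking Propositions \ref{p2}, \ref{p7}, Corollary \ref{p60}, and relations (\ref{65}), (\ref{3}), (\ref{7}) — i.e., exactly the inner-product/norm scaling relations you combine, with the only cosmetic difference that you reach item (iii) via Proposition \ref{p67} instead of relation (\ref{65}) (these are equivalent, as \ref{p67} is itself derived from \ref{p6} and Lemma \ref{lema1}). Your cancellation-of-the-conformal-factor mechanism, the use of $fX_0=\varphi X_0$ on $D_0$ and $\varphi U_0=wU_0$ on $H$, and the bookkeeping on $\theta_i\neq\frac{\pi}{2}$ and the domains $M_{\widehat{\cdot,\cdot}}$ are precisely what the paper's citation list encodes.
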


\medskip

If $\theta_i\neq \frac{\pi}{2}$ and $U_i\in w(D_i)\verb=\= \{0\}$, then, for any $x\in {M}$ with $(U_i)_x\neq 0$, from Lemma \ref{lema2} and (\ref{65}), we have $w(U_i)_x \neq 0$, $\varphi (U_i)_x \neq 0$, and 
$$\cos(\widehat{(wU_i)_x,(\varphi U_i)_x})=\frac{g((wU_i)_x,(\varphi U_i)_x)}{\|(wU_i)_x\| \cdot \|(\varphi U_i)_x\|}=\cos\theta_i\,; \text{ thus, } (\widehat{\varphi U_i, G})=\theta_i\,.$$

For $\theta_j=\frac{\pi}{2}$ and $U_j=wX_j,\ X_j\in D_j\verb=\= \{0\}$, in view of (\ref{54}) and Proposition \ref{p5}, we have $fU_j=-X_j\neq 0$ and $wU_j=0$; thus, ${(\widehat{\varphi U_j, G})}=\frac{\pi}{2}$\,. We can state

\begin{theorem}\label{p9}
The distribution $G=\oplus_{i=1}^kw(D_i)\oplus H$ is a $k$-slant distribution with $H$ the invariant component and $\oplus_{i=1}^kw(D_i)$ the proper $k$-slant com\-po\-nent, the slant distribution $w(D_i)$ having the same slant angle as $D_i$ for $i= \overline{1,k}$.
\end{theorem}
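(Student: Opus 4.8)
The plan is to verify directly that the orthogonal decomposition $G=\oplus_{i=1}^k w(D_i)\oplus H$ meets the three defining conditions of a $k$-slant distribution in Definition \ref{8}, reading off the slant data from the results already established. First I would record that this really is an orthogonal decomposition into regular distributions: the summands $w(D_i)$ are mutually orthogonal by (\ref{17}) and orthogonal to $H$ by the construction in Theorem \ref{p1}, while Proposition \ref{p3} shows each $w(D_i)$ is regular, non-null, and of the same dimension as $D_i$. Consequently $H$, being the orthogonal complement of these regular summands inside the regular distribution $G$, is itself regular (and possibly null). Thus $H$ will play the role of the invariant component $G_0$ and $\oplus_{i=1}^k w(D_i)$ the role of the proper part.

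The crucial preliminary observation is the identification of the component of $\varphi$ into $G$, restricted to $G$ itself. For $U\in G$ we have $\varphi U=fU+wU$ with $fU\in D$ and $wU\in G$, the latter because $\eta(\varphi U)=0$ annihilates the $\langle\xi\rangle$-part; hence the $G$-component of $\varphi U$ is exactly $wU$. With this in hand, condition (iii) of Definition \ref{8} for $G$ reduces to the inclusion $w^2(D_i)\subseteq w(D_i)$, which is Proposition \ref{p5}, and condition (ii) reduces to the invariance $\varphi(H)=H$, already supplied by Corollary \ref{p62}.

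The heart of the argument is condition (i): that each $w(D_i)$ is $\theta_i$-slant. I would fix $x\in M$ and a nonzero $U_i\in w(D_i)_x$, and note that the orthogonal projection of $\varphi_x U_i$ onto $G_x$ is $w_x U_i$, which by condition (iii) already lies in $w(D_i)_x$; therefore $w_x U_i$ is also the orthogonal projection of $\varphi_x U_i$ onto the subspace $w(D_i)_x$, and the angle between $\varphi_x U_i$ and $w(D_i)_x$ has cosine $\|w_x U_i\|/\|\varphi_x U_i\|$. Proposition \ref{p67} gives $\|wU_i\|=\cos\theta_i\,\|U_i\|$ and $\|\varphi U_i\|=\|U_i\|$, so $\varphi_x U_i\neq 0$ and this cosine equals $\cos\theta_i$, independently of $x$ and $U_i$. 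In the degenerate case $\theta_i=\tfrac{\pi}{2}$ this reads $w_x U_i=0$ while $\varphi_x U_i=f_x U_i\neq 0$ by (\ref{54}), so $\varphi_x U_i\perp G_x$ and the angle is again $\theta_i$. Since the $\theta_i$ are distinct and lie in $(0,\tfrac{\pi}{2}]$, being inherited from the $k$-slant structure of $D$, all hypotheses of Definition \ref{8} hold and $G$ is $k$-slant with $w(D_i)$ carrying the slant angle $\theta_i$.

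I expect no genuine obstacle here, only careful bookkeeping: the one point demanding attention is that the slant angle must be measured against the correct subspace, which is precisely why the identification $\varphi U=fU+wU$ with $wU\in G$ and the inclusion $w^2(D_i)\subseteq w(D_i)$ must be invoked \emph{together}, so that the projection of $\varphi_x U_i$ onto $G_x$ is guaranteed to land in $w(D_i)_x$. Granting this, the slant angle simply drops out of the norm identities of Proposition \ref{p67}, and the theorem follows by assembling Theorem \ref{p1}, Propositions \ref{p3}, \ref{p5}, \ref{p67}, and Corollary \ref{p62}.
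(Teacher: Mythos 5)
Your proposal is correct and takes essentially the same route as the paper: both arguments verify the conditions of Definition \ref{8} for $G$ by combining Theorem \ref{p1}, Corollary \ref{p62}, and the inclusion $w^2(D_i)\subseteq w(D_i)$ (Proposition \ref{p5}), and then read off the slant angle of each $w(D_i)$ from the same metric identities --- your use of Proposition \ref{p67} amounts to exactly the computation the paper performs with Lemma \ref{lema2} and (\ref{65}) --- with the identical separate treatment of the case $\theta_j=\frac{\pi}{2}$ via (\ref{54}). The only difference is cosmetic: you spell out why the angle of $\varphi_x U_i$ with $G_x$ agrees with its angle with $w(D_i)_x$, a point the paper leaves implicit in the equivalence of Remark \ref{6}.
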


\begin{definition}\label{p10}
We will call $\oplus_{i=1}^kw(D_i)$ \textit{the dual $k$-slant distribution} of $\oplus_{i=1}^kD_i$.
\end{definition}

\begin{remark}\label{p11}
In the same way we defined the dual of the proper \mbox{$k$-slant} component $\oplus_{i=1}^kD_i$ of the distribution $D$ by means of $w$, we can construct the dual of the proper $k$-slant component $\oplus_{i=1}^kw(D_i)$ of the distribution $G$ by means of $f$, $f(\oplus_{i=1}^kw(D_i))=\oplus_{i=1}^kfw(D_i)$. 
\end{remark}

\begin{corollary}\label{p12}
The dual of the proper $k$-slant distribution $\oplus_{i=1}^kw(D_i)$, which is $\oplus_{i=1}^kf(w(D_i))$, is precisely the $k$-slant distribution $\oplus_{i=1}^kD_i$\,.
\end{corollary}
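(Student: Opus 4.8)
The plan is to obtain the statement directly from Proposition \ref{p3}. By Definition \ref{p10} and Remark \ref{p11}, the dual $k$-slant distribution of the proper $k$-slant component $\oplus_{i=1}^k w(D_i)$ of $G$ is the distribution obtained by applying $f$ to it; here $f$ plays for $G$ the role that $w$ played for $D$, because for $U \in G$ the image $\varphi U$ has trivial $\langle\xi\rangle$-component, so its component in $G^{\perp} = D \oplus \langle\xi\rangle$ coincides with $fU \in D$. Thus the dual is $\oplus_{i=1}^k f(w(D_i))$, and the whole task reduces to identifying this with $\oplus_{i=1}^k D_i$.

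First I would invoke the per-component identity $f(w(D_i)) = D_i$, $i = \overline{1,k}$, already recorded in Proposition \ref{p3}. Since $D_1,\ldots,D_k$ are mutually orthogonal (being the proper slant components of the $k$-slant distribution $D$), taking the orthogonal sum of these equalities yields
\[
\oplus_{i=1}^k f(w(D_i)) = \oplus_{i=1}^k D_i,
\]
which is exactly the claim.

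I expect no genuine obstacle here: all the content is already packaged in Proposition \ref{p3}, whose proof in turn rested on (\ref{42}) together with the explicit formula (\ref{51}) for $fw$. The only point I would make explicit is that $f$ restricted to each $w(D_i)$ is a bijection onto $D_i$ (again Proposition \ref{p3}), so the dualizing operation is genuinely involutive on the proper components: dualizing $\oplus_{i=1}^k D_i$ to obtain $\oplus_{i=1}^k w(D_i)$ and then dualizing that result returns $\oplus_{i=1}^k D_i$.
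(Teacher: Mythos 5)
Your proposal is correct and matches the paper's (implicit) argument exactly: the corollary is stated as an immediate consequence of Remark \ref{p11}, which identifies the dual of $\oplus_{i=1}^k w(D_i)$ as $\oplus_{i=1}^k f(w(D_i))$, combined with the identity $f(w(D_i))=D_i$ from Proposition \ref{p3}. Your additional observations (why $f$ plays for $G$ the role $w$ played for $D$, and the bijectivity of $f|_{w(D_i)}$) are accurate and consistent with the paper's setup.
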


\medskip 
In view of Proposition \ref{p3} and Corollary \ref{p0}, denoting $w(D_i)$ by $G_i$, we obtain: 

\begin{proposition}\label{p66}
\begin{align*}
w(f(G_i)) &=G_i\,\text{ for }i=\overline{1,k}\,;\\ 
f^2(G_i) &=\left\{\begin{array}{ll}
D_i&\text{if }\theta_i\neq\frac{\pi}{2}\,, \\ 
\{0\}&\text{if }\theta_i= \frac{\pi}{2}\,. 
\end{array}
\right. 
\end{align*}
\end{proposition}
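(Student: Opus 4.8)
The plan is to derive both identities directly from the single relation $f(G_i)=D_i$, which is precisely the first assertion of Proposition~\ref{p3} rewritten with the notation $G_i=w(D_i)$. Once this is in hand, each line of the statement reduces to a one-step substitution, so no genuinely new computation is needed; the only real content is keeping track of the slant angles.

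For the first identity I would simply substitute $f(G_i)=D_i$ and then apply $w$, obtaining
\[
w(f(G_i))=w(D_i)=G_i,
\]
the last equality being the very definition of $G_i$. For the second identity I would write $f^2(G_i)=f\bigl(f(G_i)\bigr)=f(D_i)$ and then split into two cases according to the value of $\theta_i$. When $\theta_i\neq\frac{\pi}{2}$, Corollary~\ref{p0} gives $f(D_i)=D_i$, so $f^2(G_i)=D_i$. When $\theta_i=\frac{\pi}{2}$, I would invoke the norm relation $|fX_i|=\cos\theta_i\cdot|X_i|=0$ for every $X_i\in D_i$ (which comes from (\ref{34}) together with $|\varphi X_i|=|X_i|$ in (\ref{7}), or equivalently from Proposition~\ref{p7}); this forces $fX_i=0$, hence $f(D_i)=\{0\}$ and therefore $f^2(G_i)=\{0\}$.

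The step requiring the most care is the case $\theta_i=\frac{\pi}{2}$. Corollary~\ref{p0} covers only $\theta_i\neq\frac{\pi}{2}$, and although (\ref{37}) yields $f^2X_i=-\cos^2\theta_i\cdot X_i=0$ on $D_i$, this shows only that $f^2(D_i)=\{0\}$, not that $f(D_i)=\{0\}$ — which is what is actually needed here. Thus one cannot conclude the vanishing of $f^2(G_i)=f(D_i)$ from the vanishing of $f^2$ on $D_i$ alone; the slant-angle norm relation must be used separately to establish that $f$ itself annihilates $D_i$ in the anti-invariant case.
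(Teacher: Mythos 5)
Your proof is correct and takes essentially the same route as the paper, which obtains the proposition directly from Proposition~\ref{p3} (i.e., $f(G_i)=f(w(D_i))=D_i$) together with Corollary~\ref{p0}, after denoting $w(D_i)$ by $G_i$. Your careful handling of the case $\theta_i=\frac{\pi}{2}$ — deducing $f(D_i)=\{0\}$ from the norm relation $|fX_i|=\cos\theta_i\cdot|\varphi X_i|$ rather than from the vanishing of $f^2$ on $D_i$ — is precisely the fact the paper relies on there (it is established explicitly in the text preceding Proposition~\ref{p5}).
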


In view of (\ref{51}), we immediately get 

\begin{lemma}\label{p92}
For $X,Y\in \oplus_{i=1}^kD_i$, $U,V\in \oplus_{i=1}^kw(D_i)$, $x\in {M}$, we have: \\ 
\hspace*{7pt} (i)\, \ $X_x\neq 0$ if and only if $(wX)_x\neq 0$;\\ 
\hspace*{7pt} (ii) \ $U_x\neq 0$ if and only if $(fU)_x\neq 0$;\\ 
\hspace*{7pt} (iii) ${M}_{\widehat{X,Y}}={M}_{\widehat{wX,wY}}$ and 
${M}_{\widehat{U,V}}={M}_{\widehat{fU,fV}}$\,. 
\end{lemma}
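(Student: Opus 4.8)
The plan is to prove Lemma \ref{p92} by exploiting the length-preservation formulas already established, specifically Corollary \ref{p15} and the identity (\ref{51}), together with the duality between $f$ and $w$ on the two proper $k$-slant components. The key observation is that on each slant piece $D_i$ the map $w$ scales norms by $\sin\theta_i$, and since each $\theta_i\in(0,\frac{\pi}{2}]$ we have $\sin\theta_i\neq 0$; symmetrically, $f$ restricted to $w(D_i)$ scales norms by $\sin\theta_i$ as well. This is exactly what makes $w|_{D_i}$ and $f|_{w(D_i)}$ injective, which is the heart of all three assertions.

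First I would prove (i). Fix $X=\sum_{i=1}^k X_i$ with $X_i\in D_i$. By Proposition \ref{p16} we have the pointwise identity $|wX|^2=\sum_{i=1}^k\sin^2\theta_i\,|X_i|^2$, which localizes at each $x\in M$ to $\|(wX)_x\|^2=\sum_{i=1}^k\sin^2\theta_i\,\|(X_i)_x\|^2$. Since every $\sin^2\theta_i>0$ and the $D_i$ are mutually orthogonal, the right-hand side vanishes at $x$ if and only if every $(X_i)_x=0$, i.e.\ if and only if $X_x=0$. Hence $(wX)_x=0 \Leftrightarrow X_x=0$, which is precisely the contrapositive form of (i). Assertion (ii) is proved identically, using the companion identity $|fU|^2=\sum_{i=1}^k\sin^2\theta_i\,|U_i|^2$ from Proposition \ref{p16} for $U=\sum_{i=1}^k U_i$ with $U_i\in w(D_i)$: localizing gives $\|(fU)_x\|^2=\sum_{i=1}^k\sin^2\theta_i\,\|(U_i)_x\|^2$, and the same positivity argument yields $(fU)_x=0 \Leftrightarrow U_x=0$.

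For (iii), I would argue that the two defining sets coincide as subsets of $M$. By Definition \ref{p72}, ${M}_{\widehat{X,Y}}=\{x\in M\mid X_x\neq 0 \text{ and } Y_x\neq 0\}$, and likewise ${M}_{\widehat{wX,wY}}=\{x\in M\mid (wX)_x\neq 0 \text{ and } (wY)_x\neq 0\}$. Applying (i) to both $X$ and $Y$ shows that the condition ``$X_x\neq 0$ and $Y_x\neq 0$'' is equivalent to ``$(wX)_x\neq 0$ and $(wY)_x\neq 0$'', giving ${M}_{\widehat{X,Y}}={M}_{\widehat{wX,wY}}$. The second equality ${M}_{\widehat{U,V}}={M}_{\widehat{fU,fV}}$ follows in the same manner from (ii).

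I do not expect a serious obstacle here; the lemma is essentially a bookkeeping consequence of the norm formulas in Proposition \ref{p16}. The one point requiring a little care is the passage from the global functional identities $|wX|^2=\sum\sin^2\theta_i|X_i|^2$ to their pointwise counterparts and the observation that these norms are \emph{sums of nonnegative terms with strictly positive coefficients}, so that nullity at a point forces each summand to vanish. As an alternative (and perhaps cleaner) route for (i) and (ii), one could invoke the injectivity of $w|_{D_i}$ and $f|_{w(D_i)}$ stated in Proposition \ref{p3} directly, decomposing $X$ and $U$ into their orthogonal components and noting that $w$ maps distinct $D_i$ into mutually orthogonal $w(D_i)$ by (\ref{17}); but the norm-formula argument is the most transparent and requires no further orthogonality lemma beyond what Proposition \ref{p16} already encodes.
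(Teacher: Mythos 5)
Your proof is correct and follows essentially the same route as the paper: the paper obtains the lemma ``in view of (\ref{51})'', i.e., from $fwX=-\sum_{i=1}^k\sin^2\theta_i\cdot pr_iX$, while you invoke the norm identities of Proposition \ref{p16}, which encode exactly the same fact that $w$ (resp.\ $f$) scales each slant component by $\sin\theta_i\neq 0$, so that pointwise vanishing of $wX$ (resp.\ $fU$) forces vanishing of every component. Your localization-and-positivity argument, and the deduction of (iii) from (i)--(ii) via Definition \ref{p72}, are precisely the paper's intended one-line justification spelled out.
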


\medskip 
The relation between an angle of two vector fields of a slant distribution and the angle of the corresponding vector fields in the dual distribution will be established in the next proposition.

Taking into account (\ref{51}), (\ref{62}), Corollary \ref{p15}, and Lemmas \ref{lema2}, \ref{p92}, we deduce: 

\begin{proposition}\label{p14}
For any $i\in\{1,\ldots,k\}$, $X_i, Y_i\in D_i\verb=\=\{0\}$, and \newline$U_i, V_i\in w(D_i)\verb=\=\{0\}$ with ${M}_{\widehat{X_i,Y_i}}$ and ${M}_{\widehat{U_i,V_i}}$ nonempty, we have: \\ 
\hspace*{7pt} (i) \ \;\,$g(wX_i,wY_i)=\sin^2\theta_i \cdot g(X_i,Y_i)$;\\ 
\hspace*{7pt} (ii) \;\,$g(fU_i,fV_i)=\sin^2\theta_i \cdot g(U_i,V_i)$;\\ 
\hspace*{7pt} (iii)\, $\cos(\widehat{wX_i,wY_i})=\cos(\widehat{X_i,Y_i})$;\\ 
\hspace*{7pt} (iv)\; $\cos(\widehat{fU_i,fV_i})=\cos(\widehat{U_i,V_i})$.
\end{proposition}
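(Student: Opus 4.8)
The plan is to establish the two bilinear identities (i) and (ii) first, and then to read off the angle identities (iii) and (iv) by dividing through by the appropriate norms. Throughout I work in the almost contact metric setting of this subsection, where $\epsilon=-1$.

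For (i), since $X_i,Y_i\in D_i\subseteq D$, I would start from the identity $g(fwX,Y)=\epsilon\,g(wX,wY)$ of Lemma \ref{lema2}, rearranged as $g(wX_i,wY_i)=\epsilon\,g(fwX_i,Y_i)=-g(fwX_i,Y_i)$. Then I substitute (\ref{51}): because $pr_iX_i=X_i$, it gives $fwX_i=-\sin^2\theta_i\cdot X_i$, and the claim $g(wX_i,wY_i)=\sin^2\theta_i\,g(X_i,Y_i)$ drops out at once. For (ii) the argument is dual. Here $U_i,V_i\in w(D_i)\subseteq G\subseteq D^{\perp}$, so I would invoke the companion relation $g(wfU,V)=\epsilon\,g(fU,fV)$ of Lemma \ref{lema2}, rewritten as $g(fU_i,fV_i)=\epsilon\,g(wfU_i,V_i)=-g(wfU_i,V_i)$, and then plug in (\ref{62}), namely $w(fU_i)=-\sin^2\theta_i\cdot U_i$, to conclude $g(fU_i,fV_i)=\sin^2\theta_i\,g(U_i,V_i)$.

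For (iii) and (iv) I would pass to cosines. By definition $\cos(\widehat{wX_i,wY_i})=g(wX_i,wY_i)/(|wX_i|\,|wY_i|)$ wherever it is defined; substituting (i) in the numerator and Corollary \ref{p15}, which gives $|wX_i|=\sin\theta_i\,|X_i|$ and $|wY_i|=\sin\theta_i\,|Y_i|$, in the denominator, the factors $\sin^2\theta_i$ cancel and leave $g(X_i,Y_i)/(|X_i|\,|Y_i|)=\cos(\widehat{X_i,Y_i})$. The analogous computation with (ii) and $|fU_i|=\sin\theta_i\,|U_i|$ yields (iv).

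The computations are entirely routine once the correct auxiliary line of Lemma \ref{lema2} is selected; the only points needing care are the bookkeeping of the sign $\epsilon=-1$ and the matching of the domains on which the angular functions are defined. The cancellation above is legitimate because $D_i$ is genuinely slant, so $\sin\theta_i\neq 0$, and because the cosine functions live on the respective sets $M_{\widehat{wX_i,wY_i}}$, $M_{\widehat{X_i,Y_i}}$, etc.; Lemma \ref{p92}(iii) supplies exactly the equalities $M_{\widehat{X_i,Y_i}}=M_{\widehat{wX_i,wY_i}}$ and $M_{\widehat{U_i,V_i}}=M_{\widehat{fU_i,fV_i}}$, so both sides of (iii) and (iv) are functions on a common domain and the pointwise identities make sense. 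This domain-matching step, rather than the algebra, is where the argument could otherwise slip.
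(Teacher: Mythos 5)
Your proposal is correct and follows essentially the same route as the paper, which derives the proposition precisely from \eqref{51}, \eqref{62}, Corollary \ref{p15}, and Lemmas \ref{lema2} and \ref{p92}: the metric identities (i)--(ii) come from pairing the relevant line of Lemma \ref{lema2} with $fwX_i=-\sin^2\theta_i\cdot X_i$ and $wfU_i=-\sin^2\theta_i\cdot U_i$, and the angle identities (iii)--(iv) follow by dividing by the norms from Corollary \ref{p15}, with Lemma \ref{p92}(iii) guaranteeing the common domains. Your sign bookkeeping for $\epsilon=-1$ and the remark that $\sin\theta_i\neq 0$ are exactly the care points the paper's citation chain presupposes.
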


In view of Propositions \ref{p13} and \ref{p14}, we notice that  

\begin{theorem}\label{p21}
 $f$ and $w$ restricted to $D_i$ or $w(D_i)$, $i=\overline{1,k}$ \textup(excepting $f|_{D_j}$ and $w|_{w(D_j)}$ with $\theta_j=\frac{\pi}{2}$, in which case $f|_{D_j}$ and $w|_{w(D_j)}$ are vanishing\textup), 
 $f|_{D_0}$, $w|_H$, and $\varphi|_{D\oplus G}$ are conformal maps \textup(all of them preserve the angles\textup). 
\end{theorem}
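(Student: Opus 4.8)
The plan is to reduce Theorem \ref{p21} to the computations already carried out in Propositions \ref{p13} and \ref{p14} by recalling what ``conformal'' means here: a linear map $T$ between inner product spaces is conformal (angle-preserving) precisely when there is a scalar $c>0$ with $g(Tv,Tw)=c^2\,g(v,w)$ for all $v,w$, equivalently when $\cos(\widehat{Tv,Tw})=\cos(\widehat{v,w})$ for all nonzero $v,w$ in the domain. So the entire statement is a matter of reading off the relevant cosine identities from the earlier propositions, fibre by fibre and map by map. The parenthetical exclusions ($f|_{D_j}$ and $w|_{w(D_j)}$ when $\theta_j=\tfrac{\pi}{2}$) are exactly the cases where the scalar $c$ vanishes, so those maps fail to be conformal and are correctly set aside.

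First I would dispose of $f|_{D_i}$ and $w|_{D_i}$ for $\theta_i\neq\tfrac{\pi}{2}$. For the restriction $w|_{D_i}$, Proposition \ref{p14}(i) gives $g(wX_i,wY_i)=\sin^2\theta_i\cdot g(X_i,Y_i)$ with $\sin^2\theta_i>0$, so $w|_{D_i}$ is conformal with conformal factor $\sin\theta_i$; Proposition \ref{p14}(iii) records the angle-preservation directly. For $f|_{D_i}$, Proposition \ref{p13}(ii) gives $\cos(\widehat{fX_i,fY_i})=\cos(\widehat{X_i,Y_i})$, which is angle-preservation; the underlying scaling $g(fX_i,fY_i)=\cos^2\theta_i\cdot g(X_i,Y_i)$ (valid since $\cos^2\theta_i>0$ for $\theta_i\neq\tfrac{\pi}{2}$) follows from the first relation in Proposition \ref{p2} restricted to $D_i$ together with $f^2X_i=-\cos^2\theta_i\cdot X_i$ from (\ref{37}). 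Dually, for the restrictions to $w(D_i)$ I would invoke Proposition \ref{p67} (giving $g(fU_i,fV_i)=\sin^2\theta_i\cdot g(U_i,V_i)$ and $g(wU_i,wV_i)=\cos^2\theta_i\cdot g(U_i,V_i)$) and Proposition \ref{p14}(iv) together with Proposition \ref{p13}(v): both $f|_{w(D_i)}$ and $w|_{w(D_i)}$ are conformal when $\theta_i\neq\tfrac{\pi}{2}$, with factors $\sin\theta_i$ and $\cos\theta_i$ respectively.

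Next I would treat the invariant pieces. On $D_0$ the map $\varphi$ acts isometrically (by (\ref{7}), since $D_0\subseteq\langle\xi\rangle^{\perp}$) and $f|_{D_0}=\varphi|_{D_0}$ because $w(D_0)=\{0\}$; Proposition \ref{p13}(i) records $\cos(\widehat{fX_0,fY_0})=\cos(\widehat{X_0,Y_0})$, so $f|_{D_0}$ is a conformal (indeed isometric) map. On $H$ we have $\varphi(H)=w(H)=H$ by Corollary \ref{p62}, so $w|_{H}=\varphi|_{H}$, and Corollary \ref{p60} gives $g(wU_0,wV_0)=g(U_0,V_0)$; Proposition \ref{p13}(iv) records the angle equality, so $w|_{H}$ is conformal (isometric). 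Finally, for $\varphi|_{D\oplus G}$, Proposition \ref{p13}(vi) states $\cos(\widehat{\varphi\overline{X},\varphi\overline{Y}})=\cos(\widehat{\overline{X},\overline{Y}})$ outright, which is exactly the assertion that $\varphi$ preserves angles on $D\oplus G$; one can also see this from the isometry of $\varphi$ on $\langle\xi\rangle^{\perp}\supseteq D\oplus G$ via (\ref{7}).

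I do not expect a genuine obstacle, since every required identity is already established; the only care needed is bookkeeping: matching each of the listed maps to the correct earlier proposition and checking that the conformal factor is strictly positive exactly when $\theta_i\neq\tfrac{\pi}{2}$, which is why the two $\theta_j=\tfrac{\pi}{2}$ restrictions are excluded (there $f|_{D_j}$ and $w|_{w(D_j)}$ vanish identically, by (\ref{37}) and Proposition \ref{p6}, so no angle is defined on their images). The mild subtlety worth a sentence is that conformality should be understood pointwise on fibres $D_x$, $w(D_i)_x$, etc.; since the stated cosine identities hold at every $x$ in the relevant nonempty set ${M}_{\widehat{\cdot,\cdot}}$, the fibrewise angle-preservation assembles into the claim for the maps on the distributions, and the proof is complete.
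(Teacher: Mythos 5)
Your proposal is correct and takes essentially the same route as the paper, which obtains Theorem \ref{p21} precisely by reading off the identities of Propositions \ref{p13} and \ref{p14} (with Proposition \ref{p2}, Proposition \ref{p67}, and Corollaries \ref{p60}, \ref{p62} supplying the remaining pieces) as conformality statements. The one bookkeeping slip is that you restrict your treatment of $w|_{D_i}$ and $f|_{w(D_i)}$ to $\theta_i\neq\frac{\pi}{2}$: the theorem asserts these two maps are conformal for \emph{every} $i$, and your own citations (Proposition \ref{p14} and Proposition \ref{p67}, which hold for all $i$) give this verbatim with conformal factor $\sin\theta_i=1$ when $\theta_i=\frac{\pi}{2}$, so no such restriction is needed there.
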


From the orthogonal decompositions of $D$ and $G$ and from the above considerations, for any pair of angular compatible vector fields in a \mbox{$k$-slant} distribution, we find a corresponding pair which forms the same angle in the dual $k$-slant distribution.

\begin{theorem}\label{p17} 
For $X,Y\in (\oplus_{i=1}^kD_i)\verb=\=\{0\}$ and $U,V\in (\oplus_{i=1}^kw(D_i))\verb=\=\{0\}$ with ${M}_{\widehat{X,Y}}$ and ${M}_{\widehat{U,V}}$ nonempty, denoting $X=\sum_{i=1}^kX_i$, $Y=\sum_{i=1}^kY_i$, $U=\nolinebreak\sum_{i=1}^kU_i$, and $V=\sum_{i=1}^kV_i$, where $X_i,Y_i\in D_i$ and $U_i,V_i\in w(D_i)$, $i=\overline{1,k}$, we have: \\ 
\hspace*{7pt} (i) \ \;\,$g(wX,wY)=\sum_{i=1}^k\sin^2\theta_i \cdot g(X_i,Y_i)$;\\ 
\hspace*{7pt} (ii) \;\,$g(fU,fV)=\sum_{i=1}^k\sin^2\theta_i \cdot g(U_i,V_i)$;\\ 
\hspace*{7pt} (iii) \,$\cos(\widehat{wX,wY})=\cos\sphericalangle (\sum_{i=1}^k\sin\theta_i\cdot X_i,\ \sum_{i=1}^k\sin\theta_i\cdot Y_i)$;\\ 
\hspace*{7pt} (iv) \;$\cos(\widehat{fU,fV})=\cos\sphericalangle (\sum_{i=1}^k\sin\theta_i\cdot U_i,\ \sum_{i=1}^k\sin\theta_i\cdot V_i)$.
\end{theorem}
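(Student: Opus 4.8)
The plan is to prove parts (i) and (ii) first by expanding the inner products using the orthogonality structure already established, then derive (iii) and (iv) as immediate consequences by rewriting the weighted sums as ordinary angular functions. For part (i), I would start from the decompositions $X=\sum_{i=1}^k X_i$ and $Y=\sum_{i=1}^k Y_i$ with $X_i,Y_i\in D_i$, and compute $g(wX,wY)=\sum_{i,j}g(wX_i,wY_j)$. The key simplification comes from (\ref{17}), namely $w(D_i)\perp w(D_j)$ for $i\neq j$, which kills all off-diagonal terms and leaves $\sum_{i=1}^k g(wX_i,wY_i)$. Each diagonal term is then handled by Proposition \ref{p14}(i), giving $g(wX_i,wY_i)=\sin^2\theta_i\cdot g(X_i,Y_i)$, and summing yields (i). Part (ii) follows by the mirror-image argument: I would expand $g(fU,fV)=\sum_{i,j}g(fU_i,fV_j)$, use Lemma \ref{lema2} or the slant structure to see that $f(w(D_i))=D_i$ with the $D_i$'s mutually orthogonal to force the off-diagonal terms to vanish, and then apply Proposition \ref{p14}(ii) to the diagonal.

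For parts (iii) and (iv), the idea is to interpret the right-hand sides of (i) and (ii) as genuine cosine formulas. Introducing the auxiliary families $\tilde X=\sum_{i=1}^k\sin\theta_i\cdot X_i$ and $\tilde Y=\sum_{i=1}^k\sin\theta_i\cdot Y_i$, I would observe that by the orthogonality of the $D_i$'s we have $g(\tilde X,\tilde Y)=\sum_{i=1}^k\sin^2\theta_i\cdot g(X_i,Y_i)$, which is exactly the right-hand side of (i), and similarly $|\tilde X|^2=\sum_{i=1}^k\sin^2\theta_i\cdot|X_i|^2=|wX|^2$ by Proposition \ref{p16}, with the analogous identity for $\tilde Y$. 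Dividing $g(wX,wY)=g(\tilde X,\tilde Y)$ by $|wX|\cdot|wY|=|\tilde X|\cdot|\tilde Y|$ then gives $\cos(\widehat{wX,wY})=\cos\sphericalangle(\tilde X,\tilde Y)$, which is (iii). Part (iv) is obtained the same way using Proposition \ref{p16} for the $f$-norms on $w(D_i)$ and the orthogonality of the $w(D_i)$'s to compute $|\tilde U|$.

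I would also check the bookkeeping required to make the cosine formulas well-defined: by Lemma \ref{p92}(iii), $M_{\widehat{wX,wY}}=M_{\widehat{X,Y}}$, and on this common domain $wX$ and $wY$ are nonzero wherever $X$ and $Y$ are (Lemma \ref{p92}(i)), so the denominators do not vanish and the angular functions are defined on the same set; the auxiliary families $\tilde X,\tilde Y$ are nonzero precisely where $wX,wY$ are, since their norms agree. This justifies passing from the inner-product identities to the angle identities without domain ambiguity.

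The only real subtlety, and the step I would be most careful with, is the off-diagonal vanishing in part (ii). For (i) the relation (\ref{17}) is stated verbatim, but for (ii) I must confirm that the vectors $fU_i$ lie in mutually orthogonal spaces. Since $U_i\in w(D_i)$, Proposition \ref{p3} gives $f(w(D_i))=D_i$, and the $D_i$'s are mutually orthogonal by hypothesis, so $g(fU_i,fV_j)=0$ for $i\neq j$; this is routine but deserves an explicit line rather than an appeal to symmetry with (i). Everything else reduces to linearity of $g$ together with the per-component identities of Propositions \ref{p14} and \ref{p16}, so no genuinely hard calculation remains.
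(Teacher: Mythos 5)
Your proof is correct and takes essentially the same route the paper intends: the paper states Theorem \ref{p17} as an immediate consequence of "the orthogonal decompositions of $D$ and $G$ and the above considerations," and your argument is exactly that reasoning made explicit --- bilinearity of $g$, the orthogonality relations \eqref{17} and $f(w(D_i))=D_i$ (Proposition \ref{p3}) to kill off-diagonal terms, the per-component identities of Proposition \ref{p14}, the norm identities of Proposition \ref{p16} to identify $|wX|$ with $|\sum_i\sin\theta_i\, X_i|$, and Lemma \ref{p92} for the domain bookkeeping. The only cosmetic remark is that Proposition \ref{p14} is stated for nonzero components, but the metric identities there hold trivially when a component vanishes (both sides are zero), so your termwise application is sound.
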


\begin{corollary}\label{p18}
For $X,Y\in (\oplus_{i=1}^kD_i)\verb=\=\{0\}$ and $U,V\in (\oplus_{i=1}^kw(D_i))\verb=\=\{0\}$ with ${M}_{\widehat{X,Y}}$ and ${M}_{\widehat{U,V}}$ nonempty, denoting $X= \sum_{i=1}^kX_i$, $Y=\sum_{i=1}^kY_i$, $U=\nolinebreak\sum_{i=1}^kU_i$, and $V=\sum_{i=1}^kV_i$, where $X_i,Y_i\in D_i$ and $U_i,V_i\in w(D_i)$, $i=\overline{1,k}$, we have: \\ 
\hspace*{7pt} (i) \ \;\,$g(X,Y)=\sum_{i=1}^k\frac{1}{\sin^2\theta_i}g(wX_i,wY_i)$;\\ 
\hspace*{7pt} (ii) \;\,$g(U,V)=\sum_{i=1}^k\frac{1}{\sin^2\theta_i}g(fU_i,fV_i)$;\\ 
\hspace*{7pt} (iii) \,$\cos(\widehat{X,Y})=\cos\sphericalangle (\sum_{i=1}^k\frac{1}{\sin\theta_i}\cdot wX_i,\ \sum_{i=1}^k\frac{1}{\sin\theta_i}\cdot wY_i)$;\\ 
\hspace*{7pt} (iv) \;$\cos(\widehat{U,V})=\cos\sphericalangle (\sum_{i=1}^k\frac{1}{\sin\theta_i}\cdot fU_i,\ \sum_{i=1}^k\frac{1}{\sin\theta_i}\cdot fV_i)$.
\end{corollary}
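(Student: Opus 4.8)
The plan is to invert the relations of Theorem~\ref{p17} by exploiting the componentwise identities in Proposition~\ref{p14} together with the mutual orthogonality of the summands. The starting observation is that Proposition~\ref{p14}(i)--(ii) gives, for each fixed $i$,
\begin{equation}\nonumber
g(wX_i,wY_i)=\sin^2\theta_i\cdot g(X_i,Y_i),\qquad g(fU_i,fV_i)=\sin^2\theta_i\cdot g(U_i,V_i),
\end{equation}
so that dividing by $\sin^2\theta_i$ recovers $g(X_i,Y_i)$ and $g(U_i,V_i)$ term by term. Hence parts (i) and (ii) reduce to showing that the full inner products decompose diagonally: since the $D_i$ are mutually orthogonal and the $w(D_i)$ are mutually orthogonal by~(\ref{17}), the cross terms $g(X_i,Y_j)$ and $g(U_i,V_j)$ vanish for $i\neq j$, giving $g(X,Y)=\sum_i g(X_i,Y_i)$ and $g(U,V)=\sum_i g(U_i,V_i)$. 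Substituting the inverted componentwise identities yields (i) and (ii) immediately.

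For the angle formulas (iii) and (iv), I would introduce the auxiliary families $A:=\sum_{i=1}^k \frac{1}{\sin\theta_i}\,wX_i$ and $B:=\sum_{i=1}^k \frac{1}{\sin\theta_i}\,wY_i$, so that the right-hand side of (iii) is $\cos\sphericalangle(A,B)=g(A,B)/(\|A\|\,\|B\|)$. Expanding $g(A,B)$ and using $w(D_i)\perp w(D_j)$ for $i\neq j$ to kill the off-diagonal terms, then applying Proposition~\ref{p14}(i), gives $g(A,B)=\sum_i g(X_i,Y_i)=g(X,Y)$; the same computation with $A=B$ gives $\|A\|^2=\sum_i\|X_i\|^2=\|X\|^2$, and likewise $\|B\|=\|Y\|$, so the rescaling by $1/\sin\theta_i$ exactly cancels the $\sin\theta_i$ produced by $w$. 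Dividing shows $\cos\sphericalangle(A,B)=\cos(\widehat{X,Y})$, which is (iii). Part (iv) is the mirror image: with $A':=\sum_i\frac{1}{\sin\theta_i}fU_i$ and $B':=\sum_i\frac{1}{\sin\theta_i}fV_i$, the off-diagonal terms $g(fU_i,fV_j)$ vanish and Proposition~\ref{p14}(ii) gives $g(A',B')=g(U,V)$, $\|A'\|=\|U\|$, $\|B'\|=\|V\|$.

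The only point requiring care is the vanishing of the off-diagonal terms in part (iv): unlike the $w(D_i)$, whose orthogonality is recorded directly in~(\ref{17}), the vectors $fU_i$ live a priori in the ambient distribution, so one must invoke $f(w(D_i))=D_i$ from Proposition~\ref{p3} to place them in the mutually orthogonal $D_i$ before concluding $g(fU_i,fV_j)=0$ for $i\neq j$. Throughout, the computation is carried out pointwise on the nonempty sets ${M}_{\widehat{X,Y}}$ and ${M}_{\widehat{U,V}}$, where Lemma~\ref{p92} guarantees that the relevant vectors are simultaneously nonzero so that every angular function is well defined; with that caveat the argument is a routine bookkeeping of orthogonal sums.
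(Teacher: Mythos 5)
Your proposal is correct and is essentially the paper's own (implicit) derivation: the corollary is just Proposition \ref{p14} applied componentwise --- its metric identities extend trivially to zero components --- assembled via the mutual orthogonality of the $D_i$'s and of the $w(D_i)$'s (Theorem \ref{p1}, (\ref{17})), with $f(w(D_i))=D_i$ from Proposition \ref{p3} killing the cross terms in (iv), exactly as you do. Equivalently, one can obtain it in one line by feeding the rescaled fields $\sum_{i=1}^k\frac{1}{\sin\theta_i}X_i$, $\sum_{i=1}^k\frac{1}{\sin\theta_i}U_i$, etc.\ into Theorem \ref{p17} (the $\theta_i$ are constants in this section, so $w$ and $f$ commute with the rescaling), but that substitution unwinds to the same bookkeeping you carried out.
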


\begin{corollary}\label{p19}
For any $i\in \{1,\ldots ,k\}$, $X_i,Y_i\in D_i\verb=\=\{0\}$, $U_i,V_i\in w(D_i)\verb=\=\{0\}$ with ${M}_{\widehat{X_i,Y_i}}$ and ${M}_{\widehat{U_i,V_i}}$ nonempty, we have: \\ 
\hspace*{7pt} (i) \ \;\,$g(fwX_i,fwY_i)=\sin^4\theta_i \cdot g(X_i,Y_i)$;\\ 
\hspace*{7pt} (ii) \;\,$g(wfU_i,wfV_i)=\sin^4\theta_i \cdot g(U_i,V_i)$;\\ 
\hspace*{7pt} (iii) \,$\cos(\widehat{fwX_i,fwY_i})=\cos(\widehat{X_i,Y_i})$;\\ 
\hspace*{7pt} (iv) \;$\cos(\widehat{wfU_i,wfV_i})=\cos(\widehat{U_i,V_i})$.
\end{corollary}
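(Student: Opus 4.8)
The plan is to reduce all four parts to the two scalar identities already established. Taking $X=X_i\in D_i$ in Corollary \ref{p69} (relation (\ref{51})) makes every projection $pr_j X_i$ vanish except $pr_i X_i=X_i$, so that $fwX_i=-\sin^2\theta_i\cdot X_i$; dually, taking $U=U_i\in w(D_i)$ in Proposition \ref{p6} (relation (\ref{64})) gives $wfU_i=-\sin^2\theta_i\cdot U_i$. In other words, $fw$ restricted to $D_i$ and $wf$ restricted to $w(D_i)$ both act as multiplication by the constant $-\sin^2\theta_i$, and the whole corollary is a transcription of this fact.

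For (i) I substitute $fwX_i=-\sin^2\theta_i\,X_i$ and $fwY_i=-\sin^2\theta_i\,Y_i$ into $g(fwX_i,fwY_i)$; bilinearity pulls out the two sign factors, which cancel, and the two scalars combine into $\sin^4\theta_i$, yielding $g(fwX_i,fwY_i)=\sin^4\theta_i\cdot g(X_i,Y_i)$. Part (ii) is the verbatim dual computation, replacing $fw,X_i,Y_i$ by $wf,U_i,V_i$.

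For (iii) and (iv) I use that $\theta_i\in(0,\frac{\pi}{2}]$ forces $\sin^2\theta_i>0$, a nonzero constant since the slant angles of a $k$-slant distribution are constant. Hence $fwX_i$ is a fixed nonzero scalar multiple of $X_i$ at every point, so $(fwX_i)_x=0$ if and only if $(X_i)_x=0$; consequently ${M}_{\widehat{fwX_i,fwY_i}}={M}_{\widehat{X_i,Y_i}}$, the two angular functions are defined on the same (nonempty, by hypothesis) set, and at each such point the cosine is unchanged, because rescaling both vectors by the same nonzero scalar leaves the normalized inner product fixed. The same argument with $wf$ in place of $fw$ and $U_i,V_i$ in place of $X_i,Y_i$ gives (iv).

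There is essentially no obstacle here: the statement is an immediate consequence of (\ref{51}) and (\ref{64}). The only point deserving a word of care is the domain matching in (iii)--(iv), which is handled by the observation that $fw|_{D_i}$ and $wf|_{w(D_i)}$ are pointwise injective, being nonzero scalar multiples of the identity.
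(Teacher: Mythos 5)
Your proposal is correct and matches the paper's (implicit) derivation: the corollary rests on exactly the relations you cite, namely $fwX_i=-\sin^2\theta_i\cdot X_i$ from (\ref{51}) and $wfU_i=-\sin^2\theta_i\cdot U_i$ from (\ref{62})/(\ref{64}), from which (i)--(ii) follow by bilinearity and (iii)--(iv) because rescaling both arguments by the same nonzero scalar preserves the normalized inner product. Your domain-matching observation (that $fw|_{D_i}$ and $wf|_{w(D_i)}$ are pointwise injective, so the angular functions live on the same nonempty set) is precisely the content of Lemma \ref{p92}, so nothing is missing.
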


\begin{corollary}\label{p20} 
For $X,Y\in \oplus_{i=1}^kD_i\verb=\=\{0\}$ and $U,V\in \oplus_{i=1}^kw(D_i)\verb=\=\{0\}$ with ${M}_{\widehat{X,Y}}$ and ${M}_{\widehat{U,V}}$ nonempty, denoting $X= \sum_{i=1}^kX_i$, $Y=\sum_{i=1}^kY_i$,\linebreak $U=\nolinebreak\sum_{i=1}^kU_i$, and $V=\sum_{i=1}^kV_i$, where $X_i,Y_i\in D_i$ and $U_i,V_i\in w(D_i)$, $i=\overline{1,k}$, we have: \\ 
\hspace*{7pt} (i) \ \;\,$g(fwX,fwY)=\sum_{i=1}^k\sin^4\theta_i \cdot g(X_i,Y_i)$;\\ 
\hspace*{7pt} (ii) \;\,$g(wfU,wfV)=\sum_{i=1}^k\sin^4\theta_i \cdot g(U_i,V_i)$;\\ 
\hspace*{7pt} (iii) \,$\cos(\widehat{fwX,fwY})=\cos\sphericalangle (\sum_{i=1}^k\sin^2\theta_i\cdot X_i,\ \sum_{i=1}^k\sin^2\theta_i\cdot Y_i)$;\\ 
\hspace*{7pt} (iv) \;$\cos(\widehat{wfU,wfV})=\cos\sphericalangle (\sum_{i=1}^k\sin^2\theta_i\cdot U_i,\ \sum_{i=1}^k\sin^2\theta_i\cdot V_i)$.
\end{corollary}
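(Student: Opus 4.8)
The plan is to reduce all four identities to the explicit formulas for $fw$ and $wf$ already in hand, namely (\ref{51}) from Corollary \ref{p69} and (\ref{64}) from Proposition \ref{p6}, and then to exploit the mutual orthogonality of the summands. For (i), I would apply (\ref{51}) to $X=\sum_{i=1}^kX_i$ and $Y=\sum_{i=1}^kY_i$; since $pr_iX=X_i$ and $pr_iY=Y_i$ for $i\geq 1$ while $pr_0X=pr_0Y=0$, this yields $fwX=-\sum_{i=1}^k\sin^2\theta_i\, X_i$ and $fwY=-\sum_{i=1}^k\sin^2\theta_i\, Y_i$. Taking the inner product and using that the $D_i$ are mutually orthogonal, so that $g(X_i,Y_j)=0$ for $i\neq j$, collapses the double sum to $\sum_{i=1}^k\sin^4\theta_i\, g(X_i,Y_i)$, which is (i). The argument for (ii) is entirely parallel: apply (\ref{64}) to obtain $wfU=-\sum_{i=1}^k\sin^2\theta_i\, U_i$ and $wfV=-\sum_{i=1}^k\sin^2\theta_i\, V_i$, then use the mutual orthogonality of the $w(D_i)$, which holds by (\ref{17}), to reach $\sum_{i=1}^k\sin^4\theta_i\, g(U_i,V_i)$.

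For (iii) and (iv), the essential observation is that the displayed formulas identify $fwX$ and $wfU$, up to the common factor $-1$, with $\sum_{i=1}^k\sin^2\theta_i\, X_i$ and $\sum_{i=1}^k\sin^2\theta_i\, U_i$, respectively. Since the angular function of Definition \ref{p72} is unchanged when both of its arguments are negated, I immediately get $\cos(\widehat{fwX,fwY})=\cos\sphericalangle(\sum_{i=1}^k\sin^2\theta_i\, X_i,\ \sum_{i=1}^k\sin^2\theta_i\, Y_i)$ and likewise (iv). Alternatively, one may divide the identity (i) by $|fwX|\,|fwY|$ and recognize the right-hand side, using $|fwX|^2=\sum_{i=1}^k\sin^4\theta_i\,|X_i|^2=\bigl|\sum_{i=1}^k\sin^2\theta_i\, X_i\bigr|^2$, again by orthogonality.

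The only real care needed is to confirm that the angles on the left are well defined, i.e. that ${M}_{\widehat{fwX,fwY}}$ and ${M}_{\widehat{wfU,wfV}}$ are nonempty. This follows from Lemma \ref{p92}: if $x\in {M}_{\widehat{X,Y}}$, then $X_x,Y_x\neq 0$, whence $(wX)_x,(wY)_x\neq 0$ by part (i) of that lemma, and then $(fwX)_x,(fwY)_x\neq 0$ by part (ii) applied to $wX,wY\in\oplus_{i=1}^kw(D_i)$; the dual chain handles $wfU,wfV$ starting from ${M}_{\widehat{U,V}}$. No step presents a genuine obstacle—the statement is a direct consequence of the structural formulas (\ref{51}) and (\ref{64}) together with the orthogonality relation (\ref{17})—so the main task is simply to invoke these correctly and to keep the two dual computations in step.
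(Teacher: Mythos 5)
Your proposal is correct and follows essentially the same route the paper intends: Corollary \ref{p20} is stated there without separate proof precisely because it drops out of the structural formulas $fwX=-\sum_{i=1}^k\sin^2\theta_i\cdot pr_iX$ (formula (\ref{51})) and $wfU=-\sum_{i=1}^k\sin^2\theta_i\cdot U_i$ (formula (\ref{64})), combined with the mutual orthogonality of the $D_i$'s and of the $w(D_i)$'s from (\ref{17}), exactly as you argue. Your additional check via Lemma \ref{p92} that the left-hand angles are well defined, and the observation that negating both arguments leaves the angular function unchanged, are the same small verifications the paper's chain of results (Theorem \ref{p17}, Proposition \ref{p14}) relies on implicitly.
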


\begin{remark}\label{p91}
All the results got are, in particular, valid in a $k$-slant submanifold framework, that is, for $M$ a $k$-slant submanifold of $(\overline{M},\varphi,\xi,\eta,g)$, considering, for $TM=\oplus_{i=0}^k{D_i}\oplus \langle \xi\rangle$, the distribution $D=\oplus_{i=0}^k{D_i}$.
\end{remark}

\subsection{The dual $k$-slant distribution in almost paracontact metric \newline geometry}\label{subsec_epsilon=1}

Let $(\overline{M},\varphi,\xi,\eta,g)$ be an almost paracontact metric manifold, $k\in \mathbb{N}^*$, and $M$ be $\overline{M}$ or an immersed submanifold of $\overline{M}$. 
Let $D=\oplus_{i=0}^k{D_i}$ be a \mbox{$k$-slant} distribution on $M$ with $D_0$ the invariant component such that $\xi \perp D$, and let $G$ be the orthogonal complement of $D\oplus \langle \xi\rangle$ in $T\overline{M}$. 

\begin{remark}\label{p154}
All the results obtained in the almost contact metric case remain valid, with similar justifications, in the almost paracontact metric case (that is, for an almost (1)-contact metric manifold) with corresponding sign modifications where necessary. More precisely, relationships \eqref{9}-\eqref{36}, Lemma \ref{p92}, Propositions \ref{p2}, \ref{p3}, \ref{p5}, \ref{p67}-\ref{p13}, \ref{p66}, \ref{p14}, \ref{p16}, Theorems \ref{p1}, \ref{p9}, \ref{p21}, \ref{p17}, Definition \ref{p10}, Remark \ref{p11}, and Corollaries \ref{p0}, \ref{p62}, \ref{p12}, \ref{p15}, \ref{p18}--\ref{p20} remain further valid as they were stated. 
\end{remark}

Changes will appear in the following statements: Theorem \ref{p168}, Propositions \ref{p23}, \ref{p4}, \ref{p6}, Corollaries \ref{p69}, \ref{p60}, and Remark \ref{p70}, which become: 

\begin{proposition}\label{p93}
\begin{equation}
f^2X=\sum_{i=0}^k\cos^2\theta_i\cdot pr_iX\, \text{ for any } X\in D.
\nonumber
\end{equation}
\end{proposition}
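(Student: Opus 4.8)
The plan is to mirror the proof of Proposition \ref{p23} from the almost contact metric case, the only genuine difference being the sign carried by $\varphi^2$ on $\langle\xi\rangle^\perp$. By Remark \ref{p154}, relationships \eqref{34}--\eqref{36} remain valid verbatim in the almost paracontact metric setting, so I would start from there rather than re-deriving them.

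First, fix $i\in\{1,\ldots,k\}$. For $X_i\in D_i\setminus\{0\}$, Definition \ref{8}(i) still gives $|fX_i|=\cos\theta_i\cdot|\varphi X_i|$, whence, for $X_i,Y_i\in D_i$, relationship \eqref{36} yields $g(f^2X_i,Y_i)=\cos^2\theta_i\cdot g(\varphi^2X_i,Y_i)$. Since $f(D_i)\subseteq D_i$, both sides vanish when tested against any $Z$ orthogonal to $D_i$, so in fact $g(f^2X_i,Z)=\cos^2\theta_i\cdot g(\varphi^2X_i,Z)$ holds for every $Z\in TM$; this forces $f^2X_i=\cos^2\theta_i\cdot\varphi^2X_i$. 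The sign now enters through \eqref{7}: in the almost paracontact metric case $\epsilon=1$, so $\varphi^2X_i=X_i$ for $X_i\in\langle\xi\rangle^\perp\supseteq D_i$, giving $f^2X_i=\cos^2\theta_i\cdot X_i$, the positive counterpart of \eqref{37}. For the invariant component, Definition \ref{8}(ii) together with \eqref{7} gives $\varphi X\in D_0$ and $\varphi^2X=X$, hence $f^2X=X=\cos^2\theta_0\cdot X$ for $X\in D_0$ (with $\theta_0=0$). Summing over the projections $pr_i$ onto $D_i$, $i=\overline{0,k}$, then produces $f^2X=\sum_{i=0}^k\cos^2\theta_i\cdot pr_iX$ for all $X\in D$, as asserted.

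I expect no real obstacle here: the argument is structurally identical to that of Proposition \ref{p23}, and the only place demanding attention is the replacement of $\varphi^2|_{\langle\xi\rangle^\perp}=-I$ (valid for $\epsilon=-1$) by $\varphi^2|_{\langle\xi\rangle^\perp}=+I$ (valid for $\epsilon=1$). This single change flips the global sign and turns the minus of \eqref{38} into the plus recorded in the statement, consistently with the passage $\epsilon=-1\rightarrow\epsilon=1$ that governs the whole subsection.
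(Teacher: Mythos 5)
Your proposal is correct and follows exactly the route the paper intends: Proposition \ref{p93} is justified in the paper only through Remark \ref{p154} (repeat the $\epsilon=-1$ argument of Proposition \ref{p23} with the appropriate sign changes), and your write-up spells out precisely those changes — the sign-independence of \eqref{34}--\eqref{36}, the extension from $D_i$ to all of $TM$ via $f(D_i)\subseteq D_i$, and the single sign flip coming from $\varphi^2|_{\langle\xi\rangle^\perp}=\epsilon I$ in \eqref{7}. Nothing is missing.
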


\begin{corollary}\label{p94}
\begin{equation}
fwX=\sum_{i=1}^k \sin^2 \theta_i \cdot pr_iX\, \text{ for any } X\in D.
\nonumber 
\end{equation}
\end{corollary}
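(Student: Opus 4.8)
The plan is to run the almost contact argument that produced Corollary \ref{p69} essentially verbatim, tracking the single sign that changes because now $\epsilon=1$. The starting point is the third identity of Proposition \ref{p2}, which by Remark \ref{p154} holds unchanged in the almost paracontact setting: for $X,Y\in D$ we have $g(wX,wY)=\sum_{i=1}^k\sin^2\theta_i\,g(pr_iX,pr_iY)$. It is worth recording \emph{why} this identity survives the change of sign, since that is the only point needing care. Writing $g(fX,fY)=\epsilon\,g(f^2X,Y)$ via Lemma \ref{lema2}, both the factor $\epsilon$ and the expression for $f^2X$ (now Proposition \ref{p93} in place of Proposition \ref{p23}) reverse sign, so their product is unaffected and $g(fX,fY)=\sum_{i=0}^k\cos^2\theta_i\,g(pr_iX,pr_iY)$ exactly as before; subtracting this from $g(\varphi X,\varphi Y)=\sum_{i=0}^k g(pr_iX,pr_iY)$ (from \eqref{3}, using $\eta(X)=\eta(Y)=0$ since $\xi\perp D$) returns the stated formula for $g(wX,wY)$.

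Next I would apply Lemma \ref{lema2}, legitimate on a submanifold by Remark \ref{p194}, in the form $g(fwX,Y)=\epsilon\,g(wX,wY)$. This is precisely where the paracontact case diverges from the contact one: with $\epsilon=1$ the minus sign that appeared in the passage to Corollary \ref{p69} is gone, so $g(fwX,Y)=g(wX,wY)$. Substituting the expression above and using that the $D_i$ are mutually orthogonal, so that $g(pr_iX,pr_iY)=g(pr_iX,Y)$, gives
\begin{equation}\nonumber
g(fwX,Y)=\sum_{i=1}^k\sin^2\theta_i\,g(pr_iX,Y)=g\!\left(\sum_{i=1}^k\sin^2\theta_i\cdot pr_iX,\,Y\right)
\end{equation}
for every $Y\in D$.

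To close the argument I would observe that $fwX\in D$ by the very definition of $f$ as the $D$-component of $\varphi$, and that $\sum_{i=1}^k\sin^2\theta_i\cdot pr_iX\in D$ as well; since both sides lie in $D$ and the restriction of $g$ to $D$ is a Riemannian (hence nondegenerate) metric, the displayed equality for every $Y\in D$ forces $fwX=\sum_{i=1}^k\sin^2\theta_i\cdot pr_iX$, which is the assertion. There is no genuine obstacle here, only the bookkeeping of signs: the essential check is that Proposition \ref{p2} transfers without modification, the two sign reversals in its derivation canceling and leaving a single net sign change in the step $g(fwX,Y)=\epsilon\,g(wX,wY)$, exactly the one that turns the minus sign of Corollary \ref{p69} into the plus sign claimed here.
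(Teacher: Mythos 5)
Your proof is correct and follows essentially the same route as the paper: the paper obtains Corollary \ref{p94} by transferring the contact-case derivation of Corollary \ref{p69} (Proposition \ref{p2} combined with $g(fwX,Y)=\epsilon\,g(wX,wY)$ from Lemma \ref{lema2}) to $\epsilon=1$, exactly as you do, with Remark \ref{p154} asserting that Proposition \ref{p2} survives unchanged. Your explicit verification of the double sign cancellation behind that remark, and of the final nondegeneracy step, merely spells out what the paper leaves implicit.
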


\begin{remark}\label{p95}
For $\theta_j=\frac{\pi}{2}$ and $X_j\in D_j$, we have
$fwX_j= X_j$ and hence $wfU_j=\nolinebreak U_j$ for any $U_j\in w(D_j)$,
so $f|_{w(D_j)}: w(D_j) \rightarrow D_j$ and \linebreak $w|_{D_j}: D_j \rightarrow w(D_j)$ are inverse to each other in the case $\epsilon=1$. 
\end{remark}

\begin{proposition}\label{p96}
For any $X\in D\oplus \langle \xi\rangle$ and\, $U\in G$, we have:
\begin{align}
f^2X+fwX &= X-\eta(X)\xi, \nonumber\\ 
wfX+w^2X &=0, \nonumber \\ 
f^2U+fwU &=0, \nonumber \\ 
wfU+w^2U &= U. \nonumber
\end{align}
\end{proposition}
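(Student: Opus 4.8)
The plan is to mirror the computation that underlies Proposition \ref{p4}, feeding in the paracontact sign $\epsilon=1$ at the single place where the sign actually enters. The engine of the whole statement is the defining relation $\varphi^2=\epsilon(I-\eta\otimes\xi)$, which for $\epsilon=1$ reads $\varphi^2 Z=Z-\eta(Z)\xi$ for every $Z\in T\overline M$. I would apply $\varphi$ twice to an arbitrary vector field and split the result along the orthogonal decomposition $T\overline M=D\oplus\langle\xi\rangle\oplus G$.

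First I would record that, by the very definition of $f$ and $w$ as the components of $\varphi$ into $D$ and into $D^{\perp}$, one has $fZ\in D$ and $wZ\in G$ for every $Z$ (the $\langle\xi\rangle$-part of $\varphi Z$ vanishes because $\eta(\varphi Z)=\epsilon g(Z,\varphi\xi)=0$, since $\varphi\xi=0$). Applying $\varphi$ to $\varphi Z=fZ+wZ$ and decomposing $\varphi(fZ)=f^2Z+wfZ$ and $\varphi(wZ)=fwZ+w^2Z$ again gives $\varphi^2 Z=(f^2Z+fwZ)+(wfZ+w^2Z)$, where the first grouped summand lies in $D$ and the second in $G$. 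Equating this with $\varphi^2 Z=Z-\eta(Z)\xi$ is the crux: the four asserted identities are exactly what one obtains by matching $D$- and $G$-components in the two relevant cases.

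Second, I would dispatch the two cases. If $X\in D\oplus\langle\xi\rangle$, then $X-\eta(X)\xi$ is precisely the $D$-part of $X$ and therefore lies in $D$; matching $D$-components yields $f^2X+fwX=X-\eta(X)\xi$ and matching $G$-components yields $wfX+w^2X=0$. If instead $U\in G$, then $U\perp\xi$ forces $\eta(U)=0$, so the right-hand side collapses to $U\in G$; matching $D$-components yields $f^2U+fwU=0$ and matching $G$-components yields $wfU+w^2U=U$. This exhausts the statement.

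I do not expect a genuine obstacle here: the argument is a routine component-chase, identical in structure to the almost contact case, and the only real point of care is bookkeeping the sign and the $\xi$-term. For $\epsilon=1$ the quadratic relation carries a plus sign, so the first and fourth identities acquire the signs opposite to those in Proposition \ref{p4}, while the second and third, being homogeneous, are unchanged; one must also keep straight that $\eta(X)$ survives in the first identity but $\eta(U)$ drops out of the last precisely because $G\perp\langle\xi\rangle$.
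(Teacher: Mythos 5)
Your proposal is correct and follows essentially the same route as the paper: the paper gives no separate argument for this proposition, deriving it (via Remark \ref{p154}) from Proposition \ref{p4} "with similar justifications" and the appropriate sign changes, and that implicit justification is exactly your component-chase of $\varphi^2 Z=\epsilon(Z-\eta(Z)\xi)$ along $T\overline M=D\oplus\langle\xi\rangle\oplus G$, using $\eta\circ\varphi=0$ so that $wZ\in G$. Your bookkeeping of where $\epsilon=1$ enters and why $\eta(U)$ drops out for $U\in G$ matches the intended reasoning.
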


\begin{corollary}\label{p97}
For any\, $U_0,V_0\in H$, we have: 
\begin{align}
w^2U_0&=U_0\,,\nonumber\\ 
g(wU_0,wV_0)&=g(U_0,V_0)\nonumber,\\ 
|wU_0|&=|U_0|.\nonumber
\end{align}
\end{corollary}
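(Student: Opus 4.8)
The plan is to obtain all three identities as immediate consequences of Proposition \ref{p96} and the orthogonal decomposition of Theorem \ref{p1}, mirroring the way Corollary \ref{p60} was derived from Proposition \ref{p4} in the almost contact case, but now with $\epsilon=1$. The crucial structural fact to invoke first is that, by Theorem \ref{p1} (which by Remark \ref{p154} remains valid as stated in the almost paracontact metric setting), the summand $H$ of $G$ is precisely the part on which $f$ vanishes; hence $fU_0=0$ for every $U_0\in H$.

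First I would establish $w^2U_0=U_0$. Applying the fourth relation of Proposition \ref{p96}, namely $wfU+w^2U=U$ for $U\in G$, to $U=U_0\in H\subseteq G$ and using $fU_0=0$ (so that $wfU_0=0$), one reads off $w^2U_0=U_0$ at once.

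For the metric identity I would use Lemma \ref{lema2}, valid for distributions on a submanifold by Remark \ref{p194}, whose fourth line reads $g(w^2U,V)=\epsilon\, g(wU,wV)=g(U,w^2V)$. Taking $\epsilon=1$ together with $U=U_0$ and $V=V_0$ in $H$ gives $g(wU_0,wV_0)=g(w^2U_0,V_0)$, and substituting $w^2U_0=U_0$ yields $g(wU_0,wV_0)=g(U_0,V_0)$. The norm identity $|wU_0|=|U_0|$ then follows by specializing $V_0=U_0$ and taking nonnegative square roots.

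There is no genuine obstacle here: the entire content is inherited from Proposition \ref{p96} and Theorem \ref{p1}. The only point demanding care is the sign bookkeeping relative to the almost contact case, where the analogous Corollary \ref{p60} produced $w^2U_0=-U_0$; I would therefore double-check that the substitution $\epsilon=1$ is carried consistently through both the fourth relation of Proposition \ref{p96} and the fourth line of Lemma \ref{lema2}, so that the two minus signs present in the contact computation are correctly replaced by plus signs.
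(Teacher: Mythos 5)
Your proposal is correct and follows essentially the same route as the paper: the paper obtains this corollary by transferring, per Remark \ref{p154}, the contact-case derivation of Corollary \ref{p60} (``in view of Proposition \ref{p4} and Theorem \ref{p1}'') to the paracontact setting, i.e., it reads $w^2U_0=U_0$ off the fourth relation of Proposition \ref{p96} using $f(H)=\{0\}$ from Theorem \ref{p1}, with the metric and norm identities then immediate via Lemma \ref{lema2} with $\epsilon=1$. Your sign bookkeeping is exactly the ``corresponding sign modification'' the paper has in mind, so nothing is missing.
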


\begin{proposition}\label{p98}
For any\, $U\in w(D)$, $U=\sum_{i=1}^kU_i$ with $U_i\in w(D_i)$, we have:
\begin{equation}
wfU=\sum_{i=1}^k\sin^2\theta_i\cdot U_i\,, 
\nonumber 
\end{equation}
\begin{equation}
w^2U=\sum_{i=1}^k\cos^2\theta_i\cdot U_i\,. 
\nonumber 
\end{equation}
\end{proposition}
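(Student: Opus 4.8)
The plan is to reduce the claim to its componentwise form on each dual slant summand $w(D_i)$ and then extend by linearity, mirroring the computation performed for the almost contact metric case in Proposition \ref{p6}, but with the sign adjustments forced by $\epsilon=1$. Since $wf$ and $w^2$ are $\mathbb{R}$-linear (each is the $D^{\perp}$-component of $\varphi$ post-composed with a linear map), it suffices to establish that $wfU_i=\sin^2\theta_i\cdot U_i$ and $w^2U_i=\cos^2\theta_i\cdot U_i$ for an arbitrary $i\in\{1,\ldots,k\}$ and $U_i\in w(D_i)$; summing these identities over $i$ then produces the two displayed formulas for $U=\sum_{i=1}^kU_i$.

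First I would fix $i$ and, since $w(D_i)$ is by definition the image of $D_i$ under $w$, write $U_i=wX_i$ for some $X_i\in D_i$. Then $fU_i=fwX_i$, and Corollary \ref{p94} (the almost paracontact analog of Corollary \ref{p69}) gives $fwX_i=\sin^2\theta_i\cdot pr_iX_i=\sin^2\theta_i\cdot X_i$. Applying the linear map $w$ yields $wfU_i=w(fwX_i)=\sin^2\theta_i\cdot wX_i=\sin^2\theta_i\cdot U_i$, which is the first componentwise identity.

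For the second, I would apply $w$ to the decomposition $\varphi U_i=fU_i+wU_i$, obtaining by linearity $w(\varphi U_i)=wfU_i+w^2U_i$, so that $w^2U_i=w(\varphi U_i)-wfU_i$. The key observation is that $w(\varphi U_i)=\varphi^2U_i$: indeed $U_i\in w(D_i)\subseteq G\subseteq\langle\xi\rangle^{\perp}$, whence $\varphi^2U_i=U_i$ by \eqref{7} with $\epsilon=1$, and this vector already lies in $G\subseteq D^{\perp}$, so its $D^{\perp}$-component is $\varphi^2U_i$ itself. Substituting $w(\varphi U_i)=\varphi^2U_i=U_i$ together with the first identity gives $w^2U_i=U_i-\sin^2\theta_i\cdot U_i=\cos^2\theta_i\cdot U_i$.

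The only point demanding care is the sign bookkeeping: in the almost contact case \eqref{7} supplies $\varphi^2U_i=-U_i$ and Corollary \ref{p69} carries a minus sign, which is what produces the overall minus signs in Proposition \ref{p6}; here both inputs are positive, so the contributions $U_i$ and $-\sin^2\theta_i\cdot U_i$ combine to $+\cos^2\theta_i\cdot U_i$. Beyond checking that $\varphi^2U_i$ stays in $D^{\perp}$ so that the projection $w$ leaves it fixed, no new difficulty arises, and the argument is otherwise a faithful transcription of the almost contact metric reasoning.
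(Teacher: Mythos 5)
Your proof is correct and takes essentially the same route as the paper: the paper obtains Proposition \ref{p98} by transferring the computation of Proposition \ref{p6} (write $U_i=wX_i$, apply $w$ to the $fw$-formula of Corollary \ref{p94}, then use $w^2U_i=w\varphi U_i-wfU_i=\varphi^2U_i-wfU_i$) with the sign changes dictated by $\epsilon=1$, which is exactly your argument. Your extra justification that $\varphi^2U_i$ already lies in $G$, so that $w\varphi U_i=\varphi^2U_i=U_i$, is precisely the step the paper leaves implicit.
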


\begin{remark}\label{p99}
All the results related to $k$-slant distributions on an arbitrary submanifold of the almost paracontact metric manifold $(\overline{M},\varphi,\xi,\eta,g)$ can be transferred to any $k$-slant submanifold $M$ of $\overline{M}$ by taking $D=\oplus_{i=0}^k{D_i}$ if $TM=\oplus_{i=0}^k{D_i}\oplus \langle \xi\rangle$. Thus, the obtained results are also valid when considered in a $k$-slant submanifold framework. 
\end{remark}

Taking into account Propositions \ref{p167}, \ref{p93} and Remark \ref{p33}, we obtain 

\begin{theorem}\label{p171}
Let $\mathfrak D$ be a non-null distribution on $M$ such that $\mathfrak D\perp\nolinebreak \xi$ and $\mathfrak D$ is decomposable into an orthogonal sum of regular distributions, $\mathfrak D=\nolinebreak\oplus_{i=0}^k{\mathfrak D_i}$ with $\mathfrak D_i\neq \{0\}$ for $i=\overline{1,k}$ and $\mathfrak D_0$ invariant (possible null). Let $pr_i$ denote the projection operator onto $\mathfrak D_i$ for $i=\overline{0,k}$, $f$ the component of $\varphi$ into $\mathfrak D$, and $\theta_0=0$. If $f(\mathfrak D_i)\subseteq \mathfrak D_i$ for $i=\overline{1,k}$, then the following assertions are equivalent: \\ 
\hspace*{7pt} (a) There exist $k$ distinct values $\theta_i\in (0,\frac{\pi}{2}]$, $i=\overline{1,k}$, such that  
\begin{equation}\nonumber
f^2X= \sum_{i=0}^k\cos^2\theta_i\cdot pr_iX \ \text{for any} \  X\in \mathfrak D;
\end{equation}
\hspace*{7pt} (b) $\mathfrak D$ is a $k$-slant distribution with slant angles $\theta_i$ corresponding to $\mathfrak D_i$, $i=\nolinebreak\overline{1,k}$. 
\end{theorem}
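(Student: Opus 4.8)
The plan is to establish the equivalence by proving the two implications separately, mirroring the almost contact case of Theorem \ref{p168} but carrying the sign appropriate to $\epsilon=1$ throughout. Since $\mathfrak D\perp\xi$ means $\mathfrak D\subseteq\langle\xi\rangle^{\perp}$, the key structural fact I would lean on is Remark \ref{p33}: in an almost paracontact metric manifold $\varphi$ restricted to $\langle\xi\rangle^{\perp}$ is an isometry, so $\varphi|_{\mathfrak D}$, and in particular $\varphi|_{\oplus_{i=1}^k\mathfrak D_i}$, is an isometry. This is exactly the hypothesis that lets the general Proposition \ref{p167} fire for $\epsilon=1$.

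For the implication (b) $\Rightarrow$ (a), I would simply invoke Proposition \ref{p93}, the almost paracontact analog of Proposition \ref{p23}: once $\mathfrak D=\oplus_{i=0}^k\mathfrak D_i$ is known to be a $k$-slant distribution with angles $\theta_i$ and $\mathfrak D_0$ invariant, the relation $f^2X=\sum_{i=0}^k\cos^2\theta_i\cdot pr_iX$ holds for every $X\in\mathfrak D$, with $\theta_0=0$ accounting for the invariant component via $\varphi^2X=X$ on $\langle\xi\rangle^{\perp}$ (from \eqref{7} with $\epsilon=1$). There is essentially nothing further to check in this direction.

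For the implication (a) $\Rightarrow$ (b), I would restrict the given identity to the proper component $\oplus_{i=1}^k\mathfrak D_i$: on this subspace $pr_0X=0$, so the formula reduces to $f^2X=\sum_{i=1}^k\cos^2\theta_i\cdot pr_iX$, which is precisely the hypothesis of Proposition \ref{p167} specialized to $\epsilon=1$. Together with $f(\mathfrak D_i)\subseteq\mathfrak D_i$ (assumed) and the isometry of $\varphi|_{\oplus_{i=1}^k\mathfrak D_i}$ (from Remark \ref{p33}), Proposition \ref{p167} then yields that $\mathfrak D$ is a $k$-slant distribution with the slant angles $\theta_i$ corresponding to $\mathfrak D_i$, while $\mathfrak D_0$ serves as the invariant component.

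I do not expect a genuine obstacle, since all the heavy lifting is already packaged in Propositions \ref{p167} and \ref{p93}; the only points demanding care are bookkeeping ones. First, one must confirm that the isometry hypothesis of Proposition \ref{p167} is available, which is exactly why $\mathfrak D\perp\xi$ is assumed and Remark \ref{p33} is needed. Second, one must reconcile the summation ranges: assertion (a) sums from $i=0$, whereas Propositions \ref{p167} and \ref{p93} separate the invariant index $i=0$ (with $\theta_0=0$) from the proper indices $i=\overline{1,k}$, so the $i=0$ term has to be handled through the invariance of $\mathfrak D_0$ rather than by a slant condition. These are routine, and the equivalence follows.
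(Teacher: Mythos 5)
Your proposal is correct and follows essentially the same route as the paper: the paper obtains Theorem \ref{p171} precisely by combining Proposition \ref{p167} (for the implication (a) $\Rightarrow$ (b), with the isometry hypothesis supplied by Remark \ref{p33} since $\mathfrak D\perp\xi$ gives $\mathfrak D\subseteq\langle\xi\rangle^{\perp}$) with Proposition \ref{p93} (for (b) $\Rightarrow$ (a)). Your bookkeeping remarks about the summation range and the $i=0$ term handled by invariance of $\mathfrak D_0$ match the paper's treatment of the invariant component.
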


\begin{remark}\label{p172}
Theorem \ref{p171} provides a necessary and sufficient condition for a submanifold $M$ of an almost paracontact metric manifold $\overline M$ to be a \mbox{$k$-slant} submanifold, $\mathfrak D$ being the distribution on $M$ given by $\mathfrak D=\oplus_{i=0}^k{\mathfrak D_i}$ if $TM=\oplus_{i=0}^k{\mathfrak D_i}\oplus \langle \xi\rangle$.
\end{remark}

\begin{example}\label{ex2}
In Example \ref{ex1}, in the setting given by an almost ($\epsilon$)-contact metric manifold $(\overline{M}, \varphi,\xi,\eta, g )$, we consider the distributions $G_j:=\langle e_{4j+1}, e_{4j+2}\rangle$, $j=\overline{1,k}$, in $(TM)^{\perp}$. Then, $\oplus_{j=1}^kG_j$ is the dual $k$-slant distribution of $\oplus_{j=1}^kD_j$. We have $f(G_j)=D_j$ for $j=\overline{1,k}$, so $\oplus_{j=1}^kD_j$ is the dual $k$-slant distribution of $\oplus_{j=1}^kG_j$.
\end{example}

\begin{remark}\label{p202}
We describe below the notion of \textit{skew CR submanifold of an almost paracontact metric manifold}, relating it to the concept of $k$-slant submanifold. 

Let $M$ be an immersed submanifold of an almost paracontact metric manifold $(\overline{M},\varphi,\xi,\eta,g)$, and let $fZ$ be the tangential component of $\varphi Z$ for any $Z\in T{M}$. 

In view of Lemmas \ref{lema1}, \ref{lema2}  and Remark \ref{p194}, $f$ is symmetric, so $f^2$ is symmetric. Denoting, for any $x\in M$, by $\lambda_i(x)$, $i=\overline{1,m(x)}$, the distinct eigenvalues of $f_x^2$ acting on the tangent space $T_xM$, these eigen\-val\-ues are all real and nonnegative. 
In view of (\ref{7}) and $\varphi \xi=0$, we have $|fX|\leq\nolinebreak|\varphi X|\leq\nolinebreak |X|$ for any $X\in T{M}$; hence, all the $\lambda_i(x)$'s are contained in $[0,1]$. 
For any $x\in M$, let $\mathfrak D_x^i$ denote the eigen\-space of $f_x^2$ cor\-re\-spond\-ing to $\lambda_i(x)$, $i=\nolinebreak\overline{1,m(x)}$;
the tangent space $T_xM$ of $M$ at $x$ has the following orthogonal decomposition: 
$$T_xM={\mathfrak D_x^1}\oplus\ldots\oplus{\mathfrak D_x^{m(x)}}.$$
We notice that every eigenspace $\mathfrak D_x^i$ is invariant under $f_x$. 

Now, let us consider that $M$ is a skew CR submanifold of $\overline{M}$, that is: 
\begin{enumerate}
\item $m(x)$ does not depend on $x\in M$ (we will denote $m(x)=m$); 
\item the dimension of ${\mathfrak D_x^i}$, $i=\overline{1,m}$, is independent of $x\in M$; 
\item each $\lambda_i(\cdot)$ is constant on $M$ (we will denote $\lambda_i(x)=\lambda_i)$. 
\end{enumerate}

Thus, for any $x\in M$, there is the same number $m$ of distinct eigenvalues of $f_x^2$, these, denoted by $\lambda_1$, \dots , $\lambda_m$, being independent of $x$. Denoting by $\mathfrak D_i$ the distribution corresponding to the family $\{\mathfrak D_x^i: x\in M\}$ for $i=\overline{1,m}$, we deduce that $TM$ accepts the orthogonal decomposition: 
$$TM={\mathfrak D_1}\oplus\ldots\oplus{\mathfrak D_m}\,.$$

Moreover, the distribution $\mathfrak D_i$ is invariant under $f$ for every $i$.

Since $f\xi=\varphi\xi=0$, one of the $\mathfrak D_i$'s contains $\langle \xi \rangle$; let $\mathfrak D_m$ be that distribution, so $\lambda_m=0$. 
Let $\mathfrak D'_m$ denote the orthogonal complement of $\langle\xi\rangle$ in $\mathfrak D_m$. Then, we have $\mathfrak D_m=\langle\xi\rangle\oplus\mathfrak D'_m$. We notice that, if $\mathfrak D'_m$ is non-null, then it is a slant distribution with slant angle $\frac{\pi}{2}$.

For any $i\in\{1,\ldots,m-1\}$ and $X\in  \mathfrak{D}_i\verb=\=\{0\}$, denoting $\alpha_i=\sqrt{\lambda_i}\in\nolinebreak (0,1]$, we have 
$$|fX|^2=\lambda_ig(X,X)= \alpha_i^2 |X|^2= \alpha_i^2 |\varphi X|^2$$
\noindent and $|fX|=\alpha_i |X|=\alpha_i |\varphi X|$; hence, $\alpha_i=\cos\zeta_i$\,, where $\zeta_i$ is the angle between $\varphi X$ and $TM$, the same for any nonzero $X\in \mathfrak D_i$. 

The distributions $\mathfrak{D}_i$, $i=\overline{1,m-1}$, are slant distributions with distinct slant angles $\zeta_i$ except that distribution which corresponds to $\alpha_i=1$ and is invariant (with respect to $\varphi$) if such one exists. 

Hence, since $\oplus_{i=1}^{m-1}\mathfrak{D}_i\oplus\mathfrak{D}'_m$ does not reduce to an invariant distribution under $\varphi$, $M$ is a $k$-slant submanifold of $\overline{M}$, where $k$ is one of the values: $m-2,\, m-1,\, m$. 

\end{remark}

We conclude: 

\begin{proposition}\label{p124}
Any skew CR submanifold of an almost paracontact metric manifold is a $k$-slant submanifold. 
\end{proposition}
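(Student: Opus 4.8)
The plan is to follow the eigenspace analysis already assembled in Remark \ref{p202}, so that the proof of the proposition reduces to verifying that the decomposition produced there fits Definition \ref{4} (equivalently Definition \ref{8}). First I would recall that in the almost paracontact metric setting ($\epsilon=1$) Lemma \ref{lema1} forces $f$ to be symmetric, so that $f^2$ is a symmetric endomorphism of each tangent space $T_xM$; combined with the bound $|fX|\le|\varphi X|\le|X|$ coming from \eqref{7} and $\varphi\xi=0$, its eigenvalues are real and lie in $[0,1]$. The three skew CR hypotheses (a constant number $m$ of distinct eigenvalues, constant eigenspace dimensions, and constant eigenvalues $\lambda_1,\dots,\lambda_m$) then promote the pointwise orthogonal eigenspace decomposition to a \emph{global} orthogonal decomposition $TM=\mathfrak D_1\oplus\cdots\oplus\mathfrak D_m$ into regular distributions, each invariant under $f$.

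Next I would classify the summands by their eigenvalue. The eigenspace attached to $\lambda_m=0$ contains $\langle\xi\rangle$, and splitting it as $\mathfrak D_m=\langle\xi\rangle\oplus\mathfrak D'_m$ isolates $\xi$ (which carries no slant data since $\varphi\xi=0$) and leaves $\mathfrak D'_m$, which, if non-null, is $\frac{\pi}{2}$-slant. For each $i$ with $\lambda_i\in(0,1]$, writing $\lambda_i=\alpha_i^2$ with $\alpha_i\in(0,1]$, the identity $|fX|=\alpha_i|X|=\alpha_i|\varphi X|$ on $\mathfrak D_i\setminus\{0\}$ shows that $\mathfrak D_i$ is slant with slant angle $\zeta_i=\arccos\alpha_i$, the single exception being the summand (if any) with $\alpha_i=1$, i.e. $\zeta_i=0$, which is invariant. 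Since every $\mathfrak D_i$ is $f$-invariant, condition (iii) of the $k$-slant definition holds automatically.

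It then remains to assemble these pieces. The at most one invariant summand (the $\mathfrak D_i$ with $\alpha_i=1$, if present) plays the role of $D_0$, while $\langle\xi\rangle$ sits apart as prescribed by Definition \ref{4}; the remaining summands, namely $\mathfrak D'_m$ together with the $\mathfrak D_i$ having $\alpha_i\in(0,1)$, form the proper slant component with pairwise distinct slant angles. Counting these against whether the invariant eigenvalue-$1$ distribution occurs and whether $\mathfrak D'_m$ vanishes yields $k\in\{m-2,m-1,m\}$, and in every case the resulting decomposition of $TM$ matches all clauses of Definition \ref{4}, exactly as in the almost contact metric analogue, Proposition \ref{p123}.

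The main obstacle, though only a bookkeeping one, is the boundary-case analysis: one must confirm that the skew CR hypothesis guarantees $\oplus_{i=1}^{m-1}\mathfrak D_i\oplus\mathfrak D'_m$ does not collapse to a purely invariant distribution (which would violate the requirement $k\ge 1$ that a $k$-slant distribution carry at least one proper slant component), and one must check that the degenerate configurations, $\mathfrak D'_m=\{0\}$, or no eigenvalue equal to $1$, or both, each still produce a legitimate value of $k$ and a decomposition meeting every clause of the definition. Once these cases are dispatched, the conclusion that $M$ is a $k$-slant submanifold is immediate.
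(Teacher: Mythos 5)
Your proposal is correct and follows essentially the same route as the paper: Proposition \ref{p124} is stated there as the immediate conclusion of Remark \ref{p202}, whose eigenspace analysis of the symmetric operator $f^2$ (real constant eigenvalues in $[0,1]$, a global regular decomposition $TM=\mathfrak D_1\oplus\cdots\oplus\mathfrak D_m$, splitting $\mathfrak D_m=\langle\xi\rangle\oplus\mathfrak D'_m$, slant angles $\zeta_i=\arccos\alpha_i$, at most one invariant summand, and $k\in\{m-2,m-1,m\}$) is exactly what you reproduce. The boundary case you flag---that $\oplus_{i=1}^{m-1}\mathfrak D_i\oplus\mathfrak D'_m$ cannot collapse to an invariant distribution---is dispatched in the paper by the same bare assertion, justified because a skew CR submanifold in Ronsse's sense must carry at least one eigenvalue with $\alpha_i\in(0,1)$ (cf. Remark \ref{p204}), so your remaining check is immediate.
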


\begin{proposition}\label{p211}
Any $k$-slant submanifold of an almost paracontact metric man\-i\-fold which is not an anti-invariant or a CR submanifold is a skew CR submanifold. 
\end{proposition}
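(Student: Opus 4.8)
The plan is to verify that the eigenstructure of the symmetric operator $f^2$ on $TM$, which is rigidly prescribed by the $k$-slant decomposition, satisfies the three defining conditions of a skew CR submanifold recorded in Remark \ref{p202}, and then to observe that the hypothesis excludes exactly the degenerate configurations that lack a proper slant eigenspace.

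First I would invoke the explicit form of $f^2$. Writing $TM=\oplus_{i=0}^k D_i\oplus\langle\xi\rangle$ with $D_0$ invariant and each $D_i$ ($i=\overline{1,k}$) slant of distinct angle $\theta_i\in(0,\frac{\pi}{2}]$, Proposition \ref{p93} gives $f^2X=\sum_{i=0}^k\cos^2\theta_i\cdot pr_iX$ for $X\in D=\oplus_{i=0}^k D_i$, while $f\xi=\varphi\xi=0$ forces $f^2\xi=0$. Hence at every $x$ the operator $f_x^2$ acts as the scalar $\cos^2\theta_i$ on $(D_i)_x$ and as $0$ on $\langle\xi_x\rangle$. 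Because the $\theta_i$ are distinct in $(0,\frac{\pi}{2}]$, the values $\cos^2\theta_i$ are distinct in $[0,1)$ and $\cos^2\theta_0=1$ is distinct from all of them; so the distinct eigenvalues of $f_x^2$ are $1$ (present iff $D_0\neq\{0\}$), the values $\cos^2\theta_i\in(0,1)$ for each $\theta_i<\frac{\pi}{2}$, and $0$ (always present, contributed by $\xi$). The corresponding eigenspaces are $D_0$, the proper slant $D_i$, and $\langle\xi\rangle$ (enlarged to $\langle\xi\rangle\oplus D_j$ when some $\theta_j=\frac{\pi}{2}$).

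Next I would read off the three skew CR conditions. Since the $\theta_i$ are constants by the very definition of a $k$-slant submanifold and the $D_i$ are regular distributions with $x$-independent dimension, the number $m(x)$ of distinct eigenvalues, the eigenvalues $\lambda_i$ themselves, and the dimensions of the eigenspaces are all independent of $x$; these are precisely conditions (1)--(3) of Remark \ref{p202}.

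Finally I would use the hypothesis. A skew CR submanifold requires, in addition, at least one proper slant eigenspace, i.e. an angle lying strictly in $(0,\frac{\pi}{2})$ (the almost paracontact analogue of Ronsse's demand for a function $\alpha_i:M\to(0,1)$; cf. Remark \ref{p204}). A $k$-slant submanifold possesses no such part exactly when every $\theta_i$ equals $\frac{\pi}{2}$; as the $\theta_i$ are distinct, this forces $k=1$ and $\theta_1=\frac{\pi}{2}$, so that $M$ is anti-invariant (if $D_0=\{0\}$) or CR (if $D_0\neq\{0\}$). Ruling out these two cases therefore guarantees some $\theta_i\in(0,\frac{\pi}{2})$, and the eigenspace decomposition then contains a genuine proper slant component, whence $M$ is skew CR. I expect the only delicate point to be the bookkeeping at the eigenvalue $0$: one must merge $\langle\xi\rangle$ with the anti-invariant distribution $D_j$ (present when $\theta_j=\frac{\pi}{2}$) into the single eigenspace $\mathfrak D_m=\langle\xi\rangle\oplus\mathfrak D'_m$ of Remark \ref{p202} and check that this identification preserves the $x$-independence of all eigendata.
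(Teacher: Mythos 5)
Your proposal is correct and follows essentially the same route the paper intends: Proposition \ref{p211} is stated as a consequence of the eigenstructure analysis in Remark \ref{p202}, read in the converse direction, which is exactly what you do — using Proposition \ref{p93} and $f\xi=0$ to identify the constant eigenvalues $1$, $\cos^2\theta_i$, $0$ of $f^2$ and their regular eigenspaces (merging $\langle\xi\rangle$ with the anti-invariant component at the eigenvalue $0$), so that conditions (1)--(3) hold automatically. Your handling of the hypothesis is also the right one: since the $\theta_i$ are distinct, the absence of a proper slant angle in $(0,\frac{\pi}{2})$ forces $k=1$, $\theta_1=\frac{\pi}{2}$, i.e.\ exactly the anti-invariant and CR cases excluded by the statement (cf.\ Remark \ref{p204}).
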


\section{$k$-slant distributions in almost Hermitian and almost product Riemannian settings}\label{alm_hermitian}

In the sequel, we  will provide a unitary approach for the almost Hermitian and almost product Riemannian settings. 

Let $\overline{M}$ be a smooth manifold, $g$ a Riemannian metric on $\overline{M}$, $\epsilon\in \{-1,1\}$, and $\varphi$ a $(1,1)$-tensor field on $\overline{M}$ satisfying 
\begin{equation}\label{h2}
\varphi^2=\epsilon I \text{ \ and \ }
g(\varphi X,Y)=\epsilon g(X,\varphi Y) 
\text{ for any $X,Y\in T\overline{M}$}.
\end{equation}
We immediately get
\begin{equation}\label{h3} 
g(\varphi X,\varphi Y)=g(X,Y)\text{ for any }X,Y\in T\overline{M}. 
\end{equation}

\begin{remark}
$(\overline{M},\varphi,g)$ is an \textit{almost Hermitian manifold} for $\epsilon=-1$, and it is an \textit{almost product Riemannian manifold} for $\epsilon=1$. 
\end{remark}

\begin{remark}\label{p32}
In view of (\ref{h3}), we notice that $\varphi$ is an isometry; hence, the orthogonality of vector fields is invariant under $\varphi$. 
\end{remark}

\medskip 
For $M$ an immersed submanifold of $\overline{M}$ and $k\in \mathbb{N}^*$, 
we have the following equivalent formulation of the definition of a $k$-slant submanifold. 

\begin{definition}\label{p26}
We say that $M$ is a \textit{$k$-slant submanifold} of $\overline{M}$ if there exists an orthogonal decomposition of $TM$ into regular distributions, 
$$TM=\oplus_{i=0}^k{D_i}$$
with $D_i\neq \{0\},\ i= \overline{1,k}$, and $D_0$ possible null, and there exist distinct values $\theta_i\in (0,\frac{\pi}{2}]$, $i=\overline{1,k}$, such that: \\ 
\hspace*{7pt} (i)\; $\widehat{(\varphi X, D_i)}=\theta_i$ for any $X\in D_i\verb=\=\{0\}$, $i=\overline{1,k}$;\\ 
\hspace*{7pt} (ii)\; $\varphi X\in D_0$ for any $X\in D_0$ (i.e.,  $f(D_0)\subseteq D_0$, and $\widehat{(\varphi X, TM)}=\nolinebreak 0=: \nolinebreak\theta_0$ for $X\in D_0\verb=\=\{0\}$);\\ 
\hspace*{7pt} (iii) $f(D_i)\subseteq D_i$, $i=\overline{1,k}$. 
\end{definition}

\begin{remark}\label{32}
In view of (iii) and Remark \ref{p25}, we have: \\ 
(a) Condition (i) is equivalent to \\ 
\hspace*{7pt} (i') $\widehat{(\varphi X, TM)}=\theta_i$ for any $X\in D_i\verb=\=\{0\}$, $i=\overline{1,k}$;\\ 
(b) Condition (iii) can be replaced by \\ 
\hspace*{7pt} (iii') $\varphi(D_i)\perp D_j$ for any $i\neq j$ from $\{1,\ldots,k\}$.
\end{remark}

\medskip 
In the sequel, until the end of the section, we will consider $M$ to be $\overline{M}$ or an immersed submanifold of $\overline{M}$ if not specified otherwise. 

Let $k\in \mathbb{N}^*$, $D=\oplus_{i=0}^k{D_i}$ be a $k$-slant distribution on ${M}$ with $D_0$ the invariant component, and $G$ be the orthogonal complement of $D$ in $T\overline{M}$. Let $\theta_0=0$ and $\theta_i$ denote the slant angle of $D_i$ for $i=\overline{1,k}$. For any $Z\in T\overline{M}$, as established, we denote by $fZ$ and $wZ$ the components of $\varphi Z_M$ in $D$ and in $G$, respectively. Also, let $pr_i$ denote the projection operator onto $D_i$, $i=\overline{0,k}$.

From (\ref{h2}) and Definition \ref{8} (ii), we obtain: 

\begin{remark}\label{p153}
\begin{equation}\label{77}
\varphi(D_0)=D_0,\ \ 
w(D_0)=\{0\}, \ \ f(D_0)=D_0; 
\nonumber 
\end{equation}
\begin{equation}\label{78}
\varphi^2(D_i)=D_i \, \text{ for } i=\overline{1,k}\,,\ \  
\varphi^2(G)=G;
\nonumber
\end{equation}
\begin{equation}\label{79}
f(\varphi X)=\epsilon X,\ \ w(\varphi X)=0 \,\text{ for any } X\in D; \nonumber
\end{equation}
\begin{equation}\label{80}
f(\varphi V)=0,\ \ w(\varphi V)= \epsilon V \,\text{ for any } V\in G. \nonumber
\end{equation}
\end{remark}

For $X_i\in D_i\verb=\=\{0\},\ i= \overline{1,k}$, in view of Definition \ref{8} (i), we have
$|fX_i|=\nolinebreak\cos \theta_i \cdot |\varphi X_i|$, from which
$g(f^2X_i,Y_i)= \cos^2 \theta_i \cdot g(\varphi^2 X_i, Y_i)$ for $X_i,Y_i \in\nolinebreak{D_i}$. Since $f(D_i)\subseteq D_i$, we get $
g(f^2X_i,Y)= \cos^2 \theta_i \cdot g(\varphi^2 X_i,Y)
$ for any $Y\in T{M}$, so $f^2X_i= \epsilon\cos^2\theta_i\cdot X_i \text{ for } X_i\in D_i$.
Since $f^2X=\epsilon X$ for $X\in D_0$, we get 

\begin{proposition}\label{p36}
\begin{equation}\label{44}
f^2X=\epsilon\sum_{i=0}^k\cos^2\theta_i\cdot pr_iX\, \text{ for any } X\in D.
\nonumber
\end{equation}
\end{proposition}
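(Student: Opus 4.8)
The plan is to reduce the identity to a statement on each summand of the orthogonal decomposition $D=\oplus_{i=0}^k D_i$ and then reassemble it by linearity. Since $f^2$ is linear and every $X\in D$ satisfies $X=\sum_{i=0}^k pr_iX$, it suffices to prove that $f^2$ acts on each $D_i$ as the scalar $\epsilon\cos^2\theta_i$ (adopting $\theta_0=0$, so that $\cos^2\theta_0=1$); summing $f^2(pr_iX)$ over $i$ then yields the formula. Thus I would split the argument into the invariant component $D_0$ and the genuine slant components $D_i$ with $i\geq 1$.

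For $D_0$, I would use that it is invariant: by Remark \ref{p153} we have $f(D_0)=D_0$ and $w(D_0)=\{0\}$, so $f|_{D_0}=\varphi|_{D_0}$ and hence $f^2X=\varphi^2X=\epsilon X$ for $X\in D_0$ by \eqref{h2}, which is precisely $\epsilon\cos^2\theta_0\, X$. For each slant component $D_i$ with $i\geq 1$, I would start from the slant condition in Definition \ref{8}(i), which together with the fact that $\varphi$ is an isometry (Remark \ref{p32}, coming from \eqref{h3}) gives $|fX_i|=\cos\theta_i\,|X_i|$ for $X_i\in D_i$. Squaring and polarizing produces $g(fX_i,fY_i)=\cos^2\theta_i\, g(\varphi X_i,\varphi Y_i)$ for all $X_i,Y_i\in D_i$. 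Feeding both sides through Lemma \ref{lema2} (valid here by Remark \ref{p194}), namely $g(f^2X_i,Y_i)=\epsilon\, g(fX_i,fY_i)$ together with the compatibility identity $g(\varphi^2X_i,Y_i)=\epsilon\, g(\varphi X_i,\varphi Y_i)$ from \eqref{h2}, converts this into $g(f^2X_i,Y_i)=\cos^2\theta_i\, g(\varphi^2X_i,Y_i)$ for every $Y_i\in D_i$.

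The step that needs care, and the only genuine obstacle, is upgrading this equality of inner products from test vectors in $D_i$ to all test vectors, so as to extract the operator identity $f^2X_i=\cos^2\theta_i\,\varphi^2X_i$. This is exactly where condition (iii) of Definition \ref{8}, $f(D_i)\subseteq D_i$, is indispensable: it forces $f^2X_i\in D_i$, while $\varphi^2X_i=\epsilon X_i\in D_i$ holds automatically from \eqref{h2}. Both sides therefore lie in $D_i$, so their inner products against any vector orthogonal to $D_i$ vanish identically; combined with their agreement on $D_i$ and the nondegeneracy of $g$, this gives $f^2X_i=\cos^2\theta_i\,\varphi^2X_i=\epsilon\cos^2\theta_i\,X_i$. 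Finally, I would sum the contributions from $D_0,D_1,\dots,D_k$ along $X=\sum_{i=0}^k pr_iX$ to obtain $f^2X=\epsilon\sum_{i=0}^k\cos^2\theta_i\cdot pr_iX$, as claimed.
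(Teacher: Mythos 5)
Your proposal is correct and follows essentially the same route as the paper's own argument: the paper likewise derives $|fX_i|=\cos\theta_i\,|\varphi X_i|$ from the slant condition, polarizes and applies Lemma \ref{lema2} to get $g(f^2X_i,Y_i)=\cos^2\theta_i\, g(\varphi^2X_i,Y_i)$ on $D_i$, uses $f(D_i)\subseteq D_i$ to extend this to all test vectors and conclude $f^2X_i=\epsilon\cos^2\theta_i\,X_i$, and handles $D_0$ separately via $f^2X=\epsilon X$ before summing over the decomposition. The only cosmetic difference is that you invoke the isometry to replace $|\varphi X_i|$ by $|X_i|$, which changes nothing of substance.
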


\begin{corollary}\label{p37}
\begin{equation}\label{46}
f(D_i)=D_i \,\text{ for any } i \text{ with } \theta_i\neq \frac{\pi}{2}\,. \nonumber
\end{equation}
\end{corollary}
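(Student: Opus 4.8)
The plan is to combine the inclusion $f(D_i)\subseteq D_i$ supplied by condition (iii) of Definition \ref{8} with the explicit formula for $f^2$ from Proposition \ref{p36}; together these force $f$ to be a bijection of each non--anti-invariant component. Since condition (iii) already gives one inclusion, the only thing to establish is the reverse inclusion $D_i\subseteq f(D_i)$, that is, that $f$ restricted to $D_i$ is surjective.

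First I would specialize Proposition \ref{p36} to a single component: for $X_i\in D_i$ the projections $pr_jX_i$ vanish for $j\neq i$ while $pr_iX_i=X_i$, so the formula collapses to $f^2X_i=\epsilon\cos^2\theta_i\cdot X_i$. Read fiberwise at an arbitrary $x\in M$, this says $f_x^2=\epsilon\cos^2\theta_i\cdot\mathrm{id}$ on the finite-dimensional space $(D_i)_x$.

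Next, using the hypothesis $\theta_i\neq\frac{\pi}{2}$, we have $\cos^2\theta_i\neq 0$, so the scalar $\epsilon\cos^2\theta_i$ is nonzero (recall $\epsilon\in\{-1,1\}$, whence $1/\epsilon=\epsilon$). Hence for any $Y_i\in D_i$ I can write $Y_i=\frac{\epsilon}{\cos^2\theta_i}f^2Y_i=f\!\left(\frac{\epsilon}{\cos^2\theta_i}fY_i\right)$; since $fY_i\in D_i$ by (iii), the argument $\frac{\epsilon}{\cos^2\theta_i}fY_i$ again lies in $D_i$, exhibiting $Y_i$ as an element of $f(D_i)$. This yields $D_i\subseteq f(D_i)$ and, with the reverse inclusion, the desired equality.

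There is essentially no real obstacle here: the content is entirely carried by Proposition \ref{p36}, and the corollary is just the observation that $f|_{D_i}$ is invertible (its inverse being $\frac{\epsilon}{\cos^2\theta_i}f|_{D_i}$) precisely when the eigenvalue $\epsilon\cos^2\theta_i$ of $f^2$ is nonzero, i.e. when $\theta_i\neq\frac{\pi}{2}$. The only point requiring a word of care is that the statement concerns regular distributions, so the computation must be understood fiberwise, where $f_x$ is a genuine linear endomorphism of $(D_i)_x$; surjectivity there upgrades to the claimed equality of distributions.
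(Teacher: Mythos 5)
Your proof is correct and follows exactly the route the paper intends: the paper states the corollary without proof as an immediate consequence of Proposition \ref{p36}, the point being precisely that $f^2=\epsilon\cos^2\theta_i\cdot\mathrm{id}$ on $D_i$ with $\cos^2\theta_i\neq 0$ makes $f|_{D_i}$ invertible (with inverse $\frac{\epsilon}{\cos^2\theta_i}f|_{D_i}$), which together with $f(D_i)\subseteq D_i$ gives the equality.
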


Taking into account Propositions \ref{p167}, \ref{p36} and Remark \ref{p32}, we obtain 

\begin{theorem}\label{p173}
Let $\mathfrak D$ be a non-null distribution on $M$ decomposable into an orthogonal sum of regular distributions, $\mathfrak D=\oplus_{i=0}^k{\mathfrak D_i}$ with $\mathfrak D_i\neq \{0\}$ for $i=\nolinebreak\overline{1,k}$ and $\mathfrak D_0$ invariant (possible null). Let $pr_i$ denote the projection operator onto $\mathfrak D_i$ for $i=\overline{0,k}$, $f$ the component of $\varphi$ into $\mathfrak D$, and $\theta_0=0$. If $f(\mathfrak D_i)\subseteq \mathfrak D_i$ for $i=\overline{1,k}$, then the following assertions are equivalent: \\ 
\hspace*{7pt} (a) There exist $k$ distinct values $\theta_i\in (0,\frac{\pi}{2}]$, $i=\overline{1,k}$, such that  
\begin{equation}\nonumber
f^2X= \epsilon \sum_{i=0}^k\cos^2\theta_i\cdot pr_iX \, \text{ for any } X\in \mathfrak D;
\end{equation}
\hspace*{7pt} (b) $\mathfrak D$ is a $k$-slant distribution with slant angles $\theta_i$ corresponding to $\mathfrak D_i$, $i=\nolinebreak\overline{1,k}$. 
\end{theorem}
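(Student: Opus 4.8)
The plan is to prove the equivalence by treating the two implications separately, each reducing to a result already established in the excerpt.

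For the direction (b)$\Rightarrow$(a) I would simply invoke Proposition \ref{p36}. That proposition asserts precisely that, whenever $\mathfrak{D}=\oplus_{i=0}^k\mathfrak{D}_i$ is a $k$-slant distribution with $\mathfrak{D}_0$ the invariant component and slant angles $\theta_i$ (and the convention $\theta_0=0$), the tangential endomorphism satisfies $f^2X=\epsilon\sum_{i=0}^k\cos^2\theta_i\cdot pr_iX$ for every $X\in\mathfrak{D}$. Since assuming (b) places us exactly in the setting of Proposition \ref{p36}, the formula in (a) follows at once, with the $i=0$ term carrying $\cos^2\theta_0=1$ and thereby encoding the invariant behaviour $f^2X=\epsilon X$ on $\mathfrak{D}_0$.

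For the converse (a)$\Rightarrow$(b) I would verify the hypotheses of Proposition \ref{p167} and apply it. Two of its requirements are granted outright: the inclusion $f(\mathfrak{D}_i)\subseteq\mathfrak{D}_i$ is assumed in the statement, and the isometry of $\varphi$ restricted to $\oplus_{i=1}^k\mathfrak{D}_i$ follows from Remark \ref{p32}, since in the almost Hermitian and almost product Riemannian settings $\varphi$ is a global isometry by (\ref{h3}). The only remaining point is the defining relation of Proposition \ref{p167}, namely $f^2X=\epsilon\sum_{i=1}^k\cos^2\theta_i\cdot pr_iX$ for $X\in\oplus_{i=1}^k\mathfrak{D}_i$. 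This is obtained by restricting the formula in (a) to the proper component $\oplus_{i=1}^k\mathfrak{D}_i$: for such $X$ one has $pr_0X=0$, so the $i=0$ summand vanishes and the two expressions coincide. Proposition \ref{p167} then yields that $\mathfrak{D}$ is a $k$-slant distribution with the prescribed slant angles, which is (b).

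The argument is essentially bookkeeping, and I do not anticipate a genuine obstacle. The only point demanding care is the index convention: one must confirm that the $i=0$ contribution both correctly captures the invariant part in (a) and cleanly drops out upon restriction to the proper $k$-slant component, so that the hypothesis of Proposition \ref{p167} is matched exactly. Once this consistency between the two index ranges (namely $0$ to $k$ versus $1$ to $k$) is checked, both implications close.
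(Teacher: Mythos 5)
Your proposal is correct and follows exactly the paper's own route: the paper derives Theorem \ref{p173} precisely by combining Proposition \ref{p36} (for the implication (b)$\Rightarrow$(a)) with Proposition \ref{p167} and Remark \ref{p32} (for (a)$\Rightarrow$(b)), just as you do. Your attention to the index bookkeeping --- the $i=0$ term encoding the invariant part and dropping out upon restriction to $\oplus_{i=1}^k\mathfrak{D}_i$ --- is the only detail the paper leaves implicit, and you handle it correctly.
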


\begin{remark}\label{p174}
Theorem \ref{p173} provides a necessary and sufficient condition for a submanifold $M$ of $\overline M$ to be a $k$-slant submanifold, considering $\mathfrak D=TM$ if $TM=\oplus_{i=0}^k{\mathfrak D_i}$.
\end{remark}

\medskip 
Let us now return to the $k$-slant distribution $D=\oplus_{i=0}^k{D_i}$ on ${M}$ with its orthogonal complement $G$ in $T\overline{M}$. 
For $U\in G$ and $X\in D_0$, we have $g(fU,X)=0$, which implies
$f(G)\perp D_0$, so
\begin{equation}\label{81}
f(G)\subseteq \oplus_{i=1}^k{D_i}\,.
\end{equation}

For $U\in G$ with $U\perp w(\oplus_{i=1}^k{D_i})$, in view of Lemma \ref{lema1}, we have
$fU\perp \nolinebreak\oplus_{i=1}^k{D_i}$, and, using (\ref{81}), we get $fU=0$.
Taking into account (\ref{17}), we have the following orthogonal decomposition: 

\begin{theorem}\label{p39}
\begin{equation}\label{47}
G=\oplus_{i=1}^kw({D_i})\oplus H, \text{ where }f(H)=\{0\}.
\end{equation}
\end{theorem}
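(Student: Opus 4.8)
The plan is to read the statement as essentially a packaging of the computations already carried out in the paragraph immediately preceding it: the genuine mathematical content is (i) that the subdistributions $w(D_1),\dots,w(D_k)$ sit orthogonally inside $G$, and (ii) that whatever is left over in $G$ is annihilated by $f$. So I would define $H$ to be the orthogonal complement of $\oplus_{i=1}^k w(D_i)$ in $G$ and then verify the two halves of the claim.

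First I would record that, by Remark \ref{p32}, $\varphi$ is an isometry in this setting, so the orthogonality of vector fields is invariant under $\varphi$; consequently relation (\ref{17}) is available, which says that $w(D_i)\perp w(D_j)$ for $i\neq j$. Since $w$ takes values in $D^{\perp}=G$, each $w(D_i)$ is contained in $G$, and by mutual orthogonality the sum $\oplus_{i=1}^k w(D_i)$ is a genuine orthogonal subdistribution of $G$; by Proposition \ref{p64} it moreover equals $w(\oplus_{i=1}^k D_i)$. Defining $H$ as the orthogonal complement of $\oplus_{i=1}^k w(D_i)$ inside $G$ then yields the orthogonal decomposition $G=\oplus_{i=1}^k w(D_i)\oplus H$ tautologically.

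It remains to check $f(H)=\{0\}$, and here I would simply invoke the argument placed just before the theorem. For $V\in H$ we have $V\in G$ and $V\perp w(\oplus_{i=1}^k D_i)$; applying Lemma \ref{lema1} (the adjoint relation between $f$ and $w$) gives $fV\perp\oplus_{i=1}^k D_i$, while (\ref{81}) gives $fV\in f(G)\subseteq\oplus_{i=1}^k D_i$, and the two together force $fV=0$. Hence $f(H)=\{0\}$, completing the decomposition.

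The point needing the most care is not any of these algebraic identities but the regularity of $H$: to present $G=\oplus_{i=1}^k w(D_i)\oplus H$ as a decomposition into \emph{regular} distributions one must know that $\oplus_{i=1}^k w(D_i)$ is a smooth regular subbundle of $G$, so that its orthogonal complement is again regular. I would handle this by noting the injectivity of $w$ on each $D_i$ (so that $w(D_i)$ has the same fibre dimension as $D_i$ at every point) and its smooth dependence on the base point; granting this, the fibrewise orthogonal complement assembles into a smooth regular distribution $H$ and the conclusion stands.
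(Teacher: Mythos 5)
Your proof is correct and takes essentially the same route as the paper: the paper likewise obtains the orthogonal decomposition from (\ref{17}) (available here because $\varphi$ is an isometry, Remark \ref{p32}) and proves $f(H)=\{0\}$ for the complement $H$ by combining Lemma \ref{lema1} with (\ref{81}), exactly as you do. Your closing point about the regularity of $\oplus_{i=1}^k w(D_i)$ (via the pointwise injectivity of $w|_{D_i}$) is the one item the paper defers, settling it only afterwards in Proposition \ref{p40}.
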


\medskip 
From the decomposition (\ref{47}), we get 
$\varphi V=wV\in H$\, and\, $w^2V= \nolinebreak\epsilon V\in\nolinebreak H$ for $V\in H$; therefore, $w^2(H)=H$, and, consequently,

\begin{corollary}\label{p71}
$$
\varphi (H)=w(H)=H. 
$$
\end{corollary}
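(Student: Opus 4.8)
The plan is to exploit the decomposition $G=\oplus_{i=1}^kw(D_i)\oplus H$ from Theorem \ref{p39} together with the fact that $\varphi$ acts as an isometry (Remark \ref{p32}). First I would fix $V\in H$ and observe that, since $f(H)=\{0\}$, one has $fV=0$, whence $\varphi V=fV+wV=wV$. This already shows that $\varphi$ and $w$ agree on $H$, so $\varphi(H)=w(H)$, and it reduces the whole statement to proving $w(H)=H$.

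The key step, and the one I expect to be the only non-routine point, is to establish the inclusion $w(H)\subseteq H$, i.e. that $wV$ is orthogonal to each $w(D_i)$. Here I would use the isometry property \eqref{h3}: for any $X\in D_i$ we have $g(\varphi X,\varphi V)=g(X,V)=0$, because $X\in D$ and $V\in H\subseteq G=D^{\perp}$. Writing $\varphi X=fX+wX$ with $fX\in D$ and recalling that $\varphi V=wV\in G$, the term $g(fX,wV)$ vanishes, so $g(wX,wV)=0$. As $i$ and $X\in D_i$ are arbitrary, $wV\perp\oplus_{i=1}^kw(D_i)$, and since $wV\in G$ this forces $wV\in H$. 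Thus $w(H)\subseteq H$, and a fortiori $\varphi(H)=w(H)\subseteq H$.

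It remains to promote this inclusion to an equality. For this I would compute $w^2V=w(wV)=w(\varphi V)=\epsilon V$, using the relation $w(\varphi V)=\epsilon V$ valid for $V\in G$ recorded in Remark \ref{p153}. Since $\epsilon\in\{-1,1\}$, this gives $w^2(H)=\epsilon H=H$. Combining with the inclusion just obtained, $H=w^2(H)=w\bigl(w(H)\bigr)\subseteq w(H)\subseteq H$, so all inclusions are equalities and $w(H)=H$. Together with $\varphi(H)=w(H)$ from the first paragraph, this yields $\varphi(H)=w(H)=H$, as claimed. (The argument parallels the almost contact case of Corollary \ref{p62}, the only difference being the sign $\epsilon$ in the relation $w^2V=\epsilon V$.)
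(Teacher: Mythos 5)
Your proposal is correct and takes essentially the same route as the paper: the paper's (terse) in-text argument before Corollary \ref{p71} likewise uses $fV=0$ to identify $\varphi V=wV$, the orthogonality/isometry argument to place $wV$ in $H$ (spelled out fully in the parallel almost contact case, Corollary \ref{p62}, which you correctly cite), and $w^2V=\epsilon V$ to conclude $w^2(H)=H$ and hence $\varphi(H)=w(H)=H$. Your write-up merely makes explicit the inclusion-chasing that the paper leaves implicit.
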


\begin{remark}\label{p63}
For $\theta_j=\frac{\pi}{2}$ and $X_j\in D_j$, we have
$fwX_j=\epsilon X_j$ and, thus, $wfV_j= \nolinebreak\epsilon V_j$ for any $V_j\in w(D_j)$,
so $f|_{w(D_j)}: w(D_j) \rightarrow D_j$ and \linebreak $w|_{D_j}: D_j \rightarrow w(D_j)$ are inverse to each other for $\epsilon=1$ but anti-inverse for $\epsilon=-1$. 
\end{remark}

\begin{proposition}\label{p43}
For any $X,Y\in D$, we have: 
\begin{align}
g(\varphi X,\varphi Y)&= \sum_{i=0}^k g(pr_iX, pr_iY), \nonumber\\ 
g(fX,fY)&=\sum_{i=0}^k \cos^2 \theta_i \cdot g(pr_iX, pr_iY), \nonumber\\ 
g(wX,wY)&=\sum_{i=1}^k \sin^2 \theta_i \cdot g(pr_iX, pr_iY). \nonumber 
\end{align}
\end{proposition}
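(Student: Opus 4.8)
The plan is to establish the three identities in turn, reducing each to facts already available: that $\varphi$ is an isometry, the formula for $f^2$ from Proposition \ref{p36}, and the orthogonality of the summands in the decomposition $D=\oplus_{i=0}^k D_i$. For the first identity, since $\varphi$ is an isometry by \eqref{h3} (cf.\ Remark \ref{p32}), we have $g(\varphi X,\varphi Y)=g(X,Y)$. Writing $X=\sum_{i=0}^k pr_iX$ and $Y=\sum_{j=0}^k pr_jY$ and using that $D_0,\ldots,D_k$ are mutually orthogonal, every cross term $g(pr_iX,pr_jY)$ with $i\neq j$ vanishes, so $g(X,Y)=\sum_{i=0}^k g(pr_iX,pr_iY)$, which is the asserted first formula.

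For the second identity I would invoke Lemma \ref{lema2}, which together with $\epsilon^2=1$ gives $g(fX,fY)=\epsilon\,g(f^2X,Y)$. Substituting $f^2X=\epsilon\sum_{i=0}^k\cos^2\theta_i\cdot pr_iX$ from Proposition \ref{p36} and once more discarding the off-diagonal terms by orthogonality of the $D_i$'s (so that $g(pr_iX,Y)=g(pr_iX,pr_iY)$) yields $g(fX,fY)=\sum_{i=0}^k\cos^2\theta_i\cdot g(pr_iX,pr_iY)$.

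For the third identity I would use that $\varphi Z=fZ+wZ$ is precisely the orthogonal decomposition of $\varphi Z$ into its $D$-component $fZ$ and its $G$-component $wZ$, where $G=D^\perp$. Hence $fX,fY\in D$ are orthogonal to $wX,wY\in G$, and expanding $g(\varphi X,\varphi Y)=g(fX+wX,\,fY+wY)$ collapses to $g(\varphi X,\varphi Y)=g(fX,fY)+g(wX,wY)$. Subtracting the first two identities gives $g(wX,wY)=\sum_{i=0}^k(1-\cos^2\theta_i)\,g(pr_iX,pr_iY)=\sum_{i=0}^k\sin^2\theta_i\cdot g(pr_iX,pr_iY)$; since $\theta_0=0$, the $i=0$ term vanishes and the sum may be started at $i=1$, producing the stated third formula.

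The argument is purely computational and presents no genuine difficulty. The only points requiring care are the systematic elimination of the off-diagonal terms $g(pr_iX,pr_jY)$ for $i\neq j$, which rests entirely on the mutual orthogonality of the summands $D_i$ in the decomposition of $D$, and the observation that the $i=0$ contribution to the last sum drops out because $\sin\theta_0=0$.
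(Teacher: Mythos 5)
Your proof is correct and follows essentially the same route as the paper: the paper obtains this (via the analogue Proposition \ref{p2} in the contact case) from the compatibility identity \eqref{h3}, Lemma \ref{lema2} combined with the formula for $f^2$ in Proposition \ref{p36}, and the mutual orthogonality of the $D_i$'s, with the third identity coming from the orthogonal splitting $g(\varphi X,\varphi Y)=g(fX,fY)+g(wX,wY)$ and subtraction, exactly as you do. No gaps.
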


\medskip 
Using similar proofs as in the almost contact metric case, we get the following results. 

\begin{proposition}\label{p40}
\begin{align*}
f(w(D_i)) &=D_i \,\text{ for } i=\overline{1,k}\,; \\ 
f(G) &=\oplus_{i=1}^k{D_i}\,. 
\end{align*}

Moreover, $w|_{D_i}$ and $f|_{w(D_i)}$ are injective, for $i= \overline{1,k}$; therefore, $w(D_i)$ and $D_i$ localized in any point of $M$ are isomorphic; hence, both are regular and have the same dimension.
\end{proposition}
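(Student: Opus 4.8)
The plan is to reproduce, in the almost Hermitian / almost product Riemannian setting, the argument that produced Proposition \ref{p3} in the almost contact metric case, the only difference being the sign bookkeeping forced by $\varphi^2=\epsilon I$. The key preliminary step is to establish the analogue of Corollary \ref{p69}, namely the identity $fwX=\epsilon\sum_{i=1}^{k}\sin^2\theta_i\cdot pr_iX$ for every $X\in D$. I would read this off Proposition \ref{p43}: for $X\in D$ and any $Y\in D$, Lemma \ref{lema2} gives $g(fwX,Y)=\epsilon\,g(wX,wY)$, and the third formula of Proposition \ref{p43} rewrites the right-hand side as $\epsilon\sum_{i=1}^{k}\sin^2\theta_i\,g(pr_iX,pr_iY)=\epsilon\,g\big(\sum_{i=1}^{k}\sin^2\theta_i\,pr_iX,\,Y\big)$, using orthogonality of the $D_i$. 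Since both $fwX$ and $\sum_{i=1}^{k}\sin^2\theta_i\,pr_iX$ lie in $D$ and $Y\in D$ is arbitrary, the stated identity follows.

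With this identity in hand the two displayed equalities are immediate. Taking $X=X_i\in D_i$ gives $fwX_i=\epsilon\sin^2\theta_i\cdot X_i$; because $\theta_i\in(0,\frac{\pi}{2}]$ the scalar $\epsilon\sin^2\theta_i$ is nonzero, so as $X_i$ ranges over $D_i$ the vectors $fwX_i$ range over all of $D_i$, whence $f(w(D_i))=D_i$. For the second equality I would invoke the decomposition $G=\oplus_{i=1}^{k}w(D_i)\oplus H$ of Theorem \ref{p39} together with $f(H)=\{0\}$: applying the linear map $f$ yields $f(G)=\sum_{i=1}^{k}f(w(D_i))=\oplus_{i=1}^{k}D_i$, the directness of the sum coming from the mutual orthogonality of the $D_i$, and this is consistent with the inclusion \eqref{81}.

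It then remains to treat injectivity and the dimension count. Injectivity of $w|_{D_i}$ follows from the norm relation $|wX_i|=\sin\theta_i\,|X_i|$ (the case $X=Y=X_i$ of Proposition \ref{p43}): since $\sin\theta_i\neq 0$, $wX_i=0$ forces $X_i=0$. For $f|_{w(D_i)}$, write a general element of $w(D_i)$ as $wX_i$ with $X_i\in D_i$; the identity above gives $f(wX_i)=\epsilon\sin^2\theta_i\,X_i$, so $f(wX_i)=0$ forces $X_i=0$ and hence $wX_i=0$, proving injectivity. Evaluating at a point $x\in M$, the map $w_x$ therefore restricts to a linear isomorphism $(D_i)_x\to w(D_i)_x$, so these spaces have equal dimension; since $D_i$ is regular its pointwise dimension is constant, hence so is that of $w(D_i)$, which is thus also regular with $\dim w(D_i)=\dim D_i$.

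I do not anticipate a genuine obstacle, as the proof is structurally identical to that of Proposition \ref{p3}; the only point demanding care is the sign $\epsilon$ introduced here by $\varphi^2=\epsilon I$ (in place of $\varphi^2=\epsilon(I-\eta\otimes\xi)$), which propagates through the $fw$ identity and must be tracked so that the scalar $\epsilon\sin^2\theta_i$ governing $f\circ w$ on $D_i$ is correctly signed but, crucially, remains nonzero because $\theta_i\neq 0$. Everything is pointwise-linear, so once this scalar is seen to be an isomorphism the injectivity, surjectivity, and regularity assertions all follow without further computation.
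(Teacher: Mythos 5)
Your proof is correct and takes essentially the same approach as the paper: Proposition \ref{p40} is justified there by transferring the proof of Proposition \ref{p3} from the almost contact metric case, which is precisely your argument—first obtain $fwX=\epsilon\sum_{i=1}^{k}\sin^2\theta_i\cdot pr_iX$ (the analogue of Corollary \ref{p69}, stated here as Corollary \ref{p38}) from the metric relations and Lemma \ref{lema2}, then combine it with the decomposition $G=\oplus_{i=1}^{k}w(D_i)\oplus H$ of Theorem \ref{p39} and the inclusion \eqref{81}. Your handling of the $\epsilon$ sign and the pointwise injectivity/dimension count matches the paper's intended reasoning.
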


\begin{proposition}\label{p24}
For any $X\in D$ and\, $U\in G$, we have:
\begin{align}
f^2X+fwX &= \epsilon X, \nonumber\\ 
wfX+w^2X &=0, \nonumber \\ 
f^2U+fwU &=0, \nonumber \\ 
wfU+w^2U &=\epsilon U. \nonumber
\end{align}
\end{proposition}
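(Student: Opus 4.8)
The plan is to derive all four identities from the single relation $\varphi^2=\epsilon I$ in \eqref{h2}, together with the defining decomposition $\varphi Z=fZ+wZ$ valid for every vector field $Z$, where $fZ\in D$ and $wZ\in G$. The mechanism is to compute $\varphi^2$ by applying $\varphi=f+w$ twice and then to separate the resulting identity into its $D$- and $G$-components; the two equations for $X\in D$ will arise from $\varphi^2X=\epsilon X$ and the two for $U\in G$ from $\varphi^2U=\epsilon U$.

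First I would take $X\in D$ and write $\varphi X=fX+wX$ with $fX\in D$ and $wX\in G$. Applying $\varphi$ once more and decomposing $\varphi(fX)$ and $\varphi(wX)$ in turn gives
\[
\varphi^2X=\bigl(f^2X+wfX\bigr)+\bigl(fwX+w^2X\bigr),
\]
where $f^2X,\,fwX\in D$ and $wfX,\,w^2X\in G$. Since $\varphi^2X=\epsilon X$ with $\epsilon X\in D$, comparing $D$-components yields $f^2X+fwX=\epsilon X$ and comparing $G$-components yields $wfX+w^2X=0$, which are the first two identities.

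The computation for $U\in G$ is entirely parallel: writing $\varphi U=fU+wU$ with $fU\in D$ and $wU\in G$ and applying $\varphi$ again produces
\[
\varphi^2U=\bigl(f^2U+wfU\bigr)+\bigl(fwU+w^2U\bigr)=\epsilon U.
\]
This time $\epsilon U\in G$, so the $D$-projection gives $f^2U+fwU=0$ and the $G$-projection gives $wfU+w^2U=\epsilon U$, establishing the last two identities.

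No genuine obstacle arises; the only point demanding attention is keeping track of which summand lands in $D$ and which in $G$, and this is immediate from the definitions of $f$ (the component of $\varphi$ into $D$) and $w$ (the component into $G$). The argument is the exact analogue of the proof of Proposition \ref{p4} in the almost contact setting, but cleaner, because here $\varphi^2=\epsilon I$ holds without any $\eta\otimes\xi$ correction, so no $\xi$-dependent term survives on the right-hand sides.
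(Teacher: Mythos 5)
Your proof is correct and is essentially the argument the paper intends: the paper states Proposition \ref{p24} with the remark that it follows ``using similar proofs as in the almost contact metric case'' (Proposition \ref{p4}), and that proof is exactly your mechanism of expanding $\varphi^2 Z$ via $\varphi Z = fZ + wZ$ and comparing the $D$- and $G$-components of $\varphi^2 Z=\epsilon Z$. Your closing observation that the $\eta\otimes\xi$ correction disappears here is precisely the modification the paper alludes to when transferring the almost contact statement to the almost Hermitian/almost product setting.
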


\begin{corollary}\label{p38}
\begin{equation}
fwX=\epsilon \sum_{i=1}^k \sin^2 \theta_i \cdot pr_iX\, \text{ for any } X\in D.
\nonumber 
\end{equation}
\end{corollary}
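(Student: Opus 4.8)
The plan is to obtain this identity directly from the two results immediately preceding it, Proposition~\ref{p36} and the first relation of Proposition~\ref{p24}, so that no new geometric input is needed; it is a purely algebraic corollary.

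First I would isolate $fwX$ from the first identity of Proposition~\ref{p24}: for any $X\in D$, the relation $f^2X+fwX=\epsilon X$ gives
$$fwX=\epsilon X-f^2X.$$
Next I would substitute the explicit expression for $f^2$ supplied by Proposition~\ref{p36}, namely $f^2X=\epsilon\sum_{i=0}^k\cos^2\theta_i\cdot pr_iX$, and simultaneously rewrite $\epsilon X=\epsilon\sum_{i=0}^k pr_iX$ using the orthogonal decomposition $X=\sum_{i=0}^k pr_iX$ induced by $D=\oplus_{i=0}^k D_i$. Combining the two sums termwise and using $1-\cos^2\theta_i=\sin^2\theta_i$ yields
$$fwX=\epsilon\sum_{i=0}^k\bigl(1-\cos^2\theta_i\bigr)pr_iX=\epsilon\sum_{i=0}^k\sin^2\theta_i\cdot pr_iX.$$

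Finally I would invoke the convention $\theta_0=0$ attached to the invariant component $D_0$, so that $\sin^2\theta_0=0$ and the $i=0$ summand drops out, leaving $fwX=\epsilon\sum_{i=1}^k\sin^2\theta_i\cdot pr_iX$, which is exactly the claimed formula. The only points requiring care are that the substitution of Proposition~\ref{p36} is legitimate precisely because $X$ ranges over $D$ (and not all of $T\overline{M}$), and that the vanishing of the zeroth term rests solely on the normalization $\theta_0=0$ rather than on any feature of $D_0$ beyond its invariance. There is no substantive obstacle here: the statement is a one-line consequence of the two displayed propositions.
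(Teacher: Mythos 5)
Your proof is correct and complete: isolating $fwX$ from the first identity of Proposition \ref{p24}, substituting Proposition \ref{p36}, writing $X=\sum_{i=0}^k pr_iX$, and killing the $i=0$ term via $\theta_0=0$ gives exactly the claimed formula, and there is no circularity, since Proposition \ref{p24} follows directly from $\varphi^2=\epsilon I$ and the splitting $T\overline{M}=D\oplus G$, independently of this corollary.

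It is worth noting how this compares with the proof the paper actually intends. Section 4 gives no explicit argument, saying only that the results follow ``using similar proofs as in the almost contact metric case''; the prototype there, Corollary \ref{p69}, is obtained by a bilinear-form argument: from Lemma \ref{lema2} one has $g(fwX,Y)=\epsilon\, g(wX,wY)$, and combining this with $g(wX,wY)=\sum_{i=1}^k\sin^2\theta_i\cdot g(pr_iX,pr_iY)$ (Proposition \ref{p2}; its Section 4 analogue is Proposition \ref{p43}) identifies $fwX$ as an element of $D$ by testing against arbitrary $Y\in D$. Your route replaces that duality argument with the pointwise operator identity $f^2X+fwX=\epsilon X$, which is arguably the reading suggested by the paper's own placement of the corollary immediately after Proposition \ref{p24} (in the almost contact section, by contrast, the analogous corollary precedes the analogue of Proposition \ref{p24}). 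The two arguments are equally short and rest on previously established facts; yours has the small advantage of making explicit exactly which two inputs are used (the structural identity and the eigenvalue formula for $f^2|_D$), while the paper's prototype avoids invoking Proposition \ref{p36} a second time by working directly with the metric identities.
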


\medskip 
In view of Theorem \ref{p39} and Proposition \ref{p24}, we deduce: 

\begin{corollary}\label{p61}
For any $U_0, V_0\in H$, we have: 
\begin{align}
w^2U_0&=\epsilon U_0\,,\nonumber\\ 
g(wU_0,wV_0)&=g(U_0,V_0)\nonumber,\\ 
|wU_0|&=|U_0|.\nonumber
\end{align}

\end{corollary}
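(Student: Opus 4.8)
The plan is to derive the three asserted relations in sequence, exploiting the fact that $H$ is precisely the part of $G$ annihilated by $f$. First I would recall from Theorem~\ref{p39} that $G=\oplus_{i=1}^kw(D_i)\oplus H$ with $f(H)=\{0\}$, so that every $U_0\in H$ satisfies $fU_0=0$. Feeding this into the fourth identity of Proposition~\ref{p24}, namely $wfU+w^2U=\epsilon U$ valid for all $U\in G$, the term $wfU_0$ vanishes and I immediately obtain $w^2U_0=\epsilon U_0$, which is the first asserted relation.

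For the metric identity I would invoke the appropriate line of Lemma~\ref{lema2} (applicable since $H\subseteq G=D^{\perp}$), namely $g(w^2U_0,V_0)=\epsilon\,g(wU_0,wV_0)$, and substitute $w^2U_0=\epsilon U_0$ obtained above. This yields $\epsilon\,g(U_0,V_0)=\epsilon\,g(wU_0,wV_0)$, and dividing by $\epsilon\in\{-1,1\}$ gives $g(wU_0,wV_0)=g(U_0,V_0)$. Alternatively, one may argue directly: since $fU_0=0$ we have $\varphi U_0=wU_0$ (and likewise $\varphi V_0=wV_0$), so $g(wU_0,wV_0)=g(\varphi U_0,\varphi V_0)=g(U_0,V_0)$ because $\varphi$ is an isometry by Remark~\ref{p32} (equation \eqref{h3}); this is consistent with $\varphi(H)=w(H)=H$ from Corollary~\ref{p71}.

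Finally, the norm relation $|wU_0|=|U_0|$ follows at once by specializing $V_0=U_0$ in the metric identity, giving $|wU_0|^2=|U_0|^2$, and taking nonnegative square roots. The argument is entirely formal; the only point requiring a moment's care is the bookkeeping of the factor $\epsilon$, but since $\epsilon^2=1$ no sign ambiguity survives, so I do not anticipate any genuine obstacle.
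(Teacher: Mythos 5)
Your proposal is correct and follows the paper's own route: the paper derives this corollary precisely "in view of Theorem~\ref{p39} and Proposition~\ref{p24}," i.e., $fU_0=0$ on $H$ plugged into $wfU+w^2U=\epsilon U$ gives $w^2U_0=\epsilon U_0$, with the metric and norm identities then following via Lemma~\ref{lema2} (equivalently, via $\varphi U_0=wU_0$ and the isometry property of $\varphi$), exactly as you argue. Your $\epsilon$-bookkeeping and the specialization $V_0=U_0$ are both sound, so there is nothing to correct.
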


\begin{proposition}\label{p41}
\[
w^2(D_i)=
\begin{cases}
w(D_i) & \text{for }\, \theta_i\neq \frac{\pi}{2}\,,\\ 
\ \{0\} & \textit{for }\, \theta_i=\frac{\pi}{2}\,.
\end{cases}
\]
\end{proposition}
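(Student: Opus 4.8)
The plan is to mirror the argument used for the almost contact metric analogue, Proposition \ref{p5}, the essential structural input being the identity $w^2X=-wfX$ valid for every $X\in D$, which is precisely the second relation of Proposition \ref{p24}. Everything else reduces to tracking how $f$ behaves on an individual slant component $D_i$, so the computations are routine and the $\epsilon$ never actually intervenes in the final statement.

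First I would settle the inclusion $w^2(D_i)\subseteq w(D_i)$, which holds for every $i$ with no case distinction: by condition (iii) of Definition \ref{8} we have $f(D_i)\subseteq D_i$, hence for $X_i\in D_i$ the vector $fX_i$ lies in $D_i$ and $w^2X_i=-w(fX_i)\in w(D_i)$. This gives one half of the claim uniformly.

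Next I would treat the anti-invariant case $\theta_i=\frac{\pi}{2}$. Here I would invoke the slant identity $|fX_i|=\cos\theta_i\,|\varphi X_i|$, established just before Proposition \ref{p36}, which forces $fX_i=0$ on all of $D_i$; feeding this into $w^2X_i=-w(fX_i)$ immediately yields $w^2X_i=0$, so $w^2(D_i)=\{0\}$. For $\theta_i\neq\frac{\pi}{2}$ the remaining task is the reverse inclusion $w(D_i)\subseteq w^2(D_i)$: given $Y_i\in D_i$, I would use the surjectivity of $f$ on $D_i$ to write $fX_i=-Y_i$ for some $X_i\in D_i$, whence $wY_i=-w(fX_i)=w^2X_i\in w^2(D_i)$; combined with the first inclusion this gives $w^2(D_i)=w(D_i)$.

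The one point requiring care is exactly this last step: the reverse inclusion needs $f(D_i)=D_i$ as a genuine equality rather than the mere containment $f(D_i)\subseteq D_i$ of Definition \ref{8}(iii), so the appeal to Corollary \ref{p37} (which supplies surjectivity of $f$ on $D_i$ precisely when $\theta_i\neq\frac{\pi}{2}$) is indispensable there. No other obstacle is expected, and in particular the sign conventions of the almost Hermitian versus almost product Riemannian settings cancel out of the conclusion.
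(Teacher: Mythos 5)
Your proposal is correct and follows essentially the paper's own route: the paper proves Proposition \ref{p41} by transferring the proof of Proposition \ref{p5} from the almost contact metric setting, which is exactly your argument --- the inclusion $w^2(D_i)\subseteq w(D_i)$ from $w^2X=-wfX$ and $f(D_i)\subseteq D_i$, the reverse inclusion for $\theta_i\neq\frac{\pi}{2}$ via the surjectivity $f(D_i)=D_i$ of Corollary \ref{p37}, and the vanishing of $f|_{D_j}$ when $\theta_j=\frac{\pi}{2}$. The only (immaterial) deviation is in the $\frac{\pi}{2}$ case, where the paper decomposes $\varphi wX_j=\varphi^2X_j$ into tangential and normal parts while you conclude directly from $w^2X_j=-wfX_j=0$; both rest on the same identities of Proposition \ref{p24}.
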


\begin{proposition}\label{p42}
For any $U\in w(D)$, $U=\sum_{i=1}^kU_i$ with $U_i\in w(D_i)$, we have:
\begin{equation}\label{75}
wfU=
\epsilon\sum_{i=1}^k\sin^2\theta_i\cdot U_i\,, 
\nonumber 
\end{equation}
\begin{equation}\label{74}
w^2U=
\epsilon\sum_{i=1}^k\cos^2\theta_i\cdot U_i\,. 
\nonumber 
\end{equation}
\end{proposition}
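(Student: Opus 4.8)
The plan is to mirror the proof of Proposition \ref{p6} from the almost contact metric setting, replacing the factor $-1$ coming from $\varphi^2$ by the general factor $\epsilon$, and to reduce the whole statement to single-component identities. This reduction is legitimate because the operators $wf$ and $w^2$ are $\mathbb{R}$-linear (being compositions of the linear maps $f=pr_D\circ\varphi$ and $w=pr_G\circ\varphi$), so it suffices to prove the two formulas for one summand $U_i\in w(D_i)$ and then add them up using $U=\sum_{i=1}^k U_i$.

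First I would fix $i\in\{1,\ldots,k\}$ and $U_i\in w(D_i)$. By Proposition \ref{p40} the restriction $w|_{D_i}\colon D_i\to w(D_i)$ is a bijection, so there is a (unique) $X_i\in D_i$ with $U_i=wX_i$. Applying Corollary \ref{p38} to $X_i\in D_i\subseteq D$, where $pr_iX_i=X_i$ and $pr_jX_i=0$ for $j\neq i$, gives $fU_i=fwX_i=\epsilon\sin^2\theta_i\cdot X_i$. Applying $w$ and using its linearity then yields $wfU_i=\epsilon\sin^2\theta_i\cdot wX_i=\epsilon\sin^2\theta_i\cdot U_i$, which is the first asserted formula at the level of a single component.

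For the second formula I would invoke Proposition \ref{p24}. Since $U_i\in w(D_i)\subseteq G$ by the decomposition in Theorem \ref{p39}, the identity $wfU_i+w^2U_i=\epsilon U_i$ applies, whence $w^2U_i=\epsilon U_i-wfU_i=\epsilon U_i-\epsilon\sin^2\theta_i\cdot U_i=\epsilon\cos^2\theta_i\cdot U_i$. Summing the two component-wise identities over $i=\overline{1,k}$ then produces exactly the stated expressions for $wfU$ and $w^2U$. One checks that the argument is uniform in $i$, requiring no separate treatment of the case $\theta_i=\tfrac{\pi}{2}$, since there $\cos^2\theta_i=0$ forces $w^2U_i=0$, in agreement with Remark \ref{p63}.

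There is no substantial obstacle here; the computation is the straightforward $\epsilon$-analogue of Proposition \ref{p6}. The only points that genuinely need to be verified are the ones that license the reduction: the surjectivity of $w|_{D_i}$ onto $w(D_i)$ (Proposition \ref{p40}), the inclusion $w(D_i)\subseteq G$ that makes Proposition \ref{p24} applicable (Theorem \ref{p39}), and the linearity of $wf$ and $w^2$ that allows passing from the single-summand identities to the general sum.
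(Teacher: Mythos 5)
Your proposal is correct and takes essentially the same route as the paper, which obtains Proposition \ref{p42} by repeating the proof of Proposition \ref{p6} with $\epsilon$ in place of $-1$: write $U_i=wX_i$, apply the $fw$-formula (Corollary \ref{p38}), recover $w^2U_i$ from the identity $wfU+w^2U=\epsilon U$ on $G$ (Proposition \ref{p24}), and sum by linearity. The points you flag as needing verification (injectivity/surjectivity of $w|_{D_i}$ from Proposition \ref{p40}, the inclusion $w(D_i)\subseteq G$ from Theorem \ref{p39}) are exactly the ingredients the paper relies on, so there is no gap.
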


\begin{proposition}\label{p68}
For any $U,V\in w(D)$, $U=\sum_{i=1}^kU_i$, $V=\sum_{i=1}^kV_i$ with $U_i,V_i\in w(D_i)$, $i=\overline{1,k}$, we have: 
\begin{align}
g(fU,fV)&=\sum_{i=1}^k \sin^2 \theta_i \cdot g(U_i, V_i), \nonumber\\ 
g(wU,wV)&=\sum_{i=1}^k \cos^2 \theta_i \cdot g(U_i, V_i), \nonumber\\ 
g(\varphi U,\varphi V)&= \sum_{i=1}^k g(U_i, V_i). \nonumber
\end{align}
\end{proposition}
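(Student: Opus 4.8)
The plan is to reduce all three identities to the explicit formulas for $wfU$ and $w^2U$ recorded in Proposition \ref{p42}, using the adjointness relations of Lemma \ref{lema2} (valid here by Remark \ref{p194}) to convert inner products of $f$- and $w$-images into inner products involving the $U_i$ and $V$ directly. The only structural input needed beyond these is the mutual orthogonality of the summands $w(D_i)$. In the present setting $\varphi$ is an isometry (Remark \ref{p32}), so the orthogonality of vector fields is invariant under $\varphi$ and relation \eqref{17} applies; consequently $g(U_i,V_j)=0$ for $i\neq j$, whence $g(U_i,V)=g(U_i,V_i)$ and $g(U,V)=\sum_{i=1}^k g(U_i,V_i)$.

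First I would establish the $g(fU,fV)$ identity. Since $U,V\in w(D)\subseteq G=D^{\perp}$, Lemma \ref{lema2} gives $\epsilon\, g(fU,fV)=g(wfU,V)$. Substituting $wfU=\epsilon\sum_{i=1}^k\sin^2\theta_i\cdot U_i$ from Proposition \ref{p42} and using the orthogonality just noted to collapse each $g(U_i,V)$ to $g(U_i,V_i)$, I obtain $\epsilon\, g(fU,fV)=\epsilon\sum_{i=1}^k\sin^2\theta_i\cdot g(U_i,V_i)$; dividing by $\epsilon$ (which is $\pm 1$) yields the first formula. The $g(wU,wV)$ identity follows in exactly the same way: Lemma \ref{lema2} gives $\epsilon\, g(wU,wV)=g(w^2U,V)$, and inserting $w^2U=\epsilon\sum_{i=1}^k\cos^2\theta_i\cdot U_i$ from Proposition \ref{p42} produces the second formula after the identical cancellation.

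For the third identity I would simply add the first two, using that $\varphi U=fU+wU$ with $fU\in D$ and $wU\in G$ orthogonal, so that the cross terms vanish and $g(\varphi U,\varphi V)=g(fU,fV)+g(wU,wV)$; since $\sin^2\theta_i+\cos^2\theta_i=1$, the two sums combine into $\sum_{i=1}^k g(U_i,V_i)$. Alternatively, one may invoke the isometry property \eqref{h3} directly, obtaining $g(\varphi U,\varphi V)=g(U,V)=\sum_{i=1}^k g(U_i,V_i)$, which also serves as a consistency check on the sign bookkeeping.

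There is no genuine obstacle here: the argument is a routine transcription of the almost contact metric proof of Proposition \ref{p67} (built on Proposition \ref{p6}) into the present framework. The only points demanding care are the correct handling of the factor $\epsilon$ — it enters once through Lemma \ref{lema2} and once through Proposition \ref{p42}, the two occurrences cancelling — and the explicit use of the orthogonality of the $w(D_i)$ to discard all off-diagonal terms $g(U_i,V_j)$ with $i\neq j$, which is precisely what reduces each expression to a single diagonal sum.
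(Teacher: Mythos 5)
Your proof is correct and follows essentially the same route as the paper: the paper derives the almost contact metric analogue (Proposition \ref{p67}) from the formulas for $wfU$ and $w^2U$ (Proposition \ref{p6}) combined with the adjointness relations of Lemmas \ref{lema1}--\ref{lema2}, and transfers the argument to the present setting via Proposition \ref{p42}, exactly as you do. Your $\epsilon$-bookkeeping and your appeal to Remark \ref{p32} to justify the orthogonality \eqref{17} of the $w(D_i)$'s, which collapses the off-diagonal terms, match the paper's intended argument.
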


\begin{proposition}\label{p44}
For any $X=\sum_{i=0}^kX_i$ and\, $U=\sum_{i=1}^kU_i$ with $X_0\in D_0$, $X_i\in D_i$, and\, $U_i\in w(D_i)$, $i=\overline{1,k}$, we have:
$$|fX|^2={\sum_{i=0}^k\cos^2\theta_i\cdot |X_i|^2}\quad\, \text{and}\,\quad |wU|^2={\sum_{i=1}^k\cos^2\theta_i\cdot |U_i|^2}.$$

In particular, we obtain:\, $|fX_0|= |X_0|$\, for $X_0\in D_0$; 
$$|fX_i|=\cos\theta_i\cdot |X_i|, \quad |wU_i|=\cos\theta_i\cdot  |U_i|\, \text{ for } X_i\in D_i,\, U_i\in w(D_i),\, i=\overline{1,k}.
$$
\end{proposition}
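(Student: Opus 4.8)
The plan is to obtain both equalities as diagonal specializations of the polarized bilinear identities already established in Propositions \ref{p43} and \ref{p68}, so that essentially no new computation is required. The whole argument amounts to putting the second argument equal to the first in those two identities and then restricting to single-component families.

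First I would treat the tangential part. Proposition \ref{p43} gives, for any $X,Y\in D$, the identity $g(fX,fY)=\sum_{i=0}^k\cos^2\theta_i\cdot g(pr_iX,pr_iY)$. Setting $Y=X$ and noting that $pr_iX=X_i$, so that $g(pr_iX,pr_iX)=|X_i|^2$, this collapses immediately to $|fX|^2=\sum_{i=0}^k\cos^2\theta_i\cdot |X_i|^2$, which is the first asserted equality.

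Next I would handle the orthogonal part in exactly the same fashion, now invoking Proposition \ref{p68}. For $U=\sum_{i=1}^kU_i$ and $V=\sum_{i=1}^kV_i$ with $U_i,V_i\in w(D_i)$, that proposition yields $g(wU,wV)=\sum_{i=1}^k\cos^2\theta_i\cdot g(U_i,V_i)$. Putting $V=U$ and using $g(U_i,U_i)=|U_i|^2$ produces $|wU|^2=\sum_{i=1}^k\cos^2\theta_i\cdot |U_i|^2$, the second equality.

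Finally, the \emph{in particular} claims are read off by specializing to one-component families. Taking $X=X_0\in D_0$ leaves only the $i=0$ term, and since $\theta_0=0$ gives $\cos^2\theta_0=1$, I obtain $|fX_0|^2=|X_0|^2$, hence $|fX_0|=|X_0|$. Likewise, taking $X=X_i\in D_i$ for a fixed $i\geq 1$ keeps only the $i$-th term and yields $|fX_i|=\cos\theta_i\cdot|X_i|$, while taking $U=U_i\in w(D_i)$ gives $|wU_i|=\cos\theta_i\cdot|U_i|$. Since every step is a substitution into previously proved identities, there is no genuine obstacle here; the only point deserving attention is simply recording the convention $\theta_0=0$, which is exactly what turns the invariant component $D_0$ into the term with $\cos^2\theta_0=1$ and makes $f$ act isometrically there.
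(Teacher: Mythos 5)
Your proof is correct and follows essentially the paper's own route: the paper obtains the contact-metric analogue (Proposition \ref{p7}) from the $f^2$ and $w^2$ formulas together with Lemma \ref{lema2}, which is precisely the content underlying the polarized identities of Propositions \ref{p43} and \ref{p68}, and your argument is just their diagonal specialization $Y=X$, $V=U$. The particular cases, including the convention $\cos\theta_0=1$ on the invariant component, are handled exactly as in the paper.
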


\begin{corollary}\label{p52}
For any $i\in \{1,\ldots,k\}$, $X_i\in D_i$, and\, $U_i\in w(D_i)$, we have: 
$$|wX_i|=\sin\theta_i \cdot |X_i|\quad\, \text{and}\,\quad |fU_i|=\sin\theta_i \cdot |U_i|.$$

\end{corollary}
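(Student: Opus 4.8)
The plan is to read both identities off the Pythagorean decomposition of $\varphi$ combined with the norms already computed in Proposition \ref{p44}. The two facts I would invoke are: first, that $\varphi$ acts as an isometry in the present setting, so $|\varphi Z|=|Z|$ for every $Z$ (this is exactly (\ref{h3}) together with Remark \ref{p32}); and second, that $\varphi Z=fZ+wZ$ is an orthogonal decomposition, since by definition $fZ$ is the component of $\varphi Z$ in $D$ and $wZ$ its component in the orthogonal complement $G$, whence $|\varphi Z|^2=|fZ|^2+|wZ|^2$.

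For the first identity I would fix $i$ and $X_i\in D_i$. Applying the orthogonal splitting to $Z=X_i$ and using the isometry of $\varphi$ gives $|X_i|^2=|\varphi X_i|^2=|fX_i|^2+|wX_i|^2$. By Proposition \ref{p44} we have $|fX_i|=\cos\theta_i\cdot|X_i|$, so substituting and solving yields $|wX_i|^2=(1-\cos^2\theta_i)|X_i|^2=\sin^2\theta_i\cdot|X_i|^2$, that is, $|wX_i|=\sin\theta_i\cdot|X_i|$.

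For the second identity I would fix $U_i\in w(D_i)\subseteq G$ and argue symmetrically: here $fU_i$ is the $D$-component and $wU_i$ the $G$-component of $\varphi U_i$, so again $|U_i|^2=|\varphi U_i|^2=|fU_i|^2+|wU_i|^2$. Since Proposition \ref{p44} also records $|wU_i|=\cos\theta_i\cdot|U_i|$, the same substitution gives $|fU_i|^2=\sin^2\theta_i\cdot|U_i|^2$, hence $|fU_i|=\sin\theta_i\cdot|U_i|$.

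There is no genuine obstacle here: the entire content is the identity $\sin^2\theta_i=1-\cos^2\theta_i$ fed into a Pythagorean relation. The only ingredient that truly uses the almost Hermitian / almost product Riemannian hypothesis (rather than the almost contact metric setting) is the \emph{global} isometry $|\varphi Z|=|Z|$; in the contact case the corresponding Corollary \ref{p15} must restrict to $Z\in\langle\xi\rangle^{\perp}$, whereas here no such restriction is needed.
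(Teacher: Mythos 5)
Your proof is correct and follows essentially the paper's own route: Corollary \ref{p52} is stated as an immediate consequence of Proposition \ref{p44}, obtained exactly by combining the isometry $|\varphi Z|=|Z|$ from (\ref{h3}) with the orthogonal splitting $|\varphi Z|^2=|fZ|^2+|wZ|^2$ (the same Pythagorean identity the paper uses to derive Propositions \ref{p43} and \ref{p68}, whose $\sin^2\theta_i$ formulas give the corollary directly). Your closing remark about the contact-metric case needing the restriction to $\langle\xi\rangle^{\perp}$ is also accurate.
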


\medskip 
For more general vector fields, we get 

\begin{proposition}\label{p53}
For $X=\sum_{i=1}^kX_i$, $U=\sum_{i=1}^kU_i$ with $X_i\in D_i$, 
$U_i\in\nolinebreak w(D_i)$, $i=\overline{1,k}$, we have: 
$$|wX|^2={\sum_{i=1}^k\sin^2\theta_i\cdot |X_i|^2}\ \ \text{ and }\ \ |fU|^2={\sum_{i=1}^k\sin^2\theta_i\cdot |U_i|^2}.$$
\end{proposition}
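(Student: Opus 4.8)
The plan is to obtain both identities as the diagonal specializations ($Y=X$, respectively $V=U$) of the bilinear formulas already established just above. For the first identity I would invoke the third relation of Proposition~\ref{p43}, namely $g(wX,wY)=\sum_{i=1}^k\sin^2\theta_i\,g(pr_iX,pr_iY)$, which holds for all $X,Y\in D$. Since the given $X=\sum_{i=1}^kX_i$ lies in $\oplus_{i=1}^kD_i\subseteq D$, with $pr_0X=0$ and $pr_iX=X_i$, setting $Y=X$ gives at once $|wX|^2=g(wX,wX)=\sum_{i=1}^k\sin^2\theta_i\,|X_i|^2$, since $g(X_i,X_i)=|X_i|^2$.

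For the second identity I would argue in exactly the same way, this time using the first relation of Proposition~\ref{p68}, $g(fU,fV)=\sum_{i=1}^k\sin^2\theta_i\,g(U_i,V_i)$, valid for $U,V\in w(D)$ decomposed along the summands $w(D_i)$. Because $\varphi$ is an isometry (Remark~\ref{p32}), the orthogonality of vector fields is invariant under $\varphi$, so $w(D)=\oplus_{i=1}^kw(D_i)$ and the decomposition $U=\sum_{i=1}^kU_i$ with $U_i\in w(D_i)$ is the canonical one; putting $V=U$ then yields $|fU|^2=\sum_{i=1}^k\sin^2\theta_i\,|U_i|^2$.

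If one prefers a self-contained route that bypasses the polarized forms, I would instead proceed by orthogonality and the Pythagorean identity. The components $wX_i\in w(D_i)$ are mutually orthogonal by \eqref{17} (whose orthogonality hypothesis is automatically met here, $\varphi$ being an isometry), so $|wX|^2=\sum_{i=1}^k|wX_i|^2$; likewise, by Proposition~\ref{p40} each $fU_i\in f(w(D_i))=D_i$, so the $fU_i$ inherit mutual orthogonality from the orthogonality of the $D_i$'s, giving $|fU|^2=\sum_{i=1}^k|fU_i|^2$. Substituting the single-component norms $|wX_i|=\sin\theta_i\,|X_i|$ and $|fU_i|=\sin\theta_i\,|U_i|$ from Corollary~\ref{p52} then reproduces both formulas.

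There is no genuine obstacle here: the only point requiring verification is the vanishing of the cross terms $g(wX_i,wX_j)$ and $g(fU_i,fU_j)$ for $i\neq j$, and both routes dispose of this immediately — the first because the bilinear formulas of Propositions~\ref{p43} and \ref{p68} are already diagonal in the index $i$, the second via the orthogonality relations \eqref{17} and $D_i\perp D_j$. Thus Proposition~\ref{p53} is a direct corollary of the results preceding it, and I would state it as such with a one-line proof.
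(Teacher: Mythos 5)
Your proof is correct and follows essentially the same route as the paper: Proposition~\ref{p53} is stated there without separate proof precisely because it is the diagonal case ($Y=X$, $V=U$) of the bilinear identities in Propositions~\ref{p43} and~\ref{p68}, equivalently the Pythagorean combination of Corollary~\ref{p52} with the mutual orthogonality of the $w(D_i)$'s (from \eqref{17}, automatic here since $\varphi$ is an isometry by Remark~\ref{p32}) and of the $D_i$'s. Both of your routes are valid and match the paper's intended one-line derivation.
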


\begin{proposition}\label{p45}
For any $i\in\{1,\ldots ,k\}$ with $\theta_i\neq \frac{\pi}{2}$ and for $X_i, Y_i\in\nolinebreak D_i\verb=\=\{0\}$, $U_i, V_i\in w(D_i)\verb=\=\{0\}$, $X_0, Y_0\in D_0\verb=\=\{0\}$, $U_0, V_0\in H\verb=\=\{0\}$, $\overline{X}, \overline{Y}\in\nolinebreak (D\oplus\nolinebreak G)\verb=\=\{0\}$, if\, ${M}_{\widehat{X_0,Y_0}}\,,\, {M}_{\widehat{X_i,Y_i}}\,,\, {M}_{\widehat{U_0,V_0}}\,,\, {M}_{\widehat{U_i,V_i}}\,,\, {M}_{\widehat{\overline{X},\overline{Y}}}$ are nonempty, we have: \\ 
\hspace*{7pt} (i) \ \;$\cos(\widehat{fX_0,fY_0})=\cos(\widehat{\varphi X_0,\varphi Y_0})=\cos(\widehat{X_0,Y_0})$;\\ 
\hspace*{7pt} (ii) \ $\cos(\widehat{fX_i,fY_i})=\cos(\widehat{\varphi X_i,\varphi Y_i})=\cos(\widehat{X_i,Y_i})$;\\ 
\hspace*{7pt} (iii) \,$g(wU_i,wV_i)=\cos^2\theta_i \cdot g(U_i,V_i)$;\\ 
\hspace*{7pt} (iv) \;$\cos(\widehat{wU_0,wV_0})=\cos(\widehat{U_0,V_0})=\cos(\widehat{\varphi U_0,\varphi V_0})$;\\ 
\hspace*{7pt} (v) \ \;$\cos(\widehat{wU_i,wV_i})=\cos(\widehat{U_i,V_i})=\cos(\widehat{\varphi U_i,\varphi V_i})$;\\ 
\hspace*{7pt} (vi) \ $\cos(\widehat{\varphi \overline{X},\varphi \overline{Y}})=\cos(\widehat{\overline{X},\overline{Y}})$.
\end{proposition}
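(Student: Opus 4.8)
The plan is to reduce every asserted equality to the pointwise identity $\cos\widehat{(A,B)}=g(A,B)/(|A|\,|B|)$ and to compute the numerator and the two norms separately from the results already established, watching the factors $\cos\theta_i$ cancel. The single tool that disposes of all the equalities involving $\varphi$ is the fact that $\varphi$ is an isometry (Remark \ref{p32}, from \eqref{h3}): it gives $g(\varphi A,\varphi B)=g(A,B)$ and $|\varphi A|=|A|$, hence $\cos\widehat{(\varphi A,\varphi B)}=\cos\widehat{(A,B)}$ for any angular compatible $A,B$, which is exactly claim (vi) together with the rightmost equalities in (i), (ii), (iv), and (v).

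For the remaining equalities I would treat the pairs in turn. In (i), since $D_0$ is invariant we have $w(D_0)=\{0\}$, so $fX_0=\varphi X_0$ and $fY_0=\varphi Y_0$ and the first equality is trivial. In (ii), Proposition \ref{p43} restricted to a single summand gives $g(fX_i,fY_i)=\cos^2\theta_i\,g(X_i,Y_i)$, while Proposition \ref{p44} gives $|fX_i|=\cos\theta_i\,|X_i|$ and $|fY_i|=\cos\theta_i\,|Y_i|$; dividing, the factor $\cos^2\theta_i$ cancels and leaves $\cos\widehat{(X_i,Y_i)}$. Claim (iii) is the single-summand case of Proposition \ref{p68}. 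In (v) I would feed (iii) into the numerator and $|wU_i|=\cos\theta_i\,|U_i|$ (Proposition \ref{p44}) into the denominator, so that $\cos^2\theta_i$ again cancels. Finally, (iv) is immediate from Corollary \ref{p61}, which supplies $g(wU_0,wV_0)=g(U_0,V_0)$ and $|wU_0|=|U_0|$ on $H$.

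The only point requiring care is that each angular function is defined solely on the locus where both of its arguments are nonzero, so I must check that the transformed vectors have the same zero set as the originals over the stated domains. This is where the hypothesis $\theta_i\neq\frac{\pi}{2}$ enters: since then $\cos\theta_i\neq 0$, the norm identities $|fX_i|=\cos\theta_i\,|X_i|$ and $|wU_i|=\cos\theta_i\,|U_i|$ force $(fX_i)_x$ and $(wU_i)_x$ to vanish exactly when $(X_i)_x$ and $(U_i)_x$ do; likewise $\varphi$ and $w|_H$, being isometries, preserve nonvanishing pointwise. Hence on ${M}_{\widehat{X_i,Y_i}}$, ${M}_{\widehat{U_i,V_i}}$, ${M}_{\widehat{U_0,V_0}}$, ${M}_{\widehat{\overline{X},\overline{Y}}}$ the transformed pairs are angular compatible with matching domains, and the cosine computations above are legitimate. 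Beyond this bookkeeping the proof is entirely routine, being a transcription of the almost contact metric argument of Proposition \ref{p13} with the $\epsilon$-signs absorbed by the isometry relation \eqref{h3}; I do not expect any genuine obstacle, only the care needed to keep the domains of the angular functions consistent.
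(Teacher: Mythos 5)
Your proposal is correct and follows essentially the same route as the paper: the paper obtains Proposition \ref{p45} by transcribing the almost contact metric argument for Proposition \ref{p13}, i.e., by combining the metric identities (Propositions \ref{p43}, \ref{p68}, Corollary \ref{p61}) with the norm identities of Proposition \ref{p44} and the isometry property \eqref{h3} of $\varphi$, exactly as you do. Your explicit bookkeeping of the zero sets (using $\cos\theta_i\neq 0$ and the norm-preservation of $\varphi$ and $w|_H$ to match the domains of the angular functions) is a point the paper leaves implicit, and it is handled correctly.
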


\begin{theorem}\label{p47}
The distribution $G=\oplus_{i=1}^kw(D_i)\oplus H$ is a $k$-slant distribution with $H$ the invariant component and $\oplus_{i=1}^kw(D_i)$ the proper \mbox{$k$-slant} com\-po\-nent, every slant distribution $w(D_i)$ having the same slant angle as $D_i$, $i= \nolinebreak\overline{1,k}$. 
\end{theorem}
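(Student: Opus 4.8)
The plan is to verify directly that $G$, equipped with the decomposition $G=\oplus_{i=1}^k w(D_i)\oplus H$ furnished by Theorem \ref{p39}, fulfils every requirement of Definition \ref{8}, with $H$ playing the role of the invariant component and each $w(D_i)$ that of the $i$-th slant piece. The conceptual point to fix at the outset is that, for the distribution $G$, the relevant structural endomorphism (the component of $\varphi$ into $G$) is $w$ restricted to $G$: since $D$ is the orthogonal complement of $G$ in $T\overline{M}$, for $U\in G$ the splitting $\varphi U=fU+wU$ places $fU$ in $G^{\perp}=D$ and $wU$ in $G$, so $wU$ is exactly the $G$-component of $\varphi U$. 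First I would record the structural prerequisites: the $w(D_i)$ are non-null and regular, with the same dimension as $D_i$ (Proposition \ref{p40}); they are mutually orthogonal and orthogonal to $H$ (from \eqref{17} and Theorem \ref{p39}); and the $\theta_i$ are the same distinct values attached to $D$.

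Next I would check conditions (i)--(iii) of Definition \ref{8} for $G$. Condition (iii), namely $w(w(D_i))\subseteq w(D_i)$, is immediate from Proposition \ref{p41}, which gives $w^2(D_i)$ equal to $w(D_i)$ or $\{0\}$, in either case contained in $w(D_i)$. Condition (ii), that $H$ is invariant under $\varphi$, is precisely Corollary \ref{p71}. For condition (i) I would fix $U_i\in w(D_i)$ with $U_i\neq0$ and compute the angle between $\varphi U_i$ and $w(D_i)$: because $fU_i\in D\perp G$ and $wU_i\in w(D_i)$ by Proposition \ref{p41}, the projection of $\varphi U_i$ onto $w(D_i)$ is exactly $wU_i$, so the slant angle $\theta$ satisfies $\cos\theta=|wU_i|/|\varphi U_i|$. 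Proposition \ref{p44} gives $|wU_i|=\cos\theta_i\cdot|U_i|$, while \eqref{h3} gives $|\varphi U_i|=|U_i|$ together with $\varphi U_i\neq0$ (as $\varphi$ is invertible). Hence $\cos\theta=\cos\theta_i$, and $w(D_i)$ is a $\theta_i$-slant distribution.

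The single point needing slight care is the anti-invariant case $\theta_i=\tfrac{\pi}{2}$: there $\cos\theta_i=0$ forces $wU_i=0$, so the quotient above degenerates; instead one observes that then $\varphi U_i=fU_i\in D=G^{\perp}$ is nonzero, whence $\varphi U_i$ is orthogonal to $G$ and the angle is indeed $\tfrac{\pi}{2}$. With (i)--(iii) verified and the $\theta_i$ distinct, Definition \ref{8} yields that $G$ is a $(\theta_1,\dots,\theta_k)$-slant distribution with invariant component $H$ and proper $k$-slant component $\oplus_{i=1}^k w(D_i)$, each $w(D_i)$ carrying the slant angle $\theta_i$. As an alternative that sidesteps the case distinction, I could instead invoke Theorem \ref{p173} with $\mathfrak D=G$ and structural endomorphism $w|_G$: Proposition \ref{p42} on the $w(D_i)$ together with Corollary \ref{p61} on $H$ (where $\theta_0=0$) yields $w^2U=\epsilon\sum_{i=0}^k\cos^2\theta_i\cdot pr_i^{G}U$ for all $U\in G$, which is exactly assertion (a) of that theorem, while condition (iii) is again Proposition \ref{p41}. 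I expect no genuine obstacle; the essential insight is simply that passing from $D$ to its orthogonal complement $G$ interchanges the roles of $f$ and $w$ while preserving each slant angle.
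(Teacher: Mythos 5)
Your proof is correct and follows essentially the same route as the paper: the paper (in the argument preceding Theorem \ref{p9}, which Section \ref{alm_hermitian} invokes via ``similar proofs as in the almost contact metric case'') establishes the slant property of each $w(D_i)$ by exactly your computation $\cos\theta=\|wU_i\|/\|\varphi U_i\|=\cos\theta_i$, handles $\theta_j=\frac{\pi}{2}$ separately by noting $\varphi U_j=fU_j\in D\perp G$ with $wU_j=0$, and combines this with Theorem \ref{p39}, Corollary \ref{p71}, and Proposition \ref{p41} for the decomposition, the invariant component, and condition (iii). Your alternative closing argument via Theorem \ref{p173} (using Proposition \ref{p42} and Corollary \ref{p61}) is also valid and neatly avoids the case distinction, though the paper does not take that route here.
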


\begin{definition}\label{p48}
We will call $\oplus_{i=1}^kw(D_i)$ \textit{the dual $k$-slant distribution} of\, $\oplus_{i=1}^kD_i$. 
\end{definition}

\begin{remark}\label{p49}
In the same way we constructed the dual of the proper \mbox{$k$-slant} distribution $\oplus_{i=1}^kD_i$ by means of $w$, we can construct the dual of the proper \mbox{$k$-slant} component $\oplus_{i=1}^kw(D_i)$ of the distribution $G$ by means of $f$. This will be $f(\oplus_{i=1}^kw(D_i))=\oplus_{i=1}^kfw(D_i)$. 
\end{remark}

\begin{corollary}\label{p50}
The dual of the proper $k$-slant distribution $\oplus_{i=1}^kw(D_i)$, which is $\oplus_{i=1}^kf(w(D_i))$, is precisely the $k$-slant distribution $\oplus_{i=1}^kD_i$.
\end{corollary}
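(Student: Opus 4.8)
The plan is to reduce the claim directly to Proposition \ref{p40}, which already records the crucial componentwise identity $f(w(D_i))=D_i$ for each $i=\overline{1,k}$. First I would recall, from Theorem \ref{p47}, that $\oplus_{i=1}^k w(D_i)$ is the proper $k$-slant component of the $k$-slant distribution $G$, each $w(D_i)$ being a slant distribution with the same slant angle $\theta_i$ as $D_i$. When $G$ is regarded as the base distribution, its orthogonal complement in $T\overline{M}$ is $D$, and the component of $\varphi$ that carries a vector of $G$ into $D$ is precisely $f$; hence the dual of $\oplus_{i=1}^k w(D_i)$, built via this orthogonal component exactly as in Remark \ref{p49}, is $f(\oplus_{i=1}^k w(D_i))=\oplus_{i=1}^k f(w(D_i))$.

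Next I would invoke Proposition \ref{p40} to identify each summand: since $f(w(D_i))=D_i$, the direct sum collapses to $\oplus_{i=1}^k D_i$. To justify that this is \emph{precisely} the $k$-slant distribution $\oplus_{i=1}^k D_i$, and not merely the same underlying distribution, I would check that the slant data are preserved under the duality: applying $f$ to $w(D_i)$ returns the slant piece $D_i$ whose slant angle $\theta_i$ matches the slant angle of $w(D_i)$ guaranteed by Theorem \ref{p47}. Thus the dual of the dual recovers the original proper $k$-slant distribution together with its angles, confirming that the duality is involutive on proper $k$-slant components.

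The only point requiring care, and the potential obstacle, is the bookkeeping of the roles of $f$ and $w$ once the base distribution is switched from $D$ to $G$: one must confirm that the operator playing the part of the orthogonal component in the construction of the dual of $G$'s proper component is the restriction of $f$ (the component of $\varphi$ into $D=G^\perp$), and that it is $\oplus_{i=1}^k w(D_i)$, rather than all of $G$, which serves as the relevant proper $k$-slant component, the invariant part $H$ being excluded. Both facts are already secured by Theorems \ref{p39} and \ref{p47}, so once the correspondence of operators is made explicit the corollary follows immediately from the componentwise equality in Proposition \ref{p40}, with no residual computation.
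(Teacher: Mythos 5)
Your proposal is correct and follows essentially the same route as the paper, which treats this corollary as an immediate consequence of Remark \ref{p49} (identifying the dual of $\oplus_{i=1}^k w(D_i)$ as $\oplus_{i=1}^k f(w(D_i))$, since $f$ is the component of $\varphi$ into $D=G^{\perp}$), Proposition \ref{p40} (the componentwise identity $f(w(D_i))=D_i$), and Theorem \ref{p47} (matching of the slant angles). Your extra bookkeeping about the roles of $f$ and $w$ after switching the base distribution from $D$ to $G$, and about excluding $H$, is exactly the justification the paper leaves implicit, so nothing is missing.
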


\medskip 
In view of Proposition \ref{p40} and Corollary \ref{p37}, denoting $w(D_i)$ by $G_i$, we obtain: 

\begin{proposition}\label{p65}
\begin{align*}
w(f(G_i)) & =G_i\text{ for }i=\overline{1,k}\,;\\ 
f^2(G_i) & =\left\{\begin{array}{ll}
D_i & \text{if }\theta_i\neq\frac{\pi}{2}\,, \\ 
\{0\} & \text{if }\theta_i= \frac{\pi}{2}\,. 
\end{array}
\right.
\end{align*}
\end{proposition}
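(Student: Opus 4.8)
The plan is to reduce both identities to the single relation $f(w(D_i)) = D_i$, which is already furnished by Proposition \ref{p40}, combined with the description of how $f$ acts on each slant component $D_i$ recorded in Corollary \ref{p37}. Since $G_i$ is merely shorthand for $w(D_i)$, everything will follow by composing $f$ and $w$ and reading off the resulting images; no new geometric input beyond the cited results is needed.

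First I would establish $w(f(G_i)) = G_i$. Because $G_i = w(D_i)$ by definition, Proposition \ref{p40} gives $f(G_i) = f(w(D_i)) = D_i$; applying $w$ and using $G_i = w(D_i)$ once more yields $w(f(G_i)) = w(D_i) = G_i$, as claimed. For the second identity I would write $f^2(G_i) = f(f(w(D_i))) = f(D_i)$, again invoking $f(w(D_i)) = D_i$, so the whole question collapses to identifying $f(D_i)$. When $\theta_i \neq \frac{\pi}{2}$, Corollary \ref{p37} gives $f(D_i) = D_i$, whence $f^2(G_i) = D_i$. When $\theta_i = \frac{\pi}{2}$, the length relation $|fX_i| = \cos\theta_i \cdot |X_i|$ from Proposition \ref{p44} forces $fX_i = 0$ for every $X_i \in D_i$, so $f(D_i) = \{0\}$ and therefore $f^2(G_i) = \{0\}$. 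Assembling the two cases produces the stated dichotomy.

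There is no genuine obstacle here: once Proposition \ref{p40} is in hand, both statements are one-line compositions. The only point that merits a moment's care is the anti-invariant case $\theta_i = \frac{\pi}{2}$, where one must observe that $f$ actually \emph{annihilates} $D_i$ rather than merely squaring to zero on it; this is immediate from the slant-angle length formula (equivalently, from Definition \ref{8} (i), giving $\cos\frac{\pi}{2} = 0$), and it is exactly what distinguishes the value $\{0\}$ from $D_i$ in the second identity.
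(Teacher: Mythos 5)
Your proof is correct and follows essentially the same route as the paper, which obtains this proposition directly from Proposition \ref{p40} (giving $f(w(D_i))=D_i$) and Corollary \ref{p37} (giving $f(D_i)=D_i$ when $\theta_i\neq\frac{\pi}{2}$), with the vanishing of $f|_{D_i}$ in the case $\theta_i=\frac{\pi}{2}$ handled exactly as you do via the length formula $|fX_i|=\cos\theta_i\cdot|X_i|$.
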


In view of Corollary \ref{p38}, we immediately get 

\begin{lemma}
Let $X, Y\in \oplus_{i=1}^kD_i$, $U, V\in \oplus_{i=1}^kw(D_i)$, and $x\in {M}$. Then: \\ 
\hspace*{7pt} (i) \ \,$X_x\neq 0$ if and only if $(wX)_x\neq 0$;\\ 
\hspace*{7pt} (ii) \ $U_x\neq 0$ if and only if $(fU)_x\neq 0$;\\ 
\hspace*{7pt} (iii) ${M}_{\widehat{X,Y}}={M}_{\widehat{wX,wY}}$\,, and 
${M}_{\widehat{U,V}}={M}_{\widehat{fU,fV}}$\,. 
\end{lemma}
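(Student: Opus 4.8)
The plan is to deduce all three assertions from the pointwise identity furnished by Corollary~\ref{p38}, together with the strict positivity of the slant coefficients. Two guiding observations drive the argument. First, $f$ and $w$ are localizations of $(1,1)$-tensor fields, so that $(wX)_x=w_x(X_x)$ and $(fU)_x=f_x(U_x)$; consequently $X_x=0$ immediately forces $(wX)_x=0$, and $U_x=0$ forces $(fU)_x=0$. Second, since each $\theta_i\in(0,\frac{\pi}{2}]$ we have $\sin^2\theta_i>0$, and the distributions $D_i$ (respectively $w(D_i)$) are mutually orthogonal, so a sum $\sum_{i=1}^k\sin^2\theta_i\,(X_i)_x$ whose terms lie in pairwise orthogonal spaces vanishes only when every $(X_i)_x$ does.

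For (i) I would write $X=\sum_{i=1}^kX_i$ with $X_i=pr_iX\in D_i$ and invoke Corollary~\ref{p38} in localized form, $(fwX)_x=\epsilon\sum_{i=1}^k\sin^2\theta_i\,(X_i)_x$. By the second observation this vanishes exactly when $X_x=0$, i.e.\ $(fwX)_x\neq0\Leftrightarrow X_x\neq0$. I then close the chain $X_x\neq0\Rightarrow(fwX)_x\neq0$ (just shown) $\Rightarrow(wX)_x\neq0$ (since $fwX=f(wX)$, a vanishing $(wX)_x$ would kill $(fwX)_x$) $\Rightarrow X_x\neq0$ (tensoriality of $w$). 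As this cycle of implications closes, all three conditions are equivalent; in particular $X_x\neq0\Leftrightarrow(wX)_x\neq0$, which is (i).

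Assertion (ii) is exactly (i) read in the dual $k$-slant distribution. By Theorem~\ref{p47}, $G=\oplus_{i=1}^kw(D_i)\oplus H$ is itself a $k$-slant distribution whose proper component is $\oplus_{i=1}^kw(D_i)$, with $f$ now playing the role that $w$ played for $D$; the relevant identity $wfU=\epsilon\sum_{i=1}^k\sin^2\theta_i\,U_i$, for $U=\sum_{i=1}^kU_i\in\oplus_{i=1}^kw(D_i)$, is supplied by Proposition~\ref{p42}. Running the same cycle with $f$ and $w$ interchanged yields $U_x\neq0\Leftrightarrow(fU)_x\neq0$. Finally, (iii) is purely set-theoretic: recalling from Definition~\ref{p72} that $M_{\widehat{X,Y}}=\{x\in M\mid X_x\neq0,\ Y_x\neq0\}$, the equivalence in (i) applied to both $X$ and $Y$ gives $M_{\widehat{X,Y}}=M_{\widehat{wX,wY}}$, and that in (ii) applied to $U$ and $V$ gives $M_{\widehat{U,V}}=M_{\widehat{fU,fV}}$.

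I do not expect a genuine obstacle here; the single point requiring care is the direction of the implications. Tensoriality of $w$ and $f$ supplies $(wX)_x\neq0\Rightarrow X_x\neq0$ and $(fU)_x\neq0\Rightarrow U_x\neq0$ for free, so the real content lies in the reverse direction, and that is precisely where the strict positivity of $\sin^2\theta_i$---equivalently, the fact that $fw$ (respectively $wf$) annihilates no nonzero element of the proper $k$-slant component---is indispensable.
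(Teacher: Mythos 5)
Your proposal is correct and follows essentially the same route as the paper, which derives the lemma as an immediate consequence of Corollary~\ref{p38}: the identity $fwX=\epsilon\sum_{i=1}^k\sin^2\theta_i\cdot pr_iX$ with strictly positive coefficients on mutually orthogonal components gives the nontrivial direction, while tensoriality of $f$ and $w$ gives the trivial one. Your only refinement is to cite Proposition~\ref{p42} explicitly for the dual identity $wfU=\epsilon\sum_{i=1}^k\sin^2\theta_i\cdot U_i$ needed in part (ii), a step the paper leaves implicit in its single citation of Corollary~\ref{p38}.
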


\medskip 
The angle between two angular compatible nonzero vector fields of a slant distribution remains in\-vari\-ant when passing to the corresponding pair of vector fields in the dual distribution, as specified in the following proposition. 

\begin{proposition}\label{p51}
For\, $i\in \{1,\ldots ,k\}$ and\, $X_i, Y_i\in D_i\verb=\=\{0\}$, 
\newline $U_i, V_i\in\nolinebreak w(D_i)\verb=\=\{0\}$ with ${M}_{\widehat{X_i,Y_i}}$ and ${M}_{\widehat{U_i,V_i}}$ nonempty, we have: \\ 
\hspace*{7pt} (i) \ \;\,$g(wX_i,wY_i)=\sin^2\theta_i \cdot g(X_i,Y_i)$;\\ 
\hspace*{7pt} (ii) \;\,$g(fU_i,fV_i)=\sin^2\theta_i \cdot g(U_i,V_i)$;\\ 
\hspace*{7pt} (iii)\, $\cos(\widehat{wX_i,wY_i})=\cos(\widehat{X_i,Y_i})$;\\ 
\hspace*{7pt} (iv)\; $\cos(\widehat{fU_i,fV_i})=\cos(\widehat{U_i,V_i})$.
\end{proposition}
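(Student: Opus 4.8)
The plan is to get (i) and (ii) as immediate specializations of the two bilinear identities already established for $w$ and $f$, and then to deduce (iii) and (iv) by substituting these into the definition of the angular function, where a common factor of $\sin^2\theta_i$ will cancel. So there is essentially one short computation followed by bookkeeping.

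First, for (i) I would invoke Proposition \ref{p43}, which gives $g(wX,wY)=\sum_{i=1}^k\sin^2\theta_i\cdot g(pr_iX,pr_iY)$ for all $X,Y\in D$. Taking $X=X_i$ and $Y=Y_i$ in $D_i$, every projection $pr_j$ with $j\neq i$ annihilates them while $pr_i$ acts as the identity, so the sum collapses to its single $i$-th term and yields $g(wX_i,wY_i)=\sin^2\theta_i\cdot g(X_i,Y_i)$. For (ii) I would argue symmetrically from Proposition \ref{p68}: since the spaces $w(D_j)$ are mutually orthogonal, an element $U_i\in w(D_i)$ has, in the decomposition of $w(D)$, only its $i$-th component nonzero, so $g(fU,fV)=\sum_{i=1}^k\sin^2\theta_i\cdot g(U_i,V_i)$ reduces to the $i$-th term. (Alternatively one could route (i) through Corollary \ref{p38} together with the second line of Lemma \ref{lema2}, $g(fwX_i,Y_i)=\epsilon g(wX_i,wY_i)$, exactly as in the almost contact case; the bilinear identities are simply the shorter path.)

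Next, for (iii) and (iv) I would feed (i) and (ii) into Definition \ref{p72}. Setting $Y_i=X_i$ in (i) gives $\|wX_i\|=\sin\theta_i\,\|X_i\|$, and setting $V_i=U_i$ in (ii) gives $\|fU_i\|=\sin\theta_i\,\|U_i\|$ (these are precisely Corollary \ref{p52}). The cosine of the angle between $wX_i$ and $wY_i$ is $g(wX_i,wY_i)$ divided by $\|wX_i\|\cdot\|wY_i\|$, and substituting (i) together with the two norm formulas cancels the factor $\sin^2\theta_i$ from numerator and denominator, leaving $\cos(\widehat{X_i,Y_i})$; the identical cancellation applied to (ii) and $\|fU_i\|=\sin\theta_i\,\|U_i\|$ delivers (iv).

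The computation carries no genuine difficulty, so the points demanding care are bookkeeping rather than substance. I must check that $\sin\theta_i>0$, so that dividing by $\sin^2\theta_i$ is legitimate and so that $wX_i,wY_i,fU_i,fV_i$ are nonzero wherever the angles are evaluated; both hold because $\theta_i\in(0,\frac{\pi}{2}]$. Finally, I must confirm that the two angular functions equated in each of (iii) and (iv) are defined on the same set, which is exactly the content of the Lemma preceding this proposition, giving ${M}_{\widehat{X_i,Y_i}}={M}_{\widehat{wX_i,wY_i}}$ and ${M}_{\widehat{U_i,V_i}}={M}_{\widehat{fU_i,fV_i}}$; the stated equalities of cosines then hold pointwise on that common domain.
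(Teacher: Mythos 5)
Your proof is correct and follows essentially the same route as the paper: the paper obtains (i)–(ii) by specializing its previously established identities (via the $fw$/$wf$ formulas and Lemma \ref{lema2}, i.e., the alternative you mention parenthetically — quoting Propositions \ref{p43} and \ref{p68} instead is only a cosmetic difference, since those are the same identities in summed form), and then derives (iii)–(iv) by the same cancellation using Corollary \ref{p52} and the domain lemma preceding the proposition.
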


\begin{theorem}\label{p54}
 $f$ and $w$ restricted to $D_i$ or to $w(D_i)$, $i=\overline{1,k}$ \textup(excepting $f|_{D_j}$ and $w|_{w(D_j)}$ with $\theta_j=\frac{\pi}{2}$, in which case $f|_{D_j}$ and $w|_{w(D_j)}$ are vanishing\textup), $f|_{D_0}$, $w|_H$, and $\varphi|_{D\oplus G}$ are conformal maps. 
\end{theorem}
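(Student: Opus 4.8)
The plan is to deduce each conformality assertion directly from the metric-scaling identities already in hand, since a linear map between inner product spaces preserves angles exactly when it multiplies the metric by a strictly positive scalar on the relevant space (this is the sense of ``conformal'' intended here, as the parenthetical remark in the analogous Theorem \ref{p21} makes explicit). Concretely, for each listed restriction $T$ I would verify that $g(TX,TY)=\lambda\, g(X,Y)$ with $\lambda>0$ constant on the distribution in question; angle preservation then follows, and positivity of $\lambda$ simultaneously yields the injectivity that makes $T$ a genuine conformal map. Since the preceding propositions supply full bilinear identities (not merely norm identities), no polarization step is needed.

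First I would dispose of the three isometry cases, where $\lambda=1$: for $\varphi|_{D\oplus G}$ this is immediate from \eqref{h3}; for $f|_{D_0}$ it follows from Proposition \ref{p43} with $\theta_0=0$ (equivalently from Proposition \ref{p45}(i)); and for $w|_H$ it is exactly Corollary \ref{p61}. Next, the four slant cases: Proposition \ref{p43}, restricted to a single $D_i$, gives $g(fX_i,fY_i)=\cos^2\theta_i\, g(X_i,Y_i)$, so $f|_{D_i}$ is conformal with ratio $\cos^2\theta_i$; Proposition \ref{p68}, restricted to a single $w(D_i)$, gives $g(wU_i,wV_i)=\cos^2\theta_i\, g(U_i,V_i)$, so $w|_{w(D_i)}$ is conformal with the same ratio; and Proposition \ref{p51}(i)--(ii) gives $g(wX_i,wY_i)=\sin^2\theta_i\, g(X_i,Y_i)$ and $g(fU_i,fV_i)=\sin^2\theta_i\, g(U_i,V_i)$, so $w|_{D_i}$ and $f|_{w(D_i)}$ are conformal with ratio $\sin^2\theta_i$. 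Alternatively one may simply invoke the cosine-of-angle equalities of Propositions \ref{p45} and \ref{p51} verbatim.

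It remains to check positivity of the ratios and to treat the excepted cases, and this is where the only genuine care is required. Since every slant angle lies in $(0,\tfrac{\pi}{2}]$, we always have $\sin^2\theta_i>0$, so $w|_{D_i}$ and $f|_{w(D_i)}$ are conformal without exception (their injectivity is also recorded in Proposition \ref{p40}). By contrast $\cos^2\theta_i>0$ only when $\theta_i\neq\tfrac{\pi}{2}$; for $\theta_j=\tfrac{\pi}{2}$ the norm identities of Propositions \ref{p44} and \ref{p68} force $|fX_j|=\cos\theta_j\,|X_j|=0$ and $|wU_j|=\cos\theta_j\,|U_j|=0$, so $f|_{D_j}$ and $w|_{w(D_j)}$ vanish identically, which is precisely the exception stated in the theorem. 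I expect no real obstacle: the substantive content has been front-loaded into Propositions \ref{p43}, \ref{p45}, \ref{p51}, \ref{p68} and Corollary \ref{p61}, and the proof amounts to the bookkeeping of which ratio ($\cos^2\theta_i$ versus $\sin^2\theta_i$) attaches to which restriction, together with the correct handling of the degenerate $\theta_j=\tfrac{\pi}{2}$ cases.
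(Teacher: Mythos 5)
Your proof is correct and follows essentially the same route as the paper: the paper justifies this theorem (exactly as it does its almost contact analogue, Theorem \ref{p21}) by appealing to the previously established metric and angle identities of Propositions \ref{p45} and \ref{p51}, which are precisely the scaling identities you invoke via Propositions \ref{p43}, \ref{p68}, \ref{p51} and Corollary \ref{p61}. Your explicit bookkeeping of the conformal ratios ($\cos^2\theta_i$ versus $\sin^2\theta_i$), of their positivity, and of the vanishing cases $f|_{D_j}$, $w|_{w(D_j)}$ when $\theta_j=\frac{\pi}{2}$ matches the exception built into the statement.
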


\medskip 
Taking into account the orthogonal decompositions of $D$ and $G$ and the properties stated above, for any two angular compatible vector fields of a \mbox{$k$-slant} distribution, there is in the dual $k$-slant distribution a pair of angular compatible vector fields which form the same angle.

\begin{theorem}\label{p55} 
Let $X_i,Y_i\in D_i$, $U_i,V_i\in w(D_i)$, $i=\overline{1,k}$, and $X=\nolinebreak\sum_{i=1}^kX_i$, $Y=\sum_{i=1}^kY_i$, $U=\sum_{i=1}^kU_i$, $V=\sum_{i=1}^kV_i$. 
If\, $X,\,Y,\,U,\,V$ are nonzero with ${M}_{\widehat{X,Y}}$ and ${M}_{\widehat{U,V}}$ nonempty, then we have: \\ 
\hspace*{7pt} (i) \ \;\,$g(wX,wY)=\sum_{i=1}^k\sin^2\theta_i \cdot g(X_i,Y_i)$;\\ 
\hspace*{7pt} (ii) \;\,$g(fU,fV)=\sum_{i=1}^k\sin^2\theta_i \cdot g(U_i,V_i)$;\\ 
\hspace*{7pt} (iii) \,$\cos(\widehat{wX,wY})=\cos\sphericalangle (\sum_{i=1}^k\sin\theta_i\cdot X_i,\ \sum_{i=1}^k\sin\theta_i\cdot Y_i)$;\\ 
\hspace*{7pt} (iv) \;$\cos(\widehat{fU,fV})=\cos\sphericalangle (\sum_{i=1}^k\sin\theta_i\cdot U_i,\ \sum_{i=1}^k\sin\theta_i\cdot V_i)$.
\end{theorem}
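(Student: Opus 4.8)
The plan is to follow the same strategy as in the almost contact metric case (Theorem~\ref{p17}), splitting the four claims into the two inner-product identities (i)--(ii), which reduce immediately to earlier propositions, and the two angular identities (iii)--(iv), which I would derive from (i)--(ii) together with the orthogonality of the summands and the definition of the angular function.

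For (i), I would observe that $X=\sum_{i=1}^k X_i$ and $Y=\sum_{i=1}^k Y_i$ lie in $\oplus_{i=1}^k D_i\subseteq D$ with $pr_iX=X_i$ and $pr_iY=Y_i$; then the third identity of Proposition~\ref{p43} gives $g(wX,wY)=\sum_{i=1}^k\sin^2\theta_i\,g(X_i,Y_i)$ at once. Likewise, (ii) is exactly the first identity of Proposition~\ref{p68} applied to $U=\sum_{i=1}^kU_i$ and $V=\sum_{i=1}^kV_i$ in $\oplus_{i=1}^k w(D_i)$. No computation beyond these substitutions is needed.

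For (iii), set $\tilde X=\sum_{i=1}^k\sin\theta_i\,X_i$ and $\tilde Y=\sum_{i=1}^k\sin\theta_i\,Y_i$. Since $D=\oplus_{i=0}^k D_i$ is an orthogonal decomposition, the cross terms vanish, so $g(\tilde X,\tilde Y)=\sum_{i=1}^k\sin^2\theta_i\,g(X_i,Y_i)$, which equals $g(wX,wY)$ by (i); setting $\tilde Y=\tilde X$ and using Proposition~\ref{p53} gives $|\tilde X|=|wX|$ and, symmetrically, $|\tilde Y|=|wY|$. Substituting these three equalities into the definition of the angular function (Definition~\ref{p72}) yields $\cos(\widehat{wX,wY})=\cos\sphericalangle(\tilde X,\tilde Y)$, which is (iii). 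For (iv) I would argue identically, replacing the orthogonality of the $D_i$ by the orthogonality of the $w(D_i)$ (relation~\eqref{17}, available because $\varphi$ is an isometry by Remark~\ref{p32}), and replacing Proposition~\ref{p43} and the norm formula for $w$ by Proposition~\ref{p68} and the norm formula for $f$ in Proposition~\ref{p53}.

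The only point requiring care is the matching of domains: the equalities in (iii)--(iv) are equalities of functions, so I must check that $\tilde X,\tilde Y$ vanish exactly where $wX,wY$ do, i.e.\ that $M_{\widehat{\tilde X,\tilde Y}}=M_{\widehat{wX,wY}}$. This is immediate once $|\tilde X|=|wX|$ is established, while $M_{\widehat{wX,wY}}=M_{\widehat{X,Y}}$ follows because every $\sin\theta_i>0$ forces $(wX)_x=0$ to be equivalent to $X_x=0$. Since this is purely bookkeeping, I expect no genuine obstacle; the content of the theorem is entirely carried by Propositions~\ref{p43}, \ref{p68}, and~\ref{p53}, with the orthogonality of the two families of summands doing the rest.
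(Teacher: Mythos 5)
Your proof is correct and takes essentially the same route the paper intends for Theorem \ref{p55}: identities (i)--(ii) are direct specializations of Propositions \ref{p43} and \ref{p68}, and (iii)--(iv) follow by comparing $wX,wY$ (resp. $fU,fV$) with the auxiliary fields $\sum_{i=1}^k\sin\theta_i\cdot X_i$, etc., using the norm formulas of Proposition \ref{p53} together with the mutual orthogonality of the $D_i$'s and of the $w(D_i)$'s (relation \eqref{17}, available since $\varphi$ is an isometry by Remark \ref{p32}). Your domain-matching step, showing $M_{\widehat{wX,wY}}=M_{\widehat{X,Y}}$ and that the auxiliary fields vanish exactly where $wX,wY$ do, is precisely the bookkeeping the paper delegates to its Lemma \ref{p92}-type statement, so nothing is missing.
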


\begin{corollary}\label{p56}
Let $X_i,Y_i\in D_i$, $U_i,V_i\in w(D_i)$, $i=\overline{1,k}$, and $X=\nolinebreak\sum_{i=1}^kX_i$, $Y=\sum_{i=1}^kY_i$, $U=\sum_{i=1}^kU_i$, $V=\sum_{i=1}^kV_i$. If\, $X,\,Y,\,U,\,V$ are nonzero, and ${M}_{\widehat{X,Y}}$, ${M}_{\widehat{U,V}}$ are nonempty, we have: \\ 
\hspace*{7pt} (i) \ \;\,$g(X,Y)=\sum_{i=1}^k\frac{1}{\sin^2\theta_i}g(wX_i,wY_i)$;\\ 
\hspace*{7pt} (ii) \;\,$g(U,V)=\sum_{i=1}^k\frac{1}{\sin^2\theta_i}g(fU_i,fV_i)$;\\ 
\hspace*{7pt} (iii) \,$\cos(\widehat{X,Y})=\cos\sphericalangle (\sum_{i=1}^k\frac{1}{\sin\theta_i}\cdot wX_i,\ \sum_{i=1}^k\frac{1}{\sin\theta_i}\cdot wY_i)$;\\ 
\hspace*{7pt} (iv) \;$\cos(\widehat{U,V})=\cos\sphericalangle (\sum_{i=1}^k\frac{1}{\sin\theta_i}\cdot fU_i,\ \sum_{i=1}^k\frac{1}{\sin\theta_i}\cdot fV_i)$.
\end{corollary}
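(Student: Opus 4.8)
The plan is to reduce every assertion to Proposition \ref{p51} together with the mutual orthogonality of the summands, exploiting that $\sin\theta_i>0$ since $\theta_i\in(0,\frac{\pi}{2}]$. For (i), since $D=\oplus_{i=0}^k D_i$ is an orthogonal sum, the components $X_i\in D_i$ and $Y_j\in D_j$ are orthogonal for $i\neq j$, so $g(X,Y)=\sum_{i=1}^k g(X_i,Y_i)$. Rewriting Proposition \ref{p51}(i) as $g(X_i,Y_i)=\frac{1}{\sin^2\theta_i}\,g(wX_i,wY_i)$ then yields (i) immediately. For (ii) I would argue identically, using that $G=\oplus_{i=1}^k w(D_i)\oplus H$ is an orthogonal decomposition (Theorem \ref{p39}), whence $g(U,V)=\sum_{i=1}^k g(U_i,V_i)$, and substituting $g(U_i,V_i)=\frac{1}{\sin^2\theta_i}\,g(fU_i,fV_i)$ from Proposition \ref{p51}(ii).

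For the angular identities (iii) and (iv), set $\widetilde X=\sum_{i=1}^k\frac{1}{\sin\theta_i}\,wX_i$ and $\widetilde Y=\sum_{i=1}^k\frac{1}{\sin\theta_i}\,wY_i$. Because the $w(D_i)$ are mutually orthogonal, all cross terms drop and
\[ g(\widetilde X,\widetilde Y)=\sum_{i=1}^k\frac{1}{\sin^2\theta_i}\,g(wX_i,wY_i)=g(X,Y), \]
where the last equality is exactly (i); taking $Y=X$ gives $\|\widetilde X\|=\|X\|$, and likewise $\|\widetilde Y\|=\|Y\|$. Dividing,
\[ \cos\sphericalangle(\widetilde X,\widetilde Y)=\frac{g(\widetilde X,\widetilde Y)}{\|\widetilde X\|\,\|\widetilde Y\|}=\frac{g(X,Y)}{\|X\|\,\|Y\|}=\cos(\widehat{X,Y}), \]
which is (iii). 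Assertion (iv) follows by the mirror computation with $\widetilde U=\sum_{i=1}^k\frac{1}{\sin\theta_i}\,fU_i$ and $\widetilde V=\sum_{i=1}^k\frac{1}{\sin\theta_i}\,fV_i$, now using that the images $f(w(D_i))=D_i$ (Proposition \ref{p40}) are mutually orthogonal to kill the cross terms, invoking (ii), and obtaining the analogous norm identities $\|\widetilde U\|=\|U\|$, $\|\widetilde V\|=\|V\|$.

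The one point that needs care — and the only genuine obstacle — is the pointwise bookkeeping concealed in the angular-function notation: each $\cos(\widehat{\cdot,\cdot})$ is defined only on the locus where both families are nonzero, so I must check that the rescaled families $\widetilde X,\widetilde Y$ (resp.\ $\widetilde U,\widetilde V$) vanish exactly where $X,Y$ (resp.\ $U,V$) do. This is immediate from the norm identities $\|\widetilde X\|=\|X\|$, $\|\widetilde Y\|=\|Y\|$ (resp.\ $\|\widetilde U\|=\|U\|$, $\|\widetilde V\|=\|V\|$) obtained above, which force ${M}_{\widehat{\widetilde X,\widetilde Y}}={M}_{\widehat{X,Y}}$ (resp.\ ${M}_{\widehat{\widetilde U,\widetilde V}}={M}_{\widehat{U,V}}$); hence the two sides of (iii) (resp.\ (iv)) are functions on the same domain and the displayed equalities hold there pointwise.
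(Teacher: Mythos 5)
Your proof is correct and is essentially the argument the paper intends: the corollary is stated without proof as a consequence of the orthogonality of the decompositions $D=\oplus_i D_i$ and $G=\oplus_i w(D_i)\oplus H$ together with the relations $g(wX_i,wY_i)=\sin^2\theta_i\,g(X_i,Y_i)$ and $g(fU_i,fV_i)=\sin^2\theta_i\,g(U_i,V_i)$ of Proposition \ref{p51}, which is exactly how you proceed (equivalently, one may apply Theorem \ref{p55} to the rescaled fields $\sum_i\frac{1}{\sin\theta_i}X_i$, etc.). Your explicit check that $\|\widetilde X\|=\|X\|$ forces ${M}_{\widehat{\widetilde X,\widetilde Y}}={M}_{\widehat{X,Y}}$ is a welcome piece of bookkeeping the paper leaves implicit.
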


\begin{corollary}\label{p57}
For\, $i\in \{1,\ldots ,k\}$, $X_i,Y_i\in D_i\verb=\=\{0\}$, and $U_i,V_i\in w(D_i)\verb=\=\{0\}$, if ${M}_{\widehat{X_i,Y_i}}$ and ${M}_{\widehat{U_i,V_i}}$ are nonempty, we have: \\ 
\hspace*{7pt} (i) \ \;\,$g(fwX_i,fwY_i)=\sin^4\theta_i \cdot g(X_i,Y_i)$;\\ 
\hspace*{7pt} (ii) \;\,$g(wfU_i,wfV_i)=\sin^4\theta_i \cdot g(U_i,V_i)$;\\ 
\hspace*{7pt} (iii) \,$\cos(\widehat{fwX_i,fwY_i})=\cos(\widehat{X_i,Y_i})$;\\ 
\hspace*{7pt} (iv) \;$\cos(\widehat{wfU_i,wfV_i})=\cos(\widehat{U_i,V_i})$.
\end{corollary}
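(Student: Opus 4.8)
The plan is to reduce all four identities to a single structural observation: on each slant component the composition $fw$ is multiplication by a scalar, and symmetrically $wf$ is multiplication by a scalar on each $w(D_i)$. To extract these scalar forms, I would first specialize the sum formulas already proved. For a vector field lying in one component, $X_i\in D_i$, we have $pr_jX_i=0$ for $j\neq i$ and $pr_iX_i=X_i$, so Corollary \ref{p38} collapses to $fwX_i=\epsilon\sin^2\theta_i\cdot X_i$. In exactly the same way, feeding a single $U_i\in w(D_i)$ into Proposition \ref{p42} gives $wfU_i=\epsilon\sin^2\theta_i\cdot U_i$. Thus $fw$ restricted to $D_i$ and $wf$ restricted to $w(D_i)$ are each multiplication by the constant $\epsilon\sin^2\theta_i$, and this holds for every $i$, including the anti-invariant case $\theta_i=\frac{\pi}{2}$ (where $\sin^2\theta_i=1$), consistently with Remark \ref{p63}.

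With these scalar forms in hand, parts (i) and (ii) are immediate. For (i) I would compute $g(fwX_i,fwY_i)=g(\epsilon\sin^2\theta_i X_i,\,\epsilon\sin^2\theta_i Y_i)=\epsilon^2\sin^4\theta_i\,g(X_i,Y_i)$, and since $\epsilon^2=1$ this is $\sin^4\theta_i\,g(X_i,Y_i)$; part (ii) is the verbatim analogue with $X_i,Y_i$ replaced by $U_i,V_i$ and $fw$ by $wf$. No further input is needed for these two.

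For (iii) and (iv) I would pass to the cosine formula of Definition \ref{p72}. Because $\theta_i\in(0,\frac{\pi}{2}]$ we have $\sin^2\theta_i>0$, so from $fwX_i=\epsilon\sin^2\theta_i X_i$ the factor $|\epsilon|=1$ gives the norm $|fwX_i|=\sin^2\theta_i\,|X_i|$, and likewise $|fwY_i|=\sin^2\theta_i\,|Y_i|$. In particular $fwX_i$ and $fwY_i$ vanish exactly where $X_i$ and $Y_i$ do, so $\widehat{fwX_i,fwY_i}$ is defined precisely on the (nonempty) set $M_{\widehat{X_i,Y_i}}$. Dividing the identity of (i) by $|fwX_i|\,|fwY_i|=\sin^4\theta_i\,|X_i|\,|Y_i|$ cancels the $\sin^4\theta_i$ and returns $\cos\widehat{X_i,Y_i}$, which is (iii); part (iv) follows identically from the $wf$ scalar form. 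Equivalently, (iii) and (iv) may simply be read off from the fact that a nonzero scalar multiple is a conformal map, in the spirit of Theorem \ref{p54}.

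I do not anticipate a genuine obstacle: the whole content is the specialization of two previously established sum formulas to a single eigencomponent, after which everything is scalar bookkeeping. The only points asking for a moment's care are that the sign $\epsilon$ disappears upon squaring (in (i)–(ii)) and upon taking absolute values (in (iii)–(iv)), and that the domains $M_{\widehat{fwX_i,fwY_i}}$ and $M_{\widehat{X_i,Y_i}}$ coincide, so that the cosine identities are asserted on the correct set.
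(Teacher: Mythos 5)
Your proposal is correct and follows essentially the same route as the paper: the paper derives this family of corollaries from the scalar relations $fwX_i=\epsilon\sin^2\theta_i\, X_i$ (Corollary \ref{p38}) and $wfU_i=\epsilon\sin^2\theta_i\, U_i$ (Proposition \ref{p42}), together with the norm identities and the coincidence of the domains $M_{\widehat{fwX_i,fwY_i}}=M_{\widehat{X_i,Y_i}}$, exactly as you do. Your observations that $\epsilon^2=1$ kills the sign and that a nonzero constant scalar multiple preserves cosines of angles are precisely the "scalar bookkeeping" the paper leaves implicit.
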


\begin{corollary}\label{p58} 
Let $X_i,Y_i\in D_i$, $U_i,V_i\in w(D_i)$, $i=\overline{1,k}$, and $X=\nolinebreak\sum_{i=1}^kX_i$, $Y=\sum_{i=1}^kY_i$, $U=\sum_{i=1}^kU_i$, $V=\sum_{i=1}^kV_i$ with $X,\,Y,\,U,\,V$ nonzero and ${M}_{\widehat{X,Y}}$, ${M}_{\widehat{U,V}}$ nonempty. Then, we have: \\ 
\hspace*{7pt} (i) \ \;\,$g(fwX,fwY)=\sum_{i=1}^k\sin^4\theta_i \cdot g(X_i,Y_i)$;\\ 
\hspace*{7pt} (ii) \;\,$g(wfU,wfV)=\sum_{i=1}^k\sin^4\theta_i \cdot g(U_i,V_i)$;\\ 
\hspace*{7pt} (iii) \,$\cos(\widehat{fwX,fwY})=\cos\sphericalangle (\sum_{i=1}^k\sin^2\theta_i\cdot X_i,\ \sum_{i=1}^k\sin^2\theta_i\cdot Y_i)$;\\ 
\hspace*{7pt} (iv) \;$\cos(\widehat{wfU,wfV})=\cos\sphericalangle (\sum_{i=1}^k\sin^2\theta_i\cdot U_i,\ \sum_{i=1}^k\sin^2\theta_i\cdot V_i)$.
\end{corollary}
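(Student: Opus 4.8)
The plan is to reduce everything to the explicit expressions for $fw$ and $wf$ already obtained, and then argue by bilinearity together with the two relevant orthogonality relations. First I would invoke Corollary~\ref{p38}: since $X=\sum_{i=1}^kX_i$ and $Y=\sum_{i=1}^kY_i$ lie in $\oplus_{i=1}^kD_i\subseteq D$ with $pr_iX=X_i$ and $pr_iY=Y_i$, it gives $fwX=\epsilon\sum_{i=1}^k\sin^2\theta_i\cdot X_i$ and $fwY=\epsilon\sum_{i=1}^k\sin^2\theta_i\cdot Y_i$. Dually, Proposition~\ref{p42} yields $wfU=\epsilon\sum_{i=1}^k\sin^2\theta_i\cdot U_i$ and $wfV=\epsilon\sum_{i=1}^k\sin^2\theta_i\cdot V_i$. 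Thus $fwX,fwY$ are weighted sums of the mutually orthogonal components $X_i,Y_i\in D_i$, and $wfU,wfV$ are weighted sums of the mutually orthogonal components $U_i,V_i\in w(D_i)$, with all weights $\sin^2\theta_i>0$ since $\theta_i\in(0,\frac{\pi}{2}]$.

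For (i) and (ii) I would simply expand the inner products bilinearly. Using $\epsilon^2=1$ and the orthogonality $D_i\perp D_j$ for $i\neq j$ (from the orthogonal decomposition $D=\oplus_{i=0}^kD_i$), all cross terms in $g(fwX,fwY)=\epsilon^2\sum_{i,j}\sin^2\theta_i\sin^2\theta_j\,g(X_i,Y_j)$ drop out, leaving $\sum_{i=1}^k\sin^4\theta_i\cdot g(X_i,Y_i)$, which is (i). For (ii) the identical computation applies, now using the orthogonality $w(D_i)\perp w(D_j)$ for $i\neq j$ furnished by (\ref{17}); this gives $g(wfU,wfV)=\sum_{i=1}^k\sin^4\theta_i\cdot g(U_i,V_i)$. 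Alternatively, one may sum the single-index identities of Corollary~\ref{p57} after checking that the cross terms $g(fwX_i,fwY_j)$ vanish for $i\neq j$ (since $fwX_i=\epsilon\sin^2\theta_i\cdot X_i\in D_i$, $fwY_j=\epsilon\sin^2\theta_j\cdot Y_j\in D_j$, and $D_i\perp D_j$).

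For (iii) and (iv) I would write the cosine as the pointwise ratio $\cos(\widehat{fwX,fwY})=g(fwX,fwY)/(|fwX|\,|fwY|)$ and compute numerator and norms from (i): taking $Y=X$ gives $|fwX|^2=\sum_{i=1}^k\sin^4\theta_i\cdot|X_i|^2$, and likewise for $|fwY|^2$. On the other hand, setting $A=\sum_{i=1}^k\sin^2\theta_i\cdot X_i$ and $B=\sum_{i=1}^k\sin^2\theta_i\cdot Y_i$, the same orthogonality $D_i\perp D_j$ gives $g(A,B)=\sum_{i=1}^k\sin^4\theta_i\cdot g(X_i,Y_i)$, $|A|^2=\sum_{i=1}^k\sin^4\theta_i\cdot|X_i|^2$, and $|B|^2=\sum_{i=1}^k\sin^4\theta_i\cdot|Y_i|^2$; hence $\cos\sphericalangle(A,B)$ is the identical ratio, proving (iii). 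Statement (iv) follows in the same way from (ii) and the orthogonality of the $w(D_i)$. The only point requiring care — and the closest thing to an obstacle — is the bookkeeping of the domains of the angular functions: one must check that $M_{\widehat{fwX,fwY}}=M_{\widehat{X,Y}}$, so that the claimed identity is asserted on the correct nonempty set. This is immediate once one notes that, because every weight $\sin^2\theta_i$ is strictly positive and the $D_i$ are mutually orthogonal, $(fwX)_x=0$ if and only if $X_x=0$; the analogous remark for $wfU$ and $U$ handles (iv).
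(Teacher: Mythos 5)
Your proposal is correct and follows essentially the route the paper intends: the corollary is exactly the bilinear expansion of the identities $fwX=\epsilon\sum_{i=1}^k\sin^2\theta_i\cdot pr_iX$ (Corollary \ref{p38}) and $wfU=\epsilon\sum_{i=1}^k\sin^2\theta_i\cdot U_i$ (Proposition \ref{p42}), using the mutual orthogonality of the $D_i$ and of the $w(D_i)$ (the latter guaranteed here by Remark \ref{p32} and Theorem \ref{p39}), just as in the chain of preceding statements (Theorem \ref{p55}, Corollaries \ref{p56}, \ref{p57}). Your attention to the domain identification $M_{\widehat{fwX,fwY}}=M_{\widehat{X,Y}}$, via the strict positivity of the weights $\sin^2\theta_i$, is exactly the point the paper handles with its unnumbered Lemma following Proposition \ref{p65}.
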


\begin{remark}
All the results from this section are also valid for a \mbox{$k$-slant} submanifold $M$ of $(\overline{M},\varphi,g)$ by taking $D=TM$. 
\end{remark}

\begin{remark}\label{p203}
We describe below the notion of \textit{skew CR submanifold of an almost Hermitian} or \textit{almost product Riemannian manifold}, relating it to the concept of $k$-slant submanifold.  

Let $M$ be a skew CR submanifold of ($\overline{M},\varphi,g)$ with $\varphi$ and $g$ satisfying (\ref{h2}), where $\epsilon \in \{-1,1\}$. Let $fZ$ be the tangential component of $\varphi Z$ for any $Z\in T{M}$. 

In view of Lemmas \ref{lema1}, \ref{lema2} and Remark \ref{p194}, $f$ is skew-symmetric for $\epsilon =-1$ and symmetric for $\epsilon =1$, so $f^2$ is symmetric. We will denote by $\lambda_i(x)$, $i=\overline{1,m(x)}$, the distinct eigenvalues of $f_x^2$ acting on the tangent space $T_xM$ for $x\in M$. Since $\varphi$ verifies (\ref{h2}) and $f$ is the composition of a projection and an isometry, these eigenvalues are all  contained in $[-1,0]$ for $\epsilon =-1$ and in $[0,1]$ for $\epsilon =1$. Denoting, for any $x\in M$, by $\mathfrak D_x^i$ the eigenspace of $f_x^2$ cor\-re\-spond\-ing to $\lambda_i(x)$, $i=\overline{1,m(x)}$, since $M$ is a skew CR submanifold of $\overline{M}$, $m(x)$ is independent of $x$, and we will denote it with $m$, and the same is true for $\lambda_i(x)$, $i=\overline{1,m}$ (we will denote these values with $\lambda_1,\lambda_2,\ldots,\lambda_m$), and for the dimension of $\mathfrak D_x^i$, $i=\overline{1,m}$. Denoting by $\mathfrak D_i$ the distribution corresponding to the family $\{\mathfrak D_x^i: x\in M\}$ for $i=\overline{1,m}$, we get for $TM$ the orthogonal decomposition  
$$TM={\mathfrak D_1}\oplus\ldots\oplus{\mathfrak D_m}\,.$$

We notice that each distribution $\mathfrak D_i$ is invariant under $f$. Moreover, for $\epsilon =-1$, if $\lambda_i\neq0$, the cor\-re\-spond\-ing distribution $\mathfrak D_i$ is of even dimension. 
For every $i=\overline{1,m}$, denoting by $\alpha_i\in [0,1]$ the nonnegative value for which $\lambda_i=\epsilon\alpha_i^2$, we get, for $X\in \mathfrak{D}_i\verb=\=\{0\}$, 
$$|fX|^2=\epsilon\lambda_ig(X,X)= \alpha_i^2 |X|^2= \alpha_i^2 |\varphi X|^2$$
\noindent and $|fX|=\alpha_i |X|=\alpha_i |\varphi X|$; hence, $\alpha_i=\cos\zeta_i$, where $\zeta_i$ is the angle between $\varphi X$ and $TM$, the same for any nonzero $X\in \mathfrak D_i$\,. 

It follows that the $\mathfrak D_i$'s, $i=\overline{1,m}$, are the slant components of the distribution, with different corresponding slant angles $\zeta_i$, excepting the one of them that corresponds to $\alpha_i=1$ and is invariant with respect to $\varphi$ if it exists. Thus, $M$ is a $k$-slant submanifold of $\overline{M}$, where $k$ is $m$ or $m-1$. 

\end{remark}

We conclude: 

\begin{proposition}\label{p125}
Any skew CR submanifold of an almost Hermitian or almost product Riemannian man\-i\-fold is a $k$-slant sub\-man\-i\-fold. 
\end{proposition}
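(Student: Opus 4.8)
The plan is to reduce the statement to the eigenspace analysis already carried out in Remark \ref{p203}; in fact that remark essentially proves the proposition, so the cleanest argument is to cite it, but let me outline the steps explicitly.

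First I would record that, by Lemmas \ref{lema1} and \ref{lema2} together with Remark \ref{p194}, the tangential operator $f$ is skew-symmetric when $\epsilon=-1$ and symmetric when $\epsilon=1$; in either case $f^2$ is self-adjoint. Hence at each point $x\in M$ the tangent space $T_xM$ decomposes orthogonally into eigenspaces of $f_x^2$, and since $\varphi$ is an isometry by (\ref{h3}) and $f$ is the composite of the tangential projection with $\varphi$, the eigenvalues $\lambda_i(x)$ lie in $[-1,0]$ for $\epsilon=-1$ and in $[0,1]$ for $\epsilon=1$.

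Next I would invoke the defining hypotheses of a skew CR submanifold --- constancy in $x$ of the number $m$ of distinct eigenvalues, of the eigenvalues $\lambda_i$ themselves, and of the dimensions of the eigenspaces --- to upgrade the pointwise eigenspaces to regular distributions $\mathfrak D_1,\dots,\mathfrak D_m$ with $TM=\mathfrak D_1\oplus\cdots\oplus\mathfrak D_m$. Because $f$ commutes with $f^2$, each $\mathfrak D_i$ is $f$-invariant, which is exactly condition (iii) of Definition \ref{p26}. Writing $\lambda_i=\epsilon\alpha_i^2$ with $\alpha_i\in[0,1]$, a short computation gives $|fX|=\alpha_i|X|=\alpha_i|\varphi X|$ for nonzero $X\in\mathfrak D_i$, so $\alpha_i=\cos\zeta_i$ for a constant angle $\zeta_i$ between $\varphi X$ and $TM$; distinctness of the $\lambda_i$ forces distinctness of the $\zeta_i$. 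The eigenspace with $\alpha_i=1$, i.e.\ $\lambda_i=\epsilon$ (if one occurs), is invariant and is taken as $D_0$, while the remaining $\mathfrak D_i$ are slant with distinct angles $\zeta_i\in(0,\frac{\pi}{2}]$; relabeling these as $D_1,\dots,D_k$ yields the decomposition $TM=\oplus_{i=0}^k D_i$ demanded by Definition \ref{p26}, with $k$ equal to $m$ or $m-1$.

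The point requiring care is to confirm that at least one genuine slant component survives, so that $M$ does not collapse to a purely invariant submanifold; this is guaranteed by the defining requirement of a skew CR submanifold that at least one proper slant angle be present. Granting this, conditions (i)--(iii) of Definition \ref{p26} all hold and the conclusion follows. Since every one of these steps has already been executed in Remark \ref{p203}, the proof ultimately reduces to a reference to that remark.
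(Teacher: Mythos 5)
Your proof is correct and takes essentially the same route as the paper: the paper's own proof of this proposition is exactly Remark \ref{p203}, which performs the same eigenspace decomposition of the self-adjoint operator $f^2$, the same computation $|fX|=\alpha_i|X|=\alpha_i|\varphi X|$ producing constant slant angles $\zeta_i$ with $\alpha_i=\cos\zeta_i$, and the same separation of the possible invariant component ($\alpha_i=1$), concluding that $k$ is $m$ or $m-1$. Your extra remark that at least one genuine slant component must survive is also consistent with the paper, since the skew CR definition requires at least one $\alpha_i\in(0,1)$ (cf. Remark \ref{p204}).
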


\begin{proposition}\label{p212}
Any $k$-slant submanifold of an almost Hermitian or almost product Riemannian man\-i\-fold which is not an anti-invariant or a CR submanifold is a skew CR submanifold. 
\end{proposition}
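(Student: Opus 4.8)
The plan is to read off the eigenstructure of $f^2$ directly from the $k$-slant decomposition and to check that it meets the three defining requirements of a skew CR submanifold as laid out in Remark \ref{p203}, the sole purpose of the exclusion hypothesis being to guarantee the presence of a genuinely (proper) slant eigenspace. I would fix the decomposition $TM=\oplus_{i=0}^k D_i$ with distinct slant angles $\theta_i\in(0,\frac{\pi}{2}]$ for $i=\overline{1,k}$ and $\theta_0=0$, and invoke Proposition \ref{p36}, which gives $f^2X=\epsilon\sum_{i=0}^k\cos^2\theta_i\cdot pr_iX$. In particular $f^2$ acts on each $D_i$ as the scalar $\epsilon\cos^2\theta_i$, so every $D_i$ is contained in the eigenspace of $f^2$ attached to that value.

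Since $\cos$ is injective on $[0,\frac{\pi}{2}]$, the numbers $\epsilon\cos^2\theta_i$, $i=\overline{0,k}$, are pairwise distinct, and $\epsilon\cos^2\theta_0=\epsilon$ is distinct from the remaining ones because $\theta_i>0$ for $i\geq 1$. Combined with $TM=\oplus_{i=0}^k D_i$ and the fact that eigenspaces for distinct eigenvalues are in direct sum, a dimension count forces each nonzero $D_i$ to be exactly the eigenspace $\mathfrak D^i$ of $f^2$ for $\epsilon\cos^2\theta_i$, with no further eigenvalues. Thus the eigenspace decomposition of $f^2$ coincides with the $k$-slant decomposition. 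From this I would verify the skew CR conditions: the number $m$ of distinct eigenvalues equals the number of nonzero summands ($m=k$ or $m=k+1$ according as $D_0=\{0\}$ or not), hence is intrinsic to the fixed decomposition and independent of $x$; the eigenvalues $\epsilon\cos^2\theta_i$ are constant on $M$ because the $\theta_i$ are, by definition of a $k$-slant submanifold; and the eigenspace dimensions are constant because each $D_i$ is a regular distribution. So $f^2$ has constant eigenstructure.

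It remains to produce a proper slant component, that is, an eigenvalue of $f^2$ with $\alpha_i=\cos\theta_i\in(0,1)$, equivalently a slant angle $\theta_i\in(0,\frac{\pi}{2})$ strictly, as Ronsse's framework requires at least one such distribution. If $k\geq 2$, the $\theta_i$, $i=\overline{1,k}$, are distinct in $(0,\frac{\pi}{2}]$, so at most one equals $\frac{\pi}{2}$ and at least one lies in $(0,\frac{\pi}{2})$. If $k=1$, the value $\theta_1=\frac{\pi}{2}$ would make $D_1$ anti-invariant, whence $M$ would be anti-invariant for $D_0=\{0\}$ and a CR submanifold for $D_0\neq\{0\}$, both excluded by hypothesis; hence $\theta_1\in(0,\frac{\pi}{2})$. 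In either case a proper slant eigenspace exists, which together with the constant eigenstructure places $M$ squarely within the skew CR framework of Remark \ref{p203}.

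The main obstacle is not computational but conceptual: matching the paper's eigenvalue bookkeeping with Ronsse's requirement that at least one proper slant distribution be present, and confirming that the cases excluded (anti-invariant, CR) are precisely the $k=1$, $\theta_1=\frac{\pi}{2}$ configurations in which every eigenvalue of $f^2$ is $0$ or $\epsilon$. Everything else is a direct transcription of Proposition \ref{p36} and the regularity of the $D_i$.
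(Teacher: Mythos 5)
Your proof is correct and follows essentially the same route the paper intends: Proposition \ref{p36} identifies the $k$-slant components with the eigenspaces of $f^2$ for the pairwise distinct constant eigenvalues $\epsilon\cos^2\theta_i$, the regularity of the $D_i$ gives the constancy of dimensions, and the exclusion of the anti-invariant and CR cases (i.e., $k=1$ with $\theta_1=\frac{\pi}{2}$) is exactly what guarantees Ronsse's required proper slant eigenvalue $\alpha_i=\cos\theta_i\in(0,1)$. The paper leaves this argument implicit, stating the proposition as a conclusion of the eigenstructure analysis in Remark \ref{p203}; your write-up supplies the same bookkeeping explicitly and correctly.
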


We shall now provide an example of $k$-slant submanifold and \mbox{$k$-slant} distribution in an almost Hermitian and in an almost product Rie\-mann\-i\-an manifold. 

\begin{example}\label{ex3}
Let $\overline{M}=\mathbb{R}^{4k+2}$ be the Euclidean space for some $k\geq 2$, with the canonical coordinates 
$(x_{1},\ldots, x_{4k+2})$, and let $\{e_{1}=\frac{\partial }{\partial x_{1}},\ldots,e_{4k+2}=\nolinebreak\frac{\partial }{\partial x_{4k+2}}\}$ be the natural basis in the tangent bundle. For $\epsilon \in \{-1,1\}$, let us define a $(1,1)$-tensor field $\varphi$ by: 
$$\varphi e_{1} = e_{2}, \quad \varphi e_{2}=\epsilon e_{1},$$
$$\varphi e_{4j-1} =\frac{j-1}{\sqrt {2(j^2+1)}}{\ }e_{4j}+\frac{j+1}{\sqrt {2(j^2+1)}}{\ }e_{4j+2},$$
$$\varphi e_{4j}=\epsilon \frac{j-1}{\sqrt {2(j^2+1)}}{\ }e_{4j-1}-\frac{j+1}{\sqrt {2(j^2+1)}}{\ }e_{4j+1}, $$
$$\varphi e_{4j+1} =-\epsilon \frac{j+1}{\sqrt {2(j^2+1)}}{\ }e_{4j}+\epsilon \frac{j-1}{\sqrt {2(j^2+1)}}{\ }e_{4j+2},$$
$$\varphi e_{4j+2}=\epsilon \frac{j+1}{\sqrt {2(j^2+1)}}{\ }e_{4j-1}+\frac{j-1}{\sqrt {2(j^2+1)}}{\ }e_{4j+1}$$
for $j\in \{ 1,\ldots,k\}$.
Then, with the Riemannian metric $g$ given by $g(e_{i},e_{j})=\nolinebreak\delta _{ij}$ for $i, j= \overline{1, 4k+2}$, 
$(\overline{M},\varphi,g)$ is an almost Hermitian manifold for $\epsilon =-1$ and an almost product Riemannian manifold for $\epsilon =1$.

We define the submanifold $M$ of $\overline{M}$ by
$$
M:=\{(x_{1},\ldots, x_{4k+2})\in \mathbb{R}^{4k+2} \ | \ x_{4j+1}=x_{4j+2}=0,\ j= \overline{1,k}\},$$
and consider the distributions: 
$$
D_0=\langle e_{1},e_{2}\rangle,\quad D_{j}=\langle e_{4j-1},e_{4j}\rangle, \ j= \overline{1,k}\,.
$$

Then, $M$ is a $k$-slant submanifold of $\overline{M}$ for which the cor\-re\-spond\-ing \mbox{$k$-slant} distribution is $TM=\oplus_{i=0}^kD_i$, with
$D_0$ the invariant component and $D_j$, $j=\overline{1,k}$, slant
distributions having the slant angles 
$$\theta _{j}=\arccos \left(\frac{j-1}{\sqrt {2(j^2+1)}}\right),\ j=\overline{1,k}\,.$$ 

$\oplus_{i=1}^kD_i$ is the proper $k$-slant distribution associated to $M$.

We consider the distributions $L_i:=\langle e_{4i+1}, e_{4i+2}\rangle$ in $(TM)^{\perp}$, $i=\overline{1,k}$\,, and notice that $\oplus_{i=1}^kL_i$ is the dual $k$-slant distribution of $\oplus_{i=1}^kD_i$. 
We have $D_i= f(L_i)$, $i=\overline {1,k}$; hence, the dual $k$-slant distribution of $\oplus_{i=1}^kL_i$ is $\oplus_{i=1}^kD_i$. 
\end{example}

\section{$k$-pointwise slant distributions. \newline General considerations}\label{pointwise_gen_consid} 

Let $\epsilon \in \{-1,1\}$, and let $\varphi$ be a $(1,1)$-tensor field on the Riemannian manifold $(\overline{M},g)$ such that 
\begin{equation}\nonumber
g(\varphi X,Y)=\epsilon g(X,\varphi Y) 
\text{ for any }X,Y\in T\overline{M}.
\end{equation}

Throughout this section, we will consider $M$ to be $\overline{M}$ or an immersed submanifold of $\overline{M}$ if not specified otherwise. 

\begin{definition}\hspace{-2pt}(originating from \cite{etayo} and \cite{chen-garay})\label{p73} 
A non-null distribution $D$ on ${M}$ is called a \textit{pointwise slant distribution on ${M}$} (or \textit{in $T{M}$}) if there exists a continuous function $\theta :{M}\rightarrow (0,\frac{\pi}{2}]$ such that, for any $x \in {M}$ and $v \in D_x\verb=\=\{0\}$, we have $\varphi v\neq 0$, and the angle between $\varphi v$ and the vector space $D_x$ is equal to $\theta (x)$ (and, consequently, does not depend on $v$ but only on $x$). 
The function $\theta$ is called the \textit{slant function} of $D$, and we will also call the distribution a $\theta$-\textit{pointwise slant distribution}. 
\end{definition} 

\begin{remark}\label{p150}
The continuity of the slant function is implicit in view of the smoothness of the distribution (see also \cite{latcu}). 
\end{remark}

\begin{remark}\label{p76}
If $D$ is a $\theta$-pointwise slant distribution on $M$, then, for any vector field $X\in D\verb=\=\{0\}$ and $x\in {M}$ for which $X_x\neq 0$, the angle between $\varphi X_x$ and the vector space $D_x$ is $\theta(x)$. 
\end{remark}

The results of this section are valid in any of the settings considered throughout the paper: almost contact metric, almost paracontact metric, almost Hermitian or almost product Riemannian setting.

With a similar argument as in the proof of Proposition \ref{p31}, we obtain 

\begin{proposition}\label{p75}
Let $D_1,\,D_2$ be two orthogonal pointwise slant distributions on ${M}$ having the same slant function $\theta$. Denoting, for any $Z\in T{M}$, by $fZ$ the component of $\varphi Z$ in $D_1\oplus D_2$, assume that: \\ 
\hspace*{7pt} i)\, the orthogonality of vector fields from $D_1\oplus D_2$ is invariant under $\varphi$;\\ 
\hspace*{7pt} ii) $f(D_i)\subseteq D_i$, $i=1,2$.\\ 
Then, the two pointwise slant distributions, $D_1,\,D_2$, can be joined into a single pointwise slant distribution with slant function $\theta $.
\end{proposition}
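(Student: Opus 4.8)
The plan is to follow the proof of Proposition~\ref{p31} essentially verbatim, replacing the constant slant angle $\theta$ by the value $\theta(x)$ of the slant function at the point and checking the pointwise slant condition there. Put $D:=D_1\oplus D_2$; this is again a non-null smooth distribution, and $\theta:M\rightarrow(0,\frac{\pi}{2}]$ is already continuous, being the common slant function of $D_1$ and $D_2$. It therefore suffices to prove that, for every $x\in M$ and every nonzero $v\in D_x$, one has $\varphi_x v\neq 0$ and the angle between $\varphi_x v$ and the subspace $D_x$ equals $\theta(x)$. Since $f_x v$ is the orthogonal projection of $\varphi_x v$ onto $D_x$, the cosine of that angle is $\|f_x v\|/\|\varphi_x v\|$, so the goal is the identity
\begin{equation}
\|f_x v\|^2=\cos^2\theta(x)\,\|\varphi_x v\|^2. \nonumber
\end{equation}

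First I would reconcile $f$ with the slant component of each $D_i$. For $v_1\in(D_1)_x$, assumption ii) gives $f_x v_1\in(D_1)_x$; as $w_x v_1\perp D_x\supseteq(D_1)_x$, the orthogonal projection of $\varphi_x v_1$ onto $(D_1)_x$ equals $f_x v_1$. Hence the slant property of $D_1$ yields $\|f_x v_1\|=\cos\theta(x)\,\|\varphi_x v_1\|$, and symmetrically $\|f_x v_2\|=\cos\theta(x)\,\|\varphi_x v_2\|$ for $v_2\in(D_2)_x$. In particular the target identity already holds whenever $v\in(D_1)_x$ or $v\in(D_2)_x$.

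It remains to treat a nonzero $v=v_1+v_2$ with $v_1\in(D_1)_x$, $v_2\in(D_2)_x$ both nonzero. Since $f_x v_1\in(D_1)_x$, $f_x v_2\in(D_2)_x$, and $(D_1)_x\perp(D_2)_x$, the cross term vanishes, so
\begin{equation}
\|f_x v\|^2=\|f_x v_1\|^2+\|f_x v_2\|^2=\cos^2\theta(x)\bigl(\|\varphi_x v_1\|^2+\|\varphi_x v_2\|^2\bigr). \nonumber
\end{equation}
The key point is to turn the parenthesis into $\|\varphi_x(v_1+v_2)\|^2$, i.e.\ to show $g_x(\varphi_x v_1,\varphi_x v_2)=0$; this is exactly where assumption i) is used. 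Extending $v_1,v_2$ to local sections $X$ of $D_1$ and $Y$ of $D_2$ with $X_x=v_1$, $Y_x=v_2$, the orthogonality $D_1\perp D_2$ gives $X\perp Y$, hence $\varphi X\perp\varphi Y$ and in particular $g_x(\varphi_x v_1,\varphi_x v_2)=0$. Thus $\varphi_x v=\varphi_x v_1+\varphi_x v_2$ is a sum of two orthogonal vectors, each nonzero because $D_1$ and $D_2$ are pointwise slant, so $\varphi_x v\neq 0$, and the previous display reduces to $\|f_x v\|^2=\cos^2\theta(x)\,\|\varphi_x v\|^2$, as desired.

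I do not expect a genuine obstacle here, as the algebra is identical to that of Proposition~\ref{p31}; the only two points needing attention are the pointwise (rather than global) application of the $\varphi$-invariance of orthogonality from i), which is handled by the local extension above, and the identification of $f|_{D_i}$ with the slant component of $D_i$, guaranteed by ii).
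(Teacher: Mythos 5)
Your proof is correct and follows essentially the same route as the paper, which simply adapts the argument of Proposition~\ref{p31} pointwise: decompose $v=v_1+v_2$, use ii) to identify $f_xv_i$ with the projection of $\varphi_xv_i$ onto $(D_i)_x$, and use i) to get $g_x(\varphi_xv_1,\varphi_xv_2)=0$ so that norms add. In fact you are slightly more careful than the paper, spelling out the local-section extension needed to apply i) at a point and the nonvanishing of $\varphi_x v$.
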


\begin{corollary}
Let $L_1$, $L_2$,\ldots , $L_m$ be mutually orthogonal distributions on $M$ which are invariant or pointwise slant distributions (at least one) and are invariant with respect to $\tilde f$ (the component of $\varphi$ in $\oplus_{i=1}^m L_i$) such that the orthogonality of vector fields from $\oplus_{i=1}^m L_i$ is invariant under $\varphi$. Then, the direct sum $\oplus_{i=1}^m L_i$ can be represented as an orthogonal sum of pointwise slant distributions with distinct slant func\-tions and at most one invariant distribution. 
\end{corollary}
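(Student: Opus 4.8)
The plan is to follow the proof of Corollary~\ref{p197}, replacing Proposition~\ref{p31} by its pointwise analogue, Proposition~\ref{p75}, and using Remark~\ref{p59} to absorb the invariant summands.

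First I would separate the family $L_1,\dots,L_m$ into its invariant members and its pointwise slant members; by hypothesis at least one member is pointwise slant. Applying Remark~\ref{p59} repeatedly, I would collapse all invariant members (if any) into a single invariant distribution, since an orthogonal direct sum of invariant distributions is again invariant. Among the pointwise slant members I would then introduce the relation ``carries the same slant function'', which is well defined because each such distribution possesses exactly one slant function $M\rightarrow(0,\frac{\pi}{2}]$, so two slant functions are either identical or distinct. Merging the members of each class two at a time via Proposition~\ref{p75}, I would obtain one pointwise slant distribution per class, whose slant function is the common value of that class; distinct classes then furnish pairwise distinct slant functions. Together with the single invariant distribution, these pieces reconstitute $\oplus_{i=1}^m L_i$ as required.

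The step requiring care, and the only real obstacle beyond bookkeeping, is checking the hypotheses of Proposition~\ref{p75} at each merge of $D_1$ with $D_2$. The orthogonality hypothesis transfers for free: as $D_1\oplus D_2\subseteq\oplus_{i=1}^m L_i$, invariance of orthogonality under $\varphi$ on the larger sum restricts to the smaller one. For the condition $f(D_i)\subseteq D_i$, with $f$ the component of $\varphi$ in $D_1\oplus D_2$, I would argue from $\tilde f$-invariance: for $X\in D_1$ we have $\varphi X=\tilde f X+\tilde w X$, where $\tilde f X\in D_1$ by $\tilde f$-invariance and the orthogonal part satisfies $\tilde w X\in(\oplus_{i=1}^m L_i)^{\perp}\subseteq(D_1\oplus D_2)^{\perp}$; hence the $(D_1\oplus D_2)$-component of $\varphi X$ is precisely $\tilde f X\in D_1$, so $f(D_1)\subseteq D_1$, and symmetrically $f(D_2)\subseteq D_2$. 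Since every merged distribution is again $\tilde f$-invariant, being an orthogonal sum of $\tilde f$-invariant pieces, this computation propagates through the induction and completes the argument.
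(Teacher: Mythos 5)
Your proof is correct and takes essentially the same route as the paper, which obtains this corollary directly from Proposition \ref{p75} combined with Remark \ref{p59}, exactly as Corollary \ref{p197} is obtained from Proposition \ref{p31} and Remark \ref{p59} in the non-pointwise case. Your explicit check that $\tilde f$-invariance of each summand forces $f(D_i)\subseteq D_i$ for the component $f$ of $\varphi$ in each partial sum being merged is a detail the paper leaves implicit, and you supply it correctly.
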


For any distribution $D$ on ${M}$ and any $Z\in T\overline{M}$, let $fZ$ and $wZ$ be the com\-po\-nents of $\varphi Z_M$ in $D$ and in $D^{\perp}$, respectively. 

\medskip 
Related to the slant function $\theta$ of a general pointwise slant distribution (i.e., the slant function can take any value from $0$ to $\frac{\pi}{2}$) on a Riemannian manifold, for $\varphi$  a symmetric or skew-symmetric structural endomorphism on the manifold which acts isometrically on that distribution, we established in \cite{latcu} the following results, which will prove useful later. 

\begin{theorem}\hspace{-2pt}\cite{latcu}\label{p185}
Let $D$ be a distribution on $M$ which is a general pointwise $\theta$-slant distribution relative to $T\overline{M}$ such that $\varphi|_D$ is an isometry. 

Then, for any $x\in {M}$, $(f^2|_D)_x$ has only one eigenvalue $\lambda(x)$, and $D_x$ is entirely composed of eigenvectors of $\lambda(x)$. The eigenvalue function $\lambda$ is in $C^\infty({M})$, and $\lambda(x) = \epsilon(\cos \theta(x))^2$, respectively $\cos\theta(x)=\sqrt{\epsilon\lambda(x)}$, for any $x\in {M}$. 
In particular: 
\begin{enumerate}
\item[(a)] $f^2X=\lambda X$ for any vector field $X\in D$; 
\item[(b)] the slant function $\theta$ is continuous, and $\cos^2\theta \in C^\infty({M})$; 
\item[(c)] $\cos\theta \in C^\infty({M})$ if\, $\theta(x)\neq \frac{\pi}{2}$ for any $x\in{M}$; 
\item[(d)] $\theta \in C^\infty(M)$ if\, $\theta(x)\in (0,\frac{\pi}{2})$ for any $x\in M$. 
\end{enumerate}
\end{theorem}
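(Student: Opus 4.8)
The plan is to convert the pointwise slant hypothesis into a fibrewise identity for the self-adjoint operator $f^{2}|_{D}$ and then invoke the rigidity of symmetric operators having constant Rayleigh quotient. First I would record the algebraic content of the slant condition. For a nonzero $X_{x}\in D_{x}$, since $fX_{x}=pr_{D_{x}}(\varphi X_{x})$, the angle $\theta(x)$ between $\varphi X_{x}$ and $D_{x}$ satisfies $\cos\theta(x)=|fX_{x}|/|\varphi X_{x}|$; because $\varphi|_{D}$ is an isometry we have $|\varphi X_{x}|=|X_{x}|$, whence $|fX_{x}|^{2}=\cos^{2}\theta(x)\,|X_{x}|^{2}$. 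Feeding this into Lemma \ref{lema2} (available by Remark \ref{p194}), which gives $g(fX,fX)=\epsilon\,g(f^{2}X,X)$, I obtain the pointwise relation
$$g(f^{2}X_{x},X_{x})=\epsilon\cos^{2}\theta(x)\,g(X_{x},X_{x})\quad\text{for all }X_{x}\in D_{x}.$$

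The key step is the following rigidity observation. By Lemma \ref{lema1}, $f$ is symmetric for $\epsilon=1$ and skew-symmetric for $\epsilon=-1$, and in either case $f(D)\subseteq D$, so $f^{2}|_{D_{x}}$ is self-adjoint on the Euclidean space $D_{x}$. The displayed identity says exactly that the Rayleigh quotient of $f^{2}|_{D_{x}}$ is the constant $\epsilon\cos^{2}\theta(x)$ on all of $D_{x}\setminus\{0\}$. Polarizing the quadratic form $X_{x}\mapsto g(f^{2}X_{x},X_{x})$ shows that a self-adjoint operator with constant Rayleigh quotient must be a scalar multiple of the identity, so
$$f^{2}|_{D_{x}}=\epsilon\cos^{2}\theta(x)\,\mathrm{id}_{D_{x}}.$$
This simultaneously proves that $\lambda(x):=\epsilon\cos^{2}\theta(x)$ is the unique eigenvalue of $(f^{2}|_{D})_{x}$, that $D_{x}$ consists entirely of eigenvectors, and assertion (a), namely $f^{2}X=\lambda X$ for every $X\in D$. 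I expect this scalar-operator step to be the conceptual heart of the argument; everything that remains is smoothness bookkeeping.

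Finally I would settle the regularity claims. The operator $f^{2}$ is a smooth bundle endomorphism (as $f=pr_{D}\circ\varphi$ with $pr_{D}$ smooth for a smooth distribution $D$), so choosing a local nonvanishing smooth section $X$ of $D$ gives $\lambda=g(f^{2}X,X)/g(X,X)$, a quotient of a smooth function by a nonvanishing smooth function; hence $\lambda\in C^{\infty}(M)$ and consequently $\cos^{2}\theta=\epsilon\lambda\in C^{\infty}(M)$, which is the smoothness part of (b). Continuity of $\theta$ then follows since $t\mapsto\cos^{2}t$ is a homeomorphism of $[0,\tfrac{\pi}{2}]$ onto $[0,1]$, so $\theta$ is the continuous image of the continuous function $\cos^{2}\theta$. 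For (c), the hypothesis $\theta(x)\neq\tfrac{\pi}{2}$ everywhere forces $\epsilon\lambda=\cos^{2}\theta>0$, and the smoothness of the square root on $(0,\infty)$ yields $\cos\theta=\sqrt{\epsilon\lambda}\in C^{\infty}(M)$. For (d), the stronger hypothesis $\theta(x)\in(0,\tfrac{\pi}{2})$ everywhere makes $\cos\theta$ take values in the open interval $(0,1)$, where $\arccos$ is smooth, so $\theta=\arccos(\cos\theta)\in C^{\infty}(M)$.
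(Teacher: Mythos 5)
Your proof is correct. A point of order first: this paper never proves Theorem \ref{p185} — it is imported, with citation, from \cite{latcu} — so there is no in-paper proof to compare against. That said, your argument is precisely the technique the paper itself uses for the analogous identities in the $k$-slant and $k$-pointwise slant cases (the derivation of \eqref{37} and of Propositions \ref{p23}, \ref{p36}, \ref{p86}): the slant condition together with the isometry hypothesis gives $g(f^2X_x,X_x)=\epsilon\cos^2\theta(x)\,g(X_x,X_x)$ via Lemmas \ref{lema1}, \ref{lema2} (valid on submanifolds by Remark \ref{p194}), and polarizing this quadratic identity for the self-adjoint operator $(f^2|_D)_x$ forces $(f^2|_D)_x=\epsilon\cos^2\theta(x)\,\mathrm{id}_{D_x}$, which yields the uniqueness of the eigenvalue, the eigenvector statement, and (a) in one stroke. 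Your regularity bookkeeping is also sound and complete: $\lambda$ agrees locally with $g(f^2X,X)/g(X,X)$ for a local nonvanishing smooth section $X$ of the regular distribution $D$, hence $\lambda\in C^\infty(M)$ and $\cos^2\theta=\epsilon\lambda\in C^\infty(M)$; continuity of $\theta$ follows because $\cos^2$ is a homeomorphism of $[0,\frac{\pi}{2}]$ onto $[0,1]$; and (c), (d) follow by composing the smooth, everywhere-positive function $\epsilon\lambda$ with $\sqrt{\cdot}$, respectively $\cos\theta$ with $\arccos$, away from their singular values.
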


\begin{corollary}\hspace{-2pt}\cite{latcu}\label{p122}
Let $D$ be a distribution on $M$ which is a slant distribution relative to $T\overline{M}$ with slant angle $\theta$ such that $\varphi|_D$ is an isometry. Then, for any $x\in M$, $(f^2|_D)_x$ has only one eigenvalue $\lambda=\epsilon\cos^2 \theta$, which is independent of $x$. In particular, $\cos\theta=\sqrt{\epsilon\lambda}$, and $f^2 X=\lambda X$ for any $X\in D$. 
\end{corollary}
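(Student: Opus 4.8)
The plan is to obtain the statement as a direct specialization of Theorem \ref{p185}, the key observation being that a slant distribution is simply a pointwise slant distribution whose slant function happens to be constant. So first I would compare Definition \ref{5}(ii) with Definition \ref{p73} and note that a $\theta$-slant distribution $D$, with $\theta$ a fixed angle in $(0,\frac{\pi}{2}]$, is exactly a $\theta$-pointwise slant distribution whose slant function is the constant map $x\mapsto\theta$. Since by hypothesis $\varphi|_D$ is an isometry, the full set of assumptions of Theorem \ref{p185} is satisfied, and I may invoke that theorem with this constant slant function.

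Next I would read off the conclusions of Theorem \ref{p185} in this constant case. The theorem guarantees that, for every $x\in M$, the operator $(f^2|_D)_x$ has a single eigenvalue $\lambda(x)=\epsilon(\cos\theta(x))^2$ and that $D_x$ is entirely composed of eigenvectors for it. Because the slant function is the constant $\theta$, we have $\cos\theta(x)=\cos\theta$ for all $x$, so $\lambda(x)=\epsilon\cos^2\theta=:\lambda$ is independent of $x$; this already furnishes the ``single eigenvalue $\lambda=\epsilon\cos^2\theta$, independent of $x$'' part of the claim. Part (a) of the theorem then yields $f^2X=\lambda X$ for every vector field $X\in D$.

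Finally I would derive the relation $\cos\theta=\sqrt{\epsilon\lambda}$ by solving $\lambda=\epsilon\cos^2\theta$ for $\cos\theta$: since $\epsilon^2=1$ we get $\epsilon\lambda=\cos^2\theta\ge 0$, and $\cos\theta\ge 0$ because $\theta\in(0,\frac{\pi}{2}]$, so extracting the nonnegative square root is legitimate and gives the stated formula.

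I do not expect any genuine obstacle here: the entire content is carried by Theorem \ref{p185}, and the only thing that must be verified is the (immediate) fact that constancy of the slant angle places us in the pointwise slant framework with a constant slant function, after which the corollary is just the restriction of that theorem to the constant-angle situation. The one point worth stating explicitly, to keep the argument self-contained, is the sign bookkeeping in $\lambda=\epsilon\cos^2\theta$, which simultaneously covers the almost Hermitian / almost contact case ($\epsilon=-1$, nonpositive eigenvalue) and the almost product Riemannian / almost paracontact case ($\epsilon=1$, nonnegative eigenvalue).
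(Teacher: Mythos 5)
Your proposal is correct and matches the paper's intent: Corollary \ref{p122} is stated (citing \cite{latcu}) immediately after Theorem \ref{p185} precisely as its specialization to a constant slant function, which is exactly the reduction you carry out, including the observation that a $\theta$-slant distribution is a pointwise slant distribution with constant slant function and the sign bookkeeping $\epsilon\lambda=\cos^2\theta\ge 0$ justifying $\cos\theta=\sqrt{\epsilon\lambda}$. No gaps.
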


We will now introduce the ${k}$-pointwise slant distribution for $k\in \mathbb{N}^*$. 
 
\begin{definition}\label{p77} 
Let $k\in \mathbb{N}^*$ and $D$ be a non-null distribution on ${M}$. We will call $D$ a \textit{$k$-pointwise slant distribution} if there exists an orthogonal decomposition of $D$ into regular distributions, 
$$D=\oplus_{i=0}^k{D_i}$$
with the $D_i$'s non-null distributions for $i=\overline{1,k}$ and $D_0$ possible null, and there exist $k$ distinct continuous functions $\theta_i:{M}\rightarrow (0,\frac{\pi}{2}]$, $i=\overline{1,k}$, such that: \\ 
\hspace*{7pt} (i) \ $D_i$ is a $\theta_i$-pointwise slant distribution for $i=\overline{1,k}$;\\ 
\hspace*{7pt} (ii) \,$\varphi X\in D_0$ for any $X\in D_0$ 
(i.e., $\widehat{(\varphi X, D)}=0=:\theta_0$ for $X\in D_0$ with $\varphi X\neq 0$, and $f(D_0)\subseteq D_0$);\\ 
\hspace*{7pt} (iii) $f(D_i)\subseteq D_i$ for $i=\overline{1,k}$.

\medskip
We will say that $D$ is a \textit{multi-pointwise slant distribution} if $k\geq 2$.

We will call $D$ a $(\theta_1,\theta_2,\ldots,\theta_k)$-\textit{pointwise slant distribution} if we want to specify the slant functions. 

$D_0$ represents the \textit{invariant component} and $\oplus_{i=1}^kD_i$ the \textit{proper \mbox{$k$-point}\-wise slant component} of $D$. 

We will call the distribution $D=\oplus_{i=0}^kD_i$ a \textit{proper $k$-pointwise slant distribution} if $D_0=\{0\}$. 
\end{definition}

\begin{remark}\label{p78}
In view of (iii), we notice that (i) is equivalent to \\ 
\hspace*{7pt} (i') $\varphi v \neq 0$, and $\widehat{(\varphi v, D_x)}=\theta_i(x)$ for any $x\in {M}$ and $v\in (D_i)_x\verb=\=\{0\}$, $i=\overline{1,k}$.
\end{remark}

\begin{remark}\label{p85}
The continuity of the slant functions is implicit in view of the smoothness of the distributions. 
\end{remark}

With a similar argument as for Proposition \ref{p167}, we get 

\begin{proposition}\label{p176}
Let $k\in \mathbb{N}^*$ and $D$ be a non-null distribution on $M$ decomposable into an orthogonal sum of regular distributions, $D=\oplus_{i=0}^k{D_i}$ with $D_i\neq \{0\}$ for $i=\overline{1,k}$ and $D_0$ invariant (possible null). Let $pr_i$ denote the projection operator onto $D_i$ for $i=\overline{1,k}$. If $\varphi$ restricted to $\oplus_{i=1}^k{D_i}$ is an isometry, and $f(D_i)\subseteq D_i$ for $i=\overline{1,k}$, and there exist $k$ distinct continuous functions $\theta_i:{M}\rightarrow (0,\frac{\pi}{2}]$, $i=\overline{1,k}$, such that  
\begin{equation}\nonumber
f^2X=\epsilon \sum_{i=1}^k\cos^2\theta_i\cdot pr_iX\, \text{ for any } X\in \oplus_{i=1}^k{D_i},
\end{equation}
then $D$ is a $k$-pointwise slant distribution with slant functions $\theta_i$ corresponding to $D_i$, $i=\overline{1,k}$. 
\end{proposition}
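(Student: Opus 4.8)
The plan is to mirror the proof of Proposition \ref{p167}, working pointwise because the slant data are now functions rather than constants. First I would restrict the hypothesis $f^2X=\epsilon\sum_{i=1}^k\cos^2\theta_i\cdot pr_iX$ to a single component: for a fixed $i\in\{1,\ldots,k\}$ and $X_i\in D_i$ one has $pr_jX_i=0$ for $j\neq i$ and $pr_iX_i=X_i$, so the identity collapses to
\begin{equation}\nonumber
f^2X_i=\epsilon\cos^2\theta_i\cdot X_i.
\end{equation}
This is exactly the relation used in the constant-angle case, the only difference being that $\cos^2\theta_i$ is now a function on $M$.

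Next I would convert this eigen-relation into a statement about norms. Using Lemmas \ref{lema1} and \ref{lema2} (valid also on a submanifold by Remark \ref{p194}), which give the symmetry of $f^2$ and $g(f^2X_i,X_i)=\epsilon\, g(fX_i,fX_i)$, together with $\epsilon^2=1$, I would compute $|fX_i|^2=g(fX_i,fX_i)=\epsilon\, g(f^2X_i,X_i)=\cos^2\theta_i\cdot|X_i|^2$. Since $\varphi$ restricted to $\oplus_{i=1}^kD_i$ is an isometry, $|\varphi X_i|=|X_i|$, whence
\begin{equation}\nonumber
|fX_i|=\cos\theta_i\cdot|\varphi X_i|.
\end{equation}

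The observation that turns this into the slant property is that $fX_i$ is precisely the orthogonal projection of $\varphi X_i$ onto $(D_i)_x$. Writing $\varphi X_i=fX_i+wX_i$ with $fX_i\in D$ and $wX_i\in D^{\perp}$, the hypothesis $f(D_i)\subseteq D_i$ forces $fX_i\in D_i$, so $\varphi X_i$ has no component in $D_j$ for $j\neq i$, none in $D_0$, and $wX_i\perp D_i$. Consequently $\cos\widehat{(\varphi v,(D_i)_x)}=|fv|/|\varphi v|=\cos\theta_i(x)$ for every nonzero $v\in(D_i)_x$, and $\varphi v\neq 0$ since $\varphi$ acts isometrically there; thus each $D_i$ is a $\theta_i$-pointwise slant distribution. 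The $\theta_i$ are continuous and distinct by hypothesis (their continuity being in any case implicit, cf.\ Remark \ref{p85}).

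To conclude, I would collect the three conditions of Definition \ref{p77}: (i) holds because each $D_i$, $i=\overline{1,k}$, is $\theta_i$-pointwise slant; (ii) holds since $D_0$ is invariant; and (iii) is the standing hypothesis $f(D_i)\subseteq D_i$. Hence $D=\oplus_{i=0}^kD_i$ is a $k$-pointwise slant distribution with slant functions $\theta_i$ attached to $D_i$. I do not expect a genuine obstacle: the argument is word-for-word that of Proposition \ref{p167} with constants replaced by functions. The only point deserving care is the identification of $fX_i$ with the orthogonal projection of $\varphi X_i$ onto $(D_i)_x$, which is what licenses reading off the angle, and the observation that the slant functions inherit continuity.
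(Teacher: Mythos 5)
Your proposal is correct and is essentially the paper's own argument: the paper proves Proposition \ref{p176} simply by noting it follows ``with a similar argument as for Proposition \ref{p167}'', whose proof is exactly your computation $f^2X_i=\epsilon\cos^2\theta_i\cdot X_i$, then $|fX_i|^2=\epsilon\, g(f^2X_i,X_i)=\cos^2\theta_i\cdot|X_i|^2=\cos^2\theta_i\cdot|\varphi X_i|^2$ via Lemmas \ref{lema1}--\ref{lema2} and the isometry hypothesis, followed by invoking the invariance of $D_0$. Your added remarks (that $f(D_i)\subseteq D_i$ identifies $fX_i$ with the orthogonal projection of $\varphi X_i$ onto $(D_i)_x$, and that the isometry gives $\varphi v\neq 0$) only make explicit steps the paper leaves implicit.
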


\medskip 
Let $k\in \mathbb{N}^*$ and $D=\oplus_{i=0}^kD_i$ be a $k$-pointwise slant distribution on ${M}$ with $D_0$ the invariant component. 
With similar arguments as for Remarks \ref{p25}, \ref{p198}, we immediately obtain 

\begin{remark}\label{p79}
Condition (iii) from Definition \ref{p77} of a $k$-pointwise slant distribution can be replaced by \\ 
\hspace*{7pt} (iii') $\varphi(D_i)\perp D_j$ for any $i\neq j$ from $\{1,\ldots,k\}$.
\end{remark}

\begin{remark}\label{p199}
If the orthogonality of vector fields from the proper \mbox{$k$-point}\-wise slant distribution $\oplus_{i=1}^kD_i$ is invariant under $\varphi$,
then: 
\begin{equation}\label{86}
\varphi(D_1),\ldots,\varphi(D_k) \text{ are orthogonal};\nonumber 
\end{equation}
\begin{equation}\label{87}
w(D_i)\perp w(D_j) \text{ for } i\neq j;\nonumber 
\end{equation}
\begin{equation}\label{89}
w(\oplus_{i=1}^kD_i)=\oplus_{i=1}^kw(D_i). \nonumber 
\end{equation}

\end{remark}

Let $M$ be an immersed submanifold of $\overline{M}$. 

\begin{definition}\label{p81}
We will call $M$ a \textit{$k$-pointwise slant submanifold} of $\overline{M}$ if $TM$ is a $k$-pointwise slant distribution.  

We will call $M$ a \textit{multi-pointwise slant submanifold} if $k\geq 2$, or a $(\theta_1,\theta_2,\ldots,\theta_k)$-\textit{pointwise slant sub\-man\-i\-fold} if we want to specify the slant functions $\theta_i$. 

If $TM=\oplus_{i=0}^kD_i$, where $D_0$ denotes the invariant component, we will call $\oplus_{i=1}^kD_i$ the \textit{proper $k$-pointwise slant distribution associated} to $M$. 

We will call $M$ a \textit{proper $k$-pointwise slant submanifold} if\, $TM$ is a proper \mbox{$k$-point}\-wise slant distribution. 
\end{definition}

An equivalent formulation of the above definition is 

\begin{definition}\label{p83}
We say that $M$ is a \textit{$k$-pointwise slant submanifold} of $\overline{M}$ if there exists an orthogonal decomposition of $TM$ into regular distributions, 
$$TM=\oplus_{i=0}^k{D_i}=:D$$
with $D_i\neq \{0\}$ for $i= \overline{1,k}$ and $D_0$ possible null, and there exist $k$ distinct continuous functions $\theta_i: M\rightarrow (0,\frac{\pi}{2}]$, $i=\overline{1,k}$, such that: \\ 
\hspace*{7pt} (i) \;For any $i\in \{1,\ldots,k\}$, $x\in {M}$, and $v\in (D_i)_x\verb=\=\{0\}$, we have $\varphi v \neq 0$ and $\widehat{(\varphi v, (D_i)_x)}=\theta_i(x)$;\\ 
\hspace*{7pt} (ii) \,$\varphi v\in (D_0)_x$ for any $x\in M$ and $v\in (D_0)_x$;\\ 
\hspace*{7pt} (iii) $fv\in (D_i)_x$ for any $x\in M$ and $v\in (D_i)_x$, $i=\overline{1,k}$. 
\end{definition}

\begin{remark}
In view of (iii), we notice that (i) can be replaced by \\ 
\hspace*{7pt} (i') For any $i\in \{1,\ldots,k\}$, $x\in {M}$, and $v\in (D_i)_x\verb=\=\{0\}$, we have $\varphi v \neq 0$ and 
$ \widehat{(\varphi v, T_xM)}=\theta_i(x)$.
\end{remark}

\begin{remark}
In view of Remark \ref{p79}, condition (iii) from above can be replaced by \\ 
\hspace*{7pt} (iii') $\varphi(D_i)\perp D_j$ for any $i\neq j$ from $\{1,\ldots,k\}$.
\end{remark} 

\begin{remark}\label{p186}
In particular, if $M$ is a $k$-pointwise slant submanifold of $\overline{M}$ and $D$ is a distribution on $M$ such that $TM=D\oplus D'_0$, where $D'_0$ is an invariant (possible null) regular distribution, then $D$ is a $k$-pointwise slant distribution. 
\end{remark}

Rewriting Proposition \ref{p176} for submanifolds, we obtain 

\begin{proposition}\label{p177}
Let $k\in \mathbb{N}^*$ and $M$ be an immersed submanifold of $\overline{M}$ such that $TM$ is decomposable into an orthogonal sum of regular distributions, $TM=\oplus_{i=0}^k{D_i}$ with $D_0$ invariant (possible null) and $D_i\neq \{0\}$ for $i=\overline{1,k}$. Let $pr_i$ denote the projection operator from $TM$ onto $D_i$ for $i=\overline{1,k}$. If $\varphi$ restricted to $\oplus_{i=1}^k{D_i}$ is an isometry, and $f(D_i)\subseteq D_i$ for $i=\overline{1,k}$, and there exist $k$ distinct continuous functions $\theta_i: M\rightarrow (0,\frac{\pi}{2}]$, $i=\overline{1,k}$, such that  
\begin{equation}\nonumber
f^2X=\epsilon \sum_{i=1}^k\cos^2\theta_i\cdot pr_iX \, \text{ for any } X\in \oplus_{i=1}^k{D_i}\,,
\end{equation}
then $M$ is a $k$-pointwise slant submanifold of $\overline{M}$ with slant functions $\theta_i$ corresponding to $D_i$, $i=\overline{1,k}$. 

\end{proposition}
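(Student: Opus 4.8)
The plan is to reduce the statement directly to Proposition \ref{p176}. Since $M$ is an immersed submanifold with $TM=\oplus_{i=0}^k D_i$, I would set $D:=TM$ and observe that the hypotheses imposed here---$D_0$ invariant, each $D_i$ non-null regular for $i=\overline{1,k}$, $\varphi|_{\oplus_{i=1}^k D_i}$ an isometry, $f(D_i)\subseteq D_i$, and the eigenvalue identity $f^2X=\epsilon\sum_{i=1}^k\cos^2\theta_i\cdot pr_iX$ on $\oplus_{i=1}^k D_i$---are precisely those of Proposition \ref{p176} for the distribution $D$ on $M$. Proposition \ref{p176} then yields that $TM$ is a $k$-pointwise slant distribution with slant functions $\theta_i$ corresponding to $D_i$, and, by Definition \ref{p81}, this is exactly the assertion that $M$ is a $k$-pointwise slant submanifold.

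For completeness, I would also spell out the underlying pointwise argument, which runs parallel to the proof of Proposition \ref{p167}. Fixing $i\in\{1,\ldots,k\}$ and $X_i\in D_i$, I would restrict the hypothesis to $X=X_i$; since $pr_jX_i=0$ for $j\neq i$ and $pr_iX_i=X_i$, this gives $f^2X_i=\epsilon\cos^2\theta_i\cdot X_i$. Invoking Lemma \ref{lema1} through Remark \ref{p194} together with the isometry of $\varphi|_{\oplus_{i=1}^k D_i}$, I would then compute
$$|fX_i|^2=\epsilon\, g(f^2X_i,X_i)=\cos^2\theta_i\cdot|X_i|^2=\cos^2\theta_i\cdot|\varphi X_i|^2.$$
Localizing at an arbitrary $x\in M$ and a nonzero $v\in(D_i)_x$, the isometry gives $\varphi_xv\neq 0$ with $\|\varphi_xv\|=\|v\|$, and the relation $\|f_xv\|=\cos\theta_i(x)\,\|\varphi_xv\|$ shows that $f_xv$, the orthogonal projection of $\varphi_xv$ onto $(D_i)_x$, subtends the angle $\theta_i(x)$ with $(D_i)_x$; hence each $D_i$ is a $\theta_i$-pointwise slant distribution in the sense of Definition \ref{p73}. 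Together with the invariance of $D_0$ and the inclusions $f(D_i)\subseteq D_i$, conditions (i)--(iii) of Definition \ref{p77} are met, so $TM$ is a $k$-pointwise slant distribution and $M$ a $k$-pointwise slant submanifold.

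The argument is essentially a routine verification, so no step presents a genuine obstacle; the only point requiring care is the passage from the norm identity $\|f_xv\|=\cos\theta_i(x)\,\|\varphi_xv\|$ to the statement about the angle between $\varphi_xv$ and $(D_i)_x$. This rests on the fact that $f_xv$ is by construction the component of $\varphi_xv$ in $(D_i)_x\subseteq T_xM$, so that $\|f_xv\|/\|\varphi_xv\|$ is exactly the cosine of that angle; the hypothesis that each $\theta_i\colon M\rightarrow(0,\frac{\pi}{2}]$ guarantees a well-defined slant function, its continuity being implicit from the smoothness of the distributions (Remark \ref{p85}), and the distinctness of the $\theta_i$ is inherited directly from the hypothesis.
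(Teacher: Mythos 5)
Your proof is correct and takes essentially the same route as the paper: the paper obtains Proposition \ref{p177} simply by rewriting Proposition \ref{p176} for the distribution $D=TM$ and invoking Definition \ref{p81}, and your spelled-out pointwise verification is the same computation the paper gives for Proposition \ref{p167}, to which the proof of Proposition \ref{p176} is referred.
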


Let us consider again $M$ to be $\overline{M}$ or an immersed submanifold of $\overline{M}$. 

Revisiting Theorem \ref{p185}, for $k$-pointwise slant distributions, we get 

\begin{theorem}\label{p143}
Let $D=\oplus_{i=0}^k{D_i}$ be a distribution on $M$ such that $D$ is a \mbox{$k$-point}\-wise slant distribution relative to $T\overline{M}$ with $D_0$ the invariant (possible null) component and slant functions $\theta_i$ corresponding to $D_i$, $i=\overline{1,k}$, and with the property that $\varphi$ restricted to ${\oplus_{i=1}^k{D_i}}$ is an isometry. 

Then, for $i=\overline{1,k}$ and $x\in {M}$, $(f^2|_{D_i})_x$ has only one eigenvalue, $\lambda_i(x)$, and $(D_i)_x$ is entirely composed of eigenvectors of $\lambda_i(x)$; the eigenvalue function $\lambda_i$ is in $C^\infty({M})$, and $\lambda_i(x) = \epsilon(\cos \theta_i(x))^2$, so $\cos\theta_i(x)=\nolinebreak\sqrt{\epsilon\lambda_i(x)}$. 
In particular, for any $i\in \{1,\ldots ,k\}:$ 
\begin{enumerate}
\item[(a)] $f^2X_i=\lambda_i X_i$ for any vector field $X_i\in D_i$; 
\item[(b)] the slant function $\theta_i$ is continuous, and\, $\cos^2\theta_i \in C^\infty({M})$; 
\item[(c)] $\theta_i \in C^\infty(M)$ if \,$\theta_i(x)\in (0,\frac{\pi}{2})$ for any $x\in M$. 
\end{enumerate}
\end{theorem}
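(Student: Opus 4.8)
The plan is to reduce the statement to Theorem \ref{p185} applied separately to each slant component $D_i$, $i=\overline{1,k}$. By Definition \ref{p77}(i), each $D_i$ is a $\theta_i$-pointwise slant distribution; equivalently, by Remark \ref{p78}, $\varphi v\neq 0$ and the angle between $\varphi v$ and $(D_i)_x$ equals $\theta_i(x)$ for every $x\in M$ and every nonzero $v\in (D_i)_x$. Thus each $D_i$ is, on its own, a general pointwise $\theta_i$-slant distribution relative to $T\overline{M}$ in the sense of Definition \ref{p73}, and it is exactly to such a distribution that Theorem \ref{p185} applies.

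Before invoking that theorem, I would fix the two hypotheses it requires. First, since $\varphi$ restricted to $\oplus_{j=1}^k D_j$ is assumed to be an isometry and $D_i\subseteq \oplus_{j=1}^k D_j$, the restriction $\varphi|_{D_i}$ is an isometry as well. Second, I would check that the component operator relevant to Theorem \ref{p185} for the distribution $D_i$---namely the component $f_i$ of $\varphi$ into $D_i$---coincides on $D_i$ with the given $f$ (the component of $\varphi$ into $D$). Indeed, for $X_i\in D_i$ we have $\varphi X_i=fX_i+wX_i$ with $fX_i\in D_i$ by condition (iii) of Definition \ref{p77} and $wX_i\in D^{\perp}\subseteq D_i^{\perp}$; hence the $D_i$-component of $\varphi X_i$ is precisely $fX_i$, so $f_i|_{D_i}=f|_{D_i}$ and therefore $f_i^2|_{D_i}=f^2|_{D_i}$.

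With these two observations, Theorem \ref{p185} applies verbatim to each $D_i$ and yields, for every $x\in M$, that $(f^2|_{D_i})_x$ has the single eigenvalue $\lambda_i(x)$, that $(D_i)_x$ consists entirely of eigenvectors for $\lambda_i(x)$, that $\lambda_i\in C^\infty(M)$ with $\lambda_i(x)=\epsilon(\cos\theta_i(x))^2$ and $\cos\theta_i(x)=\sqrt{\epsilon\lambda_i(x)}$, and finally the three itemized conclusions: $f^2X_i=\lambda_i X_i$ for $X_i\in D_i$, the continuity of $\theta_i$ together with $\cos^2\theta_i\in C^\infty(M)$, and $\theta_i\in C^\infty(M)$ whenever $\theta_i(x)\in(0,\frac{\pi}{2})$ for all $x$ (this last being conclusion (d) of Theorem \ref{p185}). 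Running this argument over all $i=\overline{1,k}$ gives the theorem.

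The only genuinely non-routine point---and thus the step I would be most careful about---is the identification $f_i|_{D_i}=f|_{D_i}$ of the two a priori different component operators; everything else is a direct transcription of Theorem \ref{p185}. It is worth emphasizing that the isometry hypothesis is imposed only on the proper component $\oplus_{j=1}^k D_j$, so no assumption about $\varphi$ on $D_0$ (or on $D^{\perp}$) is needed, which is consistent with the conclusions being stated solely for the indices $i=\overline{1,k}$.
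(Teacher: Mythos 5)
Your proposal is correct and takes essentially the same approach as the paper: the paper obtains Theorem \ref{p143} precisely by applying Theorem \ref{p185} componentwise to each $D_i$ (it states the result with ``Revisiting Theorem \ref{p185}, for $k$-pointwise slant distributions, we get\ldots'' and no further argument). Your explicit checks --- that $\varphi|_{D_i}$ inherits the isometry property and, via condition (iii) of Definition \ref{p77}, that the component of $\varphi$ into $D_i$ coincides with $f$ on $D_i$, so $(f^2|_{D_i})_x$ is the operator Theorem \ref{p185} speaks about --- simply make explicit the details the paper leaves implicit.
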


\begin{corollary}\label{p187}
Let $D=\oplus_{i=0}^k{D_i}$ be a distribution on $M$ which is a \mbox{$k$-slant} distribution relative to $T\overline{M}$ with $D_0$ the invariant (possible null) component and with slant angles $\theta_i$ corresponding to $D_i$, $i=\overline{1,k}$, such that $\varphi|_{\oplus_{i=1}^k{D_i}}$ is an isometry. Then, for $i=\overline{1,k}$ and $x\in M$, $(f^2|_{D_i})_x$ has only one eigenvalue, $\lambda_i=\nolinebreak\epsilon\cos^2 \theta_i$, this being independent of $x$. In particular, $\cos\theta_i=\nolinebreak\sqrt{\epsilon\lambda_i}$\,, and $f^2 X_i=\lambda_i X_i$ for any $X_i\in D_i$,  $i=\overline{1,k}$. 
\end{corollary}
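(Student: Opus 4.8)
The plan is to recognize this statement as the specialization of Theorem~\ref{p143} to the case in which all the slant functions are constant. The key observation is that a $k$-slant distribution $D=\oplus_{i=0}^k D_i$ with slant angles $\theta_i$ is, by definition, a $k$-pointwise slant distribution whose slant functions are the constant maps $x\mapsto\theta_i$: indeed, the distinct constants $\theta_i\in(0,\frac{\pi}{2}]$ are distinct continuous functions $M\rightarrow(0,\frac{\pi}{2}]$, and conditions (i)--(iii) of Definition~\ref{8} are precisely conditions (i)--(iii) of Definition~\ref{p77} read with these constant functions. Since the hypothesis that $\varphi|_{\oplus_{i=1}^k D_i}$ is an isometry is common to both statements, Theorem~\ref{p143} applies directly.

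Granting this, I would simply invoke Theorem~\ref{p143}: for each $i\in\{1,\dots,k\}$ and each $x\in M$, the operator $(f^2|_{D_i})_x$ has a single eigenvalue $\lambda_i(x)$, the space $(D_i)_x$ consists entirely of eigenvectors for it, and $\lambda_i(x)=\epsilon\cos^2\theta_i(x)$. Because here $\theta_i$ is a constant, the right-hand side $\epsilon\cos^2\theta_i$ no longer depends on $x$, so $\lambda_i:=\epsilon\cos^2\theta_i$ is a constant. The relation $\cos\theta_i=\sqrt{\epsilon\lambda_i}$ and the pointwise identity $f^2X_i=\lambda_i X_i$ for all $X_i\in D_i$ then follow from part (a) of Theorem~\ref{p143}, now with a constant eigenvalue, which yields every assertion of the corollary.

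An equally short route, which I would mention as an alternative, is to argue componentwise via Corollary~\ref{p122}: each $D_i$ ($i=\overline{1,k}$) is a slant distribution relative to $T\overline{M}$ with slant angle $\theta_i$, and $\varphi|_{D_i}$ is an isometry as the restriction of the isometry $\varphi|_{\oplus_{j=1}^k D_j}$; applying Corollary~\ref{p122} to $D_i$ immediately gives the single constant eigenvalue $\lambda_i=\epsilon\cos^2\theta_i$ together with $\cos\theta_i=\sqrt{\epsilon\lambda_i}$ and $f^2X_i=\lambda_i X_i$.

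The only point that genuinely needs care---and the closest thing to an obstacle---is a bookkeeping check about which tangential component $f$ is meant. In the corollary, $f$ is the component of $\varphi$ into the whole distribution $D=\oplus_{i=0}^k D_i$, whereas Corollary~\ref{p122} (and the per-component reading) uses the component of $\varphi$ into $D_i$ alone. These coincide on $D_i$ precisely because of condition (iii) of Definition~\ref{8}, namely $f(D_i)\subseteq D_i$: for $X_i\in D_i$ the $D$-component and the $D_i$-component of $\varphi X_i$ agree, since the $D_j$ are mutually orthogonal and the $D$-component of $\varphi X_i$ already lies in $D_i$. Hence $f^2|_{D_i}$ is unambiguous, and both arguments are legitimate.
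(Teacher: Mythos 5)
Your main argument---recasting the $k$-slant distribution as a $k$-pointwise slant distribution whose slant functions are the distinct constants $\theta_i$ and then specializing Theorem \ref{p143}---is exactly the paper's own (implicit) derivation, since the paper states this result as an immediate corollary of Theorem \ref{p143} with no separate proof. Your alternative route via Corollary \ref{p122}, together with the bookkeeping check that condition (iii) of Definition \ref{8} ($f(D_i)\subseteq D_i$) makes $f^2|_{D_i}$ unambiguous, is also sound, but the core approach coincides with the paper's.
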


\section{$k$-pointwise slant distributions in almost contact metric and almost paracontact metric settings}\label{pointwise_alm_cont}

For a fixed $\epsilon\in \{-1,1\}$, let $(\overline{M},\varphi,\xi,\eta,g)$ be an almost ($\epsilon$)-contact metric manifold. 
In view of (\ref{3}), we notice that $\varphi$ restricted to $\langle\xi\rangle^{\perp}$ is an isometry and hence preserves on $\langle\xi\rangle^{\perp}$ the orthogonality of vector fields. 

Throughout this section, we consider that any sub\-man\-i\-fold $M$ of $\overline{M}$ we deal with satisfies $\xi\in TM$. 

In the sequel, until the end of the section, we will consider $M$ to be $\overline{M}$ or an immersed submanifold of $\overline{M}$ if not specified otherwise. 

Let $k\in \mathbb{N}^*$ and $D=\oplus_{i=0}^k{D_i}$ be a $k$-pointwise slant distribution on $M$ with $D_0$ the invariant component such that $\xi \perp D$, and let $G=(D\oplus \langle \xi\rangle)^\perp$ be the orthogonal complement of $D\oplus \langle \xi\rangle$ in $T\overline{M}$. Let $\theta_0=0$ and let $\theta_1,\theta_2,\dots ,\theta_k$ denote the slant functions of $D$. For any $Z\in T\overline{M}$, the components $fZ$ and $wZ$ of $\varphi Z_M$ in $D$ and in $D^\perp$ coincide with the components of $\varphi Z_M$ in $D\oplus \langle \xi\rangle$ and in $G$, respectively.
We will denote by $pr_i$ the projection operator onto $D_i$ for $i=\overline{0,k}$. 

\begin{remark}\label{p115}
With the same arguments as in section \ref{alm_cont}, we obtain: 
\begin{align*} 
(i)\quad\,\ \varphi(D_0) &=D_0,\; w(D_0)=\{0\}, \; f(D_0)=D_0;\\ 
(ii)\,\ \ \ker \eta_M &=D \oplus G, \ \ \varphi(D \oplus G)=D \oplus G;\\ 
(iii)\,\ \varphi^2(D_i) &=D_i \,\text{ for } i=\overline{1,k}, \; 
\varphi^2(G)=G;\\ 
(iv)\ \ f(\varphi X) &=\epsilon X, \; w(\varphi X)=0 \,\text{ for any } X\in D;\\ 
(v)\quad f(\varphi U) &=0, \; w(\varphi U)=\epsilon U \,\text{ for any } U\in G.
\end{align*}
\end{remark}

For any $i\in \{1,\ldots,k\}$ and $X_i\in D_i\verb=\=\{0\}$, from Definition \ref{p77} (i), we have $\varphi X_i\neq 0$ and 
\begin{equation}\label{90}
\|(fX_i)_x\|=\cos \theta_i(x)\cdot \|(\varphi X_i)_x\|\, \text{ for any } x\in {M}; \nonumber 
\end{equation}
hence, we get: 

\begin{proposition}\label{p86}
\begin{align}
&(i)\quad |fX_i|=\cos \theta_i\cdot |\varphi X_i| \,\text{ for any } X_i\in D_i\setminus\{0\}, i=\overline{1,k};\nonumber\\ 
&(ii)\quad
f^2X=\epsilon\sum_{i=0}^k\cos^2\theta_i\cdot pr_iX \, \text{ for any } X\in D.\label{92}
\nonumber 
\end{align}
\end{proposition}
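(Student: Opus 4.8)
The plan is to follow closely the computation carried out for the $k$-slant case in Section \ref{alm_cont}, the only new feature being that each slant angle is now a function $\theta_i$; since $\theta_i(x)$ is a constant at each fixed point $x\in M$, all the pointwise arguments transfer verbatim. For part (i), I would simply reinterpret the displayed pointwise identity preceding the proposition, $\|(fX_i)_x\|=\cos\theta_i(x)\cdot\|(\varphi X_i)_x\|$, which holds for every $x\in M$ and every nonzero $X_i\in D_i$ by Definition \ref{p77}(i) together with the fact that $(fX_i)_x$ is the orthogonal projection of $(\varphi X_i)_x$ onto $D_x$. Reading this as an equality of the real functions $|fX_i|$ and $\cos\theta_i\cdot|\varphi X_i|$ defined on $M$ gives (i) at once.

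For part (ii), I would first square (i) to get $|fX_i|^2=\cos^2\theta_i\cdot|\varphi X_i|^2$ on $M$, for every $X_i\in D_i$ and $i=\overline{1,k}$. Fix $x\in M$ and $i\in\{1,\ldots,k\}$. Since this relation holds for \emph{every} vector of $(D_i)_x$ and $\theta_i(x)$ is a fixed number, the symmetric bilinear form $(v,w)\mapsto g_x(f_x v,f_x w)-\cos^2\theta_i(x)\,g_x(\varphi_x v,\varphi_x w)$ on $(D_i)_x$ vanishes on the diagonal, hence, by polarization, vanishes identically; therefore $g(fX_i,fY_i)=\cos^2\theta_i\cdot g(\varphi X_i,\varphi Y_i)$ for all $X_i,Y_i\in D_i$. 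Applying the first identity of Lemma \ref{lema2} (valid here by Remark \ref{p194}) on the left, and using $\varphi^2X_i=\epsilon X_i$ with $g(\varphi X_i,\varphi Y_i)=g(X_i,Y_i)$ from \eqref{7} and \eqref{3} on the right (recall $D\perp\xi$), I obtain $g(f^2X_i,Y_i)=\epsilon\cos^2\theta_i\cdot g(X_i,Y_i)$ for all $Y_i\in D_i$.

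Next I would promote this inner-product identity to an identity of vectors. By Definition \ref{p77}(iii) we have $f(D_i)\subseteq D_i$, so $f^2X_i\in D_i$; also $\varphi^2X_i=\epsilon X_i\in D_i$. Hence $f^2X_i-\epsilon\cos^2\theta_i\,X_i$ lies in $D_i$ yet is orthogonal to every $Y_i\in D_i$, so it vanishes, giving $f^2X_i=\epsilon\cos^2\theta_i\,X_i$ for $X_i\in D_i$, $i=\overline{1,k}$. For the invariant component, Remark \ref{p115}(i) yields $wX=0$ and $fX=\varphi X$ for $X\in D_0$, whence $f^2X=\varphi^2X=\epsilon X$ by \eqref{7}, which equals $\epsilon\cos^2\theta_0\,X$ since $\theta_0=0$. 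Finally, writing an arbitrary $X\in D$ as $X=\sum_{i=0}^k pr_iX$ and using the pointwise linearity of $f^2$, I obtain $f^2X=\epsilon\sum_{i=0}^k\cos^2\theta_i\cdot pr_iX$, as claimed.

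The only genuinely delicate point is the passage from the scalar relation $|fX_i|^2=\cos^2\theta_i\,|\varphi X_i|^2$ to the operator relation $f^2X_i=\epsilon\cos^2\theta_i\,X_i$. This is precisely where the pointwise-slant hypothesis is essential: because \emph{all} vectors of $(D_i)_x$ subtend the same angle $\theta_i(x)$, the bilinear form above vanishes on the whole of $(D_i)_x$, licensing the polarization step; and the invariance $f(D_i)\subseteq D_i$ combined with the nondegeneracy of $g$ then converts the resulting equality of inner products into an equality of vectors.
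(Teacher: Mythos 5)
Your proof is correct and follows essentially the same route as the paper: the paper transplants, pointwise, the $k$-slant argument of Section \ref{alm_cont} (angle relation $\Rightarrow$ polarized identity $g(f^2X_i,Y_i)=\cos^2\theta_i\, g(\varphi^2X_i,Y_i)$ $\Rightarrow$ use of $f(D_i)\subseteq D_i$ and nondegeneracy of $g$ to get $f^2X_i=\epsilon\cos^2\theta_i X_i$, then the invariant component and linearity), which is exactly what you do, merely phrased with the orthogonality test inside $D_i$ rather than against all of $T{M}$.
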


\begin{corollary}\label{p87}
\begin{equation}\label{93}
f((D_i)_x)=(D_i)_x \text{ if } i \text{ and }x \text{ satisfy }\, \theta_i(x)\neq \frac{\pi}{2}\,. \nonumber
\end{equation}
\end{corollary}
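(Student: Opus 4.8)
The plan is to exploit the pointwise formula for $f^2$ furnished by Proposition \ref{p86}(ii), restricted to the single component $D_i$, and argue at a fixed point $x$. First I would record the easy inclusion: condition (iii) in Definition \ref{p77} (equivalently, the hypothesis that $D_i$ is $f$-invariant) gives $f(D_i)\subseteq D_i$, and hence $f((D_i)_x)\subseteq (D_i)_x$ for every $x\in M$. Since $(D_i)_x$ is a finite-dimensional vector space and $f_x$ maps it into itself, it therefore suffices to prove that $f_x$ acts injectively on $(D_i)_x$; injectivity will force bijectivity, yielding the reverse inclusion $(D_i)_x\subseteq f((D_i)_x)$ and thus the claimed equality.

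Next I would specialize Proposition \ref{p86}(ii) to a vector field $X_i\in D_i$. Because $pr_jX_i=0$ for $j\neq i$ and $pr_iX_i=X_i$, the sum collapses to
\begin{equation}\nonumber
f^2X_i=\epsilon\cos^2\theta_i\cdot X_i,
\end{equation}
an identity of functions on $M$; evaluating it at the chosen point gives $(f^2)_x v=\epsilon\cos^2\theta_i(x)\,v$ for every $v\in (D_i)_x$. This displays $(f^2)_x$, restricted to $(D_i)_x$, as a scalar multiple of the identity, with scalar $\epsilon\cos^2\theta_i(x)$.

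The decisive step is then the hypothesis $\theta_i(x)\neq\frac{\pi}{2}$, which guarantees $\cos^2\theta_i(x)\neq 0$; combined with $\epsilon\in\{-1,1\}$, the scalar $\epsilon\cos^2\theta_i(x)$ is nonzero, so $(f^2)_x|_{(D_i)_x}$ is an invertible linear map. In particular, if $f_x v=0$ for some $v\in (D_i)_x$, then $(f^2)_x v=0$, whence $v=0$; thus $f_x|_{(D_i)_x}$ is injective, hence surjective onto $(D_i)_x$, which completes the argument. I do not anticipate any genuine obstacle: this is precisely the pointwise analogue of Corollary \ref{p0} from the $k$-slant setting, and the only point requiring care is that the formula for $f^2$ is applied at the fixed point $x$ with the slant function $\theta_i$ evaluated there, exactly as Proposition \ref{p86}(ii) is stated.
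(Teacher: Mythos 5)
Your proposal is correct and follows exactly the route the paper intends: the corollary is stated as an immediate consequence of Proposition \ref{p86}(ii), which on $D_i$ collapses to $f^2X_i=\epsilon\cos^2\theta_i\cdot X_i$, so that at a point with $\theta_i(x)\neq\frac{\pi}{2}$ the map $(f^2)_x|_{(D_i)_x}$ is a nonzero scalar times the identity, and together with $f((D_i)_x)\subseteq (D_i)_x$ this forces $f_x|_{(D_i)_x}$ to be a bijection onto $(D_i)_x$. Your injectivity-implies-surjectivity phrasing is a trivial variant of the same finite-dimensional argument, and your care in evaluating the vector-field identity pointwise is exactly what the paper's statement requires.
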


Taking into account Remark \ref{p33} and Propositions \ref{p176}, \ref{p86}, we obtain 

\begin{theorem}\label{p178}
Let $\mathfrak D$ be a non-null distribution on $M$ such that $\mathfrak D \perp \xi$ and $\mathfrak D$ is decomposable into an orthogonal sum of regular distributions, $\mathfrak D=\oplus_{i=0}^k{\mathfrak D_i}$ with $\mathfrak D_i\neq \{0\}$ for $i=\overline{1,k}$ and $\mathfrak D_0$ invariant (possible null). Let $pr_i$ denote the projection operator onto $\mathfrak D_i$ for $i=\overline{0,k}$, $f$ the component of $\varphi$ into $\mathfrak D$ (i.e., $f=pr_{\mathfrak D}\circ \varphi$), and $\theta_0=0$. If $f(\mathfrak D_i)\subseteq \mathfrak D_i$ for $i=\overline{1,k}$, then the following assertions are equivalent: \\ 
\hspace*{7pt} (a) There exist $k$ distinct continuous functions  $\theta_i:{M}\rightarrow (0,\frac{\pi}{2}]$, $i=\nolinebreak\overline{1,k}$, such that  
\begin{equation}\nonumber
f^2X= \epsilon \sum_{i=0}^k\cos^2\theta_i\cdot pr_iX \ \text{for any} \  X\in \mathfrak D;
\end{equation}
\hspace*{7pt} (b) $\mathfrak D$ is a $k$-pointwise slant distribution with slant functions $\theta_i$ corresponding to $\mathfrak D_i$, $i=\overline{1,k}$. 
\end{theorem}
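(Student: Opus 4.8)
The plan is to establish the two implications (a)$\Rightarrow$(b) and (b)$\Rightarrow$(a) separately, each of which reduces to an already-proved statement once the hypotheses are matched up. The key preliminary observation, used in both directions, is that $\mathfrak D\perp \xi$ forces $\mathfrak D\subseteq \langle\xi\rangle^{\perp}$, so by Remark \ref{p33} the restriction $\varphi|_{\oplus_{i=1}^k\mathfrak D_i}$ is an isometry; this is precisely the isometry hypothesis demanded by the results I intend to invoke.

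For (a)$\Rightarrow$(b), I would restrict the given identity to $X\in\oplus_{i=1}^k\mathfrak D_i$. There $pr_0X=0$, so the identity collapses to $f^2X=\epsilon\sum_{i=1}^k\cos^2\theta_i\cdot pr_iX$, which is exactly the hypothesis of Proposition \ref{p176}. Since $\mathfrak D_0$ is invariant, the $\mathfrak D_i$ are regular with $\mathfrak D_i\neq\{0\}$ for $i=\overline{1,k}$, $f(\mathfrak D_i)\subseteq\mathfrak D_i$ is assumed, and the $\theta_i$ are $k$ distinct continuous functions into $(0,\frac{\pi}{2}]$, all requirements of Proposition \ref{p176} are met. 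It then yields directly that $\mathfrak D$ is a $k$-pointwise slant distribution with slant functions $\theta_i$ corresponding to $\mathfrak D_i$, which is (b).

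For (b)$\Rightarrow$(a), I would simply apply Proposition \ref{p86}(ii) to $\mathfrak D$: being a $k$-pointwise slant distribution with invariant component $\mathfrak D_0$ and $\xi\perp\mathfrak D$, its slant functions $\theta_i$ are, by Definition \ref{p77}, $k$ distinct continuous functions into $(0,\frac{\pi}{2}]$, and Proposition \ref{p86}(ii) gives exactly $f^2X=\epsilon\sum_{i=0}^k\cos^2\theta_i\cdot pr_iX$ for all $X\in\mathfrak D$, i.e.\ (a). The only point needing care — and the main, albeit minor, obstacle — is the bookkeeping between the two summation ranges: the $i=0$ term, absent in Proposition \ref{p176} but present in (a), is accounted for by $\theta_0=0$ together with the invariance of $\mathfrak D_0$ and the identity $\varphi^2X=\epsilon X$ on $\langle\xi\rangle^{\perp}$ (equation (\ref{7})), which give $f^2X=\epsilon X=\epsilon\cos^2\theta_0\cdot X$ on $\mathfrak D_0$. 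Once this identification is in place, both implications follow immediately from the cited results.
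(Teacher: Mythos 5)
Your proof is correct and takes essentially the same route as the paper, which obtains Theorem \ref{p178} precisely by combining Remark \ref{p33} (yielding, via $\mathfrak D\perp\xi$, that $\varphi|_{\oplus_{i=1}^k{\mathfrak D_i}}$ is an isometry) with Proposition \ref{p176} for the implication (a)$\Rightarrow$(b) and Proposition \ref{p86}(ii) for (b)$\Rightarrow$(a). Your closing remark on the $i=0$ bookkeeping ($f^2X=\epsilon X=\epsilon\cos^2\theta_0\cdot X$ on $\mathfrak D_0$, by invariance and (\ref{7})) is a correct elaboration of a point the paper leaves implicit.
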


\begin{remark}\label{p180}
Theorem \ref{p178} provides a necessary and sufficient condition for a submanifold $M$ of $\overline M$ to be a $k$-pointwise slant submanifold, considering $\mathfrak D=\oplus_{i=0}^k{\mathfrak D_i}$ if $TM=\oplus_{i=0}^k{\mathfrak D_i}\oplus \langle \xi\rangle$.
\end{remark}

\begin{remark}\label{141}
A great part of the results obtained for $k$-slant distributions in the almost ($\epsilon$)-contact metric case are also valid for $k$-pointwise slant distributions, with similar justifications, for another part of them being necessary some minor mod\-i\-fi\-ca\-tions. More precisely, Lemma \ref{p92}, Propositions \ref{p2}, \ref{p3}, \ref{p67}, \ref{p7}, \ref{p14}, \ref{p16}, Theorems \ref{p1}, \ref{p17}, and Corollaries \ref{p62}, \ref{p15}, \ref{p18}--\ref{p20} remain further valid for $k$-pointwise slant distributions as they were stated. 
\end{remark}

The other statements become, after adequate modifications, as follows. 

\begin{corollary}\label{p100}
\begin{equation}\label{94}
fwX=\epsilon \sum_{i=1}^k \sin^2 \theta_i \cdot pr_iX\, \text{ for any } X\in D. 
\nonumber
\end{equation}
\end{corollary}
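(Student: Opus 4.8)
The plan is to obtain this identity by a short algebraic manipulation of two facts already recorded in this section, avoiding any reproof of the angle computations. The two ingredients are the relation $f(\varphi X)=\epsilon X$ for $X\in D$ (Remark \ref{p115}(iv)) and the expression for $f^2$ on $D$ from Proposition \ref{p86}(ii), namely $f^2X=\epsilon\sum_{i=0}^k\cos^2\theta_i\cdot pr_iX$.

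First I would fix $X\in D$ and decompose $\varphi X=fX+wX$, where $fX\in D$ and $wX\in G$ are the components of $\varphi X$ with respect to the orthogonal splitting $T\overline M=D\oplus\langle\xi\rangle\oplus G$ (recall $\xi\perp D$, so the component of $\varphi X$ in $D$ is indeed $fX$). Since $f=pr_D\circ\varphi$ is linear, applying it to $\varphi X=fX+wX$ gives $f(\varphi X)=f(fX)+f(wX)=f^2X+fwX$. Combining this with $f(\varphi X)=\epsilon X$ yields the key relation $f^2X+fwX=\epsilon X$, and hence $fwX=\epsilon X-f^2X$.

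Next I would substitute the formula for $f^2X$ from Proposition \ref{p86}(ii) and write $X=\sum_{i=0}^k pr_iX$, obtaining
\begin{equation}\nonumber
fwX=\epsilon\sum_{i=0}^k pr_iX-\epsilon\sum_{i=0}^k\cos^2\theta_i\cdot pr_iX=\epsilon\sum_{i=0}^k(1-\cos^2\theta_i)\,pr_iX=\epsilon\sum_{i=0}^k\sin^2\theta_i\cdot pr_iX.
\end{equation}
Since $\theta_0=0$, the term $i=0$ vanishes, so the sum reduces to $\epsilon\sum_{i=1}^k\sin^2\theta_i\cdot pr_iX$, which is exactly the claimed formula.

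There is no serious obstacle here: the statement is a direct algebraic consequence of the facts just cited. The only points requiring care are the bookkeeping with the sign $\epsilon$ and remembering that the invariant component $D_0$ contributes nothing because $\theta_0=0$. One should also confirm that both ingredients are available in the $k$-pointwise slant framework — Proposition \ref{p86} is established in this very section, and Remark \ref{p115}(iv) is the pointwise counterpart of the corresponding $k$-slant relation. As an alternative route, the same conclusion follows from Proposition \ref{p2} (valid in this setting by Remark \ref{141}) together with Lemma \ref{lema2}, by comparing $g(fwX,Y)$ with $\epsilon\,g(wX,wY)$ for all $Y\in D$ and using that $fwX\in D$.
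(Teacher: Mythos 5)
Your proof is correct, but your primary argument takes a genuinely different route from the paper's. The paper obtains Corollary \ref{p100} exactly as it obtained Corollaries \ref{p69} and \ref{p94} in the $k$-slant case: it starts from the metric identity $g(wX,wY)=\sum_{i=1}^k \sin^2\theta_i\cdot g(pr_iX,pr_iY)$ of Proposition \ref{p2} (carried over to the pointwise setting by Remark \ref{141}), converts it via Lemma \ref{lema2} into a formula for $g(fwX,Y)$ with $Y\in D$ arbitrary, and concludes using that $fwX\in D$ and $g$ is non-degenerate on $D$ --- this is precisely the alternative you sketch in your closing sentence. Your main argument instead works at the operator level: from Remark \ref{p115}(iv) you get $f^2X+fwX=f(\varphi X)=\epsilon X$ on $D$, and subtracting the $f^2$-formula of Proposition \ref{p86}(ii) gives the claim, the $i=0$ term vanishing because $\theta_0=0$. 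This bypasses all inner-product computations and the non-degeneracy step. Two remarks on the comparison: first, the intermediate identity $f^2X+fwX=\epsilon X$ that you derive is (the restriction to $D$ of) the first identity of Proposition \ref{p102}, which the paper states only \emph{after} the corollary; since you obtain it directly from Remark \ref{p115}(iv), which precedes the corollary, there is no circularity. Second, the paper's route has the advantage of falling out of Proposition \ref{p2}, which it develops anyway for the dual-distribution theory, while yours is the more economical self-contained derivation of this particular identity, and it is automatically pointwise-safe since every step is a pointwise algebraic manipulation of the slant functions.
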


\begin{remark}\label{p101}
If $j\in\{1,\ldots,k\}$ and $x\in {M}$ such that $\theta_j(x)=\frac{\pi}{2}$, then, for any $X_j\in D_j$, we have
$fw(X_j)_x=\epsilon (X_j)_x$
and $wfw(X_j)_x=\epsilon w(X_j)_x$, which implies
$wf(U_j)_x=\epsilon (U_j)_x$ for any $U_j\in w(D_j)$. 
We conclude that $f_x|_{w((D_j)_x)}:\nolinebreak w((D_j)_x) \rightarrow\nolinebreak (D_j)_x$ and $w_x|_{(D_j)_x}: (D_j)_x \rightarrow w((D_j)_x)$ are anti-inverse to each other for $\epsilon=-1$ but inverse for $\epsilon=1$. 
\end{remark}

\begin{proposition}\label{p102}
For any $X\in D\oplus \langle \xi\rangle$ and\, $U\in G$, we get:
\begin{align}
f^2X+fwX &=\epsilon (X- \eta(X) \xi), \nonumber\\ 
wfX+w^2X &=0, \nonumber \\ 
f^2U+fwU &=0, \nonumber
\\ 
wfU+w^2U &=\epsilon U. \nonumber
\end{align}
\end{proposition}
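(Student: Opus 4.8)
The plan is to derive all four identities simultaneously from a single computation of $\varphi^2 Z$ for an arbitrary $Z\in T\overline{M}$, exactly mirroring the proofs of Proposition \ref{p4} and Proposition \ref{p96} in the $k$-slant setting. Recall that, as noted at the start of this section, for any $Z\in T\overline{M}$ we have the decomposition $\varphi Z = fZ + wZ$ with $fZ\in D$ and $wZ\in G$; indeed $wZ$ has no $\langle\xi\rangle$-component because $\eta(\varphi Z)=0$. The two ingredients are this decomposition and the defining identity $\varphi^2=\epsilon(I-\eta\otimes\xi)$ of the almost $(\epsilon)$-contact metric structure, which gives $\varphi^2 Z=\epsilon(Z-\eta(Z)\xi)$.

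First I would apply $\varphi$ to $\varphi Z=fZ+wZ$ and expand each term again through the same component decomposition, obtaining $\varphi(fZ)=f^2Z+wfZ$ and $\varphi(wZ)=fwZ+w^2Z$, where $f^2Z,\,fwZ\in D$ and $wfZ,\,w^2Z\in G$ by the very definition of $f$ and $w$ as the projections onto $D$ and $G$. Collecting terms yields
\begin{equation}\nonumber
\varphi^2 Z=(f^2Z+fwZ)+(wfZ+w^2Z),
\end{equation}
the first bracket lying in $D$ and the second in $G$. On the other hand, writing the orthogonal decomposition $T\overline{M}=D\oplus\langle\xi\rangle\oplus G$ as $Z=Z_D+\eta(Z)\xi+Z_G$ with $Z_D\in D$, $Z_G\in G$, we get $\epsilon(Z-\eta(Z)\xi)=\epsilon Z_D+\epsilon Z_G$. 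Since both expressions for $\varphi^2Z$ have vanishing $\langle\xi\rangle$-component, the uniqueness of this orthogonal decomposition forces $f^2Z+fwZ=\epsilon Z_D$ and $wfZ+w^2Z=\epsilon Z_G$.

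It then remains only to specialize $Z$. For $X\in D\oplus\langle\xi\rangle$ the $G$-component vanishes, $X_G=0$, while $X_D=X-\eta(X)\xi$, giving $f^2X+fwX=\epsilon(X-\eta(X)\xi)$ and $wfX+w^2X=0$, the first two asserted identities. For $U\in G$ we have $U_D=0$ and $\eta(U)=g(U,\xi)=0$ since $G\perp\langle\xi\rangle$, whence $U_G=U$, yielding $f^2U+fwU=0$ and $wfU+w^2U=\epsilon U$, the remaining two. There is no genuinely hard step here; the only point demanding care is the bookkeeping of which summands land in $D$ versus $G$, together with the observation that $\varphi Z$ (and hence $\varphi^2 Z$) carries no $\langle\xi\rangle$-component, so that the orthogonal projections separate cleanly into precisely the four stated relations.
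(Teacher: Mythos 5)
Your proof is correct and is precisely the argument the paper intends: Proposition \ref{p102} is stated without explicit proof (as the pointwise analogue of Propositions \ref{p4} and \ref{p96}, which are likewise justified by the same direct computation), namely expanding $\varphi^2 Z$ via $\varphi Z=fZ+wZ$, invoking $\varphi^2=\epsilon(I-\eta\otimes\xi)$, and comparing $D$- and $G$-components after noting $\eta(\varphi Z)=0$. Nothing is missing; the bookkeeping of components and the specialization to $Z=X$ and $Z=U$ are exactly right.
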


\begin{corollary}\label{p103}
For any $U_0,V_0\in H$, we have: 
\begin{align}
w^2U_0&=\epsilon U_0\,,\nonumber\\  
g(wU_0,wV_0)&=g(U_0,V_0),\nonumber\\ 
|wU_0|&=|U_0|.\nonumber
\end{align}
\end{corollary}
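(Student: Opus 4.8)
The plan is to derive all three identities in sequence, obtaining the first as a direct consequence of the vanishing of $f$ on $H$ together with Proposition \ref{p102}, and then the remaining two from Lemma \ref{lema2} and the relation $\epsilon^2=1$.

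First I would record that, by the pointwise analogue of Theorem \ref{p1} (valid here by Remark \ref{141}), we have the decomposition $G=\oplus_{i=1}^k w(D_i)\oplus H$ with $f(H)=\{0\}$; in particular $fU_0=fV_0=0$ for $U_0,V_0\in H$. Since $H\subseteq G$, I may apply the fourth identity of Proposition \ref{p102} with $U=U_0$, namely $wfU_0+w^2U_0=\epsilon U_0$. Substituting $fU_0=0$ gives $wfU_0=w(0)=0$, whence $w^2U_0=\epsilon U_0$, which is the first assertion.

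Next, observing that $H\subseteq G\subseteq D^{\perp}$, I would invoke Lemma \ref{lema2} (applicable to vector fields on a submanifold by Remark \ref{p194}), which yields $g(w^2U_0,V_0)=\epsilon\, g(wU_0,wV_0)$. Inserting the already-established relation $w^2U_0=\epsilon U_0$ turns the left-hand side into $\epsilon\, g(U_0,V_0)$, so that $\epsilon\, g(U_0,V_0)=\epsilon\, g(wU_0,wV_0)$; since $\epsilon\neq 0$, dividing by $\epsilon$ gives $g(wU_0,wV_0)=g(U_0,V_0)$, the second assertion. Specializing to $V_0=U_0$ produces $|wU_0|^2=|U_0|^2$, and hence $|wU_0|=|U_0|$, the third.

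There is no serious obstacle here: the only point requiring care is the bookkeeping that guarantees $fU_0=0$, which rests entirely on the invariance of $H$ in the decomposition of $G$ furnished by Theorem \ref{p1}. Once this is in hand, both the metric identity and the norm identity follow formally from the symmetry relations of Lemma \ref{lema2} together with $\epsilon^2=1$, with no computation beyond substitution.
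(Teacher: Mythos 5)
Your proof is correct and follows essentially the same route as the paper: the paper obtains this corollary precisely from Theorem \ref{p1} (giving $f(H)=\{0\}$, valid in the pointwise setting by Remark \ref{141}) combined with the fourth identity of Proposition \ref{p102}, exactly as you do for $w^2U_0=\epsilon U_0$. Your use of Lemma \ref{lema2} to pass to the metric and norm identities is a trivial variant of the paper's implicit argument (which one could equally run via $\varphi U_0=wU_0$ and the compatibility relation \eqref{3}), so there is no substantive difference.
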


\begin{proposition}\label{p104}
\[
w^2((D_i)_x)=
\begin{cases}
w((D_i)_x) & \text{for } \ \theta_i(x)\neq \frac{\pi}{2}\,,\\ 
\ \{0\} & \text{for } \ \theta_i(x)=\frac{\pi}{2}\,.
\end{cases}
\]
\end{proposition}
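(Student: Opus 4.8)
The plan is to fix a point $x\in M$ and to argue entirely in terms of tangent vectors in $(D_i)_x$, since the hypothesis $\theta_i(x)=\frac{\pi}{2}$ or $\theta_i(x)\neq\frac{\pi}{2}$ is a pointwise condition; all the tensorial identities collected in Remark \ref{p115}, Proposition \ref{p102}, and Corollaries \ref{p100} and \ref{p87} localize to $x$ and may therefore be applied to individual tangent vectors through the localized operators $f_x$, $w_x$, $\varphi_x$. First I would establish the inclusion $w^2((D_i)_x)\subseteq w((D_i)_x)$ valid for every value of $\theta_i(x)$: from the second identity of Proposition \ref{p102}, namely $wfX+w^2X=0$ for $X\in D\oplus\langle\xi\rangle$, one gets $w_x^2 v=-w_x f_x v$ for each $v\in (D_i)_x$; since condition (iii) of Definition \ref{p77} gives $f_x((D_i)_x)\subseteq (D_i)_x$, the vector $w_x f_x v$ lies in $w((D_i)_x)$, and hence so does $w_x^2 v$.

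For the case $\theta_i(x)\neq\frac{\pi}{2}$ I would then prove the reverse inclusion $w((D_i)_x)\subseteq w^2((D_i)_x)$. By Corollary \ref{p87}, $f_x$ maps $(D_i)_x$ onto itself, so given $v\in (D_i)_x$ there is $u\in (D_i)_x$ with $f_x u=-v$; then $w_x^2 u=-w_x f_x u=w_x v$, which places $w_x v$ in $w^2((D_i)_x)$. Combined with the first step, this yields the equality $w^2((D_i)_x)=w((D_i)_x)$.

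For the case $\theta_i(x)=\frac{\pi}{2}$ the goal is $w^2((D_i)_x)=\{0\}$. Here I would first observe that, since the angle between $\varphi_x v$ and $(D_i)_x$ equals $\theta_i(x)=\frac{\pi}{2}$, the tangential component vanishes, $f_x v=0$ (equivalently, $\|f_x v\|=\cos\theta_i(x)\,\|\varphi_x v\|=0$ by Proposition \ref{p86}(i)); consequently $\varphi_x v=w_x v$. Applying $\varphi_x$ and using $\varphi_x^2 v=\epsilon v$ from (\ref{7}), valid since $v\in\langle\xi\rangle^{\perp}$, gives $\epsilon v=\varphi_x(w_x v)=f_x w_x v+w_x^2 v$. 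Finally, Corollary \ref{p100} evaluated at $v$ gives $f_x w_x v=\epsilon\sin^2\theta_i(x)\,v=\epsilon v$, so the previous relation collapses to $w_x^2 v=0$, as required.

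The argument is essentially the pointwise transcription of the proof of Proposition \ref{p5} and its paracontact counterpart, so no genuinely new idea is needed. The one point demanding care, and the step I expect to be the main obstacle, is the bookkeeping of the pointwise viewpoint: because $\theta_i$ is merely a function, the two regimes $\theta_i(x)=\frac{\pi}{2}$ and $\theta_i(x)\neq\frac{\pi}{2}$ need not correspond to any global splitting of $D_i$, so the surjectivity of $f_x$ on $(D_i)_x$ from Corollary \ref{p87} must be invoked precisely at those points where it holds, and every relation must be read as a statement about $f_x$, $w_x$, $\varphi_x$ rather than about the underlying vector fields.
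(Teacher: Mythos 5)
Your proof is correct and takes essentially the same route as the paper: Proposition \ref{p104} is obtained there (via Remark \ref{141}) as the pointwise transcription, with the sign $\epsilon$, of the argument for Proposition \ref{p5} --- namely the inclusion $w^2\subseteq w$ from $wf+w^2=0$, the surjectivity $f_x((D_i)_x)=(D_i)_x$ when $\theta_i(x)\neq\frac{\pi}{2}$, and the collapse $w_x^2v=0$ from $f_xw_xv=\epsilon v$ when $\theta_i(x)=\frac{\pi}{2}$ --- which is exactly what you carry out, with the localization handled correctly.
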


\begin{proposition}\label{p105}
For any $U\in w(D)$, $U=\sum_{i=1}^kU_i$ with $U_i\in w(D_i)$, we have:
\begin{equation}\label{95}
wfU= \epsilon\sum_{i=1}^k\sin^2\theta_i\cdot U_i\,,\nonumber 
\end{equation}
\begin{equation}\label{96}
w^2U= \epsilon\sum_{i=1}^k\cos^2\theta_i\cdot U_i\,.\nonumber 
\end{equation}
\end{proposition}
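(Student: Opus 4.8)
The plan is to establish both identities componentwise on each $w(D_i)$ and then sum, relying on Corollary \ref{p100} for the $wf$ part and on the relation in Proposition \ref{p102} for the $w^2$ part. Fix $i\in\{1,\ldots,k\}$ and let $U_i\in w(D_i)$. Since $w(D_i)$ is by definition the image of $D_i$ under $w$ (and $w|_{D_i}$ is injective, cf.\ Proposition \ref{p3}, which remains valid here by Remark \ref{141}), there is $X_i\in D_i$ with $U_i=wX_i$. Hence $fU_i=f(wX_i)=fwX_i$.

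First I would compute $wf$. Applying Corollary \ref{p100} to $X=X_i\in D_i$ (so that $pr_jX_i=\delta_{ij}X_i$) gives $fwX_i=\epsilon\sin^2\theta_i\cdot X_i$. Since $w$ is $C^\infty(M)$-linear, the function $\sin^2\theta_i$ passes through it, and therefore
\begin{equation}\nonumber
wfU_i=w(fwX_i)=\epsilon\sin^2\theta_i\cdot wX_i=\epsilon\sin^2\theta_i\cdot U_i.
\end{equation}
Writing $U=\sum_{i=1}^k U_i$ and summing the above identity over $i$, using the linearity of $w$ and $f$, yields the first formula $wfU=\epsilon\sum_{i=1}^k\sin^2\theta_i\cdot U_i$.

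For the second identity I would avoid recomputing $w^2$ directly and instead exploit the structural relation already available. As noted in the paragraph preceding Remark \ref{p115}, every $wZ$ lands in $G$ because $\varphi Z\in\langle\xi\rangle^{\perp}$; in particular $U\in w(D)\subseteq G$. Thus Proposition \ref{p102} applies and gives $wfU+w^2U=\epsilon U$. Substituting the expression for $wfU$ just obtained and using $\epsilon U=\epsilon\sum_{i=1}^k U_i$, I get
\begin{equation}\nonumber
w^2U=\epsilon U-wfU=\epsilon\sum_{i=1}^k(1-\sin^2\theta_i)\,U_i=\epsilon\sum_{i=1}^k\cos^2\theta_i\cdot U_i,
\end{equation}
which is the desired second formula.

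I do not expect a genuine obstacle here, as the argument is the pointwise counterpart of Proposition \ref{p6}. The only points requiring care are of a bookkeeping nature: making sure the slant \emph{functions} $\theta_i$ (now function-valued rather than constant) commute with the tensorial operators $f,w$, which they do by $C^\infty(M)$-linearity, and correctly identifying $fU_i$ with $fwX_i$ so that Corollary \ref{p100} can be invoked. Once these are in place, the passage from the single-component identities to the full formulas is immediate by linearity.
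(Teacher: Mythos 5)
Your proof is correct and takes essentially the same route as the paper: the paper proves the $k$-slant analogue (Proposition \ref{p6}) by writing $U_i=wX_i$, applying the $fw$-formula to get $wfU_i=\epsilon\sin^2\theta_i\cdot U_i$, and then extracting $w^2U_i$ from the $G$-component of $\varphi^2U_i=\epsilon U_i$, which is exactly the identity $wfU+w^2U=\epsilon U$ of Proposition \ref{p102} that you invoke; Proposition \ref{p105} is then obtained in the paper by the very ``adequate modifications'' (pointwise slant functions commuting with the tensorial operators) that you spell out. The only difference --- citing Proposition \ref{p102} instead of recomputing $w\varphi U_i=\varphi^2 U_i$ directly --- is cosmetic.
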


\begin{proposition}\label{p106}
For any $i\in\{1,\ldots ,k\}$ and $x\in {M}$ with $\theta_i(x)\neq \frac{\pi}{2}$, and any $X_i, Y_i\in D_i\verb=\=\{0\}$, $U_i, V_i\in w(D_i)\verb=\=\{0\}$, $X_0, Y_0\in D_0\verb=\=\{0\}$, $U_0, V_0\in\nolinebreak H\verb=\=\{0\}$,
$\overline{X}, \overline{Y}\in (D\oplus G)\verb=\=\{0\}$ such that ${M}_{\widehat{X_i,Y_i}}$\,, ${M}_{\widehat{X_0,Y_0}}$\,, ${M}_{\widehat{U_i,V_i}}$\,, ${M}_{\widehat{U_0,V_0}}$\,, ${M}_{\widehat{\overline{X},\overline{Y}}}$ are nonempty, we have: \\ 
\hspace*{7pt} (i) \ \;\,$\cos(\widehat{fX_0,fY_0})=\cos(\widehat{\varphi X_0,\varphi Y_0})=\cos(\widehat{X_0,Y_0})$;\\ 
\hspace*{7pt} (ii) \;\,$\cos(\widehat{f(X_i)_x,f(Y_i)_x})=\cos(\widehat{\varphi (X_i)_x,\varphi (Y_i)_x})=\cos(\widehat{(X_i)_x,(Y_i)_x})$;\\ 
\hspace*{7pt} (iii) \,$g(wU_i,wV_i)=\cos^2\theta_i \cdot g(U_i,V_i)$;\\ 
\hspace*{7pt} (iv) \;$\cos(\widehat{wU_0,wV_0})=\cos(\widehat{U_0,V_0})=\cos(\widehat{\varphi U_0,\varphi V_0})$;\\ 
\hspace*{7pt} (v) \ \;$\cos(\widehat{w(U_i)_x,w(V_i)_x})=\cos(\widehat{(U_i)_x,(V_i)_x})=\cos(\widehat{\varphi (U_i)_x,\varphi (V_i)_x})$;\\ 
\hspace*{7pt} (vi) \ $\cos(\widehat{\varphi \overline{X},\varphi \overline{Y}})=\cos(\widehat{\overline{X},\overline{Y}})$.
\end{proposition}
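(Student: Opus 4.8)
The plan is to carry the proof of Proposition \ref{p13} over to the pointwise setting, performing every estimate at a fixed point $x\in M$ so that the now variable slant values $\theta_i(x)$ play exactly the role the constants $\theta_i$ did before. The single engine behind all six identities is that $\varphi$ restricted to $\langle\xi\rangle^{\perp}$ is an isometry (Remark \ref{p33}, via (\ref{3}) and (\ref{7})), combined with the fact that $f$ and $w$ rescale norms on the slant components by a factor $\cos\theta_i(x)$. Since $D\oplus G=\langle\xi\rangle^{\perp}$ and each of $D_0$, $D_i$, $w(D_i)$, $H$ sits inside $\langle\xi\rangle^{\perp}$, $\varphi$ preserves angles on all of them; this immediately yields the $\varphi$-equalities, namely part (vi) (angle-preservation of $\varphi|_{D\oplus G}$) and the rightmost equalities in (i), (iv), (v).

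For the $f$- and $w$-equalities I would separate the invariant pieces from the proper ones. On $D_0$ one has $f=\varphi$ and $w=0$ (Remark \ref{p115}(i)), so (i) reduces to the isometry of $\varphi|_{D_0}$. On $H$, Corollary \ref{p103} gives $g(wU_0,wV_0)=g(U_0,V_0)$ and $|wU_0|=|U_0|$, so $w|_H$ is an isometry and (iv) follows. Identity (iii) is just the restriction of Proposition \ref{p67} (valid here by Remark \ref{141}) to $U_i,V_i\in w(D_i)$; its diagonal case records $|wU_i|=\cos\theta_i\cdot|U_i|$. The genuinely pointwise identities are (ii) and (v). For (ii), fix $x$ with $\theta_i(x)\neq\tfrac{\pi}{2}$: Proposition \ref{p86}(i) gives $\|(fX_i)_x\|=\cos\theta_i(x)\,\|(\varphi X_i)_x\|$, while Proposition \ref{p2} gives $g_x(fX_i,fY_i)=\cos^2\theta_i(x)\,g_x(\varphi X_i,\varphi Y_i)$; substituting these into the definition of $\cos\widehat{(\cdot,\cdot)}$ (Definition \ref{p72}) the common factor $\cos^2\theta_i(x)$ cancels, leaving $\cos\widehat{(f(X_i)_x,f(Y_i)_x)}=\cos\widehat{(\varphi(X_i)_x,\varphi(Y_i)_x)}$, and the isometry of $\varphi$ closes the chain. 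Part (v) is identical, with $w$ and $w(D_i)$ replacing $f$ and $D_i$, using (iii) for the inner product and its diagonal case for the norm.

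The one point requiring care — and the reason $\theta_i(x)\neq\tfrac{\pi}{2}$ is imposed in (ii) and (v) — is that this cancellation is a genuine division by $\cos\theta_i(x)$, legitimate only where $\cos\theta_i(x)\neq 0$. The same inequality guarantees that $(fX_i)_x\neq 0$ whenever $(X_i)_x\neq 0$ (and $(wU_i)_x\neq 0$ whenever $(U_i)_x\neq 0$), so the angular functions on the left of (ii) and (v) are defined precisely on the nonempty sets $M_{\widehat{X_i,Y_i}}$ and $M_{\widehat{U_i,V_i}}$ intersected with $\{\theta_i\neq\tfrac{\pi}{2}\}$; the matching of these domains is supplied by Lemma \ref{p92}. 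No new global regularity is needed: every identity is a pointwise equality of cosines, so the passage from the constant-angle statement of Proposition \ref{p13} to the present slant-function statement is merely a matter of evaluating each relation at $x$ and invoking $\cos\theta_i(x)\neq 0$.
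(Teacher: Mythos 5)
Your proof is correct and follows essentially the same route as the paper: the paper gives no separate argument for this proposition, treating it (via Remark \ref{141} and the surrounding text) as the pointwise localization of Proposition \ref{p13}, whose proof rests on exactly the ingredients you invoke — Propositions \ref{p2}, \ref{p7} (pointwise versions), Corollary \ref{p103} in place of Corollary \ref{p60}, the structural facts $w(D_0)=\{0\}$, $f(H)=\{0\}$, and the isometry of $\varphi$ on $\langle\xi\rangle^{\perp}$ — with the $\cos^2\theta_i(x)$ cancellation performed at each fixed point. One cosmetic remark: the nonvanishing of $(fX_i)_x$ and $(wU_i)_x$ where $\theta_i(x)\neq\frac{\pi}{2}$ follows, as you yourself state, from the norm scaling $\|(fX_i)_x\|=\cos\theta_i(x)\,\|(\varphi X_i)_x\|$, not from Lemma \ref{p92}, which concerns the sine-scaled maps $w|_{D_i}$ and $f|_{w(D_i)}$ rather than $f|_{D_i}$ and $w|_{w(D_i)}$, so that citation should be dropped (the argument is unaffected).
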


\begin{theorem}\label{p107}
The distribution $G=\oplus_{i=1}^kw(D_i)\oplus H$ is a $k$-pointwise slant distribution with $H$ the invariant component and $\oplus_{i=1}^kw(D_i)$ the proper \linebreak $k$-pointwise slant com\-po\-nent, the pointwise slant distribution $w(D_i)$ having the same slant function $\theta_i$ as $D_i$ for $i= \overline{1,k}$.
\end{theorem}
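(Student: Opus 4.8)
The plan is to check, for the orthogonal decomposition $G=\oplus_{i=1}^kw(D_i)\oplus H$, the three defining conditions of a $k$-pointwise slant distribution from Definition \ref{p77}, with $H$ playing the role of the invariant component and each $w(D_i)$ that of a $\theta_i$-pointwise slant component. The decomposition itself is already in hand: the splitting with $f(H)=\{0\}$ is the pointwise analogue of Theorem \ref{p1} (valid here by Remark \ref{141}), the regularity of each $w(D_i)$ together with $\dim w(D_i)=\dim D_i$ and the injectivity of $w|_{D_i}$ come from Proposition \ref{p3}, and the mutual orthogonality $w(D_i)\perp w(D_j)$ ($i\neq j$) holds since $\varphi$ acts isometrically on $\langle\xi\rangle^{\perp}$ (Remark \ref{p33}), so it is already an orthogonal decomposition into regular distributions. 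Thus it only remains to verify the slant and invariance properties; the distinctness of the $\theta_i$ and the non-nullity of each $w(D_i)$ (from $D_i\neq\{0\}$ and $w|_{D_i}$ injective) are inherited from $D$.

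First I would dispose of the easier conditions. For the invariant component, Corollary \ref{p62} (valid in the pointwise setting by Remark \ref{141}) gives $\varphi(H)=H$, so $H$ is invariant, establishing condition (ii). For condition (iii) for $G$ — that the component of $\varphi$ into $G$ preserves each $w(D_i)$ — I note that for $U_i=wX_i\in w(D_i)$ the $G$-component of $\varphi U_i$ is $wU_i=w^2X_i$, while Proposition \ref{p104} gives $w^2((D_i)_x)\subseteq w((D_i)_x)$ pointwise (it equals $w((D_i)_x)$ when $\theta_i(x)\neq\frac{\pi}{2}$ and $\{0\}$ when $\theta_i(x)=\frac{\pi}{2}$); hence $w(w(D_i))\subseteq w(D_i)$, which is exactly what is required.

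The heart of the argument, and the main obstacle, is condition (i): that each $w(D_i)$ is a $\theta_i$-pointwise slant distribution, i.e. for every $x\in M$ and nonzero $v\in (w(D_i))_x$ one has $\varphi v\neq 0$ and $\widehat{(\varphi v,G_x)}=\theta_i(x)$. Using Proposition \ref{p3} I may write $v=(U_i)_x$ with $U_i=wX_i$, $X_i\in D_i$, $(X_i)_x\neq 0$. The delicate feature is that the slant function may equal $\frac{\pi}{2}$ at some points and not at others, so the verification is genuinely pointwise and splits into two cases, just as in the proof of Theorem \ref{p9}. If $\theta_i(x)\neq\frac{\pi}{2}$, then Proposition \ref{p7} gives $\|(wU_i)_x\|=\cos\theta_i(x)\cdot\|v\|\neq 0$ and, with (\ref{7}), $\|(\varphi U_i)_x\|=\|v\|\neq 0$; since the $G$-component of $\varphi v$ is $(wU_i)_x$, we get $\cos\widehat{(\varphi v,G_x)}=\|(wU_i)_x\|/\|(\varphi U_i)_x\|=\cos\theta_i(x)$. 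If $\theta_i(x)=\frac{\pi}{2}$, then Remark \ref{p101} gives $(fU_i)_x=(fwX_i)_x=\epsilon(X_i)_x\neq 0$ while Proposition \ref{p104} gives $(wU_i)_x=0$, so $\varphi v=(fU_i)_x\in D$ is nonzero and orthogonal to $G_x$, whence $\widehat{(\varphi v,G_x)}=\frac{\pi}{2}=\theta_i(x)$. Combining the two cases shows each $w(D_i)$ is $\theta_i$-pointwise slant (its slant function being the continuous $\theta_i$ inherited from $D$), which completes the verification of Definition \ref{p77} for $G$.
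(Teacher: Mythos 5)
Your proof is correct and follows essentially the same route as the paper: the paper obtains Theorem \ref{p107} as the ``adequate modification'' of Theorem \ref{p9}, whose proof is exactly your pointwise case split --- for $\theta_i(x)\neq\frac{\pi}{2}$ the norm identities (Proposition \ref{p7} and (\ref{7})) give $\cos\widehat{(\varphi v,G_x)}=\cos\theta_i(x)$, and for $\theta_i(x)=\frac{\pi}{2}$ the relations $fw(X_i)_x=\epsilon(X_i)_x$ and $w^2((D_i)_x)=\{0\}$ (Remark \ref{p101}, Proposition \ref{p104}) give angle $\frac{\pi}{2}$ --- together with the already established decomposition $G=\oplus_{i=1}^kw(D_i)\oplus H$, $\varphi(H)=H$, and $w(w(D_i))\subseteq w(D_i)$.
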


\begin{definition}\label{p108}
We will call $\oplus_{i=1}^kw(D_i)$ \textit{the dual $k$-pointwise slant distribution} of $\oplus_{i=1}^kD_i$.
\end{definition}

\begin{remark}\label{p109}
In the same way we defined the dual of the proper $k$-pointwise slant component $\oplus_{i=1}^kD_i$ of the distribution $D$ by means of $w$, we can construct the dual of the proper $k$-pointwise slant component $\oplus_{i=1}^kw(D_i)$ of the distribution $G$ by means of $f$. This will be $f(\oplus_{i=1}^kw(D_i))=\oplus_{i=1}^kfw(D_i)$. 
\end{remark}

\begin{corollary}\label{p110}
The dual of the proper $k$-pointwise slant distribution \linebreak $\oplus_{i=1}^kw(D_i)$, which is $\oplus_{i=1}^kf(w(D_i))$, is precisely the $k$-pointwise slant distribution $\oplus_{i=1}^kD_i$.
\end{corollary}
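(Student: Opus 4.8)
The plan is to reduce the statement to Proposition \ref{p3}, which by Remark \ref{141} remains valid verbatim in the $k$-pointwise slant setting and asserts that $f(w(D_i)) = D_i$ for each $i = \overline{1,k}$. Granting this, the dual of the proper $k$-pointwise slant component $\oplus_{i=1}^k w(D_i)$ of $G$, which by Definition \ref{p108} and Remark \ref{p109} is obtained by applying $f$, is
\[
f\Bigl(\oplus_{i=1}^k w(D_i)\Bigr) = \oplus_{i=1}^k f(w(D_i)) = \oplus_{i=1}^k D_i,
\]
which is exactly the claim.

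To keep the argument self-contained, I would instead recover the identity $f(w(D_i)) = D_i$ directly from Corollary \ref{p100}. Restricting the formula $fwX = \epsilon \sum_{i=1}^k \sin^2\theta_i \cdot pr_i X$ to $X_i \in D_i$ yields
\[
fw X_i = \epsilon \sin^2\theta_i \cdot X_i ,
\]
so that $f \circ w$ restricted to $D_i$ is multiplication by the function $\epsilon \sin^2\theta_i$. The decisive point is that the slant function satisfies $\theta_i : M \rightarrow (0, \frac{\pi}{2}]$, whence $\sin^2\theta_i$ is a \emph{nowhere-vanishing} smooth function on $M$; therefore $f \circ w$ is a module automorphism of $D_i$. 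Consequently $f(w(D_i)) = (f\circ w)(D_i) = D_i$: the inclusion $f(w(D_i)) \subseteq D_i$ follows from $fw X_i \in D_i$, and the reverse inclusion follows by writing $X_i = f\bigl(\epsilon \tfrac{1}{\sin^2\theta_i}\, w X_i\bigr)$ with $\epsilon \tfrac{1}{\sin^2\theta_i}\, w X_i \in w(D_i)$.

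Taking the orthogonal sum over $i = \overline{1,k}$ then gives $\oplus_{i=1}^k f(w(D_i)) = \oplus_{i=1}^k D_i$, which establishes the corollary. There is no serious obstacle here: the only delicate ingredient is the observation that $\sin\theta_i$ never vanishes because the slant functions are valued in $(0, \frac{\pi}{2}]$ — this is precisely what guarantees the invertibility of $f\circ w$ on each pointwise slant component and makes the dual construction involutive at the level of the proper components, exactly as in the $k$-slant case treated in Corollary \ref{p12}.
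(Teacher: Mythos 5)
Your proposal is correct and follows essentially the same route as the paper: Corollary \ref{p110} is obtained there by combining Definition \ref{p108} and Remark \ref{p109} with Proposition \ref{p3} (valid in the pointwise setting by Remark \ref{141}), whose own proof rests on exactly the identity $fwX=\epsilon\sum_{i=1}^k\sin^2\theta_i\cdot pr_iX$ that you invoke from Corollary \ref{p100}. Your explicit observation that $\sin^2\theta_i$ is smooth and nowhere vanishing (since $\theta_i$ takes values in $(0,\frac{\pi}{2}]$ and $\cos^2\theta_i\in C^\infty(M)$ by Theorem \ref{p143}) is precisely the point underlying the paper's argument, so the self-contained version is a faithful unfolding rather than a different proof.
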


Denoting $w(D_i)$ by $G_i$ for $i=\overline{1,k}$, we obtain: 

\begin{proposition}\label{p111}
\begin{align*}
w(f(G_i)) &=G_i\,\text{ for } i=\overline{1,k}\,;\\ 
f^2((G_i)_x) &=\left\{\begin{array}{ll}
(D_i)_x & \text{if }\,\theta_i(x)\neq\frac{\pi}{2}\,, \\ 
\ \{0\} & \text{if }\,\theta_i(x)= \frac{\pi}{2}\,. 
\end{array}
\right.
\end{align*}

\end{proposition}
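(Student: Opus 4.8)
The plan is to treat Proposition \ref{p111} as the pointwise-slant counterpart of Proposition \ref{p66}, reusing essentially the same argument but replacing every global statement about the slant angle $\theta_i$ by its fiberwise version at a point $x\in M$. The two ingredients I rely on are already at hand in this setting: Proposition \ref{p3} (which, by Remark \ref{141}, remains valid as stated for $k$-pointwise slant distributions), giving $f(w(D_i))=D_i$ together with the injectivity of $f|_{w(D_i)}$ and the fiberwise isomorphism between $w(D_i)$ and $D_i$; and the fiberwise relations recorded in Corollary \ref{p87} and Proposition \ref{p86}(i).

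For the first identity I would argue purely at the level of distributions. Starting from the definition $G_i=w(D_i)$ and applying $f$, Proposition \ref{p3} yields $f(G_i)=f(w(D_i))=D_i$. Applying $w$ then gives $w(f(G_i))=w(D_i)=G_i$, which is the claim. No case distinction on $\theta_i$ is required here, since this step uses only that $f$ carries $w(D_i)$ onto $D_i$.

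For the second identity I would localize at an arbitrary $x\in M$. Because $(G_i)_x=w((D_i)_x)$, the fiberwise form of Proposition \ref{p3} shows that $f$ maps $w((D_i)_x)$ onto $(D_i)_x$, so $f((G_i)_x)=(D_i)_x$ and hence $f^2((G_i)_x)=f((D_i)_x)$. It then remains to evaluate $f((D_i)_x)$ according to the value of $\theta_i(x)$. If $\theta_i(x)\neq\frac{\pi}{2}$, Corollary \ref{p87} gives $f((D_i)_x)=(D_i)_x$. If $\theta_i(x)=\frac{\pi}{2}$, Proposition \ref{p86}(i) gives $\|(fX_i)_x\|=\cos\theta_i(x)\cdot\|(\varphi X_i)_x\|=0$ for every $X_i\in D_i$, so $(fX_i)_x=0$ and thus $f((D_i)_x)=\{0\}$. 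Assembling the two cases reproduces exactly the branched formula in the statement.

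The only genuine subtlety — and the reason the conclusion must be phrased fiberwise rather than at the level of distributions — is that in the pointwise setting $\theta_i$ is a function, so $\{x:\theta_i(x)=\frac{\pi}{2}\}$ may be a proper nonempty subset of $M$ and one cannot assert $f^2(G_i)=D_i$ or $f^2(G_i)=\{0\}$ globally. Once the evaluation is carried out at each point separately, as above, I expect no real obstacle: every step is a direct transcription of the $k$-slant proof of Proposition \ref{p66}, with the constants $\cos^2\theta_i$ and the dichotomy on $\theta_i$ replaced by their pointwise analogues $\cos^2\theta_i(x)$ and the condition on $\theta_i(x)$.
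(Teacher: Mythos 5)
Your proposal is correct and follows essentially the same route as the paper: Proposition \ref{p111} is obtained there as the pointwise transcription of Proposition \ref{p66}, i.e., from Proposition \ref{p3} (valid in the pointwise setting by Remark \ref{141}), which gives $f(G_i)=f(w(D_i))=D_i$ and hence both $w(f(G_i))=G_i$ and $f^2((G_i)_x)=f((D_i)_x)$, combined with Corollary \ref{p87} for the case $\theta_i(x)\neq\frac{\pi}{2}$ and the vanishing of $f_x$ on $(D_i)_x$ (from $|fX_i|=\cos\theta_i\cdot|\varphi X_i|$) for the case $\theta_i(x)=\frac{\pi}{2}$. Your remark that the dichotomy can only be stated fiberwise is exactly the ``adequate modification'' the paper makes in passing from the slant to the pointwise slant statement.
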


\begin{theorem}\label{p112}
 $f_x$ and $w_x$ restricted to $(D_i)_x$ or $w((D_i)_x)$ for $i=\overline{1,k}$ and $x\in {M}$ \textup(excepting $f_x|_{(D_j)_x}$ and $w_x|_{w((D_j)_x)}$ with $\theta_j(x)=\frac{\pi}{2}$, in which case $f_x|_{(D_j)_x}$ and $w_x|_{w((D_j)_x)}$ are vanishing\textup), 
 $f|_{D_0}$, $w|_H$, and $\varphi|_{D\oplus G}$ are conformal maps. 
\end{theorem}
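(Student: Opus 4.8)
The plan is to treat the statement as a compilation of quantitative relations already at hand, reading the conformality of each listed map directly off Propositions \ref{p2}, \ref{p14}, \ref{p106}, \ref{p86}, \ref{p104}, and Corollary \ref{p103} (the first two being valid in the $k$-pointwise slant setting by Remark \ref{141}). Recall that a linear map $T$ of a fibre is conformal, that is, angle-preserving, as soon as there is a constant $c(x)>0$, depending only on $x$, with $g_x(Tu,Tv)=c(x)\,g_x(u,v)$ for all $u,v$ in its domain; such a $T$ scales every length by $\sqrt{c(x)}$ and hence preserves angles. When the corresponding scaling constant is $0$, the map collapses to $0$ and is not conformal, which is exactly what the exceptions in the statement record. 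Thus the whole proof reduces to exhibiting, at each $x\in M$ and each $i\in\{1,\dots,k\}$, the scaling constant of every map in the list.

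First I would dispose of the four families attached to $(D_i)_x$ and $w((D_i)_x)$. For $w_x|_{(D_i)_x}$ and $f_x|_{w((D_i)_x)}$ the scaling constant is $\sin^2\theta_i(x)$, read off respectively from Proposition \ref{p14}(i) and (ii); since $\sin^2\theta_i(x)>0$ for every $\theta_i(x)\in(0,\tfrac{\pi}{2}]$, these two maps are conformal at every point, with no exception (indeed isometries when $\theta_i(x)=\tfrac{\pi}{2}$). For $f_x|_{(D_i)_x}$ the relevant constant is $\cos^2\theta_i(x)$, obtained by restricting the second identity of Proposition \ref{p2} to $D_i$, and for $w_x|_{w((D_i)_x)}$ it is again $\cos^2\theta_i(x)$, supplied by Proposition \ref{p106}(iii). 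These are positive precisely when $\theta_i(x)\neq\tfrac{\pi}{2}$, giving conformality there; when $\theta_i(x)=\tfrac{\pi}{2}$ the constant is $0$, and $f_x|_{(D_j)_x}$ vanishes by Proposition \ref{p86}(i), while $w_x|_{w((D_j)_x)}$ vanishes by Proposition \ref{p104}. These are precisely the two excepted cases in the statement.

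Next I would handle the three remaining maps, each of which is in fact an isometry (constant $c(x)=1$). Since $T\overline{M}=D\oplus\langle\xi\rangle\oplus G$ with $D\perp\xi$ and $G=(D\oplus\langle\xi\rangle)^{\perp}$, we have $D\oplus G\subseteq\langle\xi\rangle^{\perp}$, on which $\varphi$ acts isometrically by Remark \ref{p33}; this immediately yields that $\varphi|_{D\oplus G}$ is conformal, and, since $f=\varphi$ on the invariant component $D_0$ because $w(D_0)=\{0\}$ (Remark \ref{p115}(i)), that $f|_{D_0}$ is conformal as well. For $w|_H$ the identity $g(wU_0,wV_0)=g(U_0,V_0)$ of Corollary \ref{p103} shows it to be an isometry, hence conformal.

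I expect no genuine obstacle: the theorem merely collects relations already proved, and the only point requiring care is the bookkeeping of the two degenerate cases at $\theta_i(x)=\tfrac{\pi}{2}$, where the $\cos^2\theta_i(x)$-scaling collapses and the corresponding restrictions of $f$ and $w$ must be excluded as vanishing rather than conformal. Gathering the preceding observations over all $x\in M$ and all $i\in\{1,\dots,k\}$ gives the stated conclusion.
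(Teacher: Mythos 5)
Your proof is correct and follows essentially the same route as the paper, which obtains Theorem \ref{p112} by simply collecting the previously established relations (Propositions \ref{p106} and \ref{p14}, the latter valid pointwise by Remark \ref{141}). The only cosmetic difference is that you channel everything through explicit metric-scaling constants ($\cos^2\theta_i(x)$, $\sin^2\theta_i(x)$, $1$) and the standard criterion that a positive scalar multiple of the metric preserves angles, whereas the paper reads the angle-preservation off the cosine identities directly; your handling of the degenerate case $\theta_j(x)=\frac{\pi}{2}$ via Propositions \ref{p86} and \ref{p104} matches the stated exceptions exactly.
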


Let $M$ be an immersed submanifold of $\overline{M}$ such that $\xi\in TM$ and let $k\in \mathbb{N}^*$. Since $\langle\xi\rangle$ can be considered as a part of an invariant component of $TM$, the straightforward definition of a $k$-pointwise slant submanifold in this setting will be as follows.  

\begin{definition}\label{p84}
We say that $M$ is a \textit{$k$-pointwise slant submanifold} of the almost ($\epsilon$)-contact metric manifold $(\overline{M},\varphi,\xi,\eta,g)$ if there exists an orthogonal decomposition of $TM$ into regular distributions, 
$$TM=\oplus_{i=0}^k{D_i}\oplus \langle \xi\rangle=:D\oplus \langle \xi\rangle$$
with $D_i\neq \{0\}$ for $i= \overline{1,k}$ and $D_0$ possible null, and there exist $k$ distinct continuous functions $\theta_i :M \rightarrow (0,\frac{\pi}{2}]$, $i=\overline{1,k}$, such that: \\ 
\hspace*{7pt} (i)\; $\widehat{(\varphi X_x, (D_i)_x)}=\theta_i(x)$ for any $X\in D_i\verb=\=\{0\}$, $i=\overline{1,k}$, and $x\in {M}$ with $X_x\neq 0$;\\ 
\hspace*{7pt} (ii)\, $\varphi X\in D_0$ for any $X\in D_0$ (i.e., $\widehat{(\varphi X, TM)}=0$ for $X\in D_0\verb=\=\{0\}$, and $f(D_0)\subseteq D_0$);\\ 
\hspace*{7pt} (iii) $f(D_i)\subseteq D_i$ for $i=\overline{1,k}$. 

\smallskip 
In this case, 
$\oplus_{i=1}^kD_i$ will be the \textit{proper $k$-pointwise slant distribution} associated to $M$. 

\end{definition}

\begin{remark}\label{p195}
In view of (iii) and Remark \ref{p79}, we have: \\ 
(a) Condition (i) can be replaced by \\ 
\hspace*{7pt} (i') $\widehat{(\varphi X_x, T_xM)}=\theta_i(x)$ for any $X\in D_i\verb=\=\{0\}$, $i=\overline{1,k}$, and $x\in {M}$ with $X_x\neq 0$; \\ 
(b) Condition (iii) can be replaced by \\ 
\hspace*{7pt} (iii') $\varphi(D_i)\perp D_j$ for any $i\neq j$ from $\{1,\ldots,k\}$. 
\end{remark}

The above definition can be reformulated as follows. 

\begin{definition}\label{p207}
We say that $M$ is a \textit{$k$-pointwise slant submanifold} of $\overline{M}$ if, in the orthogonal decomposition $TM=D\oplus \langle \xi\rangle$, $D$ is a $k$-pointwise slant distribution. 
\end{definition}

\begin{remark}\label{p113}
All the results of this section can be transferred to any \mbox{$k$-point}\-wise slant submanifold $M$ of $(\overline{M},\varphi,\xi,\eta,g)$, considering, for $TM=\nolinebreak\oplus_{i=0}^k{D_i}\oplus \nolinebreak\langle \xi\rangle$, the $k$-pointwise slant distribution $D=\oplus_{i=0}^k{D_i}$. 
Thus, all the results remain valid in the $k$-pointwise slant submanifold framework. 
\end{remark}

\begin{remark}\label{p119}
We describe below, in sense of Ronsse \cite{ronsse}, the notion of \textit{generic submanifold of an almost} ($\epsilon$)-\textit{contact metric manifold}, relating it to the concept of $k$-pointwise slant submanifold.  

Let $M$ be an immersed submanifold of an almost ($\epsilon$)-contact metric manifold $(\overline{M},\varphi,\xi,\eta,g)$, and, for $Z\in T{M}$, let $fZ$ be the tangential component of $\varphi Z$. 

In view of Lemmas \ref{lema1}, \ref{lema2} and Remark \ref{p194}, $f$ is skew-symmetric for $\epsilon=-1$ and symmetric for $\epsilon=1$, so $f^2$ is symmetric. Denoting by $\lambda_i(x)$, $i=\overline{1,m(x)}$, the distinct eigenvalues of $f_x^2$ acting on the tangent space $T_xM$ for $x\in M$, these eigen\-val\-ues are all real. 
In view of (\ref{7}) and $\varphi \xi=0$, we have $|fX|\leq|\varphi X|\leq |X|$ for any $X\in T{M}$, so $|\lambda_i(x)|\leq 1$ for all $x\in M, i=\overline{1,m(x)}$. Denoting, for any $x\in M$, by $\mathfrak D_x^i$ the eigen\-space cor\-re\-spond\-ing to $\lambda_i(x)$, $i=\overline{1,m(x)}$, each tangent space $T_xM$ of $M$ at $x\in M$ admits the following orthogonal decomposition into the eigenspaces of $f_x^2$: 
$$T_xM={\mathfrak D_x^1}\oplus\ldots\oplus{\mathfrak D_x^{m(x)}}.$$
Moreover, each eigenspace 
$\mathfrak D_x^i$ is invariant under $f_x$. 

Now, let us consider that $M$ is a generic submanifold of $\overline{M}$ in sense of Ronsse, that is: 
\begin{enumerate}
\item $m(x)$ does not depend on $x\in M$ (we will denote $m(x)=m$); 
\item the dimension of ${\mathfrak D_x^i}$, $i=\overline{1,m}$, is independent of $x\in M$; 
\item if one of the $\lambda_i(x)$ 
is $0$ or $1$, say $\lambda_{i_0}(x_0)$, then $\lambda_{i_0}(x)$ has the same value for all $x\in M$.
\end{enumerate}

In this situation, for any tangent space $T_xM$ ($x\in M$), there is the same number $m$ of distinct eigenvalues of $f_x^2$, denoted by $\lambda_1(x)$, \dots , $\lambda_m(x)$. In view of the smoothness of the vector fields, the functions $\lambda_1$, \dots , $\lambda_m$ are continuous on $M$. Denoting by $\mathfrak D_i$ the distribution corresponding to the family $\{\mathfrak D_x^i: x\in M\}$ for $i=\overline{1,m}$, we notice that $TM$ admits the orthogonal decomposition  
$$TM={\mathfrak D_1}\oplus\ldots\oplus{\mathfrak D_m}\,.$$
We notice that each distribution $\mathfrak D_i$ is invariant under $f$.

Since $f\xi=\varphi\xi=0$, one of the $\mathfrak D_i$'s contains $\langle \xi \rangle$ and corresponds to the zero eigenvalue of $f^2$; let $\mathfrak D_m$ be that distribution. Let us decompose $\mathfrak D_m$ into $\langle\xi\rangle$ and its orthogonal in $\mathfrak D_m$, denoted by $\mathfrak D'_m$, $\mathfrak D_m=\langle\xi\rangle\oplus\mathfrak D'_m$. We notice that $\mathfrak D'_m$, if non-null, is a slant distribution with slant angle $\frac{\pi}{2}$.

For any $x\in M$ and $i\in \{1,\ldots,m-1\}$, let $\alpha_i(x)\in (0,1]$ denote the positive value for which $\lambda_i(x)=\epsilon\alpha_i^2(x)$. Then, for any $X\in  \mathfrak{D}_i\verb=\=\{0\}$ and $x\in M$ such that $X_x\neq 0$, we get 
$$\|f_xX_x\|^2=\epsilon\lambda_i(x)g_x(X_x,X_x)= \alpha_i^2(x) \|X_x\|^2= \alpha_i^2(x) \|\varphi_x X_x\|^2$$
and $\|f_xX_x\|=\alpha_i(x) \|X_x\|=\alpha_i(x) \|\varphi_x X_x\|$; hence, $\alpha_i(x)=\cos\zeta_i(x)$, where $\zeta_i(x)$ is the angle between $\varphi X_x$ and $T_xM$. 

The distributions $\mathfrak{D}_i$, $i=\overline{1,m-1}$, are pointwise slant distributions with distinct slant functions $\zeta_i$ except at most one of them, which is invariant with respect to $\varphi$ and corresponds to $\alpha_i=1$ if such one exists. 

It follows that, since $\oplus_{i=1}^{m-1}\mathfrak{D}_i\oplus\mathfrak{D}'_m$ does not reduce to an invariant distribution with respect to $\varphi$, $M$ is a $k$-pointwise slant submanifold of $\overline{M}$, where $k$ is one of the values: $m,\, m-1,\, m-2$. 
\end{remark}

We deduce: 

\begin{proposition}\label{p126}
Any generic submanifold of an almost contact metric or almost paracontact metric man\-i\-fold is a $k$-pointwise slant sub\-man\-i\-fold. 
\end{proposition}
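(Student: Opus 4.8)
The plan is to read off the conclusion directly from the eigenspace analysis already carried out in Remark \ref{p119}, which supplies essentially the whole argument; what remains is to verify that the decomposition obtained there genuinely fits Definition \ref{p84} of a $k$-pointwise slant submanifold. So let $M$ be a generic submanifold of an almost $(\epsilon)$-contact metric manifold $(\overline{M},\varphi,\xi,\eta,g)$, and let $f$ denote the tangential component of $\varphi$. By Lemmas \ref{lema1}, \ref{lema2} and Remark \ref{p194}, $f$ is skew-symmetric for $\epsilon=-1$ and symmetric for $\epsilon=1$, so $f^2$ is symmetric with real eigenvalues, and, by \eqref{7} together with $\varphi\xi=0$, these eigenvalues lie in $[-1,0]$ or $[0,1]$. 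The three defining conditions of a generic submanifold guarantee that the number $m$ of distinct eigenvalues, the dimensions of the eigenspaces, and the values of the eigenvalues $0$ and $1$ are all independent of $x$, yielding the smooth orthogonal decomposition $TM=\mathfrak{D}_1\oplus\cdots\oplus\mathfrak{D}_m$ into $f$-invariant regular distributions.

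I would first isolate the distribution $\mathfrak{D}_m$ corresponding to the eigenvalue $0$, which contains $\langle\xi\rangle$, and split it as $\mathfrak{D}_m=\langle\xi\rangle\oplus\mathfrak{D}'_m$, noting that $\mathfrak{D}'_m$, if non-null, is an anti-invariant (i.e., slant angle $\frac{\pi}{2}$) distribution. Then I would examine each $\mathfrak{D}_i$ for $i=\overline{1,m-1}$: writing $\lambda_i=\epsilon\alpha_i^2$ with $\alpha_i\in(0,1]$, the computation in Remark \ref{p119} gives $\|f_xX_x\|=\alpha_i(x)\|\varphi_xX_x\|$, so $\mathfrak{D}_i$ is a pointwise slant distribution with slant function $\zeta_i=\arccos\alpha_i$, except for the single distribution (if present) on which $\alpha_i\equiv 1$, i.e.\ $\lambda_i\equiv\epsilon$, which is invariant under $\varphi$.

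It then remains to collect these pieces into the form required by Definition \ref{p84}. I would take as invariant component $D_0$ the (possibly absent) distribution with $\alpha_i\equiv 1$, and as proper $k$-pointwise slant component the orthogonal sum of the remaining pointwise slant $\mathfrak{D}_i$'s together with $\mathfrak{D}'_m$ when the latter is non-null. Since the eigenvalue functions $\lambda_1,\ldots,\lambda_m$ are pairwise distinct at every point, the slant functions $\zeta_i$ are distinct \emph{as functions}, and the constant value $\frac{\pi}{2}$ attached to $\mathfrak{D}'_m$ (eigenvalue $0$) differs from all of them; continuity of the slant functions is automatic from smoothness of the distributions (Remark \ref{p85}, cf.\ Theorem \ref{p143}). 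As the hypothesis that $M$ is generic forces $\oplus_{i=1}^{m-1}\mathfrak{D}_i\oplus\mathfrak{D}'_m$ not to reduce to an invariant distribution, the proper pointwise slant component is non-null, and the decomposition satisfies conditions (i)--(iii) of Definition \ref{p84}, so $M$ is a $k$-pointwise slant submanifold with $k\in\{m,\,m-1,\,m-2\}$ according to the presence or absence of the invariant piece $\alpha_i\equiv1$ and of $\mathfrak{D}'_m$.

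The main obstacle is not analytic but organizational: the only genuine checks are that the slant functions are distinct as functions (handled by pointwise distinctness of the eigenvalues) and that the three possible values of $k$ are correctly accounted for by the two optional summands; everything else has already been done in Remark \ref{p119}.
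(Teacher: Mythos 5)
Your proof is correct and is essentially the paper's own argument: the paper deduces Proposition \ref{p126} directly from the eigenspace analysis of $f^2$ carried out in Remark \ref{p119}, which is precisely the argument you reproduce and organize (splitting off $\langle\xi\rangle$ from the zero-eigenspace, identifying the pointwise slant components via $\alpha_i=\cos\zeta_i$, and isolating the possible invariant piece). The points you single out as needing verification — pointwise distinctness of the slant functions, non-nullity of the proper pointwise slant part (guaranteed since a generic submanifold has at least one $\alpha_i$ valued in $(0,1)$), and the count $k\in\{m,\,m-1,\,m-2\}$ — are exactly those handled in that remark.
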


We will show through an example that a $k$-pointwise slant submanifold is not necessarily a generic one. 

\begin{example}\label{ex4}
Let $\overline M=\mathbb{R}^{4k+3}$ be the Euclidean space for some $k\geq 2$, with the canonical coordinates 
$(x_{1},\ldots, x_{4k+3})$, and let $\{e_{1}=\frac{\partial }{\partial x_{1}},\ldots,e_{4k+3}=\frac{\partial }{\partial x_{4k+3}}\}$ be the natural basis in the tangent bundle. Let $\epsilon\in \{-1,1\}$, $\gamma\geq 0$, $\delta>0$, and 
$E_{\gamma,\delta}(j,x)=\sqrt{\|x\|^4+2\gamma \|x\|^2+j^2\delta^2+\gamma^2}$ for any $j\in \mathbb{N^*}$ and $x\in \overline M$. 

Let us define a vector field $\xi $, a $1$-form $\eta $, and a $(1,1)$-tensor field $\varphi$ by: 
$$\xi =e_{4k+3}, \quad \eta =dx_{4k+3},$$
$$\varphi e_{1} = e_{2}, \quad \varphi e_{2}=\epsilon e_{1},$$
\begin{align*}
(\varphi e_{4j-1})_x &=\frac{\|x\|^2+\gamma}{E_{\gamma,\delta}(j,x)}{\ }(e_{4j})_x+\epsilon \frac{j\delta}{E_{\gamma,\delta}(j,x)}{\ }(e_{4j+2})_x,\\ 
(\varphi e_{4j})_x &=\epsilon \frac{\|x\|^2+\gamma}{E_{\gamma,\delta}(j,x)}{\ }(e_{4j-1})_x+\epsilon \frac{j\delta}{E_{\gamma,\delta}(j,x)}{\ }(e_{4j+1})_x,\\ 
(\varphi e_{4j+1})_x &=\frac{j\delta}{E_{\gamma,\delta}(j,x)}{\ }(e_{4j})_x-\epsilon \frac{\|x\|^2+\gamma}{E_{\gamma,\delta}(j,x)}{\ }(e_{4j+2})_x,\\ 
(\varphi e_{4j+2})_x &=\frac{j\delta}{E_{\gamma,\delta}(j,x)}{\ }(e_{4j-1})_x-\frac{\|x\|^2+\gamma}{E_{\gamma,\delta}(j,x)}{\ }(e_{4j+1})_x,
\end{align*}
$$\varphi e_{4k+3}=0$$
for $j= \overline{1,k}$ and $x\in \overline{M}$. 
Let the Riemannian metric $g$ be given by \linebreak $g(e_{i},e_{j})=\nolinebreak\delta _{ij}$, $i, j= \overline{1,4k+3}$. 
Then, $(\overline M, \varphi,\xi,\eta, g )$ is an almost ($\epsilon$)-contact metric manifold. It's to be noticed that, for $\epsilon=-1$, it is an almost contact metric manifold, and, for $\epsilon=1$, it is an almost paracontact metric manifold. 

We define the following submanifold of $\overline M$: 
$$M:=\{(x_{1},\ldots, x_{4k+3})\in \mathbb{R}^{4k+3} \ | \ x_{4j+1}=x_{4j+2}=0,\ j=\overline{1,k}\}.$$

Considering 
$D_0=\langle e_{1},e_{2}\rangle$ and $D_{j}=\langle e_{4j-1},e_{4j}\rangle, \ j=\overline{1,k}$, we notice that, for $\gamma>0$, $M$ is a nontrivial generic submanifold and a $k$-pointwise slant submanifold of $\overline M$, with $TM=\oplus_{i=0}^k D_i\oplus \langle\xi\rangle$, while, for $\gamma=0$, it is a $k$-pointwise slant submanifold but not a generic one. The cor\-re\-spond\-ing \mbox{$k$-point}\-wise slant distribution is $\oplus_{i=0}^k D_i$, where $D_0$ is the invariant component and the $D_j$'s, $j=\overline{1,k}$, are pointwise slant distributions with corresponding slant functions 
$$\theta _{j}(x)=\arccos \left(\frac{\|x\|^2+\gamma}{E_{\gamma,\delta}(j,x)}\right),\; x\in M \text{ for }j=\overline{1,k}\,.$$
Thus, $\oplus_{i=1}^kD_i$ is the proper $k$-pointwise slant distribution associated to $M$.

Let us consider the distributions $G_j:=\langle e_{4j+1}, e_{4j+2}\rangle$, $j=\overline{1,k}$, in $(TM)^{\perp}$. Then, $\oplus_{j=1}^kG_j$ is the dual $k$-pointwise slant distribution of $\oplus_{j=1}^kD_j$. We have $f(G_j)=D_j$ for $j=\overline{1,k}$, and $\oplus_{j=1}^kD_j$ is the dual $k$-pointwise slant distribution of $\oplus_{j=1}^kG_j$.

\end{example}

\section{$k$-pointwise slant distributions in almost Hermitian and almost product Riemannian settings}\label{pointwise_alm_hermitian}

Throughout this section, we  will provide, as for $k$-slant distributions in the almost Hermitian and almost product Riemannian settings, a unitary approach for $k$-pointwise slant distributions in these settings. 

Let $\overline{M}$ be a smooth manifold equipped with a  $(1,1)$-tensor field $\varphi$ and a Riemannian metric $g$ satisfying, for a fixed $\epsilon\in \{-1,1\}$, 
\begin{equation}
\varphi^2=\epsilon I\, \text{ and }\,
g(\varphi X,Y)=\epsilon g(X,\varphi Y)\, 
\text{ for any } X,Y\in T\overline{M}. \nonumber
\end{equation}

\medskip 
In the sequel, let $M$ be $\overline{M}$ or an immersed submanifold of $(\overline{M},\varphi,g)$. Let $k\in \mathbb{N}^*$, $D=\oplus_{i=0}^k{D_i}$ be a $k$-pointwise slant distribution on $M$ with $D_0$ the invariant component, and $G$ be the orthogonal complement of $D$ in $T\overline{M}$. Let $\theta_0=0$ and $\theta_i$ denote the slant function of $D_i$ for $i=\overline{1,k}$. 

\begin{remark}\label{p142}
We notice that, with only a few exceptions (Remarks \ref{p115} (ii), \ref{p180}, Theorem \ref{p178}, and Proposition \ref{p102}), all the other results obtained for \mbox{$k$-point}\-wise slant distributions in the almost contact metric and almost paracontact metric settings (Definition, Propositions, Theorems, Corollaries, Remarks from \ref{p115} to \ref{p112}, together with Lemma \ref{p92}, Propositions \ref{p2}, \ref{p3}, \ref{p67}, \ref{p7}, \ref{p14}, \ref{p16}, Theorems \ref{p1}, \ref{p17}, and Corollaries \ref{p62}, \ref{p15}, \ref{p18}--\ref{p20}) remain also valid in the almost Hermitian and almost product Riemannian settings, with similar proofs, taking into account that in the present settings $D\oplus G=T\overline{M}$. 
\end{remark}

Instead of Remarks \ref{p115} (ii), \ref{p180}, Theorem \ref{p178}, and Proposition \ref{p102}, we have: 

\begin{remark}\label{p116}
\begin{equation}
\varphi(D \oplus G)=D \oplus G. \nonumber 
\end{equation}
\end{remark}

\begin{proposition}\label{p117}
For any $X\in D$ and\, $U\in G$, we have:
\begin{align}
f^2X+fwX &= \epsilon X, \nonumber\\ 
wfX+w^2X &=0, \nonumber \\ 
f^2U+fwU &=0, \nonumber \\ 
wfU+w^2U &=\epsilon U. \nonumber
\end{align}
\end{proposition}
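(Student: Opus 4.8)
The plan is to reduce everything to the single algebraic identity $\varphi^2=\epsilon I$ from \eqref{h2}, combined with the defining decomposition $\varphi Z=fZ+wZ$ (with $fZ\in D$ and $wZ\in G$) and the structural fact that $f$ takes values in $D$ while $w$ takes values in $G$. Notably, the pointwise slant hypothesis plays no role in this particular statement: the four identities hold for \emph{any} orthogonal decomposition $T\overline{M}=D\oplus G$ once $\varphi^2=\epsilon I$ is available.

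First I would take $X\in D$ and apply $\varphi$ twice. Writing $\varphi X=fX+wX$ and applying $\varphi$ again to each summand (using $\varphi(fX)=f^2X+wfX$, valid since $fX\in D$, and $\varphi(wX)=fwX+w^2X$, valid since $wX\in G$), I obtain
\begin{equation}
\epsilon X=\varphi^2 X=\varphi(fX)+\varphi(wX)=(f^2X+wfX)+(fwX+w^2X).\nonumber
\end{equation}
Now I project onto $D$ and onto $G$. The terms $f^2X$ and $fwX$ lie in $D$, while $wfX$ and $w^2X$ lie in $G$; moreover $\epsilon X\in D$ since $X\in D$. Comparing $D$-components yields the first identity $f^2X+fwX=\epsilon X$, and comparing $G$-components yields the second, $wfX+w^2X=0$.

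The argument for $U\in G$ is identical in structure, but the roles of the two projections are reversed. The same expansion gives
\begin{equation}
\epsilon U=\varphi^2 U=(f^2U+wfU)+(fwU+w^2U),\nonumber
\end{equation}
and the decisive difference is now that $\epsilon U\in G$. Hence its $D$-component vanishes, forcing the third identity $f^2U+fwU=0$, while its $G$-component equals $\epsilon U$, forcing the fourth, $wfU+w^2U=\epsilon U$.

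There is no genuine obstacle: the statement is a pointwise linear-algebra identity, and the only care needed is the bookkeeping of which summand lands in $D$ and which in $G$, the decisive asymmetry being whether $\epsilon X$ (respectively $\epsilon U$) sits in $D$ or in $G$. This is exactly what makes the right-hand sides of the first and fourth identities nonzero and those of the second and third zero. Finally, in contrast with the almost $(\epsilon)$-contact metric case of Proposition \ref{p102}, here $\varphi^2=\epsilon I$ rather than $\varphi^2=\epsilon(I-\eta\otimes\xi)$, so no $\eta(X)\xi$ correction term survives, which is why the first identity reads $\epsilon X$ in place of $\epsilon(X-\eta(X)\xi)$.
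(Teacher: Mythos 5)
Your proof is correct and matches the argument the paper relies on: the paper states Proposition \ref{p117} without a separate written proof, treating it as the $\varphi^2=\epsilon I$ analogue of Proposition \ref{p102}, and the implicit derivation is exactly yours — expand $\varphi^2 Z$ through $\varphi Z=fZ+wZ$ and compare the $D$- and $G$-components, the only asymmetry being whether $\epsilon Z$ lies in $D$ or in $G$. Your side remark that the slant hypothesis is irrelevant here and only the orthogonal splitting $T\overline{M}=D\oplus G$ plus $\varphi^2=\epsilon I$ is used is accurate and consistent with the paper's treatment.
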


Taking into account Remark \ref{p32} and Propositions \ref{p176}, \ref{p86}, we obtain 

\begin{theorem}\label{p179}
Let $\mathfrak D$ be a non-null distribution on $M$ decomposable into an orthogonal sum of regular distributions, $\mathfrak D=\oplus_{i=0}^k{\mathfrak D_i}$ with $\mathfrak D_i\neq \{0\}$ for $i=\nolinebreak\overline{1,k}$ and $\mathfrak D_0$ invariant (possible null). Let $pr_i$ denote the projection operator onto $\mathfrak D_i$ for $i=\overline{0,k}$, $f$ the component of $\varphi$ into $\mathfrak D$, and $\theta_0=0$. If $f(\mathfrak D_i)\subseteq \mathfrak D_i$ for $i=\overline{1,k}$, then the following assertions are equivalent: \\ 
\hspace*{7pt} (a) There exist $k$ distinct continuous functions  $\theta_i:{M}\rightarrow (0,\frac{\pi}{2}]$, $i=\nolinebreak\overline{1,k}$, such that  
\begin{equation}\nonumber
f^2X= \epsilon \sum_{i=0}^k\cos^2\theta_i\cdot pr_iX \ \text{for any} \  X\in \mathfrak D;
\end{equation}
\hspace*{7pt} (b) $\mathfrak D$ is a $k$-pointwise slant distribution with slant functions $\theta_i$ corresponding to $\mathfrak D_i$, $i=\overline{1,k}$. 
\end{theorem}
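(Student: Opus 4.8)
The plan is to prove the equivalence by establishing the two implications separately, citing in each direction a proposition already available for $k$-pointwise slant distributions and using the single structural feature that distinguishes the present setting, namely that here $\varphi^2=\epsilon I$ makes $\varphi$ a \emph{global} isometry (Remark \ref{p32}) rather than an isometry only on $\langle\xi\rangle^\perp$ as in the contact case.

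First I would treat (a) $\Rightarrow$ (b). Assuming the displayed identity for $f^2$ on all of $\mathfrak D$, I would note that $\varphi|_{\oplus_{i=1}^k\mathfrak D_i}$ is an isometry (a consequence of $\varphi^2=\epsilon I$), that $f(\mathfrak D_i)\subseteq\mathfrak D_i$ for $i=\overline{1,k}$ by hypothesis, and that restricting the identity in (a) to vectors of $\oplus_{i=1}^k\mathfrak D_i$ (for which $pr_0X=0$) yields
\[
f^2X=\epsilon\sum_{i=1}^k\cos^2\theta_i\cdot pr_iX.
\]
These are exactly the hypotheses of Proposition \ref{p176}, with $\mathfrak D_0$ playing the role of the invariant component, so that proposition delivers that $\mathfrak D$ is a $k$-pointwise slant distribution with slant functions $\theta_i$ corresponding to $\mathfrak D_i$, which is (b).

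For the converse (b) $\Rightarrow$ (a), I would invoke Proposition \ref{p86}: by Remark \ref{p142} this proposition (not being among the listed exceptions) carries over verbatim to the almost Hermitian and almost product Riemannian settings, so its part (ii) gives at once
\[
f^2X=\epsilon\sum_{i=0}^k\cos^2\theta_i\cdot pr_iX\quad\text{for any }X\in\mathfrak D,
\]
with $\theta_0=0$; here the $i=0$ summand records the invariant component $\mathfrak D_0$, on which $f^2X=\epsilon X$. This is precisely (a).

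I do not expect a substantive obstacle in either computation, since both directions reduce to applying Propositions \ref{p176} and \ref{p86}. The only point demanding care is verifying that these propositions apply unchanged here: Proposition \ref{p176} was stated for a general $\epsilon$-compatible $(1,1)$-tensor field and requires only the isometry of $\varphi|_{\oplus_{i=1}^k\mathfrak D_i}$, which $\varphi^2=\epsilon I$ supplies globally, while Proposition \ref{p86} is covered by Remark \ref{p142}. I would also make explicit that the restriction performed in (a) $\Rightarrow$ (b) discards exactly the $i=0$ term, matching the index range $i=\overline{1,k}$ demanded by Proposition \ref{p176}.
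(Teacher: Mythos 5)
Your proof is correct and follows essentially the same route as the paper, whose entire argument is the one-line citation of Remark \ref{p32} together with Propositions \ref{p176} (for (a) $\Rightarrow$ (b)) and \ref{p86} (for (b) $\Rightarrow$ (a), transferred to this setting via Remark \ref{p142}). Your write-up merely makes explicit the details the paper leaves implicit, such as restricting the identity in (a) to $\oplus_{i=1}^k\mathfrak D_i$ so that the $i=0$ term drops out.
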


\begin{remark}\label{p181}
Theorem \ref{p179} provides a necessary and sufficient condition for a submanifold $M$ of $\overline M$ to be a $k$-pointwise slant submanifold, considering $\mathfrak D=TM$\, if \,$TM=\oplus_{i=0}^k{\mathfrak D_i}$.
\end{remark}

\medskip 
Let $M$ be an immersed submanifold of $\overline{M}$. 
We have the following equivalent formulation of the definition of a $k$-pointwise slant submanifold. 

\begin{definition}\label{p114}
We say that $M$ is a \textit{$k$-pointwise slant submanifold} of $\overline{M}$ if there exists an orthogonal decomposition of $TM$ into regular distributions, 
$$TM=\oplus_{i=0}^k{D_i}=:D$$
with $D_i\neq \{0\}$ for $i= \overline{1,k}$ and $D_0$ possible null, and there exist $k$ distinct continuous functions $\theta_i :M \rightarrow (0,\frac{\pi}{2}]$, $i=\overline{1,k}$, such that: \\ 
\hspace*{7pt} (i)\; $\widehat{(\varphi X_x, (D_i)_x)}=\theta_i(x)$ for any $X\in D_i\verb=\=\{0\}$, $i=\overline{1,k}$, and $x\in {M}$ with $X_x\neq 0$;\\ 
\hspace*{7pt} (ii)\, $\varphi X\in D_0$ for any $X\in D_0$ (i.e., $\widehat{(\varphi X, TM)}=0$ for $X\in D_0\verb=\=\{0\}$, and $f(D_0)\subseteq D_0$);\\ 
\hspace*{7pt} (iii) $f(D_i)\subseteq D_i$ for $i=\overline{1,k}$. 
\end{definition}

\begin{remark}
In view of (iii) and Remark \ref{p79}, we have: \\ 
(a) Condition (i) can be replaced by \\ 
\hspace*{7pt} (i') $\widehat{(\varphi X_x, T_xM)}=\theta_i(x)$ for any $X\in D_i\verb=\=\{0\}$, $i=\overline{1,k}$, and $x\in {M}$ with $X_x\neq 0$; \\ 
(b) Condition (iii) can be replaced by \\ 
\hspace*{7pt} (iii') $\varphi(D_i)\perp D_j$ for any $i\neq j$ from $\{1,\ldots,k\}$. 
\end{remark}

\begin{remark}\label{p118}
All the results from this section related to distributions can be transferred to an arbitrary $k$-pointwise slant submanifold $M$ of $(\overline{M},\varphi,g)$ by taking the $k$-pointwise slant distribution $D=TM$.
Thus, these results remain also valid in the $k$-pointwise slant submanifold framework. 
\end{remark}

\begin{remark}\label{p120}
We describe below, by a unitary approach, the notion of \textit{generic submanifold}, in sense of Ronsse \cite{ronsse}, \textit{of an almost Hermitian} or \textit{almost product Riemannian manifold}, relating it to the concept of $k$-pointwise slant submanifold.  

Let $M$ be an immersed submanifold of ($\overline{M},\varphi,g)$ with $\varphi$ and $g$ satisfying (\ref{h2}), where $\epsilon \in \{-1,1\}$, and let $fZ$ be the tangential component of $\varphi Z$ for any $Z\in T{M}$. 

Since $f^2$ is symmetric (see Lemmas \ref{lema1}, \ref{lema2} and Remark \ref{p194}) and $f$ is the composition of a projection and an isometry, denoting by $\lambda_i(x)$, $i=\overline{1,m(x)}$, the distinct eigenvalues of $f_x^2$ acting on the tangent space $T_xM$ for $x\in M$, these eigenvalues are all  contained in $[-1,0]$ for $\epsilon =-1$ and in $[0,1]$ for $\epsilon =1$. Let $\mathfrak D^i_x$ denote the eigenspace of $f_x^2$ cor\-re\-spond\-ing to $\lambda_i(x)$, $i=\overline{1,m(x)}$, $x\in M$. 

Now, we will consider that $M$ is a generic submanifold of $\overline{M}$ in sense of Ronsse, that is, $m(x)$ is independent of $x$ (and it will be denoted by $m$), and the dimension of $\mathfrak D^i_x$ is also independent of $x$ for $i=\overline{1,m}$. Moreover, if one of the $\lambda_i(x)$ is $0$ or $1$, say $\lambda_{i_0}(x_0)$, then $\lambda_{i_0}(x)$ has the same value for all $x\in M$. Taking into account the smoothness of the vector fields, the functions $\lambda_1, \dots , \lambda_m$ are continuous on $M$. Denoting by $\mathfrak D_i$ the distribution corresponding to the family $\{\mathfrak D^i_x: x\in M\}$ for $i=\overline{1,m}$, we obtain for $TM$ the orthogonal decomposition  
$$TM={\mathfrak D_1}\oplus\ldots\oplus{\mathfrak D_m}\,.$$

We notice that each distribution $\mathfrak D_i$ is invariant under $f$. Moreover, for $\epsilon =-1$, if $\lambda_i(x)\neq0$ for an $i\in\{1,\ldots,m\}$ and a point $x\in M$, then $\lambda_i(x)\neq0$ for that $i$ and all $x\in M$, and the cor\-re\-spond\-ing distribution $\mathfrak D_i$ is of even dimension. 
For any $x\in M$ and $i\in\{1,\ldots,m\}$, let $\alpha_i(x)\in [0,1]$ denote the nonnegative value for which $\lambda_i(x)=\epsilon\alpha_i^2(x)$. Then, for $X\in  \mathfrak{D}_i\verb=\=\{0\}$ and $x\in M$ such that $X_x\neq 0$, we get 
$$\|f_xX_x\|^2=\epsilon\lambda_i(x)g_x(X_x,X_x)= \alpha_i^2(x) \|X_x\|^2= \alpha_i^2(x) \|\varphi_x X_x\|^2$$
and, hence, $\|f_x X_x\|=\alpha_i(x) \|X_x\|=\alpha_i(x) \|\varphi_x X_x\|$, so $\alpha_i(x)=\cos\zeta_i(x)$, where $\zeta_i(x)$ is the angle between $\varphi_x X_x$ and $T_xM$, which is the same for any nonzero $X\in \mathfrak D_i$ with $X_x\neq 0$. 

We conclude that the $\mathfrak D_i$'s, $i=\overline{1,m}$, are pointwise slant distributions with different corresponding slant functions $\zeta_i$ excepting the one of them that would correspond to $\alpha_i=1$ and would be invariant with respect to $\varphi$ (if it exists). Thus, $M$ is a $k$-pointwise slant submanifold of $\overline{M}$, where $k$ is $m$ or $m-1$. 
\end{remark}

We deduce: 

\begin{proposition}\label{p127}
Any generic submanifold of an almost Hermitian or almost product Riemannian man\-i\-fold is a $k$-pointwise slant submanifold. 
\end{proposition}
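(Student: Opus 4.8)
The plan is to extract the conclusion directly from the eigenspace analysis already carried out in Remark \ref{p120}, which contains exactly the computation required. First I would recall that, by Lemmas \ref{lema1} and \ref{lema2} together with Remark \ref{p194}, the tangential component $f$ of $\varphi$ is skew-symmetric when $\epsilon=-1$ and symmetric when $\epsilon=1$; in either case $f^2$ is symmetric, so its pointwise eigenvalues $\lambda_i(x)$ are real. Since $\varphi$ is an isometry by (\ref{h3}) and $f$ is the composition of the orthogonal projection onto $TM$ with $\varphi$, the bound $|fX|\le|\varphi X|=|X|$ confines each $\lambda_i(x)$ to $[-1,0]$ for $\epsilon=-1$ and to $[0,1]$ for $\epsilon=1$.

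Next I would invoke the three defining conditions of a generic submanifold in the sense of Ronsse: the number $m$ of distinct eigenvalues of $f_x^2$ is independent of $x$, the dimension of each eigenspace $\mathfrak D^i_x$ is independent of $x$, and any eigenvalue equal to $0$ or $1$ is locally, hence globally, constant. Smoothness of the vector fields then yields that the eigenvalue functions $\lambda_1,\dots,\lambda_m$ are continuous, and the family $\{\mathfrak D^i_x\}_{x\in M}$ assembles into a regular distribution $\mathfrak D_i$, producing the orthogonal decomposition $TM=\mathfrak D_1\oplus\dots\oplus\mathfrak D_m$ into $f$-invariant distributions. Writing $\lambda_i(x)=\epsilon\alpha_i^2(x)$ with $\alpha_i(x)\in[0,1]$, the identity $|f_xX_x|=\alpha_i(x)|X_x|=\alpha_i(x)|\varphi_xX_x|$ gives $\alpha_i(x)=\cos\zeta_i(x)$, where $\zeta_i(x)$ is the angle between $\varphi_xX_x$ and $T_xM$ for any nonzero $X\in\mathfrak D_i$ with $X_x\neq 0$. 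Hence each $\mathfrak D_i$ is a $\zeta_i$-pointwise slant distribution in the sense of Definition \ref{p73}, with distinct slant functions, save possibly one $\mathfrak D_i$ attached to $\alpha_i\equiv 1$, which is then invariant under $\varphi$. Concluding that $TM$ is a $k$-pointwise slant distribution with $k\in\{m-1,m\}$, I would apply Definition \ref{p114} to obtain that $M$ is a $k$-pointwise slant submanifold.

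The step I expect to be the main obstacle is the regularity bookkeeping rather than any single computation. One must confirm that the $m$ pointwise-distinct eigenvalues can be consistently labelled as $m$ globally defined \emph{continuous} functions that never coincide (so that the slant functions $\zeta_i$ are genuinely distinct, as Definition \ref{p77} demands), and that the possibly invariant eigenspace corresponding to $\alpha_i\equiv 1$ is correctly separated from the proper pointwise slant part. This is precisely what the generic hypotheses (constant $m$, constant eigenspace dimensions, and locally constant $0$- and $1$-eigenvalues) are designed to guarantee, and it parallels exactly the argument underlying Proposition \ref{p125} in the $k$-slant case; the remainder of the proof is then routine.
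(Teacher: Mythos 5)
Your proof is correct and follows essentially the same route as the paper, whose justification of Proposition \ref{p127} is precisely the eigenspace analysis of Remark \ref{p120} that you reproduce (symmetry of $f^2$, confinement of the eigenvalues via the isometry property of $\varphi$, the three generic hypotheses yielding regular $f$-invariant distributions with continuous slant functions $\zeta_i$, and $k\in\{m-1,m\}$ after setting aside a possible invariant component). The ``main obstacle'' you flag is in fact automatic here: since the $\lambda_i(x)$ are by definition the \emph{distinct} eigenvalues of $f_x^2$ at each point and $m$ is constant, the resulting slant functions are pointwise distinct, which is even stronger than the distinctness of functions required by Definition \ref{p77}.
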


We will show through an example that a $k$-pointwise slant submanifold is not necessarily a generic one. 

\begin{example}\label{ex5}
Let $\overline{M}=\mathbb{R}^{4k+2}$ be the Euclidean space for some $k\geq 2$, with the canonical coordinates 
$(x_{1},\ldots, x_{4k+2})$, and let $\{e_{1}=\frac{\partial }{\partial x_{1}},\ldots,e_{4k+2}=\nolinebreak\frac{\partial }{\partial x_{4k+2}}\}$ be the natural basis in the tangent bundle. Let $\epsilon \in \{-1,1\}$, $\gamma\geq1$, and 
$E_\gamma (j,x)=\sqrt{2\|x\|^4+2(\gamma+j-1)\|x\|^2+(\gamma^2-2\gamma +j^2 +1)}$ for any $j\in \mathbb{N^*}$ and $x\in \overline{M}$. Let us define a $(1,1)$-tensor field $\varphi$ by: 
$$\varphi e_{1} = e_{2}, \quad \varphi e_{2}=\epsilon e_{1},$$
$$(\varphi e_{4j-1})_x =\frac{\|x\|^2+\gamma-1}{E_\gamma (j,x)}{\ }(e_{4j})_x+\frac{\|x\|^2+j}{E_\gamma (j,x)}{\ }(e_{4j+2})_x,$$
$$(\varphi e_{4j})_x=\epsilon \frac{\|x\|^2+\gamma-1}{E_\gamma (j,x)}{\ }(e_{4j-1})_x-\frac{\|x\|^2+j}{E_\gamma (j,x)}{\ }(e_{4j+1})_x, $$
$$(\varphi e_{4j+1})_x =-\epsilon \frac{\|x\|^2+j}{E_\gamma (j,x)}{\ }(e_{4j})_x+\epsilon \frac{\|x\|^2+\gamma-1}{E_\gamma (j,x)}{\ }(e_{4j+2})_x,$$
$$(\varphi e_{4j+2})_x=\epsilon \frac{\|x\|^2+j}{E_\gamma (j,x)}{\ }(e_{4j-1})_x+\frac{\|x\|^2+\gamma-1}{E_\gamma (j,x)}{\ }(e_{4j+1})_x$$
for $j\in \{ 1,\ldots,k\}$ and $x\in \overline{M}$.
Then, with the Riemannian metric $g$ given by $g(e_{i},e_{j})=\delta _{ij}$, $i, j\in \{1,\ldots, 4k+2\}$, 
$(\overline{M},\varphi,g)$ is an almost Hermitian manifold for $\epsilon =-1$ and an almost product Riemannian manifold for $\epsilon =1$.

We define the submanifold $M$ of $\overline{M}$ by
$$
M:=\{(x_{1},\ldots, x_{4k+2})\in \mathbb{R}^{4k+2} \ | \ x_{4j+1}=x_{4j+2}=0,\ j= \overline{1,k}\}.$$
We will consider the distributions: 
$$
D_0=\langle e_{1},e_{2}\rangle,\quad D_{j}=\langle e_{4j-1},e_{4j}\rangle, \ j= \overline{1,k}\,.
$$

Then, $M$ is a generic submanifold and a $k$-pointwise slant submanifold of $\overline{M}$ for $\gamma>1$, and it is a $k$-pointwise slant submanifold but not a generic submanifold of $\overline{M}$ for $\gamma=1$. The corresponding $k$-pointwise slant distribution is $TM=\oplus_{i=0}^kD_i$, with
$D_0$ the invariant component and $D_j$, $j=\overline{1,k}$, pointwise slant distributions having the slant functions 
$$\theta _{j}(x)=\arccos \left(\frac{\|x\|^2+\gamma-1}{E_\gamma (j,x)}\right),\; x\in M,\ j=\overline{1,k}\,.$$ 
$\oplus_{i=1}^kD_i$ is the proper $k$-pointwise slant distribution associated to $M$.

We consider the distributions $L_i:=\langle e_{4i+1}, e_{4i+2}\rangle$ in $(TM)^{\perp}$, $i=\overline{1,k}$, and notice that $\oplus_{i=1}^kL_i$ is the dual $k$-pointwise slant distribution of $\oplus_{i=1}^kD_i$. We have $D_i= f(L_i)$, $i=\overline {1,k}$; hence, the dual $k$-pointwise slant distribution of $\oplus_{i=1}^kL_i$ is $\oplus_{i=1}^kD_i$. 
\end{example}

\section{$k$-slant distributions via $k$-pointwise slant distributions}\label{slant_via_pointwise_slant}

\hfill{Motto: \textit{Repetitio est mater studiorum.}} 

\medskip 
In the sequel, our aim is to find sufficient conditions for $k$-pointwise slant distributions to be $k$-slant distributions, in various settings. In view of Theorem \ref{p143}, we notice that, for any pointwise slant component of a \mbox{$k$-point}\-wise slant distribution $D$, there is the same direct relation between the associated slant function and the corresponding eigenvalue function of $f^2|_D$. 
Related to this self-adjoint operator, Chen presents in \cite{chen2} (Lemma 3.1), in the almost Hermitian setting, a result linking the property of the eigenvalues of $f^2|_D$ to be constant to the condition for the tensor field to be parallel. Investigating that type of correspondence and using the above considerations, we will achieve our goal and also obtain some related results. In particular, we will obtain sufficient conditions for a $k$-pointwise slant submanifold to be a $k$-slant submanifold. Moreover, our study will lead to the introduction of a special subclass of distributions, the pointwise $k$-slant distributions, for which we will present corresponding results. Our statements will be valid in the almost Hermitian, the almost product Riemannian, the almost contact metric, and the almost paracontact metric setting.

\medskip 
Let $\overline{M}$ be a smooth manifold equipped with a Riemannian metric $g$ and a $(1,1)$-tensor field $\varphi$ satisfying (\ref{1}) for a fixed $\epsilon\in \{-1,1\}$, i.e., 
\begin{equation}
g(\varphi X,Y)=\epsilon g(X,\varphi Y) 
\text{ for any }X,Y\in T\overline{M}, \nonumber
\end{equation}
and 
\begin{equation}\label{97}
\varphi^2=\epsilon I \text{ \ or \ }\varphi^2=\epsilon (I-\eta\otimes\xi), 
\end{equation}
where $\xi$ is a fixed unitary vector field on the Riemannian manifold $(\overline{M},g)$ and $\eta$ denotes the dual $1$-form of $\xi$. In this way, our approach will be unitary for all of the above mentioned settings. 

We will consider that any sub\-man\-i\-fold $M$ of $\overline{M}$ we deal with satisfies the condition $\xi\in TM$ if the second formula in (\ref{97}) is valid. 
 
Throughout this section, $M$ will be $\overline{M}$ or an immersed submanifold of $\overline{M}$ if not specified otherwise. Let $k\in \mathbb{N}^*$ and $D=\oplus_{i=0}^k{D_i}$ be a \mbox{$k$-point}\-wise slant distribution on $M$, where $D_0$ is the invariant component, with $D\perp \xi$ if we consider the setting given by the second formula in (\ref{97}). Let $\theta_i$ denote the slant function of $D_i$ for $i=\overline{1,k}$, and let $\theta_0=0$. 

For any $Z\in T\overline{M}$, we have denoted by $fZ$ the component of $\varphi Z_M$ in $D$. We notice that $f^2|_D$ is symmetric with respect to $g$, and the $D_i$'s are regular distributions for $i=\overline{1,k}$. 
In view of Theorem \ref{p143}, for each $x\in M$ and $i\in \nolinebreak\{1,\dots ,k\}$, $(D_i)_x$ is a linear space composed entirely of eigenvectors of the only eigenvalue $\lambda_i(x)$ of $(f^2|_{D_i})_x$, and $\lambda_i(x)$ is different from $1$ or $(-1)$. 
Let $\overline\nabla$ denote the Levi-Civita connection on $\overline{M}$. 

\begin{proposition}\label{p129}
Let $i_0\in \{1,\ldots,k\}$ and\, $\overline\nabla_X  Y\in D$ for any $X,Y\in D_{i_0}$. Then, the following two assertions are equivalent: \\ 
1) $(\overline\nabla_X f^2) Y=0$ for any $X,Y\in D_{i_0}$. \\ 
2) i) $f^2(\overline\nabla_X Y)=\lambda_{i_0}\cdot \overline\nabla_X Y$ for any $X,Y\in D_{i_0}$; \\ 
\hspace*{7 pt} ii) $X(\lambda_{i_0})=0$ for any $X\in D_{i_0}$.
\end{proposition}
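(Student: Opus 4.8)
The plan is to compute $(\overline\nabla_X f^2)(Y)$ explicitly by means of the eigenvalue relation from Theorem \ref{p143} and then read off the equivalence from the resulting identity. Recall that, by Theorem \ref{p143}(a), every vector field $Y\in D_{i_0}$ satisfies $f^2 Y=\lambda_{i_0} Y$. Since, by definition of the covariant derivative of a $(1,1)$-tensor field, $(\overline\nabla_X f^2)(Y)=\overline\nabla_X(f^2 Y)-f^2(\overline\nabla_X Y)$, substituting $f^2 Y=\lambda_{i_0} Y$ and using that $\overline\nabla$ is a derivation gives, for any $X,Y\in D_{i_0}$,
\[
(\overline\nabla_X f^2)(Y)=X(\lambda_{i_0})\,Y+\lambda_{i_0}\,\overline\nabla_X Y-f^2(\overline\nabla_X Y).
\]
Here $f^2(\overline\nabla_X Y)$ is well defined because $\overline\nabla_X Y\in D$ by hypothesis, so $f^2|_D$ applies. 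This single identity is the engine of the whole argument.

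For the implication $2)\Rightarrow 1)$, which is immediate, I would assume both i) and ii): condition ii) kills the term $X(\lambda_{i_0})\,Y$, while condition i) makes the two remaining terms $\lambda_{i_0}\,\overline\nabla_X Y$ and $-f^2(\overline\nabla_X Y)$ cancel, so that $(\overline\nabla_X f^2)(Y)=0$.

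For the converse $1)\Rightarrow 2)$, which is the substantial direction, I would rewrite the displayed identity, under the hypothesis $(\overline\nabla_X f^2)(Y)=0$, as $f^2(\overline\nabla_X Y)=X(\lambda_{i_0})\,Y+\lambda_{i_0}\,\overline\nabla_X Y$. Since $\overline\nabla_X Y\in D=\oplus_{j=0}^k D_j$, I would decompose $\overline\nabla_X Y=\sum_{j=0}^k Z_j$ with $Z_j=pr_j(\overline\nabla_X Y)\in D_j$ and apply $f^2$ componentwise via $f^2 Z_j=\lambda_j Z_j$ (valid for every $j$, with $\lambda_0=\epsilon$ on the invariant component). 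Subtracting $\lambda_{i_0}\,\overline\nabla_X Y$ from both sides yields
\[
\sum_{j=0}^k(\lambda_j-\lambda_{i_0})\,Z_j=X(\lambda_{i_0})\,Y.
\]
The decisive observation is that the left-hand side has zero component in $D_{i_0}$, because the $j=i_0$ summand carries the factor $\lambda_{i_0}-\lambda_{i_0}=0$, whereas the right-hand side lies entirely in $D_{i_0}$ since $Y\in D_{i_0}$. Projecting onto $D_{i_0}$ therefore forces $X(\lambda_{i_0})\,Y=0$; as $D_{i_0}$ is non-null, choosing at each point a $Y$ with $Y_x\neq 0$ gives $X(\lambda_{i_0})=0$, that is, ii). Feeding ii) back into the rewritten identity collapses it to $f^2(\overline\nabla_X Y)=\lambda_{i_0}\,\overline\nabla_X Y$, which is i).

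The main obstacle is precisely the separation of the two conclusions i) and ii) out of the single identity above; this is resolved by the orthogonal projection onto $D_{i_0}$, which exploits that the only eigenvalue of $f^2$ on $D_{i_0}$ is $\lambda_{i_0}$ and that $Y$ itself sits in $D_{i_0}$. It is worth noting that no hypothesis on whether $\lambda_j(x)=\lambda_{i_0}(x)$ for $j\neq i_0$ is required, since the projection annihilates every off-$D_{i_0}$ term irrespective of its scalar coefficient.
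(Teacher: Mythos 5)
Your proof is correct, and it shares the paper's skeleton: both start from Theorem \ref{p143} and the Leibniz expansion $(\overline\nabla_X f^2)Y=X(\lambda_{i_0})\,Y+\lambda_{i_0}\,\overline\nabla_X Y-f^2(\overline\nabla_X Y)$, both treat 2)$\Rightarrow$1) as immediate, and both feed ii) back into this identity to obtain i). However, the decisive step of 1)$\Rightarrow$2) — isolating $X(\lambda_{i_0})=0$ — is carried out by a genuinely different mechanism. The paper normalizes: it takes $Y_{i_0}$ a unitary vector field in $D_{i_0}$ and pairs the expanded identity with $Y_{i_0}$, using metric compatibility of the Levi-Civita connection (so $\overline\nabla_X Y_{i_0}\perp Y_{i_0}$) and the self-adjointness of $f^2|_D$ (Lemma \ref{lema2}) together with $f^2Y_{i_0}=\lambda_{i_0}Y_{i_0}$ (so $f^2(\overline\nabla_X Y_{i_0})\perp Y_{i_0}$), whence the pairing returns exactly $X(\lambda_{i_0})$. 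You instead decompose $\overline\nabla_X Y$ along $D=\oplus_{j=0}^k D_j$, let $f^2$ act as the scalar $\lambda_j$ on each summand (with $\lambda_0=\epsilon$ on the invariant part, which is legitimate since $f=\varphi$ there and $D\perp\xi$ in the ($\epsilon$)-contact case), and annihilate the left-hand side by projecting onto $D_{i_0}$. What the paper's route buys: it needs no information about how $f^2$ acts outside $D_{i_0}$, only the eigenvalue relation on $D_{i_0}$ and the symmetry of $f^2|_D$, at the price of invoking that $\overline\nabla$ is metric. What your route buys: it never uses metric compatibility of the connection (only the tensor-derivation rule and the directness of the orthogonal decomposition), and it makes transparent — as you note — why no pointwise distinctness of the $\lambda_j$'s is needed, a point the paper must address separately in Remark \ref{p144}. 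One minor caveat common to both arguments: the auxiliary section (your $Y$ with $Y_x\neq 0$, the paper's unit field $Y_{i_0}$) is in general only available locally, which suffices because ii) is a pointwise assertion.
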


\begin{proof}
1)$\Rightarrow$2):  In view of Theorem \ref{p143}, we have $f^2Y=\lambda_{i_0}Y$ for any $Y\in \nolinebreak D_{i_0}$. Let $X,Y\in D_{i_0}$. From $(\overline\nabla_X f^2)Y=0$, we get 
$\overline\nabla_X (f^2Y)=\nolinebreak f^2(\overline\nabla_X Y)$; 
hence, 
$$X(\lambda_{i_0})\cdot Y+\lambda_{i_0}\cdot \overline\nabla_X Y =f^2(\overline\nabla_X Y).$$
For $Y_{i_0}$ a unitary vector field in $D_{i_0}$, $\overline\nabla_X Y_{i_0}$ and $f^2(\overline\nabla_X Y_{i_0})$ are orthogonal to $Y_{i_0}$; hence, taking $Y=Y_{i_0}$ above, $X(\lambda_{i_0})=0$. \\ 
It follows that $f^2(\overline\nabla_X Y)=\lambda_{i_0}\cdot \overline\nabla_X Y$ for any $X,Y\in D_{i_0}$. \\ 
2)$\Rightarrow$1):
Let $X,Y\in D_{i_0}$. Because $f^2Y=\lambda_{i_0}\cdot Y$ and 
$f^2(\overline\nabla_X Y)=\lambda_{i_0}\cdot \overline\nabla_X Y$, we have 
$$(\overline\nabla_X f^2) Y= \overline\nabla_X (\lambda_{i_0}\cdot Y)-\lambda_{i_0}\cdot (\overline\nabla_X Y)= X(\lambda_{i_0})\cdot Y= 0. \eqno\qedhere$$
\end{proof}

\begin{remark}\label{p144}
The condition $f^2(\overline\nabla_X Y)=\lambda_{i_0}\cdot \overline\nabla_X Y$ for $X,Y\in D_{i_0}$ doesn't mean that $\overline\nabla_X Y\in D_{i_0}$, even if localized in a point $x\in {M}$, because the linear space $(D_{i_0})_x$ is not, in general, the entire eigenspace in $D_x$ of the eigenvalue $\lambda_{i_0}(x)$. For this, it would be necessary that $\lambda_{i_0}(x)\neq \lambda_{i}(x)$ for all $i\neq i_0$. We will solve this problem, of the eigenspace, a little later. 
\end{remark}

\begin{corollary}\label{p130}
Let\, $\overline\nabla_X  Y\in D$ for any $X,Y\in D_{i}$, $i=\overline {1,k}$. 
Then, the following two assertions are equivalent: \\ 
1) $(\overline\nabla_X f^2) Y=0$ for any $X,Y\in D_{i}$, $i=\overline {1,k}$. \\ 
2) i)  $f^2(\overline\nabla_X Y)=\lambda_{i}\cdot \overline\nabla_X Y$ for any $X,Y\in D_{i}$, $i=\overline {1,k}$; \\ 
\hspace*{7 pt} ii) $X(\lambda_{i})=0$ for any $X\in D_{i}$, $i=\overline {1,k}$.
\end{corollary}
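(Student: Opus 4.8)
The plan is to obtain Corollary \ref{p130} directly from Proposition \ref{p129} by applying the latter separately for each index $i_0=i\in\{1,\ldots,k\}$ and then conjoining the resulting equivalences. First I would observe that the global hypothesis of the corollary, namely $\overline\nabla_X Y\in D$ for any $X,Y\in D_i$ and every $i=\overline{1,k}$, is, for each fixed $i$, precisely the hypothesis of Proposition \ref{p129} with $i_0=i$. Hence Proposition \ref{p129} is applicable with $i_0=i$ for every $i\in\{1,\ldots,k\}$, the eigenvalue function $\lambda_{i_0}$ of that proposition specializing to $\lambda_i$ (in the sense of Theorem \ref{p143}).

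Next, for each fixed $i\in\{1,\ldots,k\}$, Proposition \ref{p129} yields the equivalence between the two per-index conditions: condition $(a_i)$, that $(\overline\nabla_X f^2)Y=0$ for all $X,Y\in D_i$; and condition $(b_i)$, the conjunction of $f^2(\overline\nabla_X Y)=\lambda_i\cdot\overline\nabla_X Y$ for all $X,Y\in D_i$ together with $X(\lambda_i)=0$ for all $X\in D_i$.

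Finally I would take the conjunction over $i$. Statement 1) of the corollary is exactly the conjunction of the conditions $(a_i)$ over $i=\overline{1,k}$, while statement 2), obtained by combining 2)i) and 2)ii), is exactly the conjunction of the conditions $(b_i)$ over $i=\overline{1,k}$. Since each per-index equivalence $(a_i)\Leftrightarrow(b_i)$ holds by the previous step, and since conjunction respects equivalence, the conjunction of the $(a_i)$ is equivalent to the conjunction of the $(b_i)$; this is precisely the asserted equivalence $1)\Leftrightarrow 2)$.

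There is no genuine obstacle here: the corollary is merely the uniform, all-indices version of Proposition \ref{p129}. The only point worth a word of care is that the quantifiers distribute cleanly over the conjunction because the conditions are imposed on the mutually orthogonal summands $D_i$ separately, so no interaction between distinct indices arises; in particular each $\lambda_i$ and each $D_i$ is treated in isolation exactly as in the single-index proposition, and the caveat of Remark \ref{p144} concerning the relation between $(D_{i})_x$ and the full eigenspace of $\lambda_i(x)$ plays no role, since the statement quantifies only over vector fields lying in the individual distributions $D_i$.
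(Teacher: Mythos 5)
Your proposal is correct and coincides with the paper's intended argument: the corollary is stated as an immediate consequence of Proposition \ref{p129}, obtained exactly by applying that proposition with $i_0=i$ for each $i\in\{1,\ldots,k\}$ and taking the conjunction of the resulting equivalences. Your remark that the per-index conditions do not interact (so the conjunction distributes cleanly and Remark \ref{p144} is irrelevant here) is also accurate.
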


\begin{remark}\label{p131}
The equivalence in the above Corollary is, in particular, valid for $D$ completely integrable with respect to $\overline\nabla$ (i.e., $\overline\nabla_X  Y\in D$\, for any $X,Y\in D$) or for all of the $D_i$'s ($i=\overline {1,k}$) completely integrable with respect to $\overline\nabla$. 
\end{remark}

With the same type of justifications, we get the following results. 

\begin{proposition}\label{p132}
Let $i_0\in \{1,\ldots,k\}$ and\, $\overline\nabla_X  Y\in D$ for any $X\in T{M}$ and\, $Y\in D_{i_0}$. Then, the following two assertions are equivalent: \\ 
1) $(\overline\nabla_X f^2) Y=0$ for any $X\in T{M}$ and\, $Y\in D_{i_0}$. \\ 
2) i) $f^2(\overline\nabla_X Y)=\lambda_{i_0}\cdot \overline\nabla_X Y$ for any $X\in T{M}$ and\, $Y\in D_{i_0}$; \\ 
\hspace*{7 pt} ii) the restriction of $D_{i_0}$ to any connected component of ${M}$ is a slant distribution \textup($\lambda_{i_0}$ is constant on any connected component of ${M}$\textup). In particular, if ${M}$ is a connected manifold, $D_{i_0}$ is a slant distribution. 
\end{proposition}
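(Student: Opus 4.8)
The plan is to follow the template of Proposition~\ref{p129} — as the motto suggests — the only genuinely new feature being that $X$ now ranges over the whole of $TM$ rather than only over $D_{i_0}$. The two facts I would lean on throughout are furnished by Theorem~\ref{p143}: on $D_{i_0}$ one has $f^2Y=\lambda_{i_0}Y$, and the eigenvalue function satisfies $\lambda_{i_0}=\epsilon\cos^2\theta_{i_0}$; moreover, the hypothesis $\overline\nabla_XY\in D$ guarantees that $f^2$ can be applied to $\overline\nabla_XY$ and that $f^2|_D$ is $g$-symmetric on the relevant vectors (as noted just before the statement).

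For the implication $1)\Rightarrow 2)$, I would first rewrite $(\overline\nabla_Xf^2)Y=0$ as $\overline\nabla_X(f^2Y)=f^2(\overline\nabla_XY)$ and, using $f^2Y=\lambda_{i_0}Y$, expand the left-hand side by the Leibniz rule to obtain
\[
X(\lambda_{i_0})\cdot Y+\lambda_{i_0}\cdot\overline\nabla_XY=f^2(\overline\nabla_XY).
\]
Testing this against a local \emph{unit} field $Y_{i_0}\in D_{i_0}$, I would observe that $\overline\nabla_XY_{i_0}\perp Y_{i_0}$ (metric compatibility of $\overline\nabla$) and that $f^2(\overline\nabla_XY_{i_0})\perp Y_{i_0}$ (symmetry of $f^2|_D$ combined with $f^2Y_{i_0}=\lambda_{i_0}Y_{i_0}$ and the previous orthogonality); taking the inner product with $Y_{i_0}$ then forces $X(\lambda_{i_0})=0$. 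This step is identical to Proposition~\ref{p129}, but since $X$ is now arbitrary in $TM$, the conclusion strengthens to $d\lambda_{i_0}=0$, i.e. $\lambda_{i_0}$ is constant on every connected component. By Theorem~\ref{p143} (together with Corollary~\ref{p122}), constancy of $\lambda_{i_0}=\epsilon\cos^2\theta_{i_0}$ forces $\theta_{i_0}$ to be constant on each connected component, as $\cos^2$ is injective on $(0,\tfrac{\pi}{2}]$; hence $D_{i_0}$ restricted there is slant, which is condition~ii) of~$2)$, with the stated specialization when $M$ is connected. Feeding $X(\lambda_{i_0})=0$ back into the displayed identity then yields $f^2(\overline\nabla_XY)=\lambda_{i_0}\,\overline\nabla_XY$, which is condition~i) of~$2)$.

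The converse $2)\Rightarrow 1)$ is the easy direction: condition~ii) of~$2)$ makes $\lambda_{i_0}$ locally constant, so $X(\lambda_{i_0})=0$ for all $X\in TM$, and then
\[
(\overline\nabla_Xf^2)Y=\overline\nabla_X(\lambda_{i_0}Y)-f^2(\overline\nabla_XY)=X(\lambda_{i_0})\,Y+\lambda_{i_0}\overline\nabla_XY-\lambda_{i_0}\overline\nabla_XY=0,
\]
where the last equality invokes condition~i) of~$2)$.

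I do not expect a serious obstacle, since the computation is structurally the same as in Proposition~\ref{p129}, and enlarging the range of $X$ costs nothing: neither the metric-compatibility argument nor the symmetry of $f^2|_D$ uses that $X$ lies in $D_{i_0}$, so the same identity simply yields a stronger (global) conclusion. The one point that genuinely requires care is the passage between constancy of the eigenvalue and the slant property, which is not tautological: it rests on Theorem~\ref{p143} to supply $\lambda_{i_0}=\epsilon\cos^2\theta_{i_0}$ and on the injectivity of $\cos^2$ on $(0,\tfrac{\pi}{2}]$ to conclude that a constant $\cos^2\theta_{i_0}$ gives a constant slant angle.
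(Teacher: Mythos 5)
Your proof is correct and follows essentially the same route as the paper: the paper proves Proposition~\ref{p129} in detail and then states Proposition~\ref{p132} ``with the same type of justifications,'' which is exactly what you do --- the unit-field/metric-compatibility/symmetry-of-$f^2|_D$ computation giving $X(\lambda_{i_0})=0$, now for all $X\in TM$, hence local constancy of $\lambda_{i_0}$. Your explicit justification of the passage from constant $\lambda_{i_0}=\epsilon\cos^2\theta_{i_0}$ to the slant property (via Theorem~\ref{p143} and injectivity of $\cos^2$ on $(0,\tfrac{\pi}{2}]$) fills in the one step the paper leaves implicit in the parenthetical of assertion 2)~ii).
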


\begin{theorem}\label{p133}
Let\, $\overline\nabla_X  Y\in D$ for any $X\in T{M}$ and\, $Y\in \oplus_{i=1}^k D_i$. Then, the following two assertions are equivalent: \\ 
1) $(\overline\nabla_X f^2) Y=0$ for any $X\in T{M}$ and\, $Y\in \oplus_{i=1}^k D_i$. \\ 
2) i) $f^2(\overline\nabla_X Y)=\lambda_{i}\cdot \overline\nabla_X Y$ for any $X\in T{M}$ and\, $Y\in D_{i}$, $i=\overline{1,k}$; \\ 
\hspace*{7 pt} ii) the restriction of $D$ to any connected component of ${M}$ is a \mbox{$k'$-slant} distribution, where $k'\in \{1,\dots, k\}$ depends on the values of the $\lambda_i$'s on the considered connected component \textup($\lambda_{i}$ is constant on any connected component of ${M}$, $i=\overline{1,k}$\textup).
\end{theorem}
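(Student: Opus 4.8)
The plan is to generalise the argument of Proposition~\ref{p132} from a single pointwise slant component $D_{i_0}$ to the whole proper component $\oplus_{i=1}^k D_i$, the only genuinely new ingredient being the bookkeeping in part~2)ii) when several slant functions coincide on a connected component. Throughout I would treat $f^2$ as the $(1,1)$-tensor field $(pr_D\circ\varphi)^2$ on $M$, so that $(\overline\nabla_X f^2)Y=\overline\nabla_X(f^2Y)-f^2(\overline\nabla_X Y)$, and I would repeatedly use Theorem~\ref{p143}(a), namely $f^2Y=\lambda_i Y$ for $Y\in D_i$; the standing hypothesis $\overline\nabla_X Y\in D$ keeps both sides of every identity inside $D$, where the spectral description of $f^2|_D$ is available.

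For $1)\Rightarrow 2)$, fix $i\in\{1,\dots,k\}$ and $Y\in D_i$. Expanding the vanishing covariant derivative and inserting $f^2Y=\lambda_i Y$ yields the master identity
\[
f^2(\overline\nabla_X Y)=X(\lambda_i)\,Y+\lambda_i\,\overline\nabla_X Y \quad\text{for all } X\in TM.
\]
To split this into its two conclusions I would specialise $Y=Y_i$ to a local unitary section of $D_i$ (which exists since $D_i$ is non-null and regular). Differentiating $g(Y_i,Y_i)=1$ gives $g(\overline\nabla_X Y_i,Y_i)=0$, and the symmetry of $f^2$ together with $f^2Y_i=\lambda_i Y_i$ gives $g(f^2(\overline\nabla_X Y_i),Y_i)=\lambda_i\,g(\overline\nabla_X Y_i,Y_i)=0$; taking the $g(\cdot,Y_i)$-component of the master identity then forces $X(\lambda_i)=0$ for every $X\in TM$. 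Hence each $\lambda_i$ is locally constant, and, since $\lambda_i=\epsilon\cos^2\theta_i$ with $\theta_i\in(0,\frac{\pi}{2}]$, the slant function $\theta_i$ is constant on every connected component. Feeding $X(\lambda_i)=0$ back into the master identity gives exactly part~2)i).

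For part~2)ii) I would work on a fixed connected component $C$. There every $\theta_i|_C$ is a constant, so each $D_i|_C$ is already a slant distribution in the sense of Definition~\ref{5}(ii). These angles need not be distinct, so I would group the indices according to the common value of $\theta_i|_C$ and merge each group into a single slant distribution by Corollary~\ref{p197}: its hypotheses hold because the summands are mutually orthogonal, each satisfies $f(D_i)\subseteq D_i$ (Definition~\ref{p77}(iii)) together with $\varphi(D_i)\perp D_j$ for $i\neq j$ (Remark~\ref{p79}), and $\varphi$ acts isometrically on $D$ (everywhere in the almost Hermitian and almost product Riemannian case, on $\langle\xi\rangle^\perp\supseteq D$ in the almost contact and almost paracontact metric case, cf.\ Remarks~\ref{p32},~\ref{p33}), hence preserves orthogonality. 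Adjoining the invariant $D_0$, this exhibits $D|_C$ as a $k'$-slant distribution whose $k'$ equals the number of distinct values among $\lambda_1|_C,\dots,\lambda_k|_C$, with $1\le k'\le k$. The converse $2)\Rightarrow 1)$ is then immediate: part~2)ii) gives $X(\lambda_i)=0$, and combined with part~2)i) the same expansion of $(\overline\nabla_X f^2)Y$ collapses to $X(\lambda_i)\,Y=0$ on each $D_i$; since $\overline\nabla_X f^2$ is $C^\infty(M)$-linear in its argument, this extends by linearity to all of $\oplus_{i=1}^k D_i$.

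The routine covariant-derivative manipulations are harmless once the master identity is in place; the main obstacle is the correct formulation and proof of part~2)ii), i.e.\ verifying that the slant components whose angles coincide on a connected component genuinely satisfy the hypotheses of Corollary~\ref{p197} so that they may be merged, and tracking how the resulting $k'$ depends on the component. This is precisely where the $k$-pointwise setting differs substantively from the single-component Proposition~\ref{p132}.
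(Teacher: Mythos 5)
Your proof is correct and follows essentially the paper's own route: the unitary-section argument of Proposition~\ref{p129} (extended componentwise as in Proposition~\ref{p132}) to get $X(\lambda_i)=0$ and hence constancy of each $\lambda_i$ on connected components, and then the merging of slant components whose angles coincide via Proposition~\ref{p31}/Corollary~\ref{p197}, which is exactly the device the paper invokes (e.g.\ in the proof of Theorem~\ref{p188}) to produce the $k'$-slant structure with $k'$ determined by the distinct values of the $\lambda_i$'s.
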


\begin{theorem}\label{p169}
Let\, $\overline\nabla_X  Y\in D$ for any $X\in T{M}$ and\, $Y\in \oplus_{i=1}^k D_i$. If ${M}$ is a connected manifold, then the following two assertions are equivalent: \\ 
1) $(\overline\nabla_X f^2) Y=0$ for any $X\in T{M}$ and\, $Y\in \oplus_{i=1}^k D_i$. \\ 
2) i) $f^2(\overline\nabla_X Y)=\lambda_{i}\cdot \overline\nabla_X Y$ for any $X\in T{M}$ and\, $Y\in D_{i}$, $i=\overline{1,k}$; \\ 
\hspace*{7 pt} ii) $D$ is a $k$-slant distribution \textup($\lambda_{1},\lambda_{2},\ldots, \lambda_{k}$ are constant and different on ${M}$\textup).
\end{theorem}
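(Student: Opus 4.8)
The plan is to read off Theorem~\ref{p169} from Theorem~\ref{p133} specialized to a connected base, the single new ingredient being that, once the eigenvalue functions are known to be constant, the distinctness of the slant functions forces the eigenvalues to be pairwise distinct. Since $M$ is connected it is its own unique connected component, so the restriction of $D$ to any connected component of $M$, as invoked in Theorem~\ref{p133}, is just $D$ on all of $M$; hence condition~2) there reads: 2)i) holds, each $\lambda_i$ is constant on $M$, and $D$ is a $k'$-slant distribution for some $k'\in\{1,\dots,k\}$. Note that the hypotheses and the assertion~1) of the two theorems are literally identical, so Theorem~\ref{p133} applies directly.

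For $1)\Rightarrow 2)$ I would first apply Theorem~\ref{p133}: hypothesis~1) yields 2)i) verbatim together with the constancy of every $\lambda_i$ and the fact that $D$ is $k'$-slant. It then suffices to show $k'=k$. By Theorem~\ref{p143} one has $\cos^2\theta_i=\epsilon\lambda_i$ with $\theta_i\in(0,\tfrac{\pi}{2}]$; as $\epsilon\lambda_i$ is now constant and $\cos^2$ is strictly decreasing, hence injective, on $(0,\tfrac{\pi}{2}]$, each $\theta_i$ is a \emph{constant} function. The functions $\theta_1,\dots,\theta_k$ are pairwise distinct by Definition~\ref{p77}; being constant, they therefore take pairwise distinct values, and injectivity of $\cos^2$ makes $\lambda_1,\dots,\lambda_k$ pairwise distinct. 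Distinctness of the $\lambda_i$ dissolves the eigenspace ambiguity flagged in Remark~\ref{p144}: each $(D_i)_x$ is now exactly the $\lambda_i(x)$-eigenspace of $(f^2|_{\oplus_{j=1}^k D_j})_x$ inside $D_x$, so the $D_i$ are slant distributions with distinct constant slant angles and no two of them can merge. Consequently $k'=k$, i.e. $D$ is a $k$-slant distribution, which is 2)ii).

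For the converse $2)\Rightarrow 1)$ I would observe that a $k$-slant distribution has, by Corollary~\ref{p187}, each $\lambda_i=\epsilon\cos^2\theta_i$ constant on $M$ (and, by Definition~\ref{8}, the slant angles, hence the $\lambda_i$, pairwise distinct). Together with 2)i) this is precisely condition~2) of Theorem~\ref{p133}, a $k$-slant distribution being in particular $k'$-slant with $k'=k$ on the single connected component $M$. The equivalence in Theorem~\ref{p133} then returns its condition~1), which is exactly assertion~1).

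The step I expect to carry the real weight is the distinctness argument. Theorem~\ref{p133} alone only produces a $k'$-slant structure because on a disconnected manifold two of the constant values $\lambda_i$ could coincide on some component and collapse the decomposition; the connectedness hypothesis is channeled entirely through the elementary fact that pairwise distinct \emph{constant} functions take pairwise distinct values. I would be careful to record that $\theta_i\in(0,\tfrac{\pi}{2}]$ keeps $\cos\theta_i$ in the range on which $\cos^2$ is injective, so that distinctness of the slant functions genuinely upgrades to distinctness of the eigenvalues, and the eigenspace issue of Remark~\ref{p144} is thereby removed.
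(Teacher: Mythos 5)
Your proposal is correct and follows what is essentially the paper's own (implicit) route: the paper states Theorem \ref{p169} without a separate proof, as one of the results obtained ``with the same type of justifications'' as Proposition \ref{p129} and Theorem \ref{p133}, i.e.\ the covariant-derivative computation gives 2)i) and $X(\lambda_i)=0$, connectedness upgrades this to global constancy of each $\lambda_i$, and the distinctness of the slant functions then forces distinct constant values. Your reduction to Theorem \ref{p133} together with the observation that distinct \emph{constant} functions $\theta_i$ (via injectivity of $\cos^2$ on $(0,\frac{\pi}{2}]$ and Theorem \ref{p143}) take pairwise distinct values, so that no merging of components occurs and $k'=k$, is exactly that intended argument, including the correct handling of the eigenspace issue of Remark \ref{p144}.
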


\begin{proposition}\label{p134}
Let $i_0\in \{1,\ldots,k\}$ and\, $\overline\nabla_X  Y\in D$ for any $X\in D$ and\, $Y\in D_{i_0}$. Then, the following two assertions are equivalent: \\ 
1) $(\overline\nabla_X f^2) Y=0$ for any $X\in D$ and\, $Y\in D_{i_0}$. \\ 
2) i) $f^2(\overline\nabla_X Y)=\lambda_{i_0}\cdot \overline\nabla_X Y$ for any $X\in D$ and\, $Y\in D_{i_0}$; \\ 
\hspace*{7 pt} ii) $X(\lambda_{i_0})=0$ for any $X\in D$.
\end{proposition}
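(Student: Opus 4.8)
The plan is to mimic the proof of Proposition \ref{p129}, observing that enlarging the range of $X$ from $D_{i_0}$ to the whole of $D$ changes nothing essential: every step relies only on the hypothesis $\overline\nabla_X Y\in D$ and on the symmetry of $f^2|_D$ with respect to $g$, neither of which cares about the particular subdistribution from which $X$ is drawn. Throughout I would work from the definition $(\overline\nabla_X f^2)Y=\overline\nabla_X(f^2 Y)-f^2(\overline\nabla_X Y)$ together with the fact, furnished by Theorem \ref{p143}, that $f^2 Y=\lambda_{i_0}Y$ for every $Y\in D_{i_0}$ (recall that $\varphi$ is an isometry on the relevant subspace in each of the four settings, so Theorem \ref{p143} applies, exactly as the section already presupposes).

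For the implication 1)$\Rightarrow$2), I would first insert $f^2 Y=\lambda_{i_0}Y$ and apply the Leibniz rule to $\overline\nabla_X(f^2 Y)$, so that the vanishing of $(\overline\nabla_X f^2)Y$ becomes the identity
\begin{equation}
X(\lambda_{i_0})\,Y+\lambda_{i_0}\,\overline\nabla_X Y=f^2(\overline\nabla_X Y),\nonumber
\end{equation}
valid for all $X\in D$ and $Y\in D_{i_0}$. To isolate the scalar term I would specialize $Y$ to a locally defined unit vector field $Y_{i_0}\in D_{i_0}$, so that metric compatibility of the Levi-Civita connection gives $g(\overline\nabla_X Y_{i_0},Y_{i_0})=0$, and then pair the identity with $Y_{i_0}$. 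The crucial point is that $\overline\nabla_X Y_{i_0}\in D$ by hypothesis, whence the symmetry of $f^2|_D$ and $f^2 Y_{i_0}=\lambda_{i_0}Y_{i_0}$ yield $g(f^2(\overline\nabla_X Y_{i_0}),Y_{i_0})=\lambda_{i_0}\,g(\overline\nabla_X Y_{i_0},Y_{i_0})=0$; the pairing then forces $X(\lambda_{i_0})=0$ at every point, which is ii). Feeding $X(\lambda_{i_0})=0$ back into the displayed identity leaves exactly i). The converse 2)$\Rightarrow$1) is a direct verification: for $X\in D$ and $Y\in D_{i_0}$, I would expand $(\overline\nabla_X f^2)Y$, use $f^2 Y=\lambda_{i_0}Y$ with ii) to discard the $X(\lambda_{i_0})$ term and i) to rewrite $f^2(\overline\nabla_X Y)$, after which the two surviving terms cancel.

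The step I expect to demand the most care is the extraction of $X(\lambda_{i_0})=0$, precisely because one must resist the natural but false conclusion that $\overline\nabla_X Y\in D_{i_0}$. As Remark \ref{p144} warns, assertion i) says only that $\overline\nabla_X Y$ is a $\lambda_{i_0}$-eigenvector of $f^2$ lying in $D$, and $(D_{i_0})_x$ need not exhaust the $\lambda_{i_0}(x)$-eigenspace of $(f^2|_D)_x$. The argument must therefore stay at the level of the eigenvalue identity and the symmetry of $f^2|_D$, never attempting to project $\overline\nabla_X Y$ into $D_{i_0}$; the unit-vector-field normalization is exactly what lets me circumvent this difficulty, since it replaces the unavailable membership $\overline\nabla_X Y\in D_{i_0}$ by the available orthogonality $\overline\nabla_X Y_{i_0}\perp Y_{i_0}$.
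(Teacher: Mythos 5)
Your proposal is correct and is essentially the paper's own argument: the paper proves Proposition \ref{p129} exactly this way (Leibniz rule on $\overline\nabla_X(f^2Y)$ with $f^2Y=\lambda_{i_0}Y$, then pairing with a unit field $Y_{i_0}\in D_{i_0}$, using metric compatibility and the symmetry of $f^2|_D$ to kill both inner products), and it disposes of Proposition \ref{p134} by noting that the same justification goes through verbatim when $X$ ranges over all of $D$, which is precisely your observation. Your care about not concluding $\overline\nabla_X Y\in D_{i_0}$ matches the paper's Remark \ref{p144}.
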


\begin{corollary}\label{p135}
Let\, $\overline\nabla_X  Y\in D$ for any $X\in D$ and\, $Y\in \oplus_{i=1}^k D_i$. Then, the following two assertions are equivalent: \\ 
1) $(\overline\nabla_X f^2) Y=0$ for any $X\in D$ and\, $Y\in \oplus_{i=1}^k D_i$. \\ 
2) i) $f^2(\overline\nabla_X Y)=\lambda_{i}\cdot \overline\nabla_X Y$ for any $X\in D$ and\, $Y\in D_{i}$, $i=\overline{1,k}$; \\ 
\hspace*{7 pt} ii) $X(\lambda_{i})=0$ for any $X\in D$, $i=\overline{1,k}$. 
\end{corollary}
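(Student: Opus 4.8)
The plan is to deduce this corollary directly from Proposition \ref{p134} by applying it separately for each index $i_0=i$, $i=\overline{1,k}$, and then assembling the per-index equivalences. First I would observe that the standing hypothesis $\overline\nabla_X Y\in D$ for all $X\in D$ and $Y\in\oplus_{i=1}^k D_i$ restricts, in particular, to $Y\in D_{i_0}$ for each fixed $i_0$, since $D_{i_0}\subseteq\oplus_{i=1}^k D_i$. Thus the hypothesis of Proposition \ref{p134} is satisfied for every $i_0\in\{1,\ldots,k\}$, and the single-index equivalence is available for each of them.

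The key step is to note that the map $Y\mapsto(\overline\nabla_X f^2)Y=\overline\nabla_X(f^2Y)-f^2(\overline\nabla_X Y)$ is $C^\infty(\overline{M})$-linear in $Y$, being the covariant derivative of the $(1,1)$-tensor field $f^2$; in particular it is additive. Hence, decomposing any $Y\in\oplus_{i=1}^k D_i$ as $Y=\sum_{i=1}^k Y_i$ with $Y_i\in D_i$, I obtain $(\overline\nabla_X f^2)Y=\sum_{i=1}^k(\overline\nabla_X f^2)Y_i$. From this, assertion 1) of the corollary (the vanishing of $\overline\nabla_X f^2$ on the whole sum $\oplus_{i=1}^k D_i$) is equivalent to the simultaneous vanishing on each summand $D_i$: the forward direction follows by taking $Y=Y_i\in D_i$, and the converse by summing. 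In other words, assertion 1) of the corollary is equivalent to assertion 1) of Proposition \ref{p134} holding for every $i_0=i$.

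It then remains to combine. For each fixed $i$, Proposition \ref{p134} gives that its 1) is equivalent to the conjunction of 2)i) and 2)ii) for that index. Taking the conjunction over $i=\overline{1,k}$, the resulting family of per-index conditions, namely $f^2(\overline\nabla_X Y)=\lambda_i\cdot\overline\nabla_X Y$ for $X\in D$, $Y\in D_i$, and $X(\lambda_i)=0$ for $X\in D$, is exactly assertions 2)i) and 2)ii) of the corollary as stated. Chaining the three equivalences yields the desired 1) $\Leftrightarrow$ 2).

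I expect no genuine obstacle here: the only point requiring care is the passage from the single-summand statement to the full sum, which rests solely on the $C^\infty$-linearity of $\overline\nabla_X f^2$ in its argument together with the direct-sum decomposition $Y=\sum_i Y_i$. Since $f^2$ acts as multiplication by $\lambda_i$ on $D_i$ by Theorem \ref{p143}, no cross-terms between distinct $D_i$ arise, so the per-index conditions do not interact and both conjunctions split cleanly over $i=\overline{1,k}$.
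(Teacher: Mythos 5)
Your proposal is correct and matches the paper's (implicit) argument: the paper states Corollary \ref{p135} as a direct consequence of Proposition \ref{p134}, obtained by applying it to each index $i_0=i$ and assembling the per-index equivalences, which is exactly what you do. Your explicit observation that $(\overline\nabla_X f^2)Y$ is tensorial (in particular additive) in $Y$, so that vanishing on each $D_i$ is equivalent to vanishing on $\oplus_{i=1}^k D_i$, is precisely the routine step the paper leaves unwritten.
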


\begin{theorem}\label{p188}
Let $D$ be completely integrable with respect to $\overline\nabla$. If $M'$ is a connected submanifold of ${M}$ such that\, $TM'=D$, then the following two assertions are equivalent: \\ 
1) $(\overline\nabla_X f^2) Y=0$ for any $X\in D$ and\, $Y\in \oplus_{i=1}^k D_i$. \\ 
2) i) $f^2(\overline\nabla_X Y)=\lambda_{i}\cdot \overline\nabla_X Y$ for any $X\in D$ and\, $Y\in D_{i}$, $i=\overline{1,k}$; \\ 
\hspace*{7 pt} ii) There is $k'\in \{1,\dots, k\}$ such that $M'$ is a $k'$-slant submanifold of ${M}$. 
\end{theorem}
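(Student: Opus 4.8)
The plan is to reduce the statement to Corollary \ref{p135} and then to translate its analytic condition into the geometric one about $M'$. Since $D$ is completely integrable with respect to $\overline\nabla$ and $\oplus_{i=1}^k D_i\subseteq D$, we have $\overline\nabla_X Y\in D$ for all $X\in D$ and $Y\in \oplus_{i=1}^k D_i$, so the hypothesis of Corollary \ref{p135} is automatically satisfied. Applying that corollary, assertion 1) is equivalent to the conjunction of 2)i) with the condition $(\ast)$: $X(\lambda_i)=0$ for every $X\in D$ and $i=\overline{1,k}$. Hence it remains only to show that $(\ast)$ is equivalent to 2)ii); recalling that $TM'=D$, the latter simply says that $D$ is a $k'$-slant distribution for some $k'\in\{1,\dots,k\}$.

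For $(\ast)\Rightarrow$ 2)ii), I would first note that at each $x\in M'$ one has $D_x=T_xM'$, so $(\ast)$ means that the differential of $\lambda_i|_{M'}$ vanishes identically; as $M'$ is connected, every $\lambda_i$ is then constant on $M'$. Since $\lambda_i=\epsilon\cos^2\theta_i$ and $\cos^2$ is injective on $[0,\frac{\pi}{2}]$, each slant function $\theta_i$ is constant on $M'$, so every $D_i$ ($i=\overline{1,k}$) becomes a genuine slant distribution, while $D_0$ is invariant and each $D_i$ is $f$-invariant by Definition \ref{p77}(iii). Because $\varphi$ acts isometrically on $D$ (being a global isometry in the Hermitian and product settings and isometric on $\langle\xi\rangle^\perp\supseteq D$ in the contact settings), the orthogonality of vector fields from $D$ is invariant under $\varphi$; Remark \ref{p196} then assembles $D=\oplus_{i=0}^k D_i$ into a $k'$-slant distribution with $1\le k'\le k$, i.e. $M'$ is a $k'$-slant submanifold of $M$.

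For the converse, 2)ii)$\Rightarrow(\ast)$, the idea is a pointwise spectral comparison followed by a connectedness argument. Assuming $TM'=D$ is a $k'$-slant distribution, write its slant and invariant decomposition and apply Corollary \ref{p187}: there are constants $\mu_0,\dots,\mu_{k'}$ (namely $\epsilon$ together with the values $\epsilon\cos^2\phi_j$ of the slant components, all independent of $x$) on which $f^2$ acts as the corresponding scalar. Consequently, at each $x\in M'$ every eigenvalue of $(f^2|_D)_x$ lies in the finite set $\{\mu_0,\dots,\mu_{k'}\}$. On the other hand, by Theorem \ref{p143} the nonzero space $(D_i)_x$ is composed of eigenvectors of $(f^2|_D)_x$ for the eigenvalue $\lambda_i(x)$, so $\lambda_i(x)\in\{\mu_0,\dots,\mu_{k'}\}$ for all $x$. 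Since $\lambda_i\in C^\infty(M)$ is continuous and $M'$ is connected, a continuous map into a finite set is constant; thus $\lambda_i$ is constant on $M'$ and $X(\lambda_i)=0$ for every $X\in D$, which is exactly $(\ast)$. Together with Corollary \ref{p135}, the two implications yield that 1) holds if and only if both 2)i) and 2)ii) hold, as required.

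I expect the converse direction to be the main obstacle: one must reconcile the two a priori distinct eigenspace descriptions of $f^2|_D$ at a point — the pointwise-slant one furnished by Theorem \ref{p143} and the genuinely slant one furnished by Corollary \ref{p187} — and deduce that each continuous eigenvalue function $\lambda_i$ is confined to a fixed finite set, so that connectedness forces it to be constant. The accompanying bookkeeping point, namely that distinct slant functions $\theta_i$ may take equal constant values on $M'$ and so get merged, is precisely what accounts for the weaker conclusion $k'\le k$ rather than $k'=k$.
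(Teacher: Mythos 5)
Your proposal is correct and takes essentially the same route as the paper: reduce assertion 1) via Corollary \ref{p135} to the condition $X(\lambda_i)=0$ for $X\in D$, use $TM'=D$ and connectedness to get constancy of the $\lambda_i$ on $M'$, and translate this into slant angles via Theorem \ref{p143}, merging components whose constant angles coincide (your appeal to Remark \ref{p196} is just the packaged form of the paper's appeal to Proposition \ref{p31}). The only difference is one of detail: you make explicit the converse implication (that $M'$ being a $k'$-slant submanifold forces each $\lambda_i$ to be constant) through Corollary \ref{p187} and the continuous-map-into-a-finite-set argument, a step the paper leaves implicit in its closing citation of Theorem \ref{p143} and Proposition \ref{p31}.
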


\begin{proof}
Applying Corollary \ref{p135} to the present setting, condition 2) ii), \linebreak $X(\lambda_{i})=0$ for any $X\in TM'$, $i=\overline{1,k}$, is equivalent to the fact that the functions $\lambda_i$ are constant (but not necessarily different) on $M'$ for $i=\overline{1,k}$, and these constants represent the eigenvalues different from $1$ or $(-1)$ of $(f^2|_D)_x$ for $x \in M'$. Finally, we apply Theorem \ref{p143} and Proposition \ref{p31}. 
\end{proof}


Let $M$ be a $k$-pointwise slant submanifold of $\overline{M}$, $\nabla$ be the Levi-Civita connection induced by $\overline{\nabla}$ on $M$, and $D$ be given by $D:=\oplus_{i=0}^k D_i$ for\, $TM=\nolinebreak\oplus_{i=0}^k D_i$ in the setting given by the first formula in (\ref{97}) or for\, $TM=\oplus_{i=0}^k D_i\oplus \langle\xi\rangle$ in the case of the second formula in (\ref{97}), where $D_0$ denotes the invariant component of $D$. 
Observe that the $D_i$'s, ${i=\overline{1,k}}$, are regular distributions on $M$ whose localizations in each point $x\in {M}$ are linear spaces consisting of eigenvectors corresponding to the eigenvalues $\lambda_i(x)$, ${i=\overline{1,k}}$, different from $1$ or $(-1)$ of $(f^2|_{D})_x$, respectively. 

\begin{remark}\label{p189}
Taking into account that $f(D_i)\subseteq D_i$ for all $i=\overline{1,k}$, we notice that $f^2\vert_{TM}$ is symmetric relative to $g$. It implies that, even if the condition ''$\nabla_X Y\in D$ for any $X,Y\in D$'' would not be satisfied (in the almost \mbox{($\epsilon$)-con}\-tact metric setting, when $TM=D\oplus\langle\xi\rangle$), the proof of Proposition \ref{p129} remains fully valid if, in it, $\overline\nabla$ is everywhere replaced with $\nabla$. Thus, relative to $\nabla$, we get the following results. 
\end{remark}

\begin{proposition}\label{p136}
For\, $i_0\in \{1,\ldots,k\}$, the following two assertions are equivalent: \\ 
1) $(\nabla_X f^2) Y=0$ for any $X,Y\in D_{i_0}$. \\ 
2) i) $f^2(\nabla_X Y)=\lambda_{i_0}\cdot \nabla_X Y$ for any $X,Y\in D_{i_0}$;\\ 
\hspace*{7 pt} ii) $X(\lambda_{i_0})=0$ for any $X\in D_{i_0}$.
\end{proposition}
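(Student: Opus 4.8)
The plan is to transplant the argument of Proposition \ref{p129} almost verbatim, replacing the ambient connection $\overline\nabla$ by the induced Levi-Civita connection $\nabla$ everywhere, exactly as anticipated in Remark \ref{p189}. The only formal difference between the two statements is that here one cannot assume $\nabla_X Y\in D$; the whole point is that this hypothesis is never genuinely used, as long as $f^2\vert_{TM}$ is symmetric with respect to $g$. This symmetry is available in the present setting thanks to $f(D_i)\subseteq D_i$ for all $i$ (hence $f^2$ is block-diagonal with respect to the orthogonal decomposition of $TM$), and it is precisely the substantive ingredient flagged by Remark \ref{p189}; moreover $\nabla_X Y\in TM$ automatically, so $f^2(\nabla_X Y)$ always makes sense.

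For the implication 1)$\Rightarrow$2), I would first record, via Theorem \ref{p143}, that $f^2Y=\lambda_{i_0}Y$ for every $Y\in D_{i_0}$. Expanding the covariant derivative of the $(1,1)$-tensor field $f^2$ by the Leibniz rule turns the hypothesis $(\nabla_X f^2)Y=0$ into $\nabla_X(f^2Y)=f^2(\nabla_X Y)$, which, after differentiating $f^2Y=\lambda_{i_0}Y$, reads
\[
X(\lambda_{i_0})\cdot Y+\lambda_{i_0}\cdot \nabla_X Y=f^2(\nabla_X Y).
\]
The next step is to specialize $Y=Y_{i_0}$ for a local \emph{unit} vector field $Y_{i_0}\in D_{i_0}$ and pair with $Y_{i_0}$. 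Two orthogonalities then annihilate the last two terms: metric compatibility of $\nabla$ gives $g(\nabla_X Y_{i_0},Y_{i_0})=0$, while the symmetry of $f^2\vert_{TM}$ gives $g(f^2(\nabla_X Y_{i_0}),Y_{i_0})=g(\nabla_X Y_{i_0},f^2 Y_{i_0})=\lambda_{i_0}\,g(\nabla_X Y_{i_0},Y_{i_0})=0$. Hence $X(\lambda_{i_0})=0$, which is 2)ii), and substituting this back into the displayed identity leaves $f^2(\nabla_X Y)=\lambda_{i_0}\cdot\nabla_X Y$, which is 2)i).

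The converse 2)$\Rightarrow$1) should reduce to a one-line computation: using $f^2Y=\lambda_{i_0}\cdot Y$ together with 2)i), one gets $(\nabla_X f^2)Y=\nabla_X(\lambda_{i_0}\cdot Y)-\lambda_{i_0}\cdot(\nabla_X Y)=X(\lambda_{i_0})\cdot Y$, and 2)ii) forces this to vanish. I do not expect any real obstacle in this direction. The one point that must be checked rather than assumed — and the reason no integrability or invariance of $D$ under $\nabla$ is required — is the symmetry of $f^2\vert_{TM}$ used in the orthogonality $f^2(\nabla_X Y_{i_0})\perp Y_{i_0}$; once that is in hand (Remark \ref{p189}), the argument of Proposition \ref{p129} carries over word for word with $\nabla$ in place of $\overline\nabla$.
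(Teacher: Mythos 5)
Your proposal is correct and takes essentially the same route as the paper: the paper's own "proof" of Proposition \ref{p136} is precisely Remark \ref{p189}, which says that the argument of Proposition \ref{p129} goes through verbatim with $\overline\nabla$ replaced by $\nabla$, the symmetry of $f^2|_{TM}$ (coming from $f$ preserving the orthogonal decomposition of $TM$) playing the role of the hypothesis $\overline\nabla_X Y\in D$ in the orthogonality step $g(f^2(\nabla_X Y_{i_0}),Y_{i_0})=0$. Your unit-vector-field derivation of $X(\lambda_{i_0})=0$, the substitution back to get 2)i), and the one-line converse coincide with the paper's computation.
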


\begin{corollary}\label{p137}
The following two assertions are equivalent: \\ 
1) $(\nabla_X f^2) Y=0$ for any $X,Y\in D_{i}$, $i=\overline {1,k}$. \\ 
2) i) $f^2(\nabla_X Y)=\lambda_{i}\cdot \nabla_X Y$ for any $X,Y\in D_{i}$, $i=\overline {1,k}$; \\ 
\hspace*{7 pt} ii) $X(\lambda_{i})=0$ for any $X\in D_{i}$, $i=\overline {1,k}$.
\end{corollary}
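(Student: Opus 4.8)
The plan is to reduce the statement to Proposition \ref{p136} by letting the index $i_0$ range over the whole set $\{1,\ldots,k\}$ and then taking the conjunction of the resulting equivalences. No new analytic content is required; the corollary is merely the ``for all $i$'' packaging of the proposition.

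First I would observe that assertion 1) of the corollary is, by its very phrasing, nothing but the statement that for each fixed $i\in\{1,\ldots,k\}$ one has $(\nabla_X f^2)Y=0$ for all $X,Y\in D_i$; that is, it is the conjunction over $i\in\{1,\ldots,k\}$ of assertion 1) of Proposition \ref{p136} taken with $i_0=i$. In the same way, assertion 2) of the corollary is the conjunction over $i\in\{1,\ldots,k\}$ of assertion 2) of Proposition \ref{p136} with $i_0=i$: condition 2)\,i) collects the eigenvalue relations $f^2(\nabla_X Y)=\lambda_i\cdot\nabla_X Y$ on each $D_i$, while condition 2)\,ii) collects the constancy conditions $X(\lambda_i)=0$ on each $D_i$.

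Then I would invoke Proposition \ref{p136} for each value $i_0=i$, obtaining for every $i\in\{1,\ldots,k\}$ the equivalence of the $i$-th instance of 1) with the $i$-th instance of 2). Since a finite conjunction of equivalences is the equivalence of the corresponding conjunctions, conjoining these $k$ equivalences over $i$ yields exactly the asserted equivalence $1)\Leftrightarrow 2)$ of the corollary.

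The only point deserving a moment's care --- and it is the closest thing to an obstacle --- is to confirm that the universal quantifier ``for any $\ldots$, $i=\overline{1,k}$'' genuinely distributes into a conjunction of the per-index statements of Proposition \ref{p136}, with no coupling across distinct indices. This is immediate, because each per-index assertion in the proposition involves only the single distribution $D_i$, its associated eigenvalue function $\lambda_i$, and the action of $f^2$ and $\nabla$ on arguments taken from $D_i$; the indices do not interact, so the conjunction factorizes cleanly and the equivalence is established.
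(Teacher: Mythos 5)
Your proposal is correct and is exactly how the paper treats this statement: the corollary is presented as the immediate consequence of Proposition \ref{p136} applied to each index $i_0=i\in\{1,\ldots,k\}$, with the equivalence of the conjunctions following from the per-index equivalences since no hypothesis or conclusion couples distinct indices. No additional argument is needed beyond this packaging step.
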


\begin{proposition}\label{p138}
For\, $i_0\in \{1,\ldots,k\}$, the following two assertions are equivalent: \\ 
1) $(\nabla_X f^2) Y=0$ for any $X\in TM$ and\, $Y\in D_{i_0}$. \\ 
2) i) $f^2(\nabla_X Y)=\lambda_{i_0}\cdot \nabla_X Y$ for any $X\in TM$ and\, $Y\in D_{i_0}$; \\ 
\hspace*{7 pt} ii) the restriction of $D_{i_0}$ to any connected component of $M$ is a slant distribution \textup($\lambda_{i_0}$ is constant on any connected component of $M$\textup).
\end{proposition}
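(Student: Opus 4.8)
The plan is to repeat the argument of Proposition~\ref{p132} word for word, with the ambient connection $\overline\nabla$ replaced everywhere by the induced connection $\nabla$. This substitution is legitimate because of Remark~\ref{p189}: since $f(D_i)\subseteq D_i$ for every $i$, the operator $f^2|_{TM}$ is $g$-symmetric, and $\nabla_X Y$ automatically lies in $TM$, so the containment hypothesis ``$\overline\nabla_X Y\in D$'' that was imposed in Proposition~\ref{p132} becomes unnecessary here. First I would record, from Theorem~\ref{p143}, that $f^2 Y=\lambda_{i_0}Y$ for every $Y\in D_{i_0}$ and that $\lambda_{i_0}=\epsilon\cos^2\theta_{i_0}\in C^\infty(M)$.

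For the implication $1)\Rightarrow 2)$, I would expand $(\nabla_X f^2)Y=\nabla_X(f^2Y)-f^2(\nabla_X Y)=0$ and substitute $f^2 Y=\lambda_{i_0}Y$ to obtain
$$X(\lambda_{i_0})\,Y+\lambda_{i_0}\,\nabla_X Y=f^2(\nabla_X Y).$$
Taking $Y=Y_{i_0}$ a local unit field in $D_{i_0}$ and pairing with $Y_{i_0}$, the term $g(\nabla_X Y_{i_0},Y_{i_0})$ vanishes because $\nabla$ is metric and $|Y_{i_0}|=1$, while $g(f^2(\nabla_X Y_{i_0}),Y_{i_0})=g(\nabla_X Y_{i_0},f^2 Y_{i_0})=\lambda_{i_0}\,g(\nabla_X Y_{i_0},Y_{i_0})=0$ by the symmetry of $f^2|_{TM}$ together with the eigenvector property. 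This forces $X(\lambda_{i_0})=0$ for every $X\in TM$, and feeding this back into the displayed identity gives $2)\,i)$. The passage from ``$X(\lambda_{i_0})=0$ for all $X\in TM$'' to ``$\lambda_{i_0}$ is locally constant, hence constant on each connected component of $M$'' is standard, and by Theorem~\ref{p143} (where $\cos\theta_{i_0}=\sqrt{\epsilon\lambda_{i_0}}$, with $\cos$ injective on $[0,\frac{\pi}{2}]$) constancy of $\lambda_{i_0}$ on a connected component is exactly the assertion that $D_{i_0}$ restricted to that component is a slant distribution; this yields $2)\,ii)$. For the converse $2)\Rightarrow 1)$, condition $2)\,ii)$ gives $X(\lambda_{i_0})=0$ for all $X\in TM$, whence $(\nabla_X f^2)Y=X(\lambda_{i_0})\,Y+\lambda_{i_0}\nabla_X Y-f^2(\nabla_X Y)=0$ by $2)\,i)$.

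The only delicate point, and the one I would emphasize, is justifying the replacement of $\overline\nabla$ by $\nabla$ in the absence of any containment hypothesis: one must be sure that $f^2(\nabla_X Y_{i_0})\perp Y_{i_0}$ still holds even though $\nabla_X Y_{i_0}$ need not lie in $D_{i_0}$, nor even in $D$ (in the almost $(\epsilon)$-contact setting it may carry a $\langle\xi\rangle$-component). As shown above, this orthogonality follows solely from the symmetry of $f^2|_{TM}$ and from $f^2 Y_{i_0}=\lambda_{i_0}Y_{i_0}$, with no structural information about $\nabla_X Y_{i_0}$ required; this is precisely the content of Remark~\ref{p189}, so the main difficulty dissolves once that reduction is invoked.
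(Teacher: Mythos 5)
Your proposal is correct and follows the paper's own route: the paper derives Proposition \ref{p138} exactly by invoking Remark \ref{p189} (symmetry of $f^2|_{TM}$, which makes the containment hypothesis of Propositions \ref{p129}/\ref{p132} superfluous since $\nabla_X Y\in TM$ automatically) and then repeating the eigenvalue/unit-field argument of Proposition \ref{p129} with $\overline\nabla$ replaced by $\nabla$, which is precisely what you do. Your explicit verification that $g(f^2(\nabla_X Y_{i_0}),Y_{i_0})=\lambda_{i_0}\,g(\nabla_X Y_{i_0},Y_{i_0})=0$ needs only the symmetry of $f^2|_{TM}$ and the eigenvector property is exactly the content of that remark, so there is no gap.
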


\begin{theorem}\label{p139}
The following two assertions are equivalent: \\ 
1) $(\nabla_X f^2) Y=0$ for any $X\in TM$ and\, $Y\in \oplus_{i=1}^k D_i$. \\ 
2) i)  $f^2(\nabla_X Y)=\lambda_{i}\cdot \nabla_X Y$ for any $X\in TM$ and\, $Y\in D_{i}$, $i=\overline{1,k}$; \\ 
\hspace*{7 pt} ii) any open connected component of $M$ is a $k'$-slant submanifold of $\overline{M}$, where $k'\in \{1,\dots, k\}$ depends on the values of the $\lambda_i$'s on the considered connected component \textup($\lambda_{i}$ is constant on any connected component of $M$ for $i=\overline{1,k}$\textup). 
\end{theorem}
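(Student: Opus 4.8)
The plan is to reduce Theorem~\ref{p139} to its one-component counterpart, Proposition~\ref{p138}, and then to upgrade the resulting component-wise constancy of the eigenfunctions into a genuine slant decomposition on each connected component. The crucial enabling fact is Remark~\ref{p189}: since $f(D_i)\subseteq D_i$ for every $i$, the operator $f^2|_{TM}$ is symmetric with respect to $g$, so all the computations of Proposition~\ref{p129} (hence of Proposition~\ref{p138}) remain valid verbatim with $\nabla$ in place of $\overline\nabla$, without any integrability or completeness hypothesis on $D$. Concretely, for a unit field $Y_{i_0}\in D_{i_0}$ one has $g(f^2(\nabla_X Y_{i_0}),Y_{i_0})=\lambda_{i_0}\,g(\nabla_X Y_{i_0},Y_{i_0})=0=g(\nabla_X Y_{i_0},Y_{i_0})$, which is exactly what drives the argument.

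First I would observe that condition~1) is equivalent to the conjunction, over $i_0\in\{1,\dots,k\}$, of the conditions ``$(\nabla_X f^2)Y=0$ for all $X\in TM$ and $Y\in D_{i_0}$''. Indeed, writing $Y=\sum_{i=1}^k Y_i$ with $Y_i\in D_i$ and using $\mathbb{R}$-linearity of $\nabla_X f^2$ gives one implication, while restricting condition~1) to fields supported in a single $D_{i_0}$ gives the other. Applying Proposition~\ref{p138} to each index $i_0$ then shows that condition~1) is equivalent to the conjunction of 2)\,i) (for all $i$) together with ``$\lambda_i$ is constant on every connected component of $M$, equivalently $D_i$ restricted to every connected component is a slant distribution'', for all $i=\overline{1,k}$.

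It then remains to convert this family of component-wise slant distributions into the single $k'$-slant structure required by 2)\,ii). Fix an open connected component $M_c$. On $M_c$ each $\lambda_i$ is a constant, so by Theorem~\ref{p143} (see also Corollary~\ref{p187}) each $D_i$ is slant there with constant slant angle $\theta_i=\arccos\sqrt{\epsilon\lambda_i}\in(0,\tfrac{\pi}{2}]$; distinct indices may however share the same angle. Grouping the $D_i$'s by the value of $\theta_i$ and joining the members of each group, together with the invariant component $D_0$ (and $\langle\xi\rangle$ in the almost ($\epsilon$)-contact case), I would invoke Corollary~\ref{p197} and Remark~\ref{p196}, whose hypotheses hold because $f(D_i)\subseteq D_i$ and, by Remark~\ref{p32} in the almost Hermitian / almost product setting or Remark~\ref{p33} in the almost ($\epsilon$)-contact setting, $\varphi$ acts isometrically on $\oplus_{i=1}^k D_i$, so that the orthogonality of vector fields is preserved. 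This exhibits $M_c$ as a $k'$-slant submanifold, where $k'\le k$ is the number of distinct values of $\theta_i$ on $M_c$. The converse implication 2)$\Rightarrow$1) is immediate: assertion 2)\,ii) forces each $\lambda_i$ to be constant on connected components, whence $X(\lambda_i)=0$ for all $X\in TM$, and this together with 2)\,i) is precisely assertion~2) of Proposition~\ref{p138} for each $i_0$, so the 2)$\Rightarrow$1) direction of that proposition yields condition~1).

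The main obstacle I anticipate is this final grouping step rather than the differential computation. One must keep track of the fact that different connected components may display different coincidence patterns among the constant eigenvalues (so that $k'$ genuinely depends on the component), and one must verify that joining slant distributions of equal angle into single slant distributions with \emph{distinct} slant angles, while the invariant component $D_0$ (plus $\langle\xi\rangle$) supplies the invariant part, really produces a legitimate $k'$-slant decomposition in the sense of Definition~\ref{8}; here the fact that every $\theta_i>0$, so no $D_i$ is invariant, ensures the invariant part of the decomposition is exactly $D_0$.
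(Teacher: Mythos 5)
Your proof is correct and takes essentially the same route as the paper: the paper obtains Theorem \ref{p139} implicitly (via Remark \ref{p189}) by replaying the component-wise eigenvalue argument of Propositions \ref{p129}/\ref{p132} and Theorem \ref{p133} with $\nabla$ in place of $\overline\nabla$, which is exactly your reduction to Proposition \ref{p138} plus tensoriality/linearity in $Y$. Your explicit handling of the equal-angle grouping on each connected component via Proposition \ref{p31} and Corollary \ref{p197} is precisely the paper's own merging technique (compare the proof of Theorem \ref{p188}), so no gap remains.
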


\begin{theorem}\label{p190}
If $M$ is a connected submanifold of $\overline{M}$, then the following two assertions are equivalent: \\ 
1) $(\nabla_X f^2) Y=0$ for any $X\in T{M}$ and\, $Y\in \oplus_{i=1}^k D_i$. \\ 
2) i) $f^2(\nabla_X Y)=\lambda_{i}\cdot \nabla_X Y$ for any $X\in T{M}$ and\, $Y\in D_{i}$, $i=\overline{1,k}$; \\ 
\hspace*{7 pt} ii) $M$ is a $k$-slant submanifold of $\overline{M}$ \textup($\lambda_{1},\lambda_{2},\ldots, \lambda_{k}$ are constant and different on $M$\textup).
\end{theorem}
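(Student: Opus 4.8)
The plan is to obtain Theorem \ref{p190} as the connected specialization of Theorem \ref{p139}, the only genuinely new point being the upgrade of the $k'$-slant conclusion there to a full $k$-slant conclusion here. Throughout I would keep in mind two facts already in place: by Theorem \ref{p143}, for each $i$ the eigenvalue function satisfies $\lambda_i=\epsilon\cos^2\theta_i$, where $\theta_i$ is the slant function of $D_i$; and by Definition \ref{p77} the $\theta_i$ are $k$ pairwise distinct continuous functions $M\to(0,\frac{\pi}{2}]$. I also record that, for $\theta\in(0,\frac{\pi}{2}]$, $\cos^2$ is injective (since there $\cos\ge 0$ and is strictly decreasing).

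For the implication $1)\Rightarrow 2)$, I would first apply Theorem \ref{p139}. Condition 1) immediately yields 2)i), and it also gives that every connected component of $M$ is a $k'$-slant submanifold with each $\lambda_i$ constant on that component. Since $M$ is connected, it is its own unique connected component, so each $\lambda_i$ is constant on all of $M$, say $\lambda_i\equiv c_i$. It then remains to show that the $c_i$ are pairwise distinct, which forces $k'=k$. From $c_i=\epsilon\cos^2\theta_i$ constant and injectivity of $\cos^2$ on $(0,\frac{\pi}{2}]$, each $\theta_i$ is a constant function; an equality $c_i=c_j$ would give $\cos^2\theta_i=\cos^2\theta_j$, hence $\theta_i=\theta_j$ as functions, contradicting the distinctness demanded by Definition \ref{p77}. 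Thus the $c_i$ are distinct constants, the slant angles $\arccos\sqrt{\epsilon c_i}$ are $k$ distinct constants, each $D_i$ (a pointwise slant distribution with constant slant function) is a $\theta_i$-slant distribution, and $M$ is a $k$-slant submanifold.

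For the converse $2)\Rightarrow 1)$, I would argue exactly as in the $\nabla$-version of Proposition \ref{p136}, which is licensed by Remark \ref{p189}. From 2)ii) and Corollary \ref{p187}, $M$ being $k$-slant gives $f^2Y=\lambda_iY$ with $\lambda_i$ constant for $Y\in D_i$. Then, for $X\in TM$ and $Y\in D_i$,
$$(\nabla_X f^2)Y=\nabla_X(\lambda_i Y)-f^2(\nabla_X Y)=X(\lambda_i)\,Y+\lambda_i\nabla_X Y-f^2(\nabla_X Y),$$
where the first term vanishes because $\lambda_i$ is constant and the last two cancel by 2)i). Since $(\nabla_X f^2)$ is $C^\infty(M)$-linear in $Y$, its vanishing on each $D_i$ extends by linearity to all of $\oplus_{i=1}^k D_i$, giving 1).

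The main obstacle, and the sole place where connectedness is indispensable, is the distinctness upgrade in $1)\Rightarrow 2)$. On a disconnected $M$, globally distinct slant functions may nevertheless take coinciding constant values on a single component, collapsing the structure there to $k'$-slant with $k'<k$; this is precisely the phenomenon recorded in Theorem \ref{p139}. Connectedness removes it: once the $\lambda_i$ are shown to be globally constant, globally distinct functions translate into distinct constants, so no two slant components can merge and the full value $k$ is preserved. Everything else is a verbatim transcription of the arguments already assembled for Theorem \ref{p139} and Proposition \ref{p136}.
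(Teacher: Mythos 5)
Your proof is correct and follows essentially the paper's (implicit) route: Theorem \ref{p190} is meant to be obtained exactly as you do, as the $\nabla$-transfer (licensed by Remark \ref{p189}) of the chain through Proposition \ref{p136} and Theorem \ref{p139}, with connectedness forcing the constant eigenvalues $\lambda_i=\epsilon\cos^2\theta_i$ to be pairwise distinct, since otherwise two of the globally distinct slant functions of Definition \ref{p77} would coincide as constant functions. Both your distinctness upgrade from $k'$-slant to $k$-slant and your tensorial computation for the converse match the paper's intended argument.
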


Let us consider again $M$ to be $\overline{M}$ or an immersed submanifold of $\overline{M}$. 

To solve the problem of the eigenspace, mentioned in Remark \ref{p144}, we will impose to the considered $k$-pointwise slant distribution $D$ the least restrictive requirement, which is a necessary condition for $(D_i)_x$ to be the entire eigenspace in $D_x$ corresponding to the eigenvalue $\lambda_i(x)$ for $i=\overline{1,k}$ and $x \in M$, namely: $\lambda_i(x)\neq \lambda_j(x)$ for any $i\neq j$ and $x \in M$. 
It means that, additionally to the orthogonal decomposition of $D$ into regular distributions, for any point $x$, we will have an orthogonal decomposition of $D_x$ into $k$ slant subspaces with different slant angles and eventually an invariant subspace. To achieve this, we need a stronger concept than that of $k$-pointwise slant distribution. It naturally leads us to the following definition. 

\begin{definition}\label{p145} 
A non-null distribution $D$ on ${M}$ will be called a \textit{pointwise \mbox{$k$-slant} distribution} if there exists an orthogonal decomposition of $D$ into regular distributions, 
$$D=\oplus_{i=0}^k{D_i}$$
with $D_i\neq \{0\}$ for $i=\overline{1,k}$ and $D_0$ possible null, and there exist $k$ pointwise distinct continuous functions $\theta_i:{M}\rightarrow (0,\frac{\pi}{2}]$ (i.e., continuous  and $\theta_i(x)\neq\nolinebreak \theta_j(x)$ for any $i\neq j$ and any point $x \in M$), $i=\overline{1,k}$, such that: \\ 
\hspace*{7pt} (i)\; $D_i$ is a pointwise $\theta_i$-slant distribution for $i=\overline{1,k}$;\\ 
\hspace*{7pt} (ii)\, $\varphi X\in D_0$ for any $X\in D_0$ 
(i.e., $\widehat{(\varphi X, D)}=0=:\theta_0$ for $X\in D_0$ with $\varphi X\neq 0$, and $f(D_0)\subseteq D_0$);\\ 
\hspace*{7pt} (iii) $f(D_i)\subseteq D_i$ for $i=\overline{1,k}$.

\medskip 
We will also call $D$ a \textit{pointwise $(\theta_1,\theta_2,\ldots,\theta_k)$-slant distribution}. 

$D_0$ represents the \textit{invariant component} and $\oplus_{i=1}^kD_i$ the \textit{proper pointwise \mbox{$k$-slant} component} of $D$. 

We will call the distribution $D=\oplus_{i=0}^kD_i$ a \textit{proper pointwise \mbox{$k$-slant} distribution} if $D_0=\{0\}$. 
\end{definition}

\begin{remark}\label{p146} \ \\ 
(a) Condition (i) is equivalent to \\ 
\hspace*{7pt} (i') $\varphi v \neq 0$, and $\widehat{(\varphi v, D_x)}=\theta_i(x)$ for any $x\in {M}$ and $v\in (D_i)_x\verb=\=\{0\}$, $i=\overline{1,k}$.\\ 
(b) Condition (iii) can be replaced by \\ 
\hspace*{7pt} (iii') $\varphi(D_i)\perp D_j$ for any $i\neq j$ from $\{1,\ldots,k\}$.
\end{remark}

\begin{remark}\label{p80}
For particular values of $k$ and of the slant functions, we get the following types of pointwise $k$-slant distributions. 

For $k=1$ and $D_0=\{0\}$, $D$ is a \textit{pointwise slant distribution}. For $k=1$, $D_0\neq \{0\}$, and $\theta_1$ different from the constant function $\frac{\pi}{2}$, $D$ is a \textit{pointwise semi-slant distribution}. For $k=2$ and $D_0=\{0\}$, $D$ is a \textit{pointwise bi-slant distribution}; it is a \textit{pointwise hemi-slant distribution} if one of the slant functions is constant on ${M}$, equal to $\frac{\pi}{2}$. 
\end{remark}

Corresponding to the latter concept of distribution, we have the concept of pointwise $k$-slant submanifold. 

\begin{definition}\label{p147}
For $M$ an immersed submanifold of $\overline{M}$ and $k\in \mathbb{N}^*$, we will call $M$ a \textit{pointwise $k$-slant submanifold} of $\overline{M}$ if $TM$ is a pointwise \mbox{$k$-slant} distribution, $\oplus_{i=0}^kD_i$ (where $D_0$ denotes the invariant component), relative to $T\overline{M}$. 

$\oplus_{i=1}^kD_i$ will be called the \textit{proper pointwise $k$-slant distribution associated} to $M$. 

We will call $M$ a \textit{pointwise $(\theta_1,\theta_2,\ldots,\theta_k)$-slant sub\-man\-i\-fold} if we want to specify the slant functions $\theta_i$. 

A pointwise $k$-slant submanifold $M$ will be called \textit{proper} if $TM$ is a proper pointwise $k$-slant distribution. 
\end{definition}

As examples of pointwise $k$-slant submanifolds, and, implicit, of pointwise $k$-slant distributions, we have: 

\begin{example}\label{ex6}
Replacing everywhere in Example \ref{ex4} the term ''generic'' with ''pointwise $k$-slant'', we obtain a pointwise $k$-slant submanifold which is also a $k$-pointwise slant submanifold for $\gamma>0$, but we obtain a $k$-pointwise slant submanifold which is not pointwise $k$-slant for $\gamma=0$ in the almost contact metric and almost paracontact metric settings.  
\end{example}

\begin{example}\label{ex7}
Replacing everywhere in Example \ref{ex5} the term ''generic'' with ''pointwise $k$-slant'', we obtain a pointwise $k$-slant submanifold which is also a $k$-pointwise slant submanifold for $\gamma>1$, but we obtain a $k$-pointwise slant submanifold which is not pointwise $k$-slant for $\gamma=1$ in the almost Hermitian and almost product Riemannian settings. 
\end{example}

\begin{proposition}\label{p155}
Any pointwise $k$-slant distribution is a $k$-pointwise slant distribution, but the converse is not true, in any of the considered settings. 
\end{proposition}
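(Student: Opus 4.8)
The plan is to read the result off directly from a side-by-side comparison of Definitions \ref{p77} and \ref{p145}. These two definitions are word-for-word identical in their decomposition requirement and in conditions (i)--(iii); they differ \emph{only} in the hypothesis placed on the family of slant functions. Definition \ref{p77} of a $k$-pointwise slant distribution asks that the $\theta_i$ be merely \emph{distinct} (that is, $\theta_i\neq\theta_j$ as functions whenever $i\neq j$), while Definition \ref{p145} of a pointwise $k$-slant distribution asks that they be \emph{pointwise distinct} (that is, $\theta_i(x)\neq\theta_j(x)$ for every point $x\in M$ and every $i\neq j$). The inclusion is thus a consequence of the elementary fact that pointwise distinctness is strictly stronger than distinctness.

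For the forward implication I would take a pointwise $k$-slant distribution $D=\oplus_{i=0}^k D_i$ with invariant component $D_0$ and pointwise distinct slant functions $\theta_1,\ldots,\theta_k$, and simply verify that all the hypotheses of Definition \ref{p77} hold. The orthogonal decomposition into regular distributions, the requirement that each $D_i$ (for $i=\overline{1,k}$) be a $\theta_i$-pointwise slant distribution, the invariance of $D_0$, and the inclusion $f(D_i)\subseteq D_i$ are carried over verbatim. The one nontrivial check is that the $\theta_i$ are distinct as functions: given $i\neq j$, pick any $x\in M$; then $\theta_i(x)\neq\theta_j(x)$ by pointwise distinctness, so $\theta_i\neq\theta_j$. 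Hence $D$ is a $k$-pointwise slant distribution, and since nothing in this argument is setting-dependent, it is valid uniformly in the almost Hermitian, almost product Riemannian, almost contact metric, and almost paracontact metric settings.

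For the converse I would produce, in each setting, a $k$-pointwise slant distribution whose slant functions fail to be pointwise distinct, using the families already built in Examples \ref{ex6} and \ref{ex7}. Taking the boundary value $\gamma=0$ in Example \ref{ex4} (almost contact and almost paracontact metric settings) and $\gamma=1$ in Example \ref{ex5} (almost Hermitian and almost product Riemannian settings), the slant functions $\theta_j$ remain distinct as functions on $M$, since they take different values at every point with $\|x\|\neq 0$; thus the distribution is still $k$-pointwise slant. At the origin $x=0\in M$, however, $\|x\|=0$ makes each numerator in the displayed formulas vanish while the corresponding denominators stay positive, so $\theta_j(0)=\arccos 0=\frac{\pi}{2}$ for every $j$. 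Therefore $\theta_i(0)=\theta_j(0)$, pointwise distinctness fails, and these distributions are not pointwise $k$-slant.

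The forward direction is essentially immediate, so the only genuinely non-routine point is the converse, and there the crux is to exhibit a single point of $M$ at which all slant functions coincide while they stay globally distinct. Evaluating the explicit expressions of Examples \ref{ex4} and \ref{ex5} at the origin does exactly this, so the main obstacle reduces to confirming that the boundary parameter values $\gamma=0$ and $\gamma=1$ force such a coincidence at $x=0$ without destroying the $k$-pointwise slant structure already established in those examples.
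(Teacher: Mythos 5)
Your proof is correct and follows essentially the same route as the paper: the forward implication is read off directly from comparing Definitions \ref{p77} and \ref{p145}, and the failure of the converse is witnessed by Example \ref{ex6} (i.e., Example \ref{ex4} with $\gamma=0$) in the almost contact and almost paracontact metric settings and by Example \ref{ex7} (i.e., Example \ref{ex5} with $\gamma=1$) in the almost Hermitian and almost product Riemannian settings. Your explicit check that at the origin all slant functions equal $\frac{\pi}{2}$ while they remain distinct as functions away from it is a useful elaboration of what the paper leaves implicit in those examples, but it is not a different argument.
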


\begin{proof}
The first statement follows directly from the definition. 
The last statement is illustrated in Example \ref{ex6} for $\gamma=0$, in the almost contact metric and almost paracontact metric settings, and in Example \ref{ex7} for $\gamma=1$, in the almost Hermitian and almost product Riemannian settings. Thus, the statement that a $k$-pointwise slant distribution is not always a pointwise $k$-slant distribution is proven. 
\end{proof}

\begin{remark}\label{p200}
All the results valid for $k$-pointwise slant distributions are also valid for pointwise $k$-slant distributions. 
Taking into account that a pointwise \mbox{$k$-slant} distribution is a $k$-pointwise slant distribution for which the slant functions of the pointwise slant components are pointwise distinct, we conclude that, in such situations, the statements relating to $k$-pointwise slant distributions can be rewritten and are valid for pointwise $k$-slant distributions. 
In particular, the image of a proper pointwise $k$-slant distribution in its orthogonal complement through $w$ is a proper pointwise $k$-slant distribution. 
\end{remark}

\begin{theorem}\label{p215}
If\, $D=\oplus_{i=0}^k D_i$ is a pointwise $k$-slant distribution on $M$, with $D_0$ the invariant component, $\xi \perp D$ (if\, $\xi$ exists), and $G$ is the orthogonal complement in $T\overline{M}$ of $D$ or of\, $D\oplus \langle \xi\rangle$ (if\, $\xi$ exists), then 
\begin{equation}\nonumber 
G=\oplus_{i=1}^kw({D_i})\oplus H, \text{ where }f(H)=\{0\}. 
\end{equation}
The distribution $G$ is a pointwise $k$-slant distribution with $H$ the invariant component and\, $\oplus_{i=1}^kw(D_i)$ the proper pointwise $k$-slant com\-po\-nent, the pointwise slant distribution $w(D_i)$ having the same slant function $\theta_i$ as $D_i$ for $i= \overline{1,k}$.
\end{theorem}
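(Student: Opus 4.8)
The plan is to deduce the statement entirely from the corresponding results already established for $k$-pointwise slant distributions, the only genuinely new point being the transfer of the pointwise-distinctness of the slant functions. No fresh computation is needed; everything reduces to correctly invoking the four-setting versions of the structure theorems proved earlier.

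First I would invoke Proposition \ref{p155}: since $D$ is a pointwise $k$-slant distribution, it is in particular a $k$-pointwise slant distribution, with $D_0$ its invariant component and with slant functions $\theta_1,\dots,\theta_k$ attached to $D_1,\dots,D_k$ which, by Definition \ref{p145}, are moreover pointwise distinct. This places us within the framework of sections \ref{pointwise_alm_cont} and \ref{pointwise_alm_hermitian}. Next, applying Theorem \ref{p1} (valid for $k$-pointwise slant distributions by Remark \ref{141}, and extended to the almost Hermitian and almost product Riemannian settings by Remark \ref{p142}), I obtain the orthogonal decomposition $G=\oplus_{i=1}^kw(D_i)\oplus H$ with $f(H)=\{0\}$, unitarily in all four settings covered by (\ref{97}). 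Then Theorem \ref{p107}, again transferred to the Hermitian and product Riemannian case via Remark \ref{p142}, shows that $G$ is itself a $k$-pointwise slant distribution whose invariant component is $H$, whose proper $k$-pointwise slant component is $\oplus_{i=1}^kw(D_i)$, and in which each $w(D_i)$ is a pointwise slant distribution carrying the very same slant function $\theta_i$ as $D_i$, for $i=\overline{1,k}$.

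Finally, to upgrade the conclusion from \emph{$k$-pointwise slant} to \emph{pointwise $k$-slant} for $G$, I would only have to verify that the slant functions of the components $w(D_1),\dots,w(D_k)$ are pointwise distinct. But these functions are precisely $\theta_1,\dots,\theta_k$, which are pointwise distinct by the hypothesis on $D$; hence the slant functions of $G$ are pointwise distinct as well, so $G$ satisfies Definition \ref{p145} and is a pointwise $k$-slant distribution. This last verification is the only step that uses the stronger hypothesis, and I expect it to be the crux rather than an obstacle: it is immediate precisely because the map $w$ preserves slant functions, so pointwise distinctness is inherited automatically. The only real care required is citing the correct, setting-independent formulations of Theorems \ref{p1} and \ref{p107}, which Remarks \ref{141} and \ref{p142} make available in all four geometries at once.
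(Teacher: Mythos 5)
Your proposal is correct and follows essentially the same route as the paper: the paper states Theorem \ref{p215} without a separate proof, justifying it by Remark \ref{p200}, which is exactly your argument — view the pointwise $k$-slant distribution as a $k$-pointwise slant one (Proposition \ref{p155}), invoke the $k$-pointwise slant dual results (Theorems \ref{p1} and \ref{p107}, made setting-independent by Remarks \ref{141} and \ref{p142}), and observe that since $w$ preserves each slant function $\theta_i$, the pointwise distinctness required by Definition \ref{p145} is inherited by $G$.
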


\begin{definition}\label{p216}
We will call\, $\oplus_{i=1}^kw(D_i)$ \textit{the dual pointwise $k$-slant distribution} of\, $\oplus_{i=1}^kD_i$.
\end{definition}

\begin{remark}\label{p217}
In the same way we defined the dual of the proper pointwise \mbox{$k$-slant} component $\oplus_{i=1}^kD_i$ of the distribution $D$ by means of $w$, we can construct the dual of the proper pointwise $k$-slant component $\oplus_{i=1}^kw(D_i)$ of the distribution $G$ by means of $f$. This will be $f(\oplus_{i=1}^kw(D_i))=\oplus_{i=1}^kfw(D_i)$. 
\end{remark}

\begin{corollary}\label{p218}
The dual of the proper pointwise $k$-slant distribution \linebreak $\oplus_{i=1}^kw(D_i)$, which is $\oplus_{i=1}^kf(w(D_i))$, is precisely the pointwise $k$-slant distribution $\oplus_{i=1}^kD_i$.
\end{corollary}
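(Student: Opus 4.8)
The plan is to reduce the stated equality of orthogonal sums to the single componentwise identity $f(w(D_i)) = D_i$, valid for each $i = \overline{1,k}$, and then reassemble the direct sum.

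First I would record, from Definition \ref{p216} and Remark \ref{p217}, that the dual of the proper pointwise $k$-slant distribution $\oplus_{i=1}^k w(D_i)$ of the distribution $G$ furnished by Theorem \ref{p215} is, by construction, obtained through $f$, namely $f\bigl(\oplus_{i=1}^k w(D_i)\bigr) = \oplus_{i=1}^k f(w(D_i))$. It therefore suffices to identify each summand $f(w(D_i))$ with $D_i$.

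Next I would establish $f(w(D_i)) = D_i$ for each fixed $i$. By Remark \ref{p200}, every result available for $k$-pointwise slant distributions applies to pointwise $k$-slant distributions, so in particular the relation $fwX_i = \epsilon\sin^2\theta_i \cdot X_i$ for $X_i \in D_i$ (Corollary \ref{p100}, which holds also in the almost Hermitian and almost product Riemannian settings by Remark \ref{p142}) remains valid here. Since $\theta_i$ takes values in $(0,\frac{\pi}{2}]$, the function $\sin^2\theta_i$ is nowhere vanishing and $\epsilon \in \{-1,1\}$; hence the identity $f(wX_i) = \epsilon\sin^2\theta_i \cdot X_i$ shows at once that $f(w(D_i)) \subseteq D_i$, and, read fiberwise as $X_i = \epsilon\sin^{-2}\theta_i \cdot f(wX_i)$, that $D_i \subseteq f(w(D_i))$. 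This is precisely the pointwise $k$-slant form of Proposition \ref{p3} (respectively Proposition \ref{p40}), whose validity in the present framework is guaranteed by Remarks \ref{141}, \ref{p142}, and \ref{p200}.

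Finally, substituting $f(w(D_i)) = D_i$ into $\oplus_{i=1}^k f(w(D_i))$ yields $\oplus_{i=1}^k D_i$, establishing the corollary. I do not anticipate a genuine obstacle: the argument is essentially a transcription of the already-proven $k$-slant identities into the pointwise $k$-slant framework, and the only point demanding attention is confirming that the componentwise identity propagates correctly through the chain of settings (pointwise $k$-slant being a special case of $k$-pointwise slant), which is exactly what Remark \ref{p200} supplies.
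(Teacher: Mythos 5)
Your proposal is correct and follows essentially the same route as the paper: there the corollary is an immediate consequence of the componentwise identity $f(w(D_i))=D_i$ (Proposition \ref{p3}, respectively \ref{p40}, carried into the pointwise framework by Remarks \ref{141}, \ref{p142}, and \ref{p200}), which is precisely the identity you rederive from $fwX_i=\epsilon\sin^2\theta_i\cdot X_i$ together with the nonvanishing of $\sin^2\theta_i$. Your argument matches the paper's underlying justification, including the observation that invertibility of multiplication by $\epsilon\sin^2\theta_i$ gives both inclusions.
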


Revisiting Remarks \ref{p119} and \ref{p120}, we conclude: 

\begin{proposition}\label{p208}
In any of the considered settings (almost Hermitian, almost product Riemannian, almost contact metric, or almost paracontact metric), any generic submanifold of a Riemannian manifold is a pointwise \mbox{$k$-slant} submanifold. 
\end{proposition}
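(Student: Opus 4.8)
The plan is to revisit the explicit descriptions of generic submanifolds given in Remarks \ref{p119} and \ref{p120}, which cover the almost contact metric, almost paracontact metric, almost Hermitian, and almost product Riemannian settings simultaneously, and to observe that those analyses already yield a pointwise $k$-slant structure once a single further feature is extracted: the \emph{pointwise} distinctness of the slant functions.

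First I would recall that, in each of the four settings, a generic submanifold $M$ produces through the symmetric operator $f^2$ the orthogonal eigendistribution decomposition $TM=\mathfrak D_1\oplus\ldots\oplus\mathfrak D_m$, with continuous eigenvalue functions $\lambda_1,\ldots,\lambda_m$; the constancy of $m(x)=m$, built into the definition of a generic submanifold, is what makes these functions globally well-defined. Writing $\lambda_i(x)=\epsilon\alpha_i^2(x)$ with $\alpha_i(x)\in[0,1]$, each nontrivial $\mathfrak D_i$ that is not invariant is a pointwise $\zeta_i$-slant distribution with $\cos\zeta_i(x)=\alpha_i(x)$, exactly as established in the cited remarks.

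The decisive step is to upgrade ``distinct as functions'' to ``pointwise distinct''. For each fixed $x$, the values $\lambda_1(x),\ldots,\lambda_m(x)$ are, by construction, the \emph{distinct} eigenvalues of $f_x^2$, so $\lambda_i(x)\neq\lambda_j(x)$ whenever $i\neq j$, for every $x\in M$. Since $\alpha\mapsto\epsilon\alpha^2$ is injective on $[0,1]$ for fixed $\epsilon$, and $\zeta\mapsto\cos\zeta$ is injective on $[0,\frac{\pi}{2}]$, this pointwise distinctness passes to the slant functions: $\zeta_i(x)\neq\zeta_j(x)$ for all $i\neq j$ and all $x\in M$. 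That is precisely the extra hypothesis in Definition \ref{p145} distinguishing a pointwise $k$-slant distribution from a $k$-pointwise slant one; the remaining requirements ($D_0$ invariant, each slant component pointwise slant, $f(D_i)\subseteq D_i$) were already checked in Remarks \ref{p119} and \ref{p120}. Hence $TM$ is a pointwise $k$-slant distribution and $M$ a pointwise $k$-slant submanifold in every setting.

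I do not anticipate a substantive obstacle, as the geometric content resides in the cited remarks; the only care needed is the bookkeeping of $k$. In each setting one confirms which eigendistributions enter the proper pointwise $k$-slant component: the zero eigenvalue yields a $\frac{\pi}{2}$-slant factor (when non-null, and after discarding $\langle\xi\rangle$ in the contact and paracontact cases), while the eigenvalue $\epsilon$ (i.e.\ $\alpha_i=1$) accounts for the invariant component $D_0$ if present. Counting exactly as in Remarks \ref{p119} and \ref{p120} fixes the value of $k$ and finishes the argument uniformly.
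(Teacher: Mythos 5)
Your proof is correct and takes essentially the same route as the paper: Proposition \ref{p208} is obtained there precisely by revisiting Remarks \ref{p119} and \ref{p120}, and the step you make explicit—that the $\lambda_i(x)$ are by construction the \emph{distinct} eigenvalues of $f_x^2$ at every point, so the slant functions $\zeta_i$ are pointwise distinct as Definition \ref{p145} demands—is exactly the observation that upgrades the $k$-pointwise slant conclusion of those remarks to the pointwise $k$-slant one. Your bookkeeping of $k$ (the invariant component from $\alpha_i=1$, the $\frac{\pi}{2}$-slant factor from the zero eigenvalue after removing $\langle\xi\rangle$ in the contact and paracontact cases) also agrees with the counting done in the cited remarks.
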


Moreover, 

\begin{proposition}\label{p209}
Any pointwise $k$-slant submanifold which is not an anti-invariant or a CR submanifold and whose non-constant slant functions don't take the value $\frac{\pi}{2}$ is a generic submanifold in any of the considered settings. 
\end{proposition}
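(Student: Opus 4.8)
The plan is to check directly that $M$ meets the requirements for a generic submanifold, as formulated in the unified descriptions of Remark \ref{p119} (almost contact and almost paracontact metric settings) and Remark \ref{p120} (almost Hermitian and almost product Riemannian settings): the number $m$ of distinct eigenvalues of $f_x^2$ is independent of $x$, the dimensions of the eigenspaces are independent of $x$, every extreme eigenvalue (equal to $0$ or to $\epsilon$) is globally constant, and, as recalled in Remark \ref{p204}, at least one eigenvalue corresponding to a value $\alpha_i\in(0,1)$ is present. Write $TM=\oplus_{i=0}^k D_i$ in the setting of the first formula in (\ref{97}), and $TM=\oplus_{i=0}^k D_i\oplus\langle\xi\rangle$ in that of the second, with $D_0$ the invariant component and $\theta_i$ the slant function of $D_i$ for $i=\overline{1,k}$.

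First I would determine the eigenstructure of $f_x^2$ on $T_xM$. Since $D_i\subseteq\langle\xi\rangle^\perp$ for $i=\overline{1,k}$, the endomorphism $\varphi$ acts isometrically on $\oplus_{i=1}^k D_i$ by Remarks \ref{p32} and \ref{p33}, so Theorem \ref{p143} applies and yields, for each $x$, that $(D_i)_x$ is composed entirely of eigenvectors of $f_x^2$ for the single eigenvalue $\lambda_i(x)=\epsilon\cos^2\theta_i(x)$, with $\lambda_i\in C^\infty(M)$; moreover $f^2X=\epsilon X$ for $X\in D_0$, and $f\xi=0$ in the contact case. Because the $\theta_i$ are pointwise distinct, take values in $(0,\frac{\pi}{2}]$, and $\cos$ is injective there, the values $\lambda_i(x)$ are pairwise distinct and, as $\theta_i(x)>0$, each is different from $\epsilon$. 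Hence $(D_i)_x$ is precisely the full $\lambda_i(x)$-eigenspace whenever $\theta_i(x)\neq\frac{\pi}{2}$, while for $\theta_i(x)=\frac{\pi}{2}$ it merges with $\langle\xi\rangle$ (contact case) into the $0$-eigenspace. The eigenspaces are therefore $(D_0)_x$, the individual $(D_i)_x$, and the $0$-eigenspace, each of dimension independent of $x$ by regularity of the $D_i$; this gives the constancy of $m$ and of the eigenspace dimensions.

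Next I would treat the extreme eigenvalues. The value $\epsilon$ is carried only by the invariant $D_0$, hence is constant; the value $0$ can come from $\xi$ (contact case, where it is trivially constant) or from a slant component $D_i$ with $\theta_i(x)=\frac{\pi}{2}$. By pointwise distinctness at most one $\theta_i$ can equal $\frac{\pi}{2}$ at any point, and, since non-constant slant functions are assumed not to attain $\frac{\pi}{2}$, any $\theta_i$ reaching $\frac{\pi}{2}$ is constant, so $\theta_i\equiv\frac{\pi}{2}$ and $\lambda_i\equiv 0$. Thus the extreme eigenvalues are globally constant, which is the remaining defining condition of a generic submanifold.

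Finally I would produce a proper slant eigenspace. Were all the $\theta_i$ identically $\frac{\pi}{2}$, pointwise distinctness would force $k=1$ and $TM$ would split as an invariant part plus an anti-invariant part, making $M$ a CR or an anti-invariant submanifold, contrary to hypothesis; hence some $\theta_{i_0}\not\equiv\frac{\pi}{2}$, and combined with the hypothesis on non-constant slant functions this gives $\theta_{i_0}(x)\in(0,\frac{\pi}{2})$ for all $x$, so $\alpha_{i_0}=\cos\theta_{i_0}\in(0,1)$ and $D_{i_0}$ is a genuine proper slant eigenspace. All the requirements of Ronsse's definition being satisfied, $M$ is a generic submanifold in each of the considered settings. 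The main obstacle is the bookkeeping of the $0$-eigenspace: ruling out that an anti-invariant piece could appear only on part of $M$ (which would break the constancy of the extreme eigenvalue and hence genericity) is exactly where pointwise distinctness of the slant functions and their avoidance of $\frac{\pi}{2}$ when non-constant must be used together, and this same argument is what lets the contact and non-contact cases be handled uniformly through the role of $\xi$.
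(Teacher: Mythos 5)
Your proof is correct and follows essentially the same route as the paper, which states Proposition \ref{p209} as a direct consequence of ``revisiting Remarks \ref{p119} and \ref{p120}'': you run the eigenspace correspondence of those remarks backwards, using Theorem \ref{p143} to identify each $(D_i)_x$ with the full $\lambda_i(x)$-eigenspace of $f_x^2$, and then check Ronsse's conditions (constancy of $m$, of the eigenspace dimensions, and of the extreme eigenvalues, plus the existence of a slant function valued in $(0,\frac{\pi}{2})$) exactly where the hypotheses on pointwise distinctness, the exclusion of anti-invariant/CR submanifolds, and the avoidance of $\frac{\pi}{2}$ by non-constant slant functions are needed. Your explicit treatment of the merging of an anti-invariant component with $\langle\xi\rangle$ in the $0$-eigenspace is precisely the detail the paper leaves implicit.
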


We will show through the following examples that a pointwise \mbox{$k$-slant} submanifold is not necessarily a generic one, in any of the mentioned settings. 

\begin{example}\label{ex8}
Let $\overline M=\mathbb{R}^{4k+3}$ be the Euclidean space for some $k\geq 2$, with the canonical coordinates 
$(x_{1},\ldots, x_{4k+3})$, and let $\{e_{1}=\frac{\partial }{\partial x_{1}},\ldots,e_{4k+3}=\frac{\partial }{\partial x_{4k+3}}\}$ be the natural basis in the tangent bundle. Let $\epsilon\in \{-1,1\}$, $\gamma\geq 0$, $\delta>0$, and 
$E_{\gamma,\delta}(j,x)=\sqrt{\|x\|^4+2[(j-1)\delta+\gamma] \|x\|^2+\delta^2+[(j-1)\delta+\gamma]^2}$\, 
for any $j\in \mathbb{N^*}$ and $x\in \overline M$. 

Let us define a vector field $\xi $, a $1$-form $\eta $, and a $(1,1)$-tensor field $\varphi$ by: 
$$\xi =e_{4k+3}, \quad \eta =dx_{4k+3},$$
$$
\varphi e_{1} = e_{2}, \quad \varphi e_{2}=\epsilon e_{1},$$
\begin{align*}
(\varphi e_{4j-1})_x &=\frac{\|x\|^2+(j-1)\delta+\gamma}{E_{\gamma,\delta}(j,x)}{\ }(e_{4j})_x+\epsilon \frac{\delta}{E_{\gamma,\delta}(j,x)}{\ }(e_{4j+2})_x,\\ 
(\varphi e_{4j})_x &=\epsilon \frac{\|x\|^2+(j-1)\delta+\gamma}{E_{\gamma,\delta}(j,x)}{\ }(e_{4j-1})_x+\epsilon \frac{\delta}{E_{\gamma,\delta}(j,x)}{\ }(e_{4j+1})_x,\\ 
(\varphi e_{4j+1})_x &=\frac{\delta}{E_{\gamma,\delta}(j,x)}{\ }(e_{4j})_x-\epsilon \frac{\|x\|^2+(j-1)\delta+\gamma}{E_{\gamma,\delta}(j,x)}{\ }(e_{4j+2})_x,\\ 
(\varphi e_{4j+2})_x &=\frac{\delta}{E_{\gamma,\delta}(j,x)}{\ }(e_{4j-1})_x-\frac{\|x\|^2+(j-1)\delta+\gamma}{E_{\gamma,\delta}(j,x)}{\ }(e_{4j+1})_x,
\end{align*}
$$\varphi e_{4k+3}=0$$
for $j= \overline{1,k}$ and $x\in \overline{M}$. 
Let the Riemannian metric $g$ be given by \linebreak $g(e_{i},e_{j})=\nolinebreak\delta _{ij}$, $i, j= \overline{1,4k+3}$. 
We notice that, for $\epsilon=-1$, $(\overline M, \varphi,\xi,\eta, g )$ is an almost contact metric manifold, and, for $\epsilon=1$, it is an almost paracontact metric manifold. 

We define the following submanifold of $\overline M$: 
$$
M:=\{(x_{1},\ldots, x_{4k+3})\in \mathbb{R}^{4k+3} \ | \ x_{4j+1}=x_{4j+2}=0,\ j=\overline{1,k}\}.$$

We will consider 
$D_0=\langle e_{1},e_{2}\rangle$ and $D_{j}=\langle e_{4j-1},e_{4j}\rangle$, $j=\overline{1,k}$. 
We notice that, for $\gamma>0$, $M$ is a generic and a pointwise $k$-slant submanifold of $\overline M$, with $TM=\oplus_{i=0}^k D_i\oplus \langle\xi\rangle$, while, for $\gamma=0$, it is a pointwise \mbox{$k$-slant} submanifold of $\overline M$ but not a generic one. The cor\-re\-spond\-ing pointwise \mbox{$k$-slant} distribution is $\oplus_{i=0}^k D_i$, where $D_0$ is the invariant component and the $D_j$'s, $j=\overline{1,k}$, are pointwise slant distributions with corresponding slant functions 
$$\theta _{j}(x)=\arccos \left(\frac{\|x\|^2+(j-1)\delta+\gamma}{E_{\gamma,\delta}(j,x)}\right),\; x\in M,\ j=\overline{1,k}\,.$$
$\oplus_{i=1}^kD_i$ is the proper $k$-pointwise slant distribution associated to $M$.

Let us consider the distributions $G_j:=\langle e_{4j+1}, e_{4j+2}\rangle$, $j=\overline{1,k}$, in $(TM)^{\perp}$. Then, $\oplus_{j=1}^kG_j$ is the dual pointwise $k$-slant distribution of $\oplus_{j=1}^k D_j$. We have $f(G_j)=D_j$ for $j=\overline{1,k}$, and $\oplus_{j=1}^kD_j$ is the dual pointwise \mbox{$k$-slant} distribution of $\oplus_{j=1}^kG_j$.
\end{example}

\begin{example}\label{ex9}
Let $\overline{M}=\mathbb{R}^{4k+2}$ be the Euclidean space for some $k\geq 2$, with the canonical coordinates 
$(x_{1},\ldots, x_{4k+2})$, and let $\{e_{1}=\frac{\partial }{\partial x_{1}},\ldots,e_{4k+2}=\nolinebreak\frac{\partial }{\partial x_{4k+2}}\}$ be the natural basis in the tangent bundle. Let $\epsilon \in \{-1,1\}$, $\gamma\geq1$, and denote 
$E_\gamma (j,x)=\sqrt{2\|x\|^4+2(j+\gamma-1)\|x\|^2+(j^2+\gamma^2 +2j\gamma-4(j+\gamma)+5)}$ for any $j\in \mathbb{N^*}$ and $x\in \overline{M}$. Let us define a $(1,1)$-tensor field $\varphi$ by: 
$$\varphi e_{1} = e_{2}, \quad \varphi e_{2}=\epsilon e_{1},$$
$$(\varphi e_{4j-1})_x =\frac{\|x\|^2+j+\gamma-2}{E_\gamma (j,x)}{\ }(e_{4j})_x+\frac{\|x\|^2+1}{E_\gamma (j,x)}{\ }(e_{4j+2})_x,$$
$$(\varphi e_{4j})_x=\epsilon \frac{\|x\|^2+j+\gamma-2}{E_\gamma (j,x)}{\ }(e_{4j-1})_x-\frac{\|x\|^2+1}{E_\gamma (j,x)}{\ }(e_{4j+1})_x, $$
$$(\varphi e_{4j+1})_x =-\epsilon \frac{\|x\|^2+1}{E_\gamma (j,x)}{\ }(e_{4j})_x+\epsilon \frac{\|x\|^2+j+\gamma-2}{E_\gamma (j,x)}{\ }(e_{4j+2})_x,$$
$$(\varphi e_{4j+2})_x=\epsilon \frac{\|x\|^2+1}{E_\gamma (j,x)}{\ }(e_{4j-1})_x+\frac{\|x\|^2+j+\gamma-2}{E_\gamma (j,x)}{\ }(e_{4j+1})_x$$
for $j\in \{ 1,\ldots,k\}$ and $x\in \overline{M}$.
Then, with the Riemannian metric $g$ given by $g(e_{i},e_{j})=\delta _{ij}$, $i, j\in \{1,\ldots, 4k+2\}$, 
$(\overline{M},\varphi,g)$ is an almost Hermitian manifold for $\epsilon =-1$ and an almost product Riemannian manifold for $\epsilon =1$.

We define the submanifold $M$ of $\overline{M}$ by
$$
M:=\{(x_{1},\ldots, x_{4k+2})\in \mathbb{R}^{4k+2} \ | \ x_{4j+1}=x_{4j+2}=0,\ j= \overline{1,k}\}.$$
We will consider the distributions: 
$$
D_0=\langle e_{1},e_{2}\rangle,\quad D_{j}=\langle e_{4j-1},e_{4j}\rangle, \ j= \overline{1,k}\,.
$$

Then, $M$ is a generic and a pointwise $k$-slant submanifold of $\overline{M}$ for $\gamma>1$, and it is a pointwise $k$-slant but not a generic submanifold of $\overline{M}$ for $\gamma=1$. The corresponding pointwise $k$-slant distribution is $TM=\oplus_{i=0}^kD_i$, with $D_0$ the invariant component and the $D_j$'s, $j=\overline{1,k}$, the pointwise slant components, having the slant functions 
$$\theta _{j}(x)=\arccos \left(\frac{\|x\|^2+j+\gamma-2}{E_\gamma (j,x)}\right),\; x\in M,\ j=\overline{1,k}\,.$$ 
$\oplus_{i=1}^kD_i$ is the proper pointwise $k$-slant distribution associated to $M$.

We consider the distributions $L_i:=\langle e_{4i+1}, e_{4i+2}\rangle$ in $(TM)^{\perp}$, $i=\overline{1,k}$, and notice that $\oplus_{i=1}^kL_i$ is the dual pointwise $k$-slant distribution of $\oplus_{i=1}^kD_i$. We have $D_i= f(L_i)$, $i=\overline {1,k}$; hence, the dual pointwise $k$-slant distribution of $\oplus_{i=1}^kL_i$ is $\oplus_{i=1}^kD_i$. 
\end{example}

\begin{remark}\label{p214}
Any generic submanifold is a pointwise $k$-slant submanifold, but the converse is not true, in any of the considered settings: almost Hermitian, almost product Riemannian, almost contact metric, or almost paracontact metric setting. 
\end{remark}

\begin{proposition}\label{p213}
The pointwise $k$-slant concept is more general than the generic concept, in any of the considered settings. 
\end{proposition}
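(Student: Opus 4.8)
The plan is to establish that the containment of the generic class in the pointwise $k$-slant class is strict in each of the four settings, which amounts to proving the inclusion together with the existence of a witness to strictness. For the inclusion, nothing new is needed: Proposition \ref{p208} already states that, in the almost Hermitian, almost product Riemannian, almost contact metric, and almost paracontact metric settings alike, every generic submanifold of a Riemannian manifold is a pointwise $k$-slant submanifold.

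For strictness, the goal is to exhibit, in each setting, a pointwise $k$-slant submanifold that is not generic, and here I would simply invoke the explicit constructions already produced. Example \ref{ex8}, at the degenerate parameter value $\gamma=0$, furnishes a submanifold of $\mathbb{R}^{4k+3}$ that is pointwise $k$-slant but not generic, covering at once the almost contact metric setting ($\epsilon=-1$) and the almost paracontact metric setting ($\epsilon=1$); symmetrically, Example \ref{ex9} at $\gamma=1$ yields a pointwise $k$-slant submanifold of $\mathbb{R}^{4k+2}$ that is not generic, covering the almost Hermitian setting ($\epsilon=-1$) and the almost product Riemannian setting ($\epsilon=1$). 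Together, the two families provide a witness in every one of the considered settings.

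There is no deep obstacle here; the content of the strictness was already verified inside Examples \ref{ex8} and \ref{ex9}, so the only point requiring care in assembling the proof is to confirm that the two families between them exhaust all four settings. The mechanism of non-genericity is transparent: at the critical parameter value the relevant slant function attains $\frac{\pi}{2}$ on the locus $\|x\|=0$ but not elsewhere, so the eigenvalue of $f^2$ that vanishes at those points fails to vanish at others, violating the constancy condition imposed on the eigenvalues $0$ and $1$ in the generic definition of Remarks \ref{p119} and \ref{p120}; this is exactly the kind of breakdown anticipated by Proposition \ref{p209}. Combining the inclusion from Proposition \ref{p208} with these witnesses gives the strict generality asserted, matching the statement already recorded in Remark \ref{p214}.
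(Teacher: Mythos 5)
Your proposal is correct and follows exactly the paper's own route: the inclusion is Proposition \ref{p208}, and strictness is witnessed by Example \ref{ex8} with $\gamma=0$ (almost contact and almost paracontact metric settings) and Example \ref{ex9} with $\gamma=1$ (almost Hermitian and almost product Riemannian settings), which is precisely how the paper justifies the statement (cf.\ Remark \ref{p214} and the Final remarks). Your added observation about the mechanism --- the slant function reaching $\frac{\pi}{2}$ only at the origin, so the corresponding eigenvalue of $f^2$ vanishes there but nowhere else, violating condition (3) of Remarks \ref{p119} and \ref{p120} --- is accurate and consistent with Proposition \ref{p209}.
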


Corresponding to Theorem \ref{p178}, from the almost contact metric and almost paracontact metric settings, and to Theorem \ref{p179}, from the almost Hermitian and almost product Riemannian settings, which relate to \mbox{$k$-point}\-wise slant distributions, for the characterization of the pointwise $k$-slant distributions in any of these settings, we have the following result. 

\begin{theorem}\label{p182}
Let $\mathfrak D$ be a non-null distribution on $M$ decomposable into an orthogonal sum of regular distributions, $\mathfrak D=\oplus_{i=0}^k{\mathfrak D_i}$ with $\mathfrak D_i\neq \{0\}$ for $i=\nolinebreak\overline{1,k}$ and $\mathfrak D_0$ invariant (possible null). Additionally, in the almost \mbox{($\epsilon$)-con}\-tact metric setting, we consider that $\mathfrak D\perp \xi$. Let $pr_i$ denote the projection operator onto $\mathfrak D_i$ for $i=\overline{0,k}$, $f$ the component of $\varphi$ into $\mathfrak D$, and $\theta_0=0$. If $f(\mathfrak D_i)\subseteq \mathfrak D_i$ for $i=\overline{1,k}$, then the following assertions are equivalent: \\ 
\hspace*{7pt} (a) There exist $k$ pointwise distinct continuous functions $\theta_i:\nolinebreak {M}\rightarrow\nolinebreak (0,\frac{\pi}{2}]$, $i=\nolinebreak\overline{1,k}$, such that  
\begin{equation}\nonumber
f^2X= \epsilon \sum_{i=0}^k\cos^2\theta_i\cdot pr_iX\, \text{ for any } X\in \mathfrak D;
\end{equation}
\hspace*{7pt} (b) $\mathfrak D$ is a pointwise $k$-slant distribution with slant functions $\theta_i$ corresponding to $\mathfrak D_i$, $i=\overline{1,k}$. 
\end{theorem}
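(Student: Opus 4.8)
The plan is to deduce this characterization from the analogous characterizations of $k$-pointwise slant distributions that are already available, namely Theorem \ref{p178} in the almost ($\epsilon$)-contact metric setting and Theorem \ref{p179} in the almost Hermitian and almost product Riemannian settings, and then to install the one extra requirement that separates a pointwise $k$-slant distribution from a $k$-pointwise slant one. Comparing Definition \ref{p145} with Definition \ref{p77}, a pointwise $k$-slant distribution is precisely a $k$-pointwise slant distribution whose slant functions $\theta_i$ are \emph{pointwise} distinct, that is, $\theta_i(x)\neq\theta_j(x)$ for every $i\neq j$ and every $x\in M$. Hence the whole task reduces to transporting the pointwise-distinctness condition faithfully between the analytic statement (a) and the geometric statement (b), the rest being supplied verbatim by the two theorems just cited.

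For the implication (b)$\Rightarrow$(a), I would first note that a pointwise $k$-slant distribution is in particular $k$-pointwise slant, so Proposition \ref{p86}(ii) (equivalently the (b)$\Rightarrow$(a) half of Theorems \ref{p178} and \ref{p179}) yields $f^2X=\epsilon\sum_{i=0}^k\cos^2\theta_i\cdot pr_iX$ for every $X\in\mathfrak D$, where $\theta_i$ is the slant function of $\mathfrak D_i$. Since these $\theta_i$ are pointwise distinct by the very definition of a pointwise $k$-slant distribution, they are exactly the functions required in (a).

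For the implication (a)$\Rightarrow$(b), I would invoke Proposition \ref{p176}: using that $\varphi$ restricted to $\oplus_{i=1}^k\mathfrak D_i$ is an isometry (Remark \ref{p33}, respectively Remark \ref{p32}, together with $\mathfrak D\perp\xi$ when the second formula in (\ref{97}) is in force) and that $f(\mathfrak D_i)\subseteq\mathfrak D_i$, the eigenvalue formula forces $\mathfrak D$ to be a $k$-pointwise slant distribution whose slant functions are the very $\theta_i$ of (a); here Theorem \ref{p143} is the decisive tool, guaranteeing that $\lambda_i=\epsilon\cos^2\theta_i$ is the unique eigenvalue of $f^2|_{\mathfrak D_i}$ and that $\theta_i$ is indeed the slant function of $\mathfrak D_i$. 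It then remains only to upgrade ``$k$-pointwise slant'' to ``pointwise $k$-slant'', which is immediate because the $\theta_i$ in (a) are assumed pointwise distinct. The single book-keeping point to make explicit is that pointwise distinctness of the $\theta_i$ is genuinely equivalent to pointwise distinctness of the eigenvalue functions $\lambda_i$: as $\cos^2$ is strictly decreasing, hence injective, on $[0,\frac{\pi}{2}]$, one has $\theta_i(x)\neq\theta_j(x)$ if and only if $\lambda_i(x)\neq\lambda_j(x)$, which is exactly the condition recalled before Definition \ref{p145} that makes each $(\mathfrak D_i)_x$ fill out the whole eigenspace of $\lambda_i(x)$ in $\mathfrak D_x$.

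Since the argument is a refinement of Theorems \ref{p178} and \ref{p179} rather than a fresh computation, I do not expect a serious obstacle. The only points demanding care are treating the two families of settings uniformly, which is already arranged through (\ref{97}) and the fact that $\varphi$ acts isometrically on $\mathfrak D$ in every case, and verifying that the slant functions read off from the analytic condition (a) coincide with the slant functions of the components $\mathfrak D_i$, for which Theorem \ref{p143} is exactly what is needed.
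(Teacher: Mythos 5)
Your proposal is correct and follows essentially the same route as the paper: the paper states Theorem \ref{p182} without a separate proof, presenting it as the pointwise-distinct analogue of Theorems \ref{p178} and \ref{p179}, which are themselves obtained by combining Remark \ref{p33} (resp.\ Remark \ref{p32}) with Propositions \ref{p176} and \ref{p86} --- precisely the ingredients you invoke. Your explicit book-keeping step (injectivity of $\cos^2$ on $[0,\frac{\pi}{2}]$, so pointwise distinctness of the $\theta_i$ is equivalent to that of the eigenvalue functions $\lambda_i$, which is what upgrades ``$k$-pointwise slant'' to ``pointwise $k$-slant'' in the sense of Definition \ref{p145}) is exactly the observation the paper leaves implicit.
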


Let $D=\oplus_{i=0}^k{D_i}$ be a pointwise $k$-slant distribution on ${M}$, where, additionally, $D\perp \xi$ if we consider the setting given by the second formula in (\ref{97}). Let us consider the already established notations for $\theta_i$, $\lambda_i$ ($i=\overline{1,k}$), $f$, and $w$, and let $\overline\nabla$ be the Levi-Civita connection on $\overline{M}$.  

Taking into account that, for any $x\in M$ and $i\in \{1,\dots ,k\}$, the linear space $(D_i)_x$ is the entire eigenspace in $D_x$ of the eigenvalue $\lambda_i(x)$ (different from $1$ or $(-1)$), we obtain new variants for some of the mentioned results. 

\begin{proposition}\label{p156}
Let $i_0\in \{1,\ldots,k\}$ and\, $\overline\nabla_X  Y\in D$ for any $X,Y\in D_{i_0}$. Then, the following two assertions are equivalent: \\ 
1) $(\overline\nabla_X f^2) Y=0$ for any $X,Y\in D_{i_0}$. \\ 
2) i) $D_{i_0}$ is completely integrable with respect to $\overline\nabla$; \\ 
\hspace*{7 pt} ii) $X(\lambda_{i_0})=0$ for any $X\in D_{i_0}$.
\end{proposition}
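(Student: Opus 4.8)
The plan is to derive Proposition~\ref{p156} from the already established Proposition~\ref{p129}, exploiting the additional rigidity supplied by the pointwise $k$-slant hypothesis. Since every pointwise $k$-slant distribution is in particular a $k$-pointwise slant distribution (Proposition~\ref{p155}), Proposition~\ref{p129} applies under the present hypothesis that $\overline\nabla_X Y\in D$ for all $X,Y\in D_{i_0}$; it shows that assertion 1) is equivalent to the conjunction of the two conditions (a) $f^2(\overline\nabla_X Y)=\lambda_{i_0}\cdot \overline\nabla_X Y$ for any $X,Y\in D_{i_0}$ and (b) $X(\lambda_{i_0})=0$ for any $X\in D_{i_0}$. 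As condition (b) is verbatim assertion 2) ii), the whole task reduces to proving that, in the present setting, condition (a) is equivalent to assertion 2) i), that is, to the complete integrability of $D_{i_0}$ with respect to $\overline\nabla$ (meaning $\overline\nabla_X Y\in D_{i_0}$ for all $X,Y\in D_{i_0}$, in the sense of Remark~\ref{p131}).

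The crux is the eigenspace observation that motivated the very introduction of pointwise $k$-slant distributions (Definition~\ref{p145}): for each $x\in M$, the subspace $(D_{i_0})_x$ is the \emph{entire} eigenspace of $(f^2|_D)_x$ associated with the eigenvalue $\lambda_{i_0}(x)$. To justify this, I would invoke Theorem~\ref{p143}, by which the eigenvalues of $(f^2|_D)_x$ are $\epsilon$ on $(D_0)_x$ (when $D_0\neq\{0\}$) and $\lambda_i(x)=\epsilon\cos^2\theta_i(x)$ on $(D_i)_x$ for $i=\overline{1,k}$. Since $\theta_{i_0}(x)\in(0,\frac{\pi}{2}]$ gives $\cos^2\theta_{i_0}(x)<1$, we have $\lambda_{i_0}(x)\neq\epsilon$; and since the slant functions of a pointwise $k$-slant distribution are pointwise distinct, $\theta_{i_0}(x)\neq\theta_j(x)$ for $j\neq i_0$ forces $\lambda_{i_0}(x)\neq\lambda_j(x)$, because $t\mapsto\cos^2 t$ is injective on $[0,\frac{\pi}{2}]$. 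Thus $\lambda_{i_0}(x)$ is separated from every other eigenvalue of $(f^2|_D)_x$, and its eigenspace inside $D_x$ is precisely $(D_{i_0})_x$.

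Granting this observation, the equivalence (a) $\Leftrightarrow$ 2) i) is immediate. If $D_{i_0}$ is completely integrable, then $\overline\nabla_X Y\in D_{i_0}$, and Theorem~\ref{p143} directly yields $f^2(\overline\nabla_X Y)=\lambda_{i_0}\cdot\overline\nabla_X Y$, which is (a). Conversely, assuming (a), at each point $(\overline\nabla_X Y)_x$ lies in $D_x$ (by the standing hypothesis) and is an eigenvector of $(f^2|_D)_x$ for $\lambda_{i_0}(x)$; by the eigenspace observation it must belong to $(D_{i_0})_x$, whence $\overline\nabla_X Y\in D_{i_0}$ and $D_{i_0}$ is completely integrable. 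Substituting this equivalence into the statement delivered by Proposition~\ref{p129} gives the desired equivalence of 1) and 2). The only genuinely new ingredient beyond Proposition~\ref{p129} is the eigenspace observation, so the step I expect to require the most care is the pointwise distinctness of the $\lambda_i(x)$ and its consequence that $(D_{i_0})_x$ exhausts the $\lambda_{i_0}(x)$-eigenspace of $D_x$; the remainder is a mechanical substitution.
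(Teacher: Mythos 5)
Your proposal is correct and follows exactly the route the paper intends: Proposition~\ref{p156} is presented as the ``new variant'' of Proposition~\ref{p129} obtained from the key observation that, for a pointwise $k$-slant distribution, $(D_{i_0})_x$ is the \emph{entire} $\lambda_{i_0}(x)$-eigenspace of $(f^2|_D)_x$ (since the pointwise distinct slant functions and the injectivity of $\cos^2$ on $[0,\frac{\pi}{2}]$ separate $\lambda_{i_0}(x)$ from all other eigenvalues, including $\epsilon$ on $D_0$), which upgrades condition 2)\,i) of Proposition~\ref{p129} to complete integrability of $D_{i_0}$. Your elaboration of this eigenspace argument, which the paper only asserts in the paragraph preceding the proposition, is accurate and complete.
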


\begin{corollary}\label{p157}
Let\, $\overline\nabla_X  Y\in D$ for any $X,Y\in D_{i}$, $i=\overline {1,k}$. 
Then, the following two assertions are equivalent: \\ 
1) $(\overline\nabla_X f^2) Y=0$ for any $X,Y\in D_{i}$, $i=\overline {1,k}$. \\ 
2) i) $D_{i}$ is completely integrable with respect to $\overline\nabla$ for any $i\in \{1,\ldots,k\}$; \\ 
\hspace*{7 pt} ii) $X(\lambda_{i})=0$ for any $X\in D_{i}$, $i=\overline {1,k}$.
\end{corollary}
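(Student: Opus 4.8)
The plan is to derive Corollary~\ref{p157} directly from Proposition~\ref{p156} by applying the latter separately for each index $i_0=i$ running over $\{1,\ldots,k\}$, and then conjoining the resulting equivalences. First I would observe that the hypothesis of the corollary, namely $\overline\nabla_X Y\in D$ for all $X,Y\in D_i$ and all $i=\overline{1,k}$, is precisely the hypothesis of Proposition~\ref{p156} with $i_0=i$, holding simultaneously for every $i\in\{1,\ldots,k\}$. Thus for each fixed $i$ the machinery of Proposition~\ref{p156} is available without any further verification.

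Next, for each fixed $i$, Proposition~\ref{p156} (with $i_0=i$) asserts the equivalence between the statement ``$(\overline\nabla_X f^2)Y=0$ for all $X,Y\in D_i$'' and the conjunction of ``$D_i$ is completely integrable with respect to $\overline\nabla$'' and ``$X(\lambda_i)=0$ for all $X\in D_i$''. I would then note that condition~1) of the corollary is exactly the conjunction, over all $i\in\{1,\ldots,k\}$, of the left-hand sides of these $k$ equivalences, while condition~2) is the conjunction, over all $i$, of the corresponding right-hand sides, with part~i) collecting the complete-integrability statements and part~ii) collecting the derivative conditions $X(\lambda_i)=0$. Since each of the $k$ individual equivalences holds by Proposition~\ref{p156}, forming the conjunction over $i=\overline{1,k}$ yields the desired equivalence $1)\Leftrightarrow 2)$.

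There is no substantive obstacle in this argument: it is a purely formal consequence of the proposition applied pointwise in the index $i$. The only point requiring genuine care is the bookkeeping of the quantifiers, that is, checking that the joint quantification ``for any $X,Y\in D_i$, $i=\overline{1,k}$'' appearing in both conditions of the corollary decomposes cleanly as ``for each $i$, for any $X,Y\in D_i$''. This decomposition is legitimate precisely because both condition~1) and condition~2) involve only vector fields lying within a single component $D_i$ (no cross-terms between distinct $D_i$ and $D_j$ occur), so the global statements split as conjunctions of their per-index versions and the per-index applications of Proposition~\ref{p156} reassemble without interference into the two global conditions.

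\begin{proof}
Apply Proposition~\ref{p156} for each $i_0=i\in\{1,\ldots,k\}$. The hypothesis $\overline\nabla_X Y\in D$ for $X,Y\in D_i$, $i=\overline{1,k}$, supplies the hypothesis of the proposition for every such $i$. By that proposition, for each $i$ the condition $(\overline\nabla_X f^2)Y=0$ for all $X,Y\in D_i$ is equivalent to $D_i$ being completely integrable with respect to $\overline\nabla$ together with $X(\lambda_i)=0$ for all $X\in D_i$. Since assertion~1) is the conjunction over $i=\overline{1,k}$ of the former conditions and assertion~2) is the conjunction over $i=\overline{1,k}$ of the latter, the two assertions are equivalent.
\end{proof}
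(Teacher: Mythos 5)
Your proof is correct and matches the paper's intent exactly: the paper states Corollary \ref{p157} without a separate argument precisely because it is the conjunction, over $i=\overline{1,k}$, of the instances of Proposition \ref{p156} with $i_0=i$, which is what you carry out. Your remark that the quantifiers split cleanly per index (no cross-terms between distinct $D_i$ and $D_j$) is the only point needing care, and you handle it correctly.
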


\begin{remark}\label{p158}
The equivalence in the above Corollary is, in particular, valid for $D$ completely integrable with respect to $\overline\nabla$. 
\end{remark}

\begin{proposition}\label{p159}
Let $i_0\in \{1,\ldots,k\}$ and\, $\overline\nabla_X  Y\in D$ for any $X\in T{M}$ and\, $Y\in D_{i_0}$. Then, the following two assertions are equivalent: \\ 
1) $(\overline\nabla_X f^2) Y=0$ for any $X\in T{M}$ and\, $Y\in D_{i_0}$. \\ 
2) i) $\overline\nabla$ restricts to $D_{i_0}$ \textup(i.e., $\overline\nabla_X  Y\in D_{i_0}$ for any $X\in T{M}$ and\, $Y\in D_{i_0}$\textup); \\ 
\hspace*{7 pt} ii) the restriction of $D_{i_0}$ to any connected component of ${M}$ is a slant distribution.
\end{proposition}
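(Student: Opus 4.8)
The plan is to follow the computational pattern already used for Propositions \ref{p129} and \ref{p132}, and then exploit the extra hypothesis of \emph{pointwise} $k$-slantness (pointwise distinct slant functions) to strengthen the conclusion. First I would expand the covariant derivative by the Leibniz rule, $(\overline\nabla_X f^2)Y=\overline\nabla_X(f^2Y)-f^2(\overline\nabla_X Y)$, and substitute $f^2Y=\lambda_{i_0}Y$ for $Y\in D_{i_0}$, which is furnished by Theorem \ref{p143}. This yields the working identity $(\overline\nabla_X f^2)Y=X(\lambda_{i_0})\,Y+\lambda_{i_0}\,\overline\nabla_X Y-f^2(\overline\nabla_X Y)$, valid for any $X\in TM$ and $Y\in D_{i_0}$, where $\lambda_{i_0}=\epsilon\cos^2\theta_{i_0}\in C^\infty(M)$ by Theorem \ref{p143}.

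For the implication $1)\Rightarrow 2)$, I would first extract condition ii). Assuming the identity vanishes, I take $Y=Y_{i_0}$ a unit vector field in $D_{i_0}$ and pair with $Y_{i_0}$: since $\overline\nabla_X Y_{i_0}\perp Y_{i_0}$ (from $g(Y_{i_0},Y_{i_0})=1$) and $f^2$ is symmetric with $f^2Y_{i_0}=\lambda_{i_0}Y_{i_0}$, both $g(\overline\nabla_X Y_{i_0},Y_{i_0})$ and $g(f^2(\overline\nabla_X Y_{i_0}),Y_{i_0})$ vanish, leaving $X(\lambda_{i_0})=0$ for all $X\in TM$. Hence $\lambda_{i_0}$, and so $\theta_{i_0}=\arccos\sqrt{\epsilon\lambda_{i_0}}$, is constant on every connected component, which is exactly 2) ii) by Definition \ref{5}(ii) (and Corollary \ref{p122}). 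With $X(\lambda_{i_0})=0$ the identity collapses to $f^2(\overline\nabla_X Y)=\lambda_{i_0}\,\overline\nabla_X Y$; combined with the hypothesis $\overline\nabla_X Y\in D$, this says that at each $x\in M$ the vector $(\overline\nabla_X Y)_x$ lies in the eigenspace of $\lambda_{i_0}(x)$ inside $D_x$. Here the pointwise $k$-slant hypothesis enters decisively: because the $\theta_i$ are \emph{pointwise} distinct, $\lambda_{i_0}(x)\neq\lambda_j(x)$ for $j\neq i_0$ and $\lambda_{i_0}(x)\neq\epsilon$ (the eigenvalue on $D_0$), so $(D_{i_0})_x$ is precisely that eigenspace. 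Therefore $\overline\nabla_X Y\in D_{i_0}$, giving 2) i).

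The reverse implication $2)\Rightarrow 1)$ is a direct substitution: condition ii) gives $X(\lambda_{i_0})=0$ for all $X\in TM$ (constancy on connected components), so $\overline\nabla_X(\lambda_{i_0}Y)=\lambda_{i_0}\overline\nabla_X Y$, while condition i) gives $\overline\nabla_X Y\in D_{i_0}$, whence $f^2(\overline\nabla_X Y)=\lambda_{i_0}\overline\nabla_X Y$; plugging both into the working identity returns $(\overline\nabla_X f^2)Y=0$. The only genuinely nontrivial step, and the point where this proposition improves on its $k$-pointwise analogue Proposition \ref{p132}, is the eigenspace identification $(D_{i_0})_x=\ker\bigl((f^2)_x-\lambda_{i_0}(x)\,\mathrm{Id}\bigr)\cap D_x$. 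This is exactly the ``problem of the eigenspace'' flagged in Remark \ref{p144}, and resolving it is precisely what the definition of a pointwise $k$-slant distribution was introduced to guarantee; I expect this to be the main (and essentially the sole) obstacle, the rest being the routine Leibniz bookkeeping inherited from Proposition \ref{p129}.
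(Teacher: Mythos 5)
Your proof is correct and takes essentially the same route as the paper: the paper obtains Proposition \ref{p159} by combining Proposition \ref{p132} (whose computational core is the Leibniz/unit-vector-field argument proved for Proposition \ref{p129}, which you reproduce) with the observation stated just before Proposition \ref{p156} that, for a pointwise $k$-slant distribution, $(D_{i_0})_x$ is the entire eigenspace of $\lambda_{i_0}(x)$ in $D_x$ --- exactly your eigenspace-identification step. Your identification of that step as the sole point where pointwise distinctness of the slant functions (and $\lambda_{i_0}\neq\epsilon$) is needed matches the paper's stated resolution of the ``problem of the eigenspace'' from Remark \ref{p144}.
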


\begin{theorem}\label{p160}
Let\, $\overline\nabla_X  Y\in D$ for any $X\in T{M}$ and\, $Y\in \oplus_{i=1}^k D_i$. Then, the following two assertions are equivalent: \\ 
1) $(\overline\nabla_X f^2) Y=0$ for any $X\in T{M}$ and\, $Y\in \oplus_{i=1}^k D_i$. \\ 
2) i) $\overline\nabla$ restricts to $D_{i}$ for $i=\overline{1,k}$; \\ 
\hspace*{7 pt} ii) the restriction of $D$ to any connected component of ${M}$ is a \mbox{$k$-slant} distribution. In particular, if ${M}$ is connected, $D$ is a $k$-slant distribution. 
\end{theorem}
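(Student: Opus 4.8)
The plan is to deduce this theorem from its single-component counterpart, Proposition \ref{p159}, by running that proposition once for each index $i_0\in\{1,\dots,k\}$ and then assembling the pieces, exactly as Theorem \ref{p133} is obtained from Proposition \ref{p132} in the merely $k$-pointwise slant case. The extra input here, which upgrades the conclusion from a $k'$-slant to a genuinely $k$-slant distribution, is that for a pointwise $k$-slant distribution the slant functions $\theta_i$, hence the eigenvalue functions $\lambda_i=\epsilon\cos^2\theta_i$ of Theorem \ref{p143}, are pointwise distinct; thus $(D_i)_x$ is the \emph{entire} eigenspace of $\lambda_i(x)$ in $D_x$. Note first that, since $D_{i_0}\subseteq\oplus_{i=1}^k D_i$, the standing hypothesis $\overline\nabla_X Y\in D$ is available for every $Y\in D_{i_0}$, so Proposition \ref{p159} is applicable to each $D_{i_0}$.

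For $1)\Rightarrow 2)$ I would fix $i_0$ and restrict assertion 1) to $Y\in D_{i_0}$; Proposition \ref{p159} then gives both that $\overline\nabla$ restricts to $D_{i_0}$ and that $D_{i_0}$ is slant on every connected component, i.e.\ $\lambda_{i_0}$ is locally constant. Letting $i_0$ range over $\{1,\dots,k\}$ yields 2)i) and the local constancy of all the $\lambda_i$. The assembly step is then to observe that on a fixed connected component the now constant values $\lambda_1,\dots,\lambda_k$ are mutually distinct by pointwise distinctness, so the constant slant angles are distinct; combined with $f(D_i)\subseteq D_i$ and $D_0$ invariant, Definition \ref{8} certifies that $D$ restricts to a $k$-slant distribution, which is 2)ii).

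For $2)\Rightarrow 1)$ I would read 2)i) and 2)ii) back into the hypotheses of Proposition \ref{p159} for each $D_{i_0}$: part 2)i) gives that $\overline\nabla$ restricts to $D_{i_0}$, and the $k$-slant conclusion of 2)ii) forces each $D_{i_0}$ to be slant, of constant angle, on every connected component. Proposition \ref{p159} then returns $(\overline\nabla_X f^2)Y=0$ for $Y\in D_{i_0}$. Since $(\overline\nabla_X f^2)$ is the covariant derivative of the $(1,1)$-tensor field $f^2$ and hence acts $C^\infty(M)$-linearly in its vector argument, summing over $i_0$ and writing $Y=\sum_i Y_i$ with $Y_i\in D_i$ gives the full statement 1).

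The main obstacle, really the only non-formal point, is the assembly in $1)\Rightarrow 2)$: Proposition \ref{p159} alone only certifies that each $D_i$ is slant on each connected component, which by itself would merely reproduce the weaker $k'$-slant conclusion of Theorem \ref{p133}. What promotes this to a full $k$-slant distribution is precisely the pointwise distinctness built into the pointwise $k$-slant hypothesis, guaranteeing the $k$ locally constant angles stay distinct so that no two slant components can merge on any connected component of $M$. I would flag that this is exactly the eigenspace issue raised in Remark \ref{p144}, now resolved by the stronger hypothesis.
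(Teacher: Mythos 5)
Your proposal is correct and follows essentially the same route the paper intends: Theorem \ref{p160} is presented there as the pointwise-$k$-slant variant of Theorem \ref{p133}, obtained by running the single-component result (Proposition \ref{p159}) over $i=\overline{1,k}$ and assembling, with the pointwise distinctness of the slant functions supplying exactly the two upgrades you identify — that $\overline\nabla$ restricts to each $D_i$ (the full-eigenspace property of Remark \ref{p144}) and that the locally constant eigenvalues $\lambda_1,\ldots,\lambda_k$ cannot merge on any connected component, so the conclusion is genuinely $k$-slant rather than $k'$-slant. Your tensoriality argument for the converse direction is also the standard justification and raises no issues.
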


\begin{proposition}\label{p161}
Let $i_0\in \{1,\ldots,k\}$ and\, $\overline\nabla_X  Y\in D$ for any $X\in D$ and\, $Y\in D_{i_0}$. Then, the following two assertions are equivalent: \\ 
1) $(\overline\nabla_X f^2) Y=0$ for any $X\in D$ and\, $Y\in D_{i_0}$. \\ 
2) i) $\overline\nabla_X  Y\in D_{i_0}$ for any $X\in D$ and\, $Y\in D_{i_0}$; \\ 
\hspace*{7 pt} ii) $X(\lambda_{i_0})=0$ for any $X\in D$.
\end{proposition}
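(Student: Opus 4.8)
The plan is to deduce this from Proposition \ref{p134}, which is its $k$-pointwise slant analogue, and then to upgrade its condition 2)i) using the extra structure carried by a pointwise $k$-slant distribution. By Proposition \ref{p155} a pointwise $k$-slant distribution is in particular $k$-pointwise slant, and the standing hypothesis here, $\overline\nabla_X Y\in D$ for all $X\in D$ and $Y\in D_{i_0}$, is precisely the hypothesis of Proposition \ref{p134}. Hence Proposition \ref{p134} applies verbatim and tells us that assertion 1) is equivalent to the conjunction of $f^2(\overline\nabla_X Y)=\lambda_{i_0}\cdot\overline\nabla_X Y$ for all $X\in D$, $Y\in D_{i_0}$, and $X(\lambda_{i_0})=0$ for all $X\in D$. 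Since the latter is exactly 2)ii), it remains only to show that, under the pointwise $k$-slant assumption, the condition $f^2(\overline\nabla_X Y)=\lambda_{i_0}\cdot\overline\nabla_X Y$ is equivalent to 2)i), namely $\overline\nabla_X Y\in D_{i_0}$.

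The key step is the eigenspace identification noted just before Proposition \ref{p156} (and motivated in Remark \ref{p144}). By Theorem \ref{p143}, for each $x\in M$ the operator $(f^2|_D)_x$ acts on $(D_i)_x$ as multiplication by $\lambda_i(x)=\epsilon\cos^2\theta_i(x)$, and on the invariant component $(D_0)_x$ as multiplication by $\lambda_0=\epsilon$. Because the distribution is pointwise $k$-slant, the slant functions are pointwise distinct, and since each $\theta_i(x)\in(0,\frac{\pi}{2}]$ forces $\lambda_i(x)\neq\epsilon=\lambda_0$, the scalars $\lambda_0,\lambda_1(x),\ldots,\lambda_k(x)$ are pairwise distinct at every point $x$. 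Consequently $(D_{i_0})_x$ is exactly the $\lambda_{i_0}(x)$-eigenspace of $(f^2|_D)_x$ inside $D_x$. Writing any $Z\in D$ at $x$ in its components along the orthogonal eigenspace decomposition $D_x=\oplus_{i=0}^k(D_i)_x$, one sees at once that $f^2 Z=\lambda_{i_0}\cdot Z$ holds if and only if all components of $Z$ outside $(D_{i_0})_x$ vanish, i.e.\ if and only if $Z\in D_{i_0}$. Applying this with $Z=\overline\nabla_X Y$ (which lies in $D$ by hypothesis) turns $f^2(\overline\nabla_X Y)=\lambda_{i_0}\cdot\overline\nabla_X Y$ into $\overline\nabla_X Y\in D_{i_0}$, and conversely, which completes the equivalence.

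I do not expect a genuine obstacle here: the only content beyond Proposition \ref{p134} is the eigenspace identification, and that is precisely the feature built into the pointwise $k$-slant notion to resolve the difficulty flagged in Remark \ref{p144}. The single point requiring care is checking that the invariant eigenvalue $\lambda_0=\epsilon$ is not among the values $\lambda_{i_0}(x)$, which is guaranteed by $\theta_{i_0}(x)>0$; once this is in place, pointwise distinctness makes each $(D_{i_0})_x$ a full eigenspace and the argument closes. Alternatively, one could give a self-contained proof by repeating the computation in the proof of Proposition \ref{p129} (using that $f^2$ is $g$-symmetric and that $\overline\nabla_X Y_{i_0}\perp Y_{i_0}$ for a unit $Y_{i_0}\in D_{i_0}$ to extract $X(\lambda_{i_0})=0$) and then inserting the eigenspace step at the end to pass from $f^2(\overline\nabla_X Y)=\lambda_{i_0}\cdot\overline\nabla_X Y$ to $\overline\nabla_X Y\in D_{i_0}$.
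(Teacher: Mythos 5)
Your proposal is correct and follows essentially the paper's own route: the paper states Proposition \ref{p161} as one of the "new variants" obtained from Proposition \ref{p134} precisely via the observation that, for a pointwise $k$-slant distribution, each $(D_{i_0})_x$ is the full eigenspace of $\lambda_{i_0}(x)$ in $D_x$ (the eigenvalues $\epsilon,\lambda_1(x),\ldots,\lambda_k(x)$ being pairwise distinct at every point), which is exactly your upgrade of condition 2)i). Your verification that $\lambda_{i_0}(x)\neq\epsilon$ because $\theta_{i_0}(x)>0$ is the same point the paper encodes in the phrase "different from $1$ or $(-1)$".
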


\begin{corollary}\label{p162}
Let\, $\overline\nabla_X  Y\in D$ for any $X\in D$ and\, $Y\in \oplus_{i=1}^k D_i$. Then, the following two assertions are equivalent: \\ 
1) $(\overline\nabla_X f^2) Y=0$ for any $X\in D$ and\, $Y\in \oplus_{i=1}^k D_i$. \\ 
2) i) $\overline\nabla_X  Y\in D_{i}$ for any $X\in D$ and\, $Y\in D_{i}$, $i=\overline{1,k}$; \\ 
\hspace*{7 pt} ii) $X(\lambda_{i})=0$ for any $X\in D$, $i=\overline{1,k}$. 
\end{corollary}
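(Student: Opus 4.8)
The plan is to reduce Corollary \ref{p162} to the already-established Corollary \ref{p135}. That corollary carries exactly the same standing hypothesis (namely $\overline\nabla_X Y\in D$ for all $X\in D$ and $Y\in\oplus_{i=1}^k D_i$), the same assertion 1), and the same condition 2) ii); the two statements differ only in their condition 2) i), which in Corollary \ref{p135} reads $f^2(\overline\nabla_X Y)=\lambda_{i}\cdot\overline\nabla_X Y$, while here it reads $\overline\nabla_X Y\in D_i$. Consequently it suffices to prove that, under the standing hypothesis $\overline\nabla_X Y\in D$, these two forms of 2) i) are equivalent; the result then follows verbatim from Corollary \ref{p135}.

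One implication is immediate: if $\overline\nabla_X Y\in D_i$, then Theorem \ref{p143} (a), which gives $f^2 Z=\lambda_i Z$ for every $Z\in D_i$, yields $f^2(\overline\nabla_X Y)=\lambda_i\cdot\overline\nabla_X Y$. The substance lies in the converse, and this is precisely where the \emph{pointwise $k$-slant} hypothesis of Definition \ref{p145}, as opposed to the merely $k$-pointwise slant one, becomes decisive: it removes the eigenspace obstruction noted in Remark \ref{p144}.

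The key step is a pointwise spectral observation. By Theorem \ref{p143}, for each $x\in M$ and each $i\in\{1,\ldots,k\}$, the subspace $(D_i)_x$ consists entirely of eigenvectors of $(f^2|_D)_x$ for the eigenvalue $\lambda_i(x)=\epsilon\cos^2\theta_i(x)$. Because $D$ is pointwise $k$-slant, the slant functions satisfy $\theta_i(x)\neq\theta_j(x)$ for $i\neq j$; as $\theta_i(x),\theta_j(x)\in(0,\frac{\pi}{2}]$, where $\cos^2$ is injective, this forces $\lambda_i(x)\neq\lambda_j(x)$. Moreover each such $\lambda_i(x)$ differs from $1$ and $-1$, hence from the value $\epsilon$ carried by the invariant part $(D_0)_x$ (on which $f^2=\varphi^2=\epsilon I$). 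Thus the eigenvalues attached to the summands of $D_x=(D_0)_x\oplus\bigoplus_{i=1}^k(D_i)_x$ are mutually distinct, so $(D_i)_x$ is exactly the full $\lambda_i(x)$-eigenspace of $(f^2|_D)_x$ inside $D_x$.

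With this in hand the converse is direct: if $f^2(\overline\nabla_X Y)=\lambda_i\cdot\overline\nabla_X Y$ and $\overline\nabla_X Y\in D$, then at every point $(\overline\nabla_X Y)_x$ is a $\lambda_i(x)$-eigenvector of $(f^2|_D)_x$ lying in $D_x$, whence $(\overline\nabla_X Y)_x\in(D_i)_x$, i.e. $\overline\nabla_X Y\in D_i$. This establishes the equivalence of the two forms of 2) i), and invoking Corollary \ref{p135} finishes the argument. I expect no genuine obstacle beyond this converse: once the distinctness of the pointwise eigenvalues $\lambda_i(x)$ is recorded, the proof collapses to Corollary \ref{p135}. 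The only point demanding care is that all spectral statements be carried out pointwise in $x$, since the eigenvalue functions $\lambda_i$ need not be constant over $M$.
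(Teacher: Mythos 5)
Your proof is correct and takes essentially the same route the paper does: the paper presents Corollary \ref{p162} as the pointwise $k$-slant variant of Corollary \ref{p135}, obtained precisely because the pointwise distinctness of the slant functions makes $(D_i)_x$ the entire $\lambda_i(x)$-eigenspace of $(f^2|_D)_x$ in $D_x$ (the observation recorded just before Proposition \ref{p156}, resolving the obstruction of Remark \ref{p144}). Your spectral argument --- injectivity of $\cos^2$ on $(0,\frac{\pi}{2}]$ giving $\lambda_i(x)\neq\lambda_j(x)$, and $|\lambda_i(x)|<1$ separating these from the eigenvalue $\epsilon$ on the invariant component --- simply makes explicit the justification the paper leaves implicit.
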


\begin{theorem}\label{p191}
Let $D$ be completely integrable with respect to $\overline\nabla$. If $M'$ is a connected submanifold of ${M}$ such that $TM'=D$, then the following two assertions are equivalent: \\ 
1) $(\overline\nabla_X f^2) Y=0$ for any $X\in D$ and\, $Y\in \oplus_{i=1}^k D_i$. \\ 
2) i) $\overline\nabla_X  Y\in D_{i}$ for any $X\in D$ and\, $Y\in D_{i}$, $i=\overline{1,k}$; \\ 
\hspace*{7 pt} ii) $M'$ is a $k$-slant submanifold of ${M}$. 
\end{theorem}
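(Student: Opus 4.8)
The plan is to deduce the equivalence from the already-established Corollary~\ref{p162} and then to read off its auxiliary condition~2)ii) geometrically, using the fact that for a \emph{pointwise} $k$-slant distribution each $(D_i)_x$ is the \emph{full} eigenspace of $\lambda_i(x)$ in $D_x$. First I would note that, since $D$ is completely integrable with respect to $\overline\nabla$, we have $\overline\nabla_X Y\in D$ for all $X,Y\in D$, and in particular whenever $X\in D$ and $Y\in\oplus_{i=1}^kD_i$; thus the standing hypothesis of Corollary~\ref{p162} is automatically met. Because $TM'=D$, the fields tangent to $M'$ are exactly those in $D$, so "$X\in D$" and "$X\in TM'$" describe the same set of test fields. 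Applying Corollary~\ref{p162} then shows that assertion~1) is equivalent to condition~2)i) of the present theorem together with the auxiliary condition
\[
X(\lambda_i)=0\quad\text{for any }X\in D,\ i=\overline{1,k}.
\]

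The second step is to show that this auxiliary condition is equivalent, under the connectedness of $M'$, to $M'$ being a $k$-slant submanifold of $M$. Since $TM'=D$, the requirement $X(\lambda_i)=0$ for all $X\in TM'$ says precisely that each eigenvalue function $\lambda_i$ is locally constant on $M'$, hence constant on $M'$ because $M'$ is connected. By Theorem~\ref{p143} we have $\lambda_i=\epsilon\cos^2\theta_i$, so constancy of $\lambda_i$ is equivalent to constancy of the slant function $\theta_i$; a pointwise slant distribution with constant slant function is, by definition, a slant distribution with that constant slant angle. Together with the invariance of $D_0$ and $f(D_i)\subseteq D_i$, the decomposition $TM'=\oplus_{i=0}^kD_i$ then exhibits $D$ as a $k$-slant distribution, i.e., $M'$ as a $k$-slant submanifold of $M$. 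The converse direction is immediate: if $M'$ is $k$-slant, Corollary~\ref{p187} gives that each $\lambda_i=\epsilon\cos^2\theta_i$ is constant on $M'$, whence $X(\lambda_i)=0$ for all $X\in TM'=D$.

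The delicate point — and the place where the \emph{pointwise} $k$-slant hypothesis, rather than the weaker $k$-pointwise slant hypothesis of Theorem~\ref{p188}, does the essential work — is ensuring that the now-constant angles $\theta_1,\dots,\theta_k$ remain pairwise distinct, so that the resulting object is genuinely $k$-slant and not merely $k'$-slant with $k'<k$. This is guaranteed by the defining feature of a pointwise $k$-slant distribution: the slant functions satisfy $\theta_i(x)\neq\theta_j(x)$ for every $x$ and every $i\neq j$ (equivalently, each $(D_i)_x$ is the entire $\lambda_i(x)$-eigenspace in $D_x$), so their constant values are distinct and no components can collapse. Consequently, unlike in the proof of Theorem~\ref{p188}, there is no need to invoke Proposition~\ref{p31} to merge coinciding slant components; this absence of merging is exactly the simplification afforded by the pointwise $k$-slant structure.
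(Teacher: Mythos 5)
Your proof is correct and follows essentially the route the paper intends for this statement, which it leaves as the pointwise $k$-slant analogue of Theorem \ref{p188}: apply Corollary \ref{p162}, use connectedness of $M'$ to pass from $X(\lambda_i)=0$ to constancy of the eigenvalue functions, and conclude via Theorem \ref{p143}. Your closing observation --- that pointwise distinctness of the slant functions forces the constant angles to remain pairwise distinct, so that, unlike in the proof of Theorem \ref{p188}, no merging via Proposition \ref{p31} is needed and the conclusion is genuinely $k$-slant rather than $k'$-slant --- is precisely the simplification the paper's strengthened hypothesis is designed to provide.
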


Let $M$ be an immersed submanifold of $\overline{M}$, and let $f$ be the tangential com\-po\-nent of $\varphi$. An equivalent formulation of the definition of a pointwise \mbox{$k$-slant} submanifold is the following one. 

\begin{definition}\label{p148}
We say that $M$ is a \textit{pointwise $k$-slant submanifold} of $\overline{M}$ if there exists an orthogonal decomposition of $TM$ into regular distributions, 
$$TM=\oplus_{i=0}^k{D_i}$$
with $D_i\neq \{0\},\ i= \overline{1,k}$, and $D_0$ possible null, and there exist $k$ pointwise distinct continuous functions $\theta_i: M\rightarrow (0,\frac{\pi}{2}]$ ($\theta_i(x)\neq \theta_j(x)$ for any $i\neq j$ and any point $x \in M$), $i=\overline{1,k}$, such that: \\ 
\hspace*{7pt} (i) \;For any $x\in {M}$, $i\in \{1,\ldots,k\}$, and $v\in (D_i)_x\verb=\=\{0\}$, we have $\varphi v \neq 0$ and $ \widehat{(\varphi v, (D_i)_x)}=\theta_i(x)$;\\ 
\hspace*{7pt} (ii) \,$\varphi v\in (D_0)_x$ for any  $x\in M$ and $v\in (D_0)_x$;\\ 
\hspace*{7pt} (iii) $fv\in (D_i)_x$ for any $x\in M$ and $v\in (D_i)_x$, $i=\overline{1,k}$. 
\end{definition}

\begin{remark}\label{p149}\ \\ 
(a) Condition (i) can be replaced by \\ 
\hspace*{7pt} (i') For any $x\in {M}$, $i\in \{1,\ldots,k\}$, and $v\in (D_i)_x\verb=\=\{0\}$, we have $\varphi v \neq 0$ and 
$ \widehat{(\varphi v, T_xM)}=\theta_i(x)$.\\ 
(b) Condition (iii) can be replaced by \\ 
\hspace*{7pt} (iii') $\varphi(D_i)\perp D_j$ for any $i\neq j$ from $\{1,\ldots,k\}$.
\end{remark} 

\begin{remark}\label{p82}  
As particular cases of pointwise $k$-slant submanifolds, we mention the following situations. 

If $k=1$ and $D_0=\{0\}$, $M$ is a \textit{pointwise slant submanifold}. If $k=1$, $D_0\neq \{0\}$, and $\theta_1$ is different from the constant function $\frac{\pi}{2}$, $M$ is a \textit{pointwise semi-slant submanifold}. If $k=2$ and $D_0=\{0\}$, $M$ is a \textit{pointwise bi-slant submanifold}; it is a \textit{pointwise hemi-slant submanifold} if one of the slant functions is constant on ${M}$, equal to $\frac{\pi}{2}$. 
\end{remark}

\begin{remark}\label{p140}
Any pointwise $k$-slant submanifold is a $k$-pointwise slant submanifold, but the converse is not true, in any of the considered settings (as it was shown for distributions in the proof of Proposition \ref{p155}). 

Accordingly, all the results obtained for $k$-pointwise slant submanifolds are also valid for pointwise $k$-slant submanifolds. 
\end{remark}

\begin{remark}\label{p183}
Theorem \ref{p182} provides a necessary and sufficient condition for a submanifold $M$ of $\overline M$ to be a pointwise $k$-slant submanifold, considering $\mathfrak D=\oplus_{i=0}^k{\mathfrak D_i}$ for $TM=\oplus_{i=0}^k{\mathfrak D_i}$ in the almost Hermitian or almost product Riemannian setting, respectively for $TM=\oplus_{i=0}^k{\mathfrak D_i}\oplus\langle\xi\rangle$ in the almost contact metric or almost paracontact metric setting. 
\end{remark}

Let $M$ be a pointwise $k$-slant submanifold of $\overline{M}$, $\nabla$ be the Levi-Civita connection induced by $\overline{\nabla}$ on $M$, and  $D:=\oplus_{i=0}^k D_i$ for $TM=\oplus_{i=0}^k D_i$ in the setting given by the first formula in (\ref{97}) or for $TM=\oplus_{i=0}^k D_i\oplus\langle\xi\rangle$ in the case of the second formula in (\ref{97}). For each point $x\in{M}$, the linear spaces $(D_i)_x$ ($i=\overline{1,k}$) are the entire eigenspaces in $D_x$ of the eigenvalues $\lambda_i(x)$ ($i=\overline{1,k}$) different from $1$ or $(-1)$ of $(f^2|_D)_x$, respectively. 

Relative to $\nabla$, we have the following results.  

\begin{proposition}\label{p163}
For\, $i_0\in \{1,\ldots,k\}$, the following two assertions are equivalent: \\ 
1) $(\nabla_X f^2) Y=0$ for any $X,Y\in D_{i_0}$. \\ 
2) i) $D_{i_0}$ is completely integrable with respect to $\nabla$; \\ 
\hspace*{7 pt} ii) $X(\lambda_{i_0})=0$ for any $X\in D_{i_0}$.
\end{proposition}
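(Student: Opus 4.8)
The plan is to obtain Proposition~\ref{p163} as the $\nabla$-counterpart of Proposition~\ref{p156}, following the scheme used in Propositions~\ref{p129} and~\ref{p136}. As noted in Remark~\ref{p189}, the hypothesis $f(D_i)\subseteq D_i$ makes $f^2|_{TM}$ symmetric with respect to $g$, so every computation in the proof of Proposition~\ref{p129} remains valid when $\overline\nabla$ is replaced throughout by $\nabla$; thus I may freely use this symmetry together with the identity $f^2Y=\lambda_{i_0}Y$ for $Y\in D_{i_0}$ supplied by Theorem~\ref{p143}.

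First I would settle $1)\Rightarrow 2)$. For $X,Y\in D_{i_0}$ I write $(\nabla_X f^2)Y=\nabla_X(f^2Y)-f^2(\nabla_XY)=X(\lambda_{i_0})\,Y+\lambda_{i_0}\nabla_XY-f^2(\nabla_XY)=0$, and then test against a unit field $Y=Y_{i_0}\in D_{i_0}$. Both $\nabla_XY_{i_0}$ and $f^2(\nabla_XY_{i_0})$ are $g$-orthogonal to $Y_{i_0}$ (the second because $f^2$ is symmetric and $f^2Y_{i_0}=\lambda_{i_0}Y_{i_0}$), which forces $X(\lambda_{i_0})=0$, i.e.\ assertion~2)ii). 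Substituting back leaves $f^2(\nabla_XY)=\lambda_{i_0}\nabla_XY$, precisely the conclusion that Proposition~\ref{p136} already reaches in the general $k$-pointwise slant case.

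The genuinely new step, and the reason for working with a \emph{pointwise} $k$-slant distribution, is to upgrade this eigenvector identity to complete integrability, namely $\nabla_XY\in D_{i_0}$. Here I would use the defining property of Definition~\ref{p145}: since the slant functions are pointwise distinct, $(D_{i_0})_x$ is the \emph{entire} eigenspace of $\lambda_{i_0}(x)$ of $(f^2|_D)_x$. In the almost Hermitian and almost product Riemannian settings $TM=D$, so $\nabla_XY\in D$ and $f^2(\nabla_XY)=\lambda_{i_0}\nabla_XY$ places $\nabla_XY$ pointwise in that eigenspace, giving $\nabla_XY\in D_{i_0}$ directly. In the almost $(\epsilon)$-contact metric settings I would decompose $\nabla_XY=Z+\mu\,\xi$ with $Z\in D$ and $\mu=g(\nabla_XY,\xi)$; since $f^2\xi=0$ and $f^2(D)\subseteq D$, comparing the $D$- and $\langle\xi\rangle$-components of $f^2(\nabla_XY)=\lambda_{i_0}\nabla_XY$ yields $\lambda_{i_0}\mu=0$ and $f^2Z=\lambda_{i_0}Z$, whence $Z\in D_{i_0}$ and, wherever $\lambda_{i_0}\neq0$, also $\mu=0$. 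The converse $2)\Rightarrow 1)$ is the short computation $(\nabla_X f^2)Y=\nabla_X(\lambda_{i_0}Y)-f^2(\nabla_XY)=X(\lambda_{i_0})\,Y=0$, using complete integrability to write $f^2(\nabla_XY)=\lambda_{i_0}\nabla_XY$ and 2)ii) to annihilate the remaining term.

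The main obstacle I anticipate is exactly the $\langle\xi\rangle$-component in the contact and paracontact settings on the locus where $\lambda_{i_0}$ vanishes, i.e.\ where $\theta_{i_0}=\frac{\pi}{2}$: there the eigenvalue argument only confines $\nabla_XY$ to $(D_{i_0})_x\oplus\langle\xi\rangle_x$ and does not on its own force $\mu=0$. I would handle this by a continuity argument, using that $\mu$ is smooth and vanishes on the open set $\{\theta_{i_0}\neq\frac{\pi}{2}\}$ to conclude $\mu\equiv0$ on its closure; the residual situation in which $\theta_{i_0}\equiv\frac{\pi}{2}$ on an open (locally anti-invariant) set is the delicate point that must be argued separately, and is where I expect the reasoning to require the most care.
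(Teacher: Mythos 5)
Your route is the paper's own: Proposition \ref{p163} is stated there without a separate proof, being justified exactly as you reconstruct it --- the computation of Proposition \ref{p129} transferred to $\nabla$ via the symmetry of $f^2|_{TM}$ (Remark \ref{p189}), combined with the fact that for a pointwise $k$-slant distribution each $(D_{i_0})_x$ is the \emph{entire} eigenspace of $\lambda_{i_0}(x)$ in $D_x$. Your derivation of $X(\lambda_{i_0})=0$, your converse, and your treatment of the almost Hermitian and almost product Riemannian settings (where $TM=D$ forces $\nabla_XY\in D$, so the eigenspace argument immediately yields $\nabla_XY\in D_{i_0}$) are correct and complete, and this is precisely the intended argument.

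The obstacle you flag in the almost $(\epsilon)$-contact settings is, however, not merely a delicate residual step: on the interior of $\{\theta_{i_0}=\frac{\pi}{2}\}$ the implication from assertion 1) to assertion 2) i) is actually false, so no separate argument can close it. Concretely, take $\overline{M}=\mathbb{R}^5$ with the Euclidean metric, put $\xi=\cos(x_1)e_2+\sin(x_1)e_3$, $\zeta=-\sin(x_1)e_2+\cos(x_1)e_3$, and define $\varphi$ on the orthonormal frame $(e_1,\zeta,\xi,e_4,e_5)$ by $\varphi e_1=e_4$, $\varphi\zeta=e_5$, $\varphi\xi=0$, $\varphi e_4=-e_1$, $\varphi e_5=-\zeta$; one checks this is an almost contact metric structure. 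The submanifold $M=\{x_4=x_5=0\}$ satisfies $\xi\in TM$ and $TM=D_1\oplus\langle\xi\rangle$ with $D_1=\langle e_1,\zeta\rangle$ anti-invariant, so $M$ is a pointwise $1$-slant submanifold with $\theta_1\equiv\frac{\pi}{2}$, $\lambda_1\equiv0$, and $f\equiv0$ on $TM$. Hence $(\nabla_Xf^2)Y=0$ for all $X,Y\in D_1$ and $X(\lambda_1)=0$, yet $\nabla_{e_1}\zeta=-\xi\notin D_1$, so $D_1$ is not completely integrable with respect to $\nabla$ and 2) i) fails. So your proof is as complete as the statement permits: the step you could not finish is a defect of Proposition \ref{p163} itself (and of the paper's implicit justification via Remark \ref{p189}) in the contact and paracontact settings, where the statement needs an additional hypothesis --- for instance $\nabla_XY\in D$ for $X,Y\in D_{i_0}$ as in Proposition \ref{p156}, or density of $\{\theta_{i_0}\neq\frac{\pi}{2}\}$ in $M$ (exactly what your continuity argument requires), or else the weaker conclusion of Proposition \ref{p136} in place of complete integrability.
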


\begin{corollary}\label{p164}
The following two assertions are equivalent: \\ 
1) $(\nabla_X f^2) Y=0$ for any $X,Y\in D_{i}$, $i=\overline {1,k}$. \\ 
2) i) $D_{i}$ is completely integrable with respect to $\nabla$ for $i=\overline {1,k}$; \\ 
\hspace*{7 pt} ii) $X(\lambda_{i})=0$ for any $X\in D_{i}$, $i=\overline {1,k}$.
\end{corollary}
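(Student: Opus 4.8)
The plan is to obtain the corollary as the index-wise conjunction of Proposition \ref{p163}. For each $i \in \{1,\ldots,k\}$ write $A_i$ for the statement that $(\nabla_X f^2)Y = 0$ for all $X,Y \in D_i$, and write $B_i$ for the conjunction of the two conditions that $D_i$ be completely integrable with respect to $\nabla$ and that $X(\lambda_i) = 0$ for all $X \in D_i$. With this notation, assertion 1) of the corollary is exactly $\bigwedge_{i=1}^k A_i$, while assertion 2) is exactly $\bigwedge_{i=1}^k B_i$: its part i) gathers the integrability of every $D_i$, and its part ii) gathers the relations $X(\lambda_i) = 0$ over all $i$.

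First I would invoke Proposition \ref{p163} with $i_0$ replaced successively by each $i \in \{1,\ldots,k\}$; this yields the individual equivalences $A_i \Leftrightarrow B_i$. Since a finite family of equivalences may be conjoined, I then conclude $\bigwedge_{i=1}^k A_i \Leftrightarrow \bigwedge_{i=1}^k B_i$, which is precisely the asserted equivalence $1) \Leftrightarrow 2)$. No further argument is required, so the corollary follows at once.

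I expect no genuine obstacle here, since all the analytic content lies in Proposition \ref{p163}: there one differentiates the eigenrelation $f^2 Y = \lambda_{i} Y$ along $X$ to extract $X(\lambda_i)=0$, and one uses the defining feature of the \emph{pointwise $k$-slant} framework --- namely that $(D_i)_x$ is the \emph{entire} eigenspace of $\lambda_i(x)$ in $D_x$ --- to upgrade the conclusion ``$\nabla_X Y$ is an eigenvector of $\lambda_i$'' into ``$\nabla_X Y \in D_i$'', i.e.\ complete integrability. The only thing to verify at the level of the corollary is the bookkeeping that parts i) and ii) of assertion 2) range over all indices simultaneously, so that the corollary is nothing more than Proposition \ref{p163} summed over $i=1,\ldots,k$, exactly as Corollary \ref{p157} was deduced from Proposition \ref{p156} in the $\overline\nabla$ setting.
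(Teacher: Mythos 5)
Your proposal is correct and is exactly the paper's (implicit) derivation: Corollary \ref{p164} is stated without separate proof precisely because it is the index-wise conjunction of Proposition \ref{p163} applied with $i_0$ ranging over $\{1,\ldots,k\}$, just as Corollary \ref{p157} follows from Proposition \ref{p156}. Your bookkeeping that assertion 1) is $\bigwedge_{i=1}^k A_i$ and assertion 2) is $\bigwedge_{i=1}^k B_i$, so that the family of equivalences $A_i \Leftrightarrow B_i$ conjoins to the stated equivalence, is all that is needed.
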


\begin{proposition}\label{p165}
For\, $i_0\in \{1,\ldots,k\}$, the following two assertions are equivalent: \\ 
1) $(\nabla_X f^2) Y=0$ for any $X\in TM$ and\, $Y\in D_{i_0}$. \\ 
2) i) $\nabla$ restricts to $D_{i_0}$ \textup(i.e., $\nabla_X  Y\in D_{i_0}$ for any $X\in TM$ and\, $Y\in D_{i_0}$\textup); \\ 
\hspace*{7 pt} ii) the restriction of $D_{i_0}$ to any connected component of $M$ is a slant distribution. 
\end{proposition}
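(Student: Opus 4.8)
The plan is to prove the two implications separately, following the template of the corresponding statement for the ambient connection (Proposition \ref{p159}) but working throughout with the induced connection $\nabla$. This substitution is legitimate because, as noted in Remark \ref{p189}, the operator $f^2|_{TM}$ is symmetric with respect to $g$ (a consequence of $f(D_i)\subseteq D_i$ for all $i$), so that the automatic membership $\nabla_X Y\in TM$ can play the role that the hypothesis $\overline\nabla_X Y\in D$ played in the ambient version. The recurring ingredient is Theorem \ref{p143}, which gives $f^2 Y=\lambda_{i_0}Y$ for every $Y\in D_{i_0}$, with $\lambda_{i_0}=\epsilon\cos^2\theta_{i_0}$ smooth on $M$ and different from $1$ and $-1$.

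For the implication 2)$\Rightarrow$1), assuming $\nabla$ restricts to $D_{i_0}$ and that $D_{i_0}$ is slant on each connected component (so $\lambda_{i_0}$ is locally constant), I would expand $(\nabla_X f^2)Y=\nabla_X(\lambda_{i_0}Y)-f^2(\nabla_X Y)$. Since $\nabla_X Y\in D_{i_0}$ forces $f^2(\nabla_X Y)=\lambda_{i_0}\nabla_X Y$, the expression collapses to $X(\lambda_{i_0})\,Y$, which vanishes because $\lambda_{i_0}$ is locally constant.

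For the harder implication 1)$\Rightarrow$2), from $(\nabla_X f^2)Y=0$ and $f^2Y=\lambda_{i_0}Y$ I obtain
\begin{equation}
X(\lambda_{i_0})\,Y+\lambda_{i_0}\nabla_X Y-f^2(\nabla_X Y)=0. \nonumber
\end{equation}
Taking $Y$ to be a local unit section $Y_{i_0}$ of $D_{i_0}$, both $\nabla_X Y_{i_0}$ (metric connection) and $f^2(\nabla_X Y_{i_0})$ (symmetry of $f^2$) are orthogonal to $Y_{i_0}$, so projecting onto $Y_{i_0}$ yields $X(\lambda_{i_0})=0$ for every $X\in TM$; hence $\lambda_{i_0}$ is constant on each connected component, which by Theorem \ref{p143} is exactly condition 2)ii). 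It then remains to upgrade the residual identity $f^2(\nabla_X Y)=\lambda_{i_0}\nabla_X Y$ to $\nabla_X Y\in D_{i_0}$, and here the \emph{pointwise} $k$-slant hypothesis is essential: because the $\lambda_i(x)$ are pairwise distinct, $(D_{i_0})_x$ is the \emph{whole} $\lambda_{i_0}(x)$-eigenspace of $(f^2|_D)_x$, so the $D$-component of $\nabla_X Y$ must lie in $D_{i_0}$.

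The main obstacle is the $\xi$-direction in the almost $(\epsilon)$-contact metric setting, where $TM=D\oplus\langle\xi\rangle$ and $\nabla_X Y$ may a priori acquire a component $c\,\xi$. Since $f^2\xi=0$ and $f^2(\nabla_X Y)\in D$, comparing $\xi$-components in $f^2(\nabla_X Y)=\lambda_{i_0}\nabla_X Y$ gives $\lambda_{i_0}c=0$, so $c=0$ whenever $\lambda_{i_0}\neq 0$, i.e. whenever $\theta_{i_0}\neq\frac{\pi}{2}$; in the almost Hermitian and almost product Riemannian settings there is no $\xi$ and this step is vacuous (there $\nabla_X Y\in D$ already). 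The delicate point to handle carefully is therefore the anti-invariant value $\theta_{i_0}=\frac{\pi}{2}$, where $\lambda_{i_0}=0$: the eigenvalue equation alone does not pin down the $\xi$-component, and one must argue separately that $g(\nabla_X Y,\xi)=0$ (equivalently $\nabla_X\xi\perp D_{i_0}$) to secure $\nabla_X Y\in D_{i_0}$. Once this is settled, condition 2)i) follows and the equivalence is complete.
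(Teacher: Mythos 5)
Your route is the one the paper itself intends for this statement: transfer the proof of Proposition \ref{p129} to the induced connection $\nabla$ (justified, as you note, by the symmetry of $f^2|_{TM}$ from Remark \ref{p189} and the automatic membership $\nabla_X Y\in TM$), extract $X(\lambda_{i_0})=0$ for all $X\in TM$ by pairing with a unit section of $D_{i_0}$ --- this is condition 2)ii) --- and then use the defining feature of pointwise $k$-slant distributions, namely that $(D_{i_0})_x$ is the whole $\lambda_{i_0}(x)$-eigenspace of $(f^2|_D)_x$, to turn the residual identity $f^2(\nabla_X Y)=\lambda_{i_0}\nabla_X Y$ into $\nabla_X Y\in D_{i_0}$. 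Your 2)$\Rightarrow$1) direction, your derivation of 2)ii), and your handling of the case $\lambda_{i_0}\neq 0$ (where comparison of $\xi$-components gives $c=0$) are all correct.

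However, the point you flag and leave open --- the anti-invariant case $\lambda_{i_0}=0$ in the almost ($\epsilon$)-contact metric setting --- is a genuine gap, and it cannot be closed: there, hypothesis 1) is equivalent to $\nabla_X Y\in\ker f^2|_{TM}=D_{i_0}\oplus\langle\xi\rangle$, which carries no information about the $\xi$-component, and no further argument yields $g(\nabla_X Y,\xi)=0$. Concretely, take $\overline M=\mathbb{R}^3$ with the Euclidean metric, $\xi=\cos x\,\partial_x+\sin x\,\partial_y$, $\zeta=-\sin x\,\partial_x+\cos x\,\partial_y$, $\eta=g(\cdot\,,\xi)$, and $\varphi$ defined by $\varphi\xi=0$, $\varphi\zeta=\partial_z$, $\varphi\partial_z=\epsilon\zeta$; this is an almost ($\epsilon$)-contact metric manifold. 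The surface $M=\{z=0\}$ is tangent to $\xi$, and $TM=D_1\oplus\langle\xi\rangle$ with $D_1=\langle\zeta\rangle$ anti-invariant, so $M$ is a pointwise $1$-slant submanifold with $\lambda_1\equiv 0$ and $f\equiv 0$ on $TM$; hence assertion 1) holds trivially. Yet $\nabla_\xi\zeta=-\cos x\,\xi\notin D_1$, so $\nabla$ does not restrict to $D_1$ and 2)i) fails (the same computation, with $\nabla_\zeta\zeta=\sin x\,\xi$, defeats Proposition \ref{p163} as well). So the implication 1)$\Rightarrow$2) is actually false in this case: the statement holds as you proved it in the almost Hermitian and almost product Riemannian settings, but in the contact settings it needs either the extra hypothesis $\theta_{i_0}\neq\frac{\pi}{2}$ (equivalently $\lambda_{i_0}\neq 0$) or the weakened conclusion $\nabla_X Y\in D_{i_0}\oplus\langle\xi\rangle$. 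Your proof is therefore incomplete at precisely the point where the paper's own implicit justification (the unproved transfer from Proposition \ref{p159} via Remark \ref{p189}) also breaks down, and no ``separate argument'' of the kind you postpone can exist in general.
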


\begin{theorem}\label{p166}
The following two assertions are equivalent: \\ 	
1) $(\nabla_X f^2) Y=0$ for any $X\in TM$ and\, $Y\in \oplus_{i=1}^k D_i$. \\ 
2) i) $\nabla$ restricts to $D_{i}$ for any $i \in \{1,\ldots,k\}$; \\ 
\hspace*{7 pt} ii)  any open connected component of $M$ is a $k$-slant submanifold of $\overline{M}$ \textup($\lambda_{1}, \lambda_{2},\ldots, \lambda_{k}$ are constant and different on any connected component of $M$\textup). 
\end{theorem}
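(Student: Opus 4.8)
The plan is to reduce this statement, which involves all of $\oplus_{i=1}^k D_i$ simultaneously, to the single-component result of Proposition \ref{p165}, and then to reassemble the pieces into a $k$-slant structure. First I would note that the tensor $(\nabla_X f^2)$ is $C^\infty(M)$-linear in its argument $Y$; hence, writing any $Y\in\oplus_{i=1}^k D_i$ as $Y=\sum_{i=1}^k Y_i$ with $Y_i\in D_i$, condition 1) holds for all such $Y$ if and only if $(\nabla_X f^2)Y_{i}=0$ for every $X\in TM$, every $i\in\{1,\dots,k\}$, and every $Y_{i}\in D_{i}$. In other words, condition 1) is exactly the conjunction over $i_0\in\{1,\dots,k\}$ of condition 1) of Proposition \ref{p165}.

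Next I would apply Proposition \ref{p165} separately for each $i_0$. This converts the conjunction above into the conjunction, over all $i_0$, of the two assertions: that $\nabla$ restricts to $D_{i_0}$, and that the restriction of $D_{i_0}$ to each connected component of $M$ is a slant distribution. The first family of assertions, ranging over $i_0$, is precisely 2)i). It then remains to identify the second family with 2)ii).

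For the reassembly I would use that $M$ is a \emph{pointwise} $k$-slant submanifold, so the decomposition $TM=\oplus_{i=0}^kD_i$ (respectively $TM=\oplus_{i=0}^kD_i\oplus\langle\xi\rangle$) already satisfies the structural requirements of a $k$-slant submanifold on all of $M$: the $D_i$ are mutually orthogonal regular distributions, $D_0$ is invariant, and $f(D_i)\subseteq D_i$. Fixing a connected component $C$, Theorem \ref{p143} shows that the restriction of $D_{i_0}$ to $C$ is a slant distribution if and only if the slant function $\theta_{i_0}$ (equivalently the eigenvalue function $\lambda_{i_0}=\epsilon\cos^2\theta_{i_0}$) is constant on $C$. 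Thus the second family of assertions says exactly that each $\theta_{i_0}$ is constant on $C$. Since $M$ is pointwise $k$-slant, the functions $\theta_1,\dots,\theta_k$ are pointwise distinct, so constant values on $C$ are automatically mutually different; consequently $C$ carries a decomposition into $k$ slant distributions with distinct constant slant angles together with at most one invariant component, i.e. $C$ is a $k$-slant submanifold of $\overline{M}$. Conversely, if every connected component $C$ is $k$-slant, then by definition each $D_{i_0}|_C$ is slant. This establishes the equivalence of the second family with 2)ii), completing the argument.

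I expect the only delicate point to be the distinctness of the constant slant angles in the reassembly step: it is here that the \emph{pointwise} $k$-slant hypothesis is indispensable, since it guarantees that $(D_{i_0})_x$ is the full eigenspace of $\lambda_{i_0}(x)$ in $D_x$ and that the angles stay separated at every point, whereas a merely $k$-pointwise slant submanifold could fail to split into genuinely distinct slant components upon restriction. The $\langle\xi\rangle$ summand in the almost contact and almost paracontact cases plays no active role, being part of the invariant data and preserved under restriction to $C$, so the same reasoning covers all four settings uniformly.
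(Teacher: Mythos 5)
Your proposal is correct and is essentially the paper's own derivation: Theorem \ref{p166} is obtained as the conjunction over $i_0$ of Proposition \ref{p165} (the reduction being justified by tensoriality of $(\nabla_X f^2)$ in $Y$), reassembled via constancy of each $\lambda_i$ on connected components together with the pointwise distinctness of the slant functions supplied by the pointwise $k$-slant hypothesis, which is exactly what the parenthetical in 2)\,ii) records. One cosmetic point: in your converse step, read 2)\,ii) through its parenthetical formulation ($\lambda_1,\dots,\lambda_k$ constant and different on each connected component) rather than through the bare phrase ``$k$-slant submanifold'', so that the claim ``each $D_{i_0}|_C$ is slant'' is indeed immediate rather than requiring an extra argument that the given decomposition is the one realizing the $k$-slant structure.
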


\begin{remark}\label{152}
We notice that, in view of conditions (\ref{1}) and (\ref{97}), all the results of this section are valid in any of the settings considered in the paper: almost Hermitian, almost product Riemannian, almost contact metric, or almost paracontact metric setting. 
\end{remark}

\section*{Final remarks}

The present paper introduces some general frameworks, that of $k$-(pointwise) slant distribution and correspondingly that of $k$-(pointwise) slant submanifold, which cover the variants of slant and pointwise slant submanifolds studied in the literature by now. In these new frameworks, we establish not only properties corresponding to some of those known for the particular variants of (pointwise) slant submanifolds but also new types of results. 

It has to be mentioned that the skew CR and generic submanifolds in sense of Ronsse are $k$-slant and $k$-pointwise slant submanifolds, respectively (see Propositions \ref{p123}, \ref{p124}, \ref{p125}, \ref{p126}, \ref{p127}), but the $k$-pointwise slant concept is more general than the generic one, as follows from Examples \ref{ex4} and \ref{ex5}. 
Actually, the generic submanifolds are pointwise $k$-slant submanifolds (see Proposition \ref{p208}), the pointwise $k$-slant concept being more general than the generic one, as follows from Examples \ref{ex8} and \ref{ex9}. Moreover, the pointwise \mbox{$k$-slant} submanifolds are particular cases of $k$-pointwise slant submanifolds, as it was shown in Proposition \ref{p155} and Examples \ref{ex6} and \ref{ex7}. 

The work reveals the possibility of a new and general treatment of the problems regarding the slant and pointwise slant phenomenon in different contexts. Moreover, it seems that our approach, which starts directly from the $k$-(pointwise) slant concepts, can be at least as fruitful as the classical one.


\end{document}